\documentclass[11pt,reqno]{amsart}
\usepackage{amsthm,amsfonts,amssymb,euscript,hyperref}
\usepackage{graphicx}
\usepackage{comment}
\def\alphab{\underline{\alpha}}
\def\betab{\underline{\beta}}
\def\chib{\underline{\chi}}
\def\chibh{\hat{\underline{\chi}}}
\def\chih{\hat{\chi}}
\def\curl{\text{curl}\,}
\def\divergence{\text{div}\,}
\def\Divergence{\text{Div}\,}
\def\D{\mathcal{D}}
\def\etab{\underline{\eta}}
\def\Hb{\underline{H}}
\def\Lb{\underline{L}}
\def\mub{\underline{\mu}}
\def\kappab{\underline{\kappa}}
\def\tr{\text{tr}}
\def\omegab{\underline{\omega}}

\def\tensor{\widehat{\otimes}}
\def\ub{\underline{u}}
\def\O{\mathcal{O}}
\def\omegabt{\omegab^\dagger}
\def\omegat{\omega^\dagger}
\def\omegabpair{\langle \omegab \rangle}
\def\omegapair{\langle \omega \rangle}
\def\thetab{\underline{\theta}}
\def\phib{\underline{\phi}}
\def\phif{\,^{(4)}\phi}
\def\phibf{\,^{(4)}\!\underline{\phi}}
\def\phit{\,^{(3)}\phi}
\def\phibt{\,^{(3)}\!\underline{\phi}}
\def\varphib{\underline{\varphi}}

\def\L{\mathcal{L}}
\def\Lh{\widehat{\mathcal{L}}}

\def\pih{\hat{\pi}}

\def\doubleint{\int\!\!\!\!\!\int}
\def\Dslash{\mbox{$D \mkern -13mu /$\, }}

\def\OSzeroinfinity{{}^{(S)}\mathcal{O}_{0,\infty}}
\def\OSzerop{\,^{(S)}\mathcal{O}_{0,}}
\def\OSzerofour{\,^{(S)}\mathcal{O}_{0,4}}
\def\OSzerotwo{{}^{(S)}\mathcal{O}_{0,2}}
\def\OSonep{{}^{(S)}\mathcal{O}_{1,p}}
\def\OSonetwo{{}^{(S)}\mathcal{O}_{1,2}}
\def\OSonefour{{}^{(S)}\mathcal{O}_{1,4}}

\def\OHtwo{{}^{(H)}\!\mathcal{O}}
\def\OHbtwo{{}^{(\Hb)}\!\mathcal{O}}

\def\Ozeroinfinity{\mathcal{O}_{0,\infty}}
\def\Ozerofour{\mathcal{O}_{0,p}}
\def\Ozerofour{\mathcal{O}_{0,4}}
\def\Ozerotwo{\mathcal{O}_{0,2}}

\def\Oonetwo{\mathcal{O}_{1,2}}
\def\Oonefour{\mathcal{O}_{1,4}}
\def\Ozero{{\mathcal{O}^{(0)}}}

\def\Rinitial{{\mathcal{R}^{(0)}}}
\def\Finitial{{\mathcal{F}^{(0)}}}
\def\Oinitial{{\mathcal{O}^{(0)}}}

\def\R{\mathcal{R}}
\def\Rb{\underline{\mathcal{R}}}
\def\F{\mathcal{F}}
\def\Fb{\underline{\mathcal{F}}}

\def\Rzero{\mathcal{R}_0}
\def\Rone{\mathcal{R}_1}
\def\Rzerob{\underline{\mathcal{R}}_0}
\def\Roneb{\underline{\mathcal{R}}_1}

\def\Fzero{\mathcal{F}_0}
\def\Fone{\mathcal{F}_1}
\def\Ftwo{\mathcal{F}_2}
\def\Fzerob{\underline{\mathcal{F}}_0}
\def\Foneb{\underline{\mathcal{F}}_1}
\def\Ftwob{\underline{\mathcal{F}}_2}

\def\Izero{\mathcal{I}_0}

\def\Done{\D_1}
\def\Donestar{^*\!\D_1}
\def\Dtwo{\D_2}
\def\Dtwostar{^*\!\D_2}
\def\nablastar{^*\!\nabla}
\def\Wstar{{}^*W}
\def\Fstar{{}^*F}
\def\trchibt{\widetilde{\tr \chib}}
\newtheorem*{attention}{Attention}
\newtheorem*{theoremA}{Theorem A}
\newtheorem*{theoremB}{Theorem B}
\newtheorem*{theoremC}{Theorem C}
\newtheorem*{maintheorem}{Main Theorem}
\newtheorem*{mainCor1}{First Corollary}
\newtheorem*{mainCor2}{Second Corollary}
\newtheorem{theorem}{Theorem}[section]
\newtheorem{lemma}[theorem]{Lemma}
\newtheorem{proposition}[theorem]{Proposition}
\newtheorem{corollary}[theorem]{Corollary}
\newtheorem{definition}[theorem]{Definition}
\newtheorem{remark}[theorem]{Remark}

\setlength{\textwidth}{16cm} \setlength{\oddsidemargin}{0cm}
\setlength{\evensidemargin}{0cm}
\numberwithin{equation}{section}

\begin{document}
\title[Formation of Black Holes]{Dynamical formation of black holes due to the condensation of matter field}
\author[Pin Yu]{Pin Yu}
\address{Mathematical Sciences Center}
\address{Tsinghua University\\ Beijing, China}
\email{pin@math.tsinghua.edu.cn}
\thanks{This work was partly done when the author was a graduate student at Princeton University and afterwards when he was visiting Harvard University. He would like to thank these institutions for their hospitality. The author is deeply indebted to Professor \emph{Sergiu Klainerman} and \emph{Igor Rodnianski} for many fruitful discussions on the problem. The author also would like to acknowledge Professor \emph{Demetrios Christodoulou} and \emph{Shing-Tung Yau} for explaining the insights and continuous encouragements.}

\begin{abstract}
The purpose of the paper is to understand a mechanism of evolutionary formation of trapped surfaces when there is an electromagnetic field coupled to the background space-time. Based on the \emph{short pulse} ansatz, on a given finite outgoing null hypersurface which is free of trapped surfaces, we exhibit an open set of initial data $(\chih,\alpha_F)$ for Einstein equations coupled with a Maxwell field, so that a trapped surface forms along the Einstein-Maxwell flow.

On one hand, this generalizes the black-hole-formation results of Christodoulou \cite{Ch} and Klainerman-Rodnianski \cite{K-R-09}. In fact, by switching off the electromagnetic field in our main theorem, we can retrieve their results in vacuum. On the other hand, this shows that the formation of black hole can be purely due to the condensation of Maxwell field on the initial null hypersurface where there is no incoming gravitational energy.
\end{abstract}
\maketitle

\section{Introduction}\label{introduction}
The famous singularity theorem of Penrose (see \cite{H-E}) states that if in addition to the dominant energy condition, the space-time has a trapped surface, namely a two dimensional space-like sphere whose outgoing and incoming expansions are negative, thus the space-time is future causally geodesically incomplete (we usually say that the space-time contains a \emph{singularity}). The \emph{weak cosmic censorship} conjecture asserts there is no naked singularity under reasonable physical assumptions. In other words, singularities need to be hidden from an observer at infinity by the event horizon of a black hole. Thus, by combining these two claims, if one can exhibit a trapped surface in a space-time, then one can predict the existence of black holes. In other words, although many supplementary conditions are required, we regard the existence of a trapped surface as the presence of a black hole.

A major challenge in general relativity is to understand how trapped surfaces actually form due to the focusing of gravitational waves. In a recent remarkable breakthrough \cite{Ch}, Christodoulou solved this long standing problem. He discovered a mechanism which is responsible for the dynamical formation of trapped surfaces in vacuum space-times. In the monograph \cite{Ch}, in addition to the Minkowskian flat data on a incoming null hypersurface, Christodoulou identified an open set of initial data (this is the \emph{short pulse} ansatz) on a outgoing null hypersurfaces. Based on the techniques developed by himself and Klainerman in the proof of the global stability of the Minkowski space-times \cite{Ch-K}, he managed to understand the whole picture of how the various estimates on geometric quantities propagates along the evolution. Once those estimates are established in a large region of the space-time, the actual formation of trapped surfaces is easy to demonstrate. Christodoulou also proved a version of the same result for the short pulse data prescribed on past null infinity. He showed that strongly focused gravitational waves, coming in from past null infinity, lead to a trapped surface. This miraculous work provides the first global \emph{large data} result in general relativity (without symmetry assumptions) and opens the gate for many new developments on dynamical problems related to black holes. The methods undoubtedly have many future applications in both general relativity and other partial differential equations.

In \cite{K-R-09}, Klainerman and Rodnianski extend aforementioned result which significantly simplifies the proof of Christodoulou (from about six hundred pages to one hundred and twenty). They enlarge the admissible set of initial conditions and show that the corresponding propagation estimates are much easier to derive. The relaxation of the propagation estimates are just enough to guarantee that a trapped surface still forms. Based on the trace estimates developed in a sequence of work \cite{K-R-04}, \cite{K-R-05}, \cite{K-R-05-Rough}, \cite{K-R-05-Microlocolized-Rough}, \cite{K-R} and \cite{K-R-Sharp} towards the critical local well-posedness for Einstein vacuum equations, they reduce the number of derivatives needed in the argument from two derivatives on curvature (in Christodoulou's proof) to just one. More importantly, Klainerman and Rodnianski introduce a parabolic scaling in \cite{K-R-09} which is incorporated into Lebesgue norms and Sobolev norms. These new techniques allow them to capture the hidden \emph{smallness} of the nonlinear interactions among different small or large components of various geometric objects. The result of Klainerman and Rodnianski can be easily localized with respect to angular sectors which leads to further developments, see \cite{K-R-10} for details. We remark that Klainerman and Rodnianski only considered the problem on a finite region. The question from past null infinity can be solved in a similar manner as in \cite{Ch} once one understand the picture on a finite region. The problem from past null infinity has also been studied in a recent work \cite{R-T} by Reiterer and Trubowitz.

The aforementioned works are all investigating vacuum space-times. When some matter field is presenting, the formation of black holes has attracted a lot of interest. We mention only two of them which are more related to the present paper. In \cite{S-Y}, based on their work on positive mass theorem, especially the resolution of Jang's equation, Schoen and Yau proved the existence of a trapped surface when matter is condensed in a small region, see \cite{Y} for an improvement. Their result is restricted to the initial 3-slice hence is not dynamical. Another work is an earlier paper \cite{Ch-91} of Christodoulou. He studied the evolutionary formation of singularities for the Einstein-scalar field system. The radial symmetry is assumed and the mechanism of the formation of trapped surface is quite different to his most recent work \cite{Ch}. And moreover, he could get more precise information on the gravitational collapse.

The purpose of the present paper is, under the framework of Christodoulou and Klainerman-Rodnianski, to understand the formation of charged black holes, namely the evolutionary formation of trapped surfaces when there is an electromagnetic field coupled to the background space-time.  On one hand, we will generalize the black-hole-formation results of Christodoulou and Klainerman-Rodnianski. In fact, by switching off the electromagnetic field in our main theorem, we can retrieve their results in vacuum. On the other hand, we will show that the formation of black hole can purely due to condensation of Maxwell field on the initial null hypersurface, namely, we can set the incoming gravitational energy to be zero.


\subsection{Framework} We recall double-null-foliation formalism and we refer the reader to Chapter 3 of \cite{K-N} for more precise descriptions. We use $\D = \D(u_*,\ub_*)$ to denote the underlying space-time and use $g$ to denote the background metric. We assume that $\D$ is spanned by a double null foliation generated by two optical functions $u$ and $\ub$ and we also assume that $u$ and $\ub$ increase towards the future, $0 \leq u \leq u_*$ and $0 \leq \ub \leq \ub_*$. We use $H_u$ to denote the outgoing null hypersurfaces generated by the level surfaces of $u$ and use ${\Hb}_{\ub}$ to denote the incoming null hypersurfaces generated by the level surfaces of $\ub$. We use $S_{u,\ub}$ to denote the space-like two surface $H_u \cap \Hb_{\ub}$. We denote by $H_u^{(\ub_1,\ub_2)}$ the region of $H_u$ defined by $\ub_1 \leq \ub \leq\ub_2$; similarly, we can define $\Hb_{\ub}^{(u_1,u_2)}$.

We require that when $\ub \leq 0$, this part of initial null cone $H_0$ is a flat light cone in Minkowski space-time.

\begin{minipage}[!t]{0.5\textwidth}
  \includegraphics[width = 3 in]{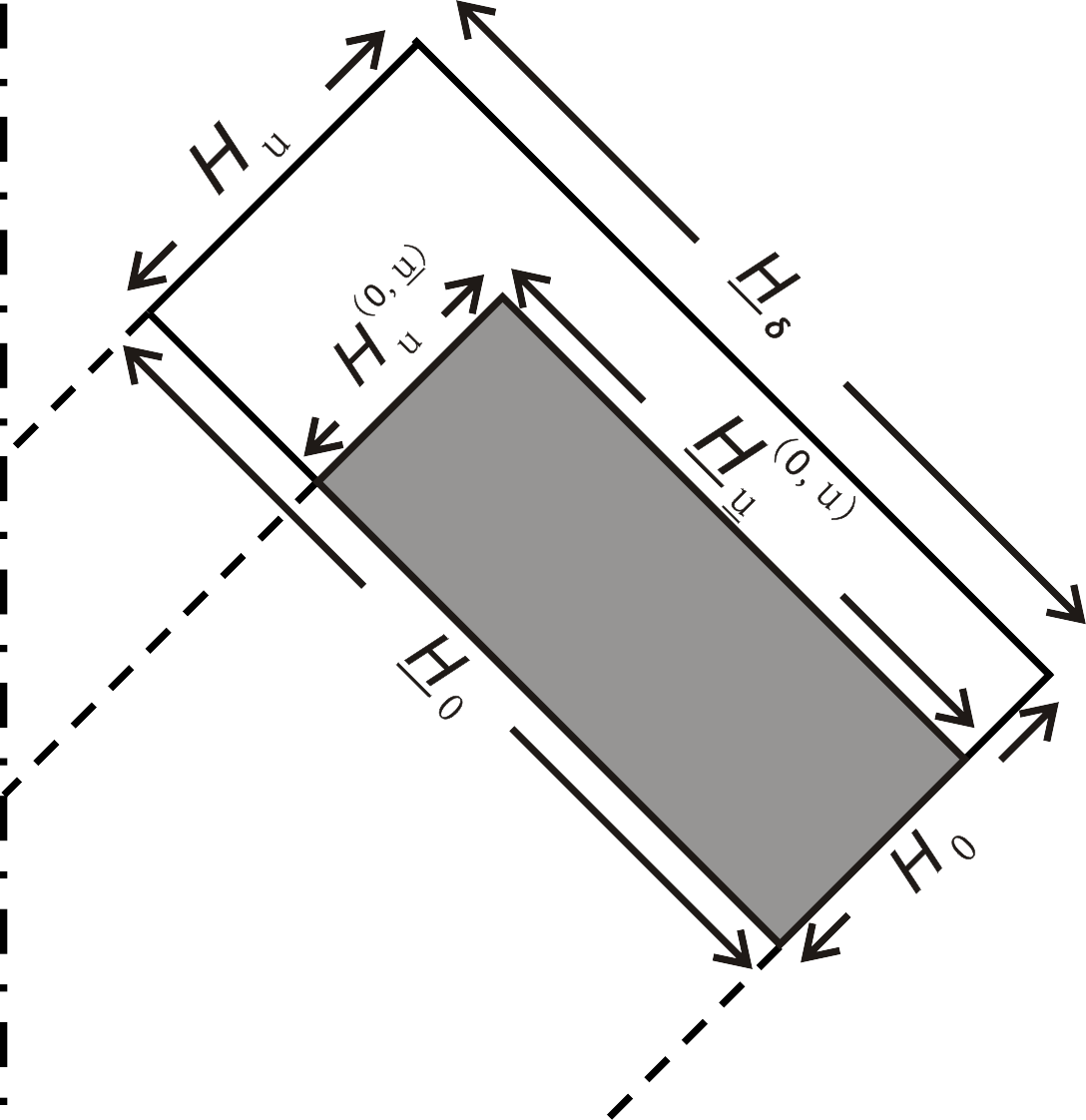}
\end{minipage}
\hspace{0.0\textwidth}
\begin{minipage}[!t]{0.4\textwidth}
The shaded region represents the domain $\D(u,\ub)$.  The function $\ub$ is in fact defined from $-u_*$ to $\delta$. When $\ub \leq 0$, this part of $H_0$ is assumed to be a flat light cone in Minkowski space-time with vertex located at $\ub = -u_*$. We shall show that the trapped surface forms when $\ub \sim 1$ and $u = \delta$. We also require that $u_*$ is a fixed number which is at least $2$, say, we may take $u_* = 2$.
\end{minipage}

Let $(L,\Lb)$ be the null geodesic generators of the double null foliation and we define the lapse function $\Omega$ by $ g(L,\Lb) = -\frac{2}{\Omega^2}$. We also need normalized null pair $(e_3, e_4)$, i.e. $e_3 = \Omega \Lb$, $e_4 = \Omega L$ and $g(e_3,e_4)= -2$. On a given two sphere $S_{u,\ub}$ we choose an orthonormal frame $(e_1,e_2)$. We call $(e_1, e_2, e_3, e_4)$ a \emph{null frame}. \footnote{\, Throughout the paper, we use Greek letters $\alpha, \beta, \cdots$ to denote an index from $1$ to $4$ and Latin letters $a, b, \cdots$ to denote an index from $1$ to $2$. Repeated indices should always be understood as summations.}

We use $D$ to denote Levi-Civita connection of the metric $g$ and we define the \emph{connection coefficients},
\begin{align*}
 \chi_{ab}&= g(D_b e_4, e_b), \eta_a = -\frac{1}{2}g(D_3 e_a, e_4),  \omega = -\frac{1}{4} g(D_4 e_3, e_4),\\
\chib_{ab}&=  g(D_b e_3, e_b),  \etab_a = -\frac{1}{2}g(D_4 e_a, e_3), \omegab = -\frac{1}{4}g(D_3 e_4, e_3), \zeta_a  = \frac{1}{2} g(D_a e_4, e_3).
\end{align*}
where $D_a = D_{e_a}$. On $S_{u.\ub}$ we use $\nabla$ to denote the induced connection and we use $\nabla_3$ and $\nabla_4$ to denote the projections to $S_{u.\ub}$ of $D_3$ and $D_4$. Those $\nabla$ derivatives are called \emph{horizontal derivatives} (we use the word \emph{horizontal tensor fields} for tensor fields depending only on components along $S_{u,\ub}$, see \cite{Ch}). 

For a Weyl field $W$, the null decomposition adapted to the null frame $\{e_\alpha\}$ is \footnote{\, When $W$ is equal to Weyl curvature tensor, we use $\alpha, \alphab, \beta, \betab, \rho, \sigma$ to denote its null components.}
\begin{align*}
 \alpha(W)_{ab} &= W(e_a, e_4, e_b, e_4), \quad  \beta(W)_a = \frac{1}{2}W(e_a,e_4,e_3,e_4), \quad \rho(W) = \frac{1}{4} W(e_4,e_3,e_4,e_3), \\
 \alphab(W)_{ab} &= W(e_a, e_3, e_b, e_3) \quad \betab(W)_a = \frac{1}{2}W(e_a,e_3,e_3,e_4), \quad \sigma(W) = \frac{1}{4}{} ^*W(e_4,e_3,e_4,e_3).
\end{align*}
where ${}^*W$ is the space-time Hodge dual of $W$. 

A \emph{Maxwell field} shall always refer to a two form $F_{\alpha\beta}$ satisfying \emph{Maxwell equations},
\begin{equation}\label{Maxwell_Equations}
D_{[\gamma}F_{\alpha\beta]} = 0, \quad D^\alpha F_{\alpha\beta} = 0.
\end{equation}
The Maxwell field $F_{\alpha\beta}$ is coupled to the background geometry, namely, in addition to \eqref{Maxwell_Equations}, $F_{\alpha\beta}$ satisfies Einstein field equations,
\begin{equation}\label{Einstein_Equations}
 R_{\alpha\beta}-\frac{1}{2}R g_{\alpha\beta} = T_{\alpha\beta},
\end{equation}
where $R_{\alpha\beta}$ is Ricci curvature, $R$ is scalar curvature and the $T_{\alpha\beta}$ is the
energy-momentum tensor associated to $F_{\alpha\beta}$ defined as
\begin{equation*}
T_{\alpha\beta} = F_{\alpha\mu}F_{\beta}{}^{\mu} - \frac{1}{4} g_{\alpha\beta}F_{\mu\nu}F^{\mu\nu}.
\end{equation*}
We remark that the energy-momentum tensor $T_{\alpha\beta}$ can also be written as
\begin{equation*}
T_{\alpha\beta} = \frac{1}{2}(F_{\alpha}{}^\mu F_{\beta\mu}+{}^*F_{\alpha}{}^\mu {}^*F_{\beta\mu}),
\end{equation*}
where ${}^*F$ denotes the space-time Hodge dual of $F$. The symmetry between $F$ and ${}^* F$ in above expression plays an important role when we derive energy estimates.

We also decompose $F$ in a given null frame into null components, \footnote{\, We use shorthand symbols $\alpha_F$, $\beta_F$, $\rho_F$ and $\sigma_F$ to denote these components.}
\begin{align*}
 {\alpha(F)}_{a}= F(e_a, e_4), {\alphab(F)}_{a} = F(e_a, e_3), {\rho(F)} = \frac{1}{2} F(e_3,e_4),{\sigma(F)} =  \frac{1}{2}{} ^*F(e_3,e_4)=F(e_1,e_2).
\end{align*}

Using null components, \eqref{Einstein_Equations} are equivalent to following null structure equations, see \cite{B-Z}, \cite{Ch} or \cite{Ch-K} for details. \footnote{\, Thanks to \eqref{Einstein_Equations}, we can also use $R_{\mu\nu}$ to denote the energy momentum tensor of $F$.}
\begin{equation}\label{NSE_L_tr_chi}
 \nabla_4 \tr \chi + \frac{1}{2}(\tr \chi)^2 = -|\chih|^2-2\omega \tr \chi - T_{44},
\end{equation}
\begin{equation}\label{NSE_L_chih}
 \nabla_4 \chih + \tr \chi \,\chih = -2\omega \chih -\alpha,
\end{equation}
\begin{equation}\label{NSE_Lb_tr_chib}
 \nabla_3 \tr \chib + \frac{1}{2}(\tr \chib)^2 = -|\chibh|^2-2\omegab \tr \chib - T_{33},
\end{equation}
\begin{equation}\label{NSE_Lb_chibh}
 \nabla_3 \chibh + \tr \chib \, \chibh = -2\omegab \chibh -\alphab,
\end{equation}
\begin{equation}\label{NSE_L_eta}
 \nabla_4 \eta = -\chi \cdot (\eta -\etab)-\beta - \frac{1}{2}T_{b4},
\end{equation}
\begin{equation}\label{NSE_Lb_etab}
 \nabla_3 \etab = -\chib \cdot (\etab -\eta)+\betab + \frac{1}{2}T_{b3},
\end{equation}
\begin{equation}\label{NSE_L_omegab}
 \nabla_4 \omegab = 2\omega \omegab +\frac{3}{4}|\eta-\etab|^2-\frac{1}{4}(\eta -\etab)\cdot(\eta +\etab)-\frac{1}{8}|\eta +\etab|^2 + \frac{1}{2}\rho + \frac{1}{4} T_{43},
\end{equation}
\begin{equation}\label{NSE_Lb_omega}
 \nabla_3 \omega = 2\omegab \omega +\frac{3}{4}|\eta-\etab|^2+\frac{1}{4}(\eta -\etab)\cdot(\eta +\etab)-\frac{1}{8}|\eta +\etab|^2 + \frac{1}{2}\rho + \frac{1}{4} T_{34},
\end{equation}
\begin{equation}\label{NSE_L_tr_chib}
 \nabla_4 \tr \chib + \frac{1}{2}\tr \chi \, \tr \chib=2 \omega \tr \chib + 2 \divergence \etab  + 2|\etab|^2 + 2\rho - \chih \cdot \chibh,
\end{equation}
\begin{equation}\label{NSE_Lb_tr_chi}
 \nabla_3 \tr \chi + \frac{1}{2}\tr \chib \, \tr \chi=2 \omegab \tr \chi + 2 \divergence \eta + 2|\eta|^2 + 2\rho - \chih \cdot \chibh,\footnote{\, $\divergence$ denotes divergence operator on $S_{u, \ub}$; $\Divergence$ denotes the space-time divergence.}
\end{equation}
\begin{equation}\label{NSE_L_chibh}
 \nabla_4 \chibh + \frac{1}{2} \tr \chi \chibh = \nabla \tensor \etab +2\omega \chibh -\frac{1}{2}\tr \chib \chih +\etab \tensor \etab + \hat{T}_{ab},
\end{equation}
\begin{equation}\label{NSE_Lb_chih}
 \nabla_3 \chih + \frac{1}{2} \tr \chib \chih = \nabla \tensor \eta +2\omegab \chih -\frac{1}{2}\tr \chi \chibh +\eta \tensor \eta + \hat{T}_{ab}.
\end{equation}
\begin{equation}\label{NSE_div_chih}
 \divergence \chih =\frac{1}{2}\nabla \tr \chi -\frac{1}{2}(\eta-\etab)\cdot(\chih -\frac{1}{2}\tr \chi \delta_{ab})-\beta+ \frac{1}{2}T_{4b},
\end{equation}
\begin{equation}\label{NSE_div_chibh}
 \divergence \chibh =\frac{1}{2}\nabla \tr \chib -\frac{1}{2}(\etab-\eta)\cdot(\chibh -\frac{1}{2}\tr \chib \delta_{ab})+\betab+ \frac{1}{2}T_{3b},
\end{equation}
\begin{equation}\label{NSE_curl_eta}
 \curl\eta = \chibh \wedge \chih +\sigma \epsilon_{ab},
\end{equation}
\begin{equation}\label{NSE_curl_etab}
 \curl \etab = -\chibh \wedge \chih - \sigma \epsilon_{ab},
\end{equation}
\begin{equation}\label{NSE_gauss}
 K =-\frac{1}{4}\tr \chi \tr \chib + \frac{1}{2} \chih \cdot \chibh -\rho + \frac{1}{4}T_{43}.\footnote{\, $K$ is the Gauss curvature of  $S_{u,\ub}$.}
\end{equation}

The second Bianchi equations are equivalent to null Bianchi equations,
\begin{equation}\label{NBE_Lb_alpha}
 \nabla_3 \alpha + \frac{1}{2}\tr \chib \alpha =\nabla \tensor \beta + 4\omegab \alpha - 3(\chih \rho + ^*\!\chih \sigma)+(\zeta + 4\eta)\tensor \beta +\frac{1}{2}(D_3 R_{44}-D_4 R_{43})\delta_{ab},\footnote{\, In applications, we always eliminate $\zeta$ by $\zeta = \frac{1}{2}(\eta -\etab)$.}
\end{equation}
\begin{equation}\label{NBE_L_beta}
\nabla_4 \beta + 2 \tr \chi \beta = \divergence \alpha - 2\omega \beta +\eta \cdot \alpha - \frac{1}{2}(D_b R_{44}-D_4 R_{4b}),
\end{equation}
\begin{equation}\label{NBE_Lb_beta}
\nabla_3 \beta +  \tr \chib \beta = \nabla \rho + ^*\! \nabla \sigma + 2\omegab \beta + 2\chih \cdot \betab + 3(\eta \rho + ^*\!\eta \sigma) +\frac{1}{2}(D_b R_{34}-D_4 R_{3b}),
\end{equation}
\begin{equation}\label{NBE_L_sigma}
 \nabla_4 \sigma + \frac{3}{2} \tr \chi \sigma = -\divergence ^*\! \beta + \frac{1}{2}\chibh \cdot ^*\!\alpha - \zeta \cdot ^*\! \beta -2\etab \cdot ^*\!\beta-\frac{1}{4}(D_\mu R_{4\nu}-D_\nu R_{4\mu})\epsilon^{\mu\nu}{}_{34},
\end{equation}
\begin{equation}\label{NBE_Lb_sigma}
 \nabla_3 \sigma + \frac{3}{2} \tr \chib \sigma = -\divergence ^*\! \betab + \frac{1}{2}\chih \cdot ^*\!\alphab - \zeta \cdot ^*\! \betab -2\eta \cdot ^*\!\betab+\frac{1}{4}(D_\mu R_{3\nu}-D_\nu R_{3\mu})\epsilon^{\mu\nu}{}_{34},
\end{equation}
\begin{equation}\label{NBE_L_rho}
 \nabla_4 \rho + \frac{3}{2} \tr \chi \rho = \divergence \beta -\frac{1}{2}\chibh \cdot \alpha + \zeta \cdot \beta + 2\etab \cdot \beta -\frac{1}{4}(D_3 R_{44}-D_4 R_{34}),
\end{equation}
\begin{equation}\label{NBE_Lb_rho}
 \nabla_3 \rho + \frac{3}{2} \tr \chib \rho = -\divergence \betab-\frac{1}{2}\chih \cdot \alphab + \zeta \cdot \betab - 2\eta \cdot \betab +\frac{1}{4}(D_3 R_{34}-D_4 R_{33}),
\end{equation}
\begin{equation}\label{NBE_L_betab}
\nabla_4 \betab +  \tr \chi \betab = -\nabla \rho + ^*\! \nabla \sigma + 2\omega \betab + 2\chibh \cdot \beta - 3(\etab \rho - ^*\!\etab \sigma) -\frac{1}{2}(D_b R_{43}-D_3 R_{4b}),
\end{equation}
\begin{equation}\label{NBE_Lb_betab}
\nabla_3 \betab + 2 \tr \chib \, \betab = -\divergence \alphab - 2\omegab \betab +\etab \cdot \alphab + \frac{1}{2}(D_b R_{33}-D_3 R_{3b}),
\end{equation}
\begin{equation}\label{NBE_L_alphab}
 \nabla_4 \alphab + \frac{1}{2}\tr \chi \alphab =-\nabla \tensor \betab + 4\omega \alphab - 3(\chibh \rho - ^*\!\chibh \sigma)+(\zeta - 4\etab)\tensor \betab +\frac{1}{2}(D_4 R_{33}-D_3 R_{34})\delta_{ab}.
\end{equation}

The Maxwell equations \eqref{Maxwell_Equations} are equivalent to null Maxwell equations,
\begin{equation}\label{NM_L_alphab}
\nabla_4 {\alphab_F}  + \frac{1}{2}\tr\chi {\alphab_F} = -\nabla {\rho_F} -^*\!\nabla {\sigma_F}-2\,^*\!\etab \cdot {\sigma_F} - 2\etab \cdot {\rho_F} + 2\omega {\alphab_F}-\chibh \cdot {\alpha_F}, \footnote{\, $^* \nabla_a =\epsilon_{ab}\nabla^b$.}
\end{equation}
\begin{equation}\label{NM_Lb_alpha}
\nabla_3 {\alpha_F}  + \frac{1}{2}\tr\chib {\alpha_F} = -\nabla {\rho_F} + ^*\!\nabla {\sigma_F}-2\,^*\!\etab \cdot {\sigma_F} + 2\etab \cdot {\rho_F} + 2\omegab {\alpha_F}-\chih \cdot {\alphab_F},
\end{equation}
\begin{equation}\label{NM_L_rho}
\nabla_4 {\rho_F} = - \divergence {\alpha_F} -\tr\chi {\rho_F} -(\eta-\etab)\cdot{\alpha_F},
\end{equation}
\begin{equation}\label{NM_L_sigma}
\nabla_4 {\sigma_F} = - \curl {\alpha_F} -\tr\chi {\sigma_F} +(\eta-\etab)\cdot \, ^*\!{\alpha_F},
\end{equation}
\begin{equation}\label{NM_Lb_rho}
\nabla_3 {\rho_F} = \divergence {\alphab_F} + \tr\chib {\rho_F} + (\eta-\etab)\cdot{\alphab_F},
\end{equation}
\begin{equation}\label{NM_Lb_sigma}
\nabla_3 {\sigma_F} = -\curl {\alphab_F} -\tr\chib {\sigma_F} +(\eta-\etab)\cdot \, ^*\!{\alphab_F}.
\end{equation}

\subsection{Scale Invariant Norms}
We recall the concept of \emph{signature} and \emph{scale} for very horizontal tensor field introduced in \cite{K-R-09}. Let $\phi$ be a horizontal tensor,
we use $N_a(\phi)$, $N_3(\phi)$ and $N_4(\phi)$ to denote the number of times $(e_a)_{i=1,2}$, respectively $e_3$ and $e_4$ appearing in the definition of $\phi$. We define $sgn{\phi}$ the \emph{signature} of $\phi$ to be $sgn(\phi) = N_4(\phi) + \frac{1}{2} N_a(\phi) -1$.
We also assign a \emph{scale} $sc(\phi)$ for $\phi$ to be $sc(\phi) = -sgn(\phi)+ \frac{1}{2}$.
For horizontal derivatives, we also assign signatures via the following convention, $sgn(\nabla_4\phi)= sgn(\phi)+1$, $sgn(\nabla \phi) = sgn(\phi)+\frac{1}{2}$ and $sgn(\nabla_3 \phi)= sgn(\phi)+ 0$. For product of two horizontal fields, we define the signature to be the sum, i.e. $sgn(\phi_1 \cdot \phi_2) = sgn(\phi_1)+sgn(\phi_2)$.


We introduce the \emph{scale invariant norms} \`a la Klainerman and Rodnianski. Along the outgoing null hypersurfaces $H_u^{(0,\ub)}$ and incoming null hypersurfaces $\Hb_{\ub}^{(0,u)}$, we define scale invariant $L^2$ norms,
\begin{equation*}
 \|\phi\|_{L^2_{(sc)}(H_u^{(0,\ub)})} =  \delta^{-sc(\phi)-1}\|\phi\|_{L^2(H_u^{(0,\ub)})}, \quad \|\phi\|_{L^2_{(sc)}(\Hb_{\ub}^{(0,u)})} = \delta^{-sc(\phi)-\frac{1}{2}} \|\phi\|_{L^2(\Hb_{\ub}^{(0,u)})}.
\end{equation*}
On two surface $S_{u,\ub}$, we define the scare invariant $L^p$ norms,\footnote{\, We use shorthand notation $\|\phi\|_{L^p_{(sc)}(u,\ub)}$ for this norm.}
\begin{equation*}
 \|\phi\|_{L^p_{(sc)}(S_{u,\ub})} = \delta^{-sc(\phi)-\frac{1}{p}}  \|\phi\|_{L^p(S_{u,\ub})}.
\end{equation*}
By definition, scale invariant norms come up naturally with a small parameter $\delta$. Roughly speaking, it reflects the \emph{smallness} of the nonlinear interactions in our problem. One key aspect of this principle is the scale invariant H\"older's inequality,
\begin{equation}\label{Holder}
\|\phi_1 \cdot \phi_2 \|_{L^{p}_{(sc)}(S_{u,\ub})} \leq \delta^{\frac{1}{2}}\|\phi_1\|_{L^{p_1}_{(sc)}(S_{u,\ub})} \|\phi_2\|_{L^{p_2}_{(sc)}(S_{u,\ub})},
\end{equation}
where $\frac{1}{p} =  \frac{1}{p_1} +  \frac{1}{p_2}$. Similar inequalities hold along incoming and outgoing hypersurfaces. We emphasize the rule of thumb for treating the nonlinear terms.
\begin{quotation}
For a product of terms each use of H\"older's inequality gains a $\delta^{\frac{1}{2}}$.
\end{quotation}
In this sense, nonlinear terms are better than linear terms. Other other hand, there are nonlinear terms where we do \emph{not} gain any power in $\delta$. If $f$ is a bounded (in usual sense) scalar function (bounded by a universal constant), the best we expect is $\|f \cdot \phi \|_{L^{p}_{(sc)}} \lesssim \|\phi\|_{L^{p}_{(sc)}}$. \footnote{\, This is the case for $f = \tr\chib_0 =\frac{4}{2r_0 + \ub - u}$ appearing frequently in the paper.}

We now introduce schematic notations $\psi$, $\psi_g$, $\Psi$, $\Psi_g$, $\Psi_4$, $\Upsilon$, $\Upsilon_g$ and $\Upsilon_4$. We use $\psi$, $\Psi$ and $\Upsilon$ to denote any connection coefficient \footnote{\, Instead of considering $\tr\chib$, we always use $\trchibt$ defined by $\trchibt = \tr\chib-\tr\chib_0$.}, null curvature component and null Maxwell component respectively; We use $\psi_g$, $\Psi_g$ and $\Upsilon_g$ to denote a \emph{good} connection coefficient, null curvature component and null Maxwell component respectively, i.e. $\psi_g \neq \chih, \chibh$, $\Psi_g \neq \alpha$ and $\Upsilon_g \neq \alpha_F$.  We require $\Psi_4 \neq \alphab$ and $\Upsilon_4 \neq \alphab_F$.

For $p = 2$ or $4$, $S = S_{u,\ub}$, $H=H_u^{(0,\ub)}$ and $\Hb = \Hb_{\ub}^{(0,u)}$, we introduce a family of scale invariant norms for connection coefficients, \footnote{\, We use shorthand notations $\|\psi\|$ to denote the sum for all the possible $\|\psi\|'s$ and $\|(\psi,\psi',\psi'',\cdots)\| = \|\psi\|+\|\psi'\|+\|\psi''\|+ \cdots$}
\begin{align*}
 \OSzeroinfinity (u,\ub) &= \|\psi\|_{L^\infty_{(sc)}(S)}, \OSzerop (u,\ub) =\delta^{\frac{1}{p}}\|(\chih, \chibh)\|_{L^p_{(sc)}(S)} + \|\psi_g\|_{L^p_{(sc)}(S)},\\
\OSonep (u,\ub) &=\|\nabla \psi\|_{L^p_{(sc)}(S)}, \OHtwo(u,\ub)=\|\nabla^2 \psi\|_{L^2_{(sc)}(H)}, \OHbtwo(u,\ub)=\|\nabla^2 \psi \|_{L^2_{(sc)}(\Hb)};
\end{align*}
For curvature components,
\begin{align*}
\Rzero(u,\ub) &= \delta^{\frac{1}{2}}\|\alpha\|_{L^2_{(sc)}(H)} + \|(\beta, \rho, \sigma, \betab)\|_{L^2_{(sc)}(H)}, \Rone(u,\ub) = \delta^{\frac{1}{2}}\|\nabla_4 \alpha\|_{L^2_{(sc)}(H)} + \|\nabla \Psi_4\|_{L^2_{(sc)}(H)},\\
\Rzerob(u,\ub)&= \delta^{\frac{1}{2}}\|\beta\|_{L^2_{(sc)}(\Hb)} + \|(\rho, \sigma, \betab, \alphab)\|_{L^2_{(sc)}(\Hb)}, \Roneb(u,\ub) = \delta^{\frac{1}{2}}\|\nabla_3 \alphab\|_{L^2_{(sc)}(\Hb)} + \|\nabla \Psi_g\|_{L^2_{(sc)}(\Hb)};
\end{align*}
For Maxwell components,
\begin{align*}
\Fzero(u,\ub) & = \delta^{\frac{1}{2}}\|\alpha_F\|_{L^2_{(sc)}(H)} + \|(\rho_F, \sigma_F)\|_{L^2_{(sc)}(H)}, \Fzerob(u,\ub) = \delta^{\frac{1}{2}}\|(\rho_F, \sigma_F)\|_{L^2_{(sc)}(\Hb)} + \|\alphab_F\|_{L^2_{(sc)}(\Hb)},\\
\Fone(u,\ub) & = \delta^{\frac{1}{2}}\|\nabla_4 \alpha_F\|_{L^2_{(sc)}(H)} + \|\nabla \Upsilon_4\|_{L^2_{(sc)}(H)}, \Foneb(u,\ub) = \delta^{\frac{1}{2}}\|\nabla_3 \alphab_F\|_{L^2_{(sc)}(\Hb)} + \|\nabla \Upsilon_g\|_{L^2_{(sc)}(\Hb)},\\
\Ftwo(u,\ub) & = \delta^{\frac{1}{2}} \|\nabla^2_4 \Upsilon_4\|_{L^2_{(sc)}(H)}  + \|\nabla^2 \Upsilon_4\|_{L^2_{(sc)}(H)}, \Ftwob(u,\ub) = \delta^{\frac{1}{2}} \|\nabla^2_3 \Upsilon_g\|_{L^2_{(sc)}(\Hb)} + \|\nabla^2 \Upsilon_g\|_{L^2_{(sc)}(\Hb)}.
\end{align*}

In those definitions, some terms come up with a $\delta^{\frac{1}{2}}$, for example, $\alpha$ in $\Rzero$. Those terms are understood to \emph{cause a loss of $\delta^{-\frac{1}{2}}$}. We call them \emph{anomalies} or being \emph{anomalous}. By convention, we do not regard $\beta$ and $\beta_F$  as anomalies, since they are not anomalous on outgoing lightcones. In sequel, we shall encounter other terms which cause a loss of $\delta^{-\frac{1}{2}}$. By abuse of language, we will call them \emph{anomalies} too. And, for those non-anomalous terms, in schematic notations, we usually use a $g$ as a subindex, for example, we use $(\nabla \Psi)_g$ to denote $\nabla \alpha$.

To rectify global $L^\infty$ estimate on anomalies, we need localized norms. Let ${}^{\delta}\!S_{u,\ub} \subset S_{u,\ub}$ be a patch of $S_{u,\ub}$ obtained by transporting a disc $S_\delta \subset S_{u,0}$ of radius $\delta^{\frac{1}{2}}$ along integral curves of $L$ and let ${}^{\delta}\!H_u \subset H_u$ be a piece of the hypersurface $H^{(0,\delta)}_u$ obtained by evolving a disc $S_\delta \subset S_{u,0}$ of radius $\delta^{\frac{1}{2}}$ along the integral curves of $L$, we set
\begin{align*}
\Ozerofour^{\delta}[\chih](u,\ub) &= \sup_{{}^{\delta}\!S_{u,\ub} \subset S_{u,\ub}} \|\chih\|_{L^4_{(sc)}({}^{\delta}\!S_{u,\ub})}, \quad \Ozerofour^{\delta}[\chibh](u,\ub) = \sup_{{}^{\delta}\!S_{u,\ub} \subset S_{u,\ub}} \|\chibh\|_{L^4_{(sc)}({}^{\delta}\!S_{u,\ub})},\\
\Rzero^{\delta}[\alpha](u) &= \sup_{{}^{\delta}\!H_u \subset H_u} \|\alpha\|_{L^2_{(sc)}({}^{\delta}\!H_u)}, \quad \Fzero^{\delta}[\alpha_F](u) = \sup_{{}^{\delta}\!H_u \subset H_u} \|\alpha_F\|_{L^2_{(sc)}({}^{\delta}\!H_u)}.
\end{align*}

Let $\OSzerofour = \sup_{u,\ub} \OSzerofour(u,\ub)$ and $\Rzero = \sup_{u,\ub} \Rzero(u,\ub)$; other norms are defined in a similar manner. Finally, we introduce total norms,
\begin{align*}
 \mathcal{O} &= \Ozeroinfinity + \Ozerotwo +\Ozerofour + \Oonetwo + \Oonefour + \OHtwo + \OHbtwo,\\
 \R &= \Rzero+\Rone,  \Rb = \Rzerob + \Roneb, \F = \Fzero + \Fone + \Ftwo, \Fb = \Fzerob + \Foneb + \Ftwob.
\end{align*}

 We use $\Oinitial$, $\Rinitial$ and $\Finitial$ to denote the corresponding total norms on initial surface $H_0$.

\subsection{Main Results}
Let $\psi_i =(\chih, \alpha_F)$. We define initial data norm,
\begin{align*}
\Izero &= \delta^{\frac{1}{2}}\|\psi_i \|_{L^{\infty}(H_0)} +\delta^{\frac{1}{2}} \sup_{0\leq \ub \leq \delta}[ \sum_{k=0}^{2}\|(\delta \nabla_4)^k \psi_i\|_{L^2(S_{0,\ub})}+ \sum_{k=0}^{1}\sum_{m=0}^3 \|(\delta^{\frac{1}{2}}\nabla)^m(\delta \nabla_4)^k \psi_i \|_{L^2(S_{0,\ub})}].
\end{align*}
Our assumption on initial data is
\begin{equation}\label{initial_ansatz_1}
\Izero < \infty.
\end{equation}
This ansatz (when $\alpha_F =0$) was discovered in \cite{K-R-09} and it is larger than those in \cite{Ch}. Unfortunately, the formation of trapped surfaces can not be derived from ansatz \eqref{initial_ansatz_1}. Following \cite{K-R-09}, we shall present necessary modifications in {\bf{Main Theorem}}. Nevertheless, ansatz \eqref{initial_ansatz_1} implies
\begin{proposition} Under the ansatz \eqref{initial_ansatz_1}, along initial outgoing hypersurface $H_0$, if $\delta$ is sufficiently small, we have
\begin{equation*}
\Oinitial +\Rinitial + \Finitial  \lesssim \Izero.
\end{equation*}
\end{proposition}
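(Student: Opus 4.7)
The strategy is to propagate every geometric and field quantity along the generators of $H_0$ starting from the Minkowskian portion $\{\ub \leq 0\}$, by viewing the $\nabla_4$-restrictions of the null structure, Bianchi, and Maxwell equations as a coupled system of transport ODEs along $H_0$. Only $\chih$ and $\alpha_F$ are free data on $H_0$; all other quantities are then determined, and the task is to show that each of them enters its defining norm with a bound by $\Izero$.

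First I would fix the standard gauge: $\Omega \equiv 1$ on $H_0$ (so $\omega = 0$), and arrange the matching at $\ub = 0$ so that $S_{0,0}$ is the Minkowskian round sphere of radius $r_0 = u_* \geq 2$. The Minkowskian values of $\tr\chi = \tfrac{2}{r_0}$, $\tr\chib = -\tfrac{2}{r_0}$, $\chibh = \eta = \etab = \omegab = 0$, together with the vanishing of all curvature components and of $\rho_F, \sigma_F, \alphab_F$, serve as initial conditions at $\ub = 0$ for the transport system. Then I would organize the $\nabla_4$ propagation in a definite hierarchy and estimate each quantity in the corresponding scale-invariant norm: (i) $\tr\chi$ from Raychaudhuri \eqref{NSE_L_tr_chi} with source $|\chih|^2 + T_{44}$, where $T_{44} = |\alpha_F|^2$; (ii) $\rho_F,\sigma_F$ from \eqref{NM_L_rho}--\eqref{NM_L_sigma}; (iii) $\eta$ from \eqref{NSE_L_eta}; (iv) $\tr\chib,\etab,\chibh,\omegab$ from the remaining structure equations; (v) the curvature components $\alpha,\beta,\rho,\sigma,\betab,\alphab$ from the $\nabla_4$ Bianchi equations \eqref{NBE_L_beta}--\eqref{NBE_L_alphab}; (vi) finally, the second-order norms $\OSonep,\OHtwo,\Rone,\Fone,\Ftwo$ by commuting $\nabla$ and $\nabla_4$ through each transport and estimating the commutator errors.

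The closing mechanism is the scale-invariant $\nabla_4$ transport estimate together with \eqref{Holder}: each nonlinear source is a product of two or more horizontal fields, and each such product gains a factor $\delta^{1/2}$ under the scale-invariant H\"older inequality. The inherent $\int_0^{\ub}d\ub' \leq \delta$ from integrating $\nabla_4$ along $H_0$ then yields additional smallness, while the Minkowskian initial values at $\ub = 0$ contribute nothing. Hence for each transport, the linear part is controlled by $\Izero$ (directly for $\chih,\alpha_F$, and through the first stages of the hierarchy for the remaining quantities), while the nonlinear part is $o(1)$ as $\delta \to 0$. A routine bootstrap on $\ub \in [0,\delta]$ closes and delivers $\Oinitial + \Rinitial + \Finitial \lesssim \Izero$.

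The main obstacle I anticipate is the bookkeeping for the doubly anomalous interactions involving $\chih,\chibh,\alpha,\alphab$ and their Maxwell analogues $\alpha_F,\alphab_F$, each of which enters its norm with an extra $\delta^{1/2}$ weight. For instance, \eqref{NBE_L_alphab} couples $\alphab$ to $\chibh\rho$ and to the energy-momentum terms $D_4 R_{33}-D_3 R_{34}$, and in the latter the $\nabla_3$-derivatives of $T$ must be reduced to tangential and $\nabla_4$ derivatives using \eqref{NM_Lb_rho}--\eqref{NM_Lb_alpha}, which reintroduces the anomalous $\alphab_F$. One must therefore check, signature by signature, that every product on every right-hand side recovers sufficient positive powers of $\delta$ after Hölder and transport integration; the signature convention from the preceding section is designed exactly so this accounting balances. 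Once the hierarchy is executed and these anomalous couplings are verified to close, the remaining estimates are entirely standard scale-invariant transport estimates.
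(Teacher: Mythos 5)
Your proposal takes essentially the same route the paper indicates: the paper omits the proof, remarking only that one chases the null structure, null Bianchi, and null Maxwell equations one by one along $H_0$ (pointing to Chapter 2 of Christodoulou's book for the vacuum template), and your plan is precisely that, with the Einstein--Maxwell coupling absorbed through the $T$-terms and their $\nabla_3$-derivatives rewritten via the null Maxwell equations. The scale-invariant transport estimate plus H\"older, the Minkowskian initial condition at $\ub=0$, the gauge $\Omega\equiv 1$ (hence $\omega=0$, $\etab=-\eta$) on $H_0$, and the final bootstrap are all the expected ingredients.

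Two small imprecisions worth flagging, neither fatal. First, the strict ``hierarchy'' you list is not actually acyclic: e.g.\ the $\nabla_4\rho_F$ and $\nabla_4\sigma_F$ equations already require $\eta-\etab$, and the $\nabla_4\omegab$ equation already requires $\rho$; the ordering works only because you close with a Gronwall/bootstrap on the coupled system, as you indicate, so it would be cleaner to present it that way from the start rather than as a sequential determination. Second, in scale-invariant form the transport inequality \eqref{integralestimate1sc} carries a $\delta^{-1}$ factor that exactly cancels the $O(\delta)$ length of the $\ub$-interval, so the integration itself contributes no ``additional smallness''; the only genuine gain comes from \eqref{Holder} applied to the quadratic/nonlinear sources, so you should not lean on the interval length as a separate source of $\delta$.
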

Thanks to this proposition, we can replace ansatz \eqref{initial_ansatz_1} by
 \begin{equation}\label{initial_ansatz_1_prime}
\Oinitial +\Rinitial + \Finitial  \lesssim \Izero < \infty.
\end{equation}

We omit the proof of this Proposition since the it is very much similar to the vacuum case. In the vacuum case, the reader can find a proof in Chapter 2 of Christodoulou's book \cite{Ch}. In general, the proof is mainly to chase null structure equations, null Bianchi equations and null Maxwell equations one by one along $H_0$. The proof actually shows that $(\chih,\alpha_F)$ can be freely prescribed along $H_0$ for Einstein-Maxwell system \eqref{Einstein_Equations}.

This first result of the paper addresses the propagation of \eqref{initial_ansatz_1_prime} to later time, this is the content of {\bf{Theorem C}}. Roughly speaking, modulo the electromagnetic field, curvatures are one derivative of connection coefficients. Thus, once one controls connection coefficients, one expects to control the curvature components. On the other hand, one can gain the control of connection coefficients provided bounds on curvatures. This later statement is recorded in {\bf{Theorem A}}. Together with {\bf{Theorem B}} which, at the end of Section \ref{Section_Trace_Rotation}, addresses estimates on angular momentums, {\bf{Theorem B}} serves as an intermediate step toward {\bf{Theorem C}}.
\begin{attention}
We use $C$ to denote constants depending only on $\Ozero$, $\R$, $\Rb$, $\F$ and $\Fb$.
\end{attention}
\begin{theoremA}\label{theoremA}
Assume that $\Ozero$, $\R$, $\Rb$, $\F$ and $\Fb$ are finite. If $\delta$ is sufficiently small, we have
\begin{equation*}
\O \lesssim C, \quad \OSzerofour[\chibh] \lesssim \Ozero + C \delta^\frac{1}{4}.
\end{equation*}
\end{theoremA}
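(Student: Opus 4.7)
The plan is to close a bootstrap on $\O$. Introduce a large constant $K \geq 1$ depending on $\Ozero$, $\R$, $\Rb$, $\F$, $\Fb$, and make the continuity assumption $\O + \OSzerofour[\chibh] \leq K$ on a subregion of $\D(u_*,\ub_*)$. The aim is to improve this to $\O \lesssim C$ and $\OSzerofour[\chibh] \lesssim \Ozero + C\delta^{\frac{1}{4}}$, so that for $\delta$ sufficiently small the bootstrap closes. The main analytic tool is the scale-invariant evolution lemma: for a horizontal tensor $\phi$ solving $\nabla_4\phi + k\tr\chi\,\phi = G$ on $H_u$, one has
\begin{equation*}
\|\phi\|_{L^p_{(sc)}(S_{u,\ub})} \lesssim \|\phi\|_{L^p_{(sc)}(S_{u,0})} + \int_0^{\ub} \|G\|_{L^p_{(sc)}(S_{u,\ub'})}\, d\ub',
\end{equation*}
and symmetrically for $\nabla_3$ along $\Hb_{\ub}$. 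Together with the scale-invariant H\"older inequality~\eqref{Holder}, every bilinear source gains a factor of $\delta^{\frac{1}{2}}$, so nonlinear interactions among bootstrap-controlled quantities and curvature/Maxwell components contribute $O(K^2 \delta^{\frac{1}{2}})$ and can be absorbed.

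Now work through the null structure equations in a definite order. First estimate the non-anomalous coefficients $\tr\chi$, $\trchibt$, $\eta$, $\etab$, $\omega$, $\omegab$ by applying the evolution lemma to \eqref{NSE_L_tr_chi}, \eqref{NSE_L_tr_chib}, \eqref{NSE_L_eta}, \eqref{NSE_Lb_etab}, \eqref{NSE_L_omegab}, \eqref{NSE_Lb_omega} in the null direction along which the initial data is small; the curvature and Maxwell components on the right-hand side are controlled by $\R + \Rb + \F + \Fb$ (the new stress-energy contributions being quadratic in the $F$-components and hence extra-small), while purely nonlinear interactions gain $\delta^{\frac{1}{2}}$. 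Next, the anomalous shear $\chih$ is estimated via \eqref{NSE_L_chih} along $H_u$: the source $-\alpha$ satisfies $\delta^{\frac{1}{2}}\|\alpha\|_{L^2_{(sc)}(H)} \lesssim \R$, producing exactly the permitted $\delta^{\frac{1}{4}}$-anomaly in $L^4_{(sc)}(S)$. The doubly anomalous shear $\chibh$ is handled through its $\nabla_4$ transport \eqref{NSE_L_chibh}, whose source involves $\nabla\tensor\etab$, $\tr\chib\,\chih$, $\etab\tensor\etab$, and the symmetric Maxwell stress $\hat T_{ab}$ (quadratic in $\Upsilon$); integrating in $\ub\in[0,\delta]$, the initial trace of $\chibh$ on $\Hb_0$ accounts for the $\Ozero$ term and the evolution through the bilinear sources produces the $C\delta^{\frac{1}{4}}$ correction.

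To recover $\Oonetwo$ and $\Oonefour$, pass from the div-curl systems \eqref{NSE_div_chih}--\eqref{NSE_curl_etab} (together with the Gauss equation \eqref{NSE_gauss}) to elliptic $L^p$ estimates: $\nabla\chih$ and $\nabla\chibh$ are bounded by $\nabla\tr\chi$, $\nabla\tr\chib$, the Hodge pair $(\beta,\betab)$, and $T_{4b},T_{3b}$. The derivatives $\nabla\tr\chi$, $\nabla\tr\chib$, $\nabla\eta$, $\nabla\etab$, $\nabla\omega$, $\nabla\omegab$ are recovered by commuting the transport equations with $\nabla$ and reapplying the evolution lemma; the resulting commutator terms are bilinear and again gain $\delta^{\frac{1}{2}}$. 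The $\OHtwo$ and $\OHbtwo$ norms follow from one further $\nabla$-commutation, with the top-order curvature and Maxwell sources supplied by $\Rone,\Roneb,\Fone,\Foneb$.

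The hard part is controlling $\chibh$: its natural $\nabla_3$-equation \eqref{NSE_Lb_chibh} carries $\alphab$ on the right, which sits only in $L^2_{(sc)}(\Hb)$ at the non-small scale set by $\Rb$, so transporting in $\nabla_3$ would destroy the smallness. The only available strategy is to commit to the $\nabla_4$-transport \eqref{NSE_L_chibh} and accept an anomalous $L^4$ bound for $\chibh$, which is exactly why the conclusion of the theorem contains the separate $\OSzerofour[\chibh]$ estimate with its unavoidable $\delta^{\frac{1}{4}}$ loss. Keeping careful track of which terms are anomalous, so that no product ever carries more than one $\delta^{-\frac{1}{2}}$ loss (always compensated by H\"older), is what ultimately closes the bootstrap uniformly in $\delta$.
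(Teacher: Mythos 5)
Your account of the structure is broadly right through the preliminary steps, but the treatment of $\chibh$ — which you yourself flag as the crux — is wrong in a way that would make the bootstrap fail to close. You argue that the $\nabla_3$ equation \eqref{NSE_Lb_chibh} cannot be used because $\alphab$ on the right ``sits only in $L^2_{(sc)}(\Hb)$ at the non-small scale set by $\Rb$, so transporting in $\nabla_3$ would destroy the smallness.'' This misreads the norms: in $\Rzerob$ the component $\alphab$ carries no $\delta^{\frac12}$ weight — only $\beta$ is anomalous along $\Hb$ — so by the interpolation estimate \eqref{sobolev_L4_along_incoming} one has $\int_0^u \|\alphab\|_{L^4_{(sc)}(u',\ub)}\,du' \lesssim \Rzerob^{\frac12}\Roneb^{\frac12}+\delta^{\frac14}\Rzerob \lesssim C$. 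The paper does exactly this: it transports $\chibh$ in the $\nabla_3$ direction, absorbs $\tr\chib_0\cdot\chibh$ by Gronwall, integrates the benign $\alphab$ to $C$, and so obtains $\|\chibh\|_{L^4_{(sc)}(u,\ub)} \lesssim \|\chibh\|_{L^4_{(sc)}(0,\ub)}+C \lesssim \delta^{-\frac14}\Ozero+C$, i.e.\ $\OSzerofour[\chibh]\lesssim\Ozero+C\delta^{\frac14}$ once the $\delta^{\frac14}$ weight in the definition of $\OSzerofour$ is restored. The anomaly propagates \emph{from the initial data on $H_0$}, and the refinement $\Ozero+C\delta^{\frac14}$ (as opposed to a generic $C$) is not cosmetic — it is used explicitly later in the energy estimates to retrieve the loss of $\delta$'s back to $\Izero$.

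Your proposed $\nabla_4$ transport \eqref{NSE_L_chibh} cannot give the stated bound for two independent reasons. First, the source contains $\nabla\tensor\etab$, a first angular derivative; at the stage of the argument where $\OSzerofour$ must be closed, $\OSonep$ is not yet available, so you would be introducing a circular dependency. Second, $S_{u,0}\subset\Hb_0$ lies in the Minkowski region (it is in the domain of dependence of the flat $\ub\leq 0$ part of $H_0$), so $\chibh(u,0)\equiv 0$: the ``initial trace of $\chibh$ on $\Hb_0$'' you invoke to account for the $\Ozero$ term vanishes identically. The entire $\Ozero$ contribution disappears from a $\nabla_4$ transport, and what remains from the dangerous $-\tfrac12\tr\chib\,\chih$ source yields only a crude bound of order $C$, not $\Ozero+C\delta^{\frac14}$. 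A secondary point: the first-derivative estimates in the paper do not follow merely from the div-curl Hodge systems plus commutation; they require the specially constructed renormalized quantities $\mu,\mub,\kappa,\kappab$ and the auxiliary scalars $\omegat,\omegabt$ precisely to avoid losing derivatives on $\rho,\sigma$ in the Hodge systems for $\eta,\etab$ and $\omega,\omegab$, and the paper's bootstrap is on the $L^\infty_{(sc)}$ norms with a large free constant $\Delta_0$ rather than on the full $\O$-norm with a constant depending on the curvature norms.
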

\begin{theoremC}\label{theoremC}
Under ansatz \eqref{initial_ansatz_1_prime}, if $\delta$ is sufficiently small, we have
 \begin{equation*}
 \R + \Rb + \F + \Fb \lesssim \Izero.
 \end{equation*}
\end{theoremC}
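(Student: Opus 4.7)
My plan is to run a bootstrap argument on the coupled null Bianchi system \eqref{NBE_Lb_alpha}--\eqref{NBE_L_alphab} and null Maxwell system \eqref{NM_L_alphab}--\eqref{NM_Lb_sigma}, using Theorem A as a black box to convert the bootstrap assumption on curvature and Maxwell norms into connection coefficient bounds. First I would fix a large constant $K$, assume
\begin{equation*}
\R(u,\ub) + \Rb(u,\ub) + \F(u,\ub) + \Fb(u,\ub) \leq K\,\Izero
\end{equation*}
on a maximal subdomain of $\D(u_*,\ub_*)$, and aim to improve the constant from $K$ to $K/2$. On this subdomain Theorem A gives $\O \lesssim C$, with $C$ depending only on $\Ozero, \R, \Rb, \F, \Fb$, so every connection coefficient is uniformly bounded, and Theorem B supplies the auxiliary rotation estimates needed to treat the top order angular derivatives.

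For the Maxwell system I would pair the transport equations \eqref{NM_L_alphab}--\eqref{NM_Lb_sigma} to produce divergence identities of the schematic form
\begin{equation*}
\nabla_3(|\alpha_F|^2) + \nabla_4(|\rho_F|^2 + |\sigma_F|^2) = \psi\cdot \Upsilon\cdot \Upsilon
\end{equation*}
and its conjugate, integrate over $H_u^{(0,\ub)}\cup \Hb_{\ub}^{(0,u)}$, and absorb the right-hand side via the scale invariant H\"older inequality \eqref{Holder}. For the curvature I would contract the Bel-Robinson tensor of the Weyl field $W$ with $e_3$ and $e_4$ in the spirit of \cite{Ch-K,K-R-09}. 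Since the Ricci tensor no longer vanishes, the Bianchi equations have inhomogeneous sources $D R$ which, via \eqref{Einstein_Equations}, are quadratic in $F$ and $DF$; these generate error integrals of the form $\int \Psi\cdot \Upsilon\cdot \nabla\Upsilon + \int \Psi\cdot \psi\cdot \Upsilon\cdot \Upsilon$, each of which picks up at least a factor $\delta^{1/2}$ from \eqref{Holder}. Together with the classical cubic $\Psi^3$ error terms, this closes the base level estimates $\Rzero, \Rzerob, \Fzero, \Fzerob$ provided $\delta$ is sufficiently small.

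To reach the first derivative curvature norms $\Rone, \Roneb$ and the first and second derivative Maxwell norms $\Fone, \Foneb, \Ftwo, \Ftwob$ I would commute the Bianchi system once with $\nabla$, $\nabla_3$ or $\nabla_4$, and the Maxwell system twice. The second derivative on Maxwell is exactly what is required, because the source $D^2 R$ produced by commuting Bianchi once is quadratic in $F$ and $D^2 F$, so the natural companion of $\Rone + \Roneb$ is precisely $\Ftwo + \Ftwob$. The transport components $\Psi_4$ on $H$ and $\Psi_g$ on $\Hb$ are recovered from their $\nabla_4$ and $\nabla_3$ equations respectively, while $\alphab$ and $\alphab_F$ are obtained from their $\nabla_3$ equations on $\Hb$, so that an anomalous factor is never paired with another anomaly under \eqref{Holder}.

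The main obstacle will be the bookkeeping of the Maxwell--geometry coupling. Every instance of $DR$ in the Bianchi hierarchy is to be replaced by a Maxwell bilinear, and one has to verify that after decomposition into null components and passage to scale invariant norms the resulting error terms either contain a non-anomalous factor or have a favorable signature. This structural check relies on the symmetry $F\leftrightarrow {}^*F$ built into $T_{\alpha\beta}$, on the fact that neither $\alphab$ nor $\alphab_F$ propagates forward along $H$, and on the good-component conventions $\Psi_g, \Upsilon_g$. Once this is accomplished, the $\delta^{1/2}$ gain in \eqref{Holder} improves $K$ to $K/2$ and, by a standard continuity argument, extends the estimates to the full $\D(u_*,\ub_*)$, which is the claim.
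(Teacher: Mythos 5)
Your overall architecture matches the paper's: a Stokes-type energy identity for the Bel--Robinson tensor and for $T[F]$, commuted derivatives to reach the higher-order norms, Theorem A and Theorem B supplying the connection-coefficient and deformation-tensor control, and closure via the smallness of $\delta$.  But there is a genuine gap in the claim that the error integrals ``each pick up at least a factor $\delta^{1/2}$ from \eqref{Holder}.''  This is false for a substantial subset of the terms, and the paper spends most of Section 4 precisely on those terms.  Whenever the bulk integrand contains an anomalous component ($\alpha$, $\chih$, $\chibh$, $\alpha_F$, or their $\nabla_3$/$\nabla_4$ derivatives), the anomaly costs a $\delta^{-1/2}$ which cancels the H\"older gain.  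The paper handles this by a combination of devices the proposal never names: (i) \emph{signature considerations} showing that in products arising from ${}^{(L)}\pih$ or ${}^{(\Lb)}\pih$ at most one factor can be anomalous, and that $\tr\chib_0$ is never paired with $\alpha_F$ (Remarks 4.4--4.5); (ii) \emph{tracing the residual anomaly back to $\Izero$} rather than to a power of $\delta$, which is why the intermediate inequalities take the form $\Rzero + \Rzerob \lesssim \Izero + C\delta^{1/8} + C(\Izero)\R^{1/2}$ instead of $\Rzero + \Rzerob \lesssim \Izero + C\delta^{1/2}$; and (iii) an \emph{extra integration by parts} for the triple-anomalous term $\alpha\cdot\alpha_F\cdot\nabla_3\alphab_F$ that arises through $\alpha_{ab}\cdot D_3 R_{ab}$, moving the bad $\nabla_3$ off $\alphab_F$ onto $\alpha$.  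Without these three mechanisms the bootstrap as you describe it does not close.

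A second, smaller imprecision: you propose to ``commute the Bianchi system once with $\nabla$'' to obtain angular-derivative estimates.  Commuting with bare $\nabla_a$ breaks the Weyl-field structure and the divergence-theorem framework; the paper instead commutes with the modified Lie derivative $\Lh_O$ along the rotation vector fields $O$, which is precisely why Theorem B's estimates on ${}^{(O)}\pi$, $D\,{}^{(O)}\pi$, and the sharp trace bound on $\nabla_3 Z = \nabla_3\,{}^{(O)}\pi_{3a}$ are indispensable ingredients rather than auxiliary conveniences.  The comparison lemmas (Lemmas 4.6--4.8, 4.16--4.23) that translate between $\Psi(\Lh_O W)$, $\Upsilon(\Lh_O F)$ and the $\nabla_O$-derivatives of the null components, tracking exactly which commutator terms re-introduce anomalies, are where the real content lies; they cannot be absorbed into ``bookkeeping'' without verifying them.
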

Finally, we state the main theorem of the paper and its immediate corollaries.
\begin{maintheorem}
In addition to ansatz \eqref{initial_ansatz_1_prime}, we assume that
\begin{equation}\label{initial_ansatz_2}
\sup_{0\leq \ub \leq \delta}\sum_{k=2}^4 \delta^{\frac{1}{2}}\|(\delta^{\frac{1}{2}})^k (\chih,\alpha_F)\|_{L^2(S_{0,\ub})} \leq \varepsilon,
\end{equation}
for sufficiently small $\varepsilon$ such that $0< \delta \ll \varepsilon$. On $H_0$, we also assume
\begin{equation}\label{initial_ansatz_3_formation_ansatz}
 (1+ C_0 \delta^{\frac{1}{2}})\frac{2(r_0-u)}{r_0^2}<\int_0^\delta(|\chih(0,\ub)|^2 + |\alpha_F(0,\ub)|^2) d\ub < \frac{2(r_0-\delta)}{r_0^2},
\end{equation}
where $C_0$ is a universal constant and $r_0$ is the maximal radius of the flat part of $H_0$. Then, if $\delta$ is sufficiently small,  $H_0$ is free of trapped surfaces and a trapped surface forms at $(u,\ub)=(1,\delta)$.
\end{maintheorem}
We can set shear $\chih \equiv 0$ on $H_0$ but $\alpha_F$ large enough to form trapped surfaces. This can be regarded as formation of black hole purely due to the condensation of matter,
\begin{mainCor1}
There exists an initial data set for Maxwell field such that the shear of $H_0$ is trivial, $H_0$ is free of trapped surfaces and a trapped surface forms at $(u,\ub)=(1,\delta)$.
\end{mainCor1}
We can also switch off electromagnetic field to retrieve earlier results for vacuum space-times,
\begin{mainCor2}(\emph{Christodoulou} \cite{Ch}; \emph{Klainerman and Rodnianski}, \cite{K-R-09})
For vacuum, there exists an initial data set on $H_0$ such that $H_0$ is free of trapped surfaces and a trapped surface forms at $(u,\ub)=(1,\delta)$.
\end{mainCor2}





\section{Theorem A - Estimates on connection coefficients}
We prove {\bf{Theorem A}} in this section. The proof is based on following bootstrap assumption,
\begin{equation}\label{bootstrap}
 \OSzeroinfinity + \|(\alpha_F, \rho_F, \sigma_F, \alphab_F)\|_{L^{\infty}_{(sc)}}\leq \Delta_0
\end{equation}
where $\Delta_0$ is a large positive number. We will show that if $\delta$ is sufficiently small, all the statements in the theorem hold. In particular, $\OSzeroinfinity \lesssim C$. We also show $\|\Upsilon\|_{L^{\infty}_{(sc)}}\lesssim C$. Since $C$ is independent of $\Delta_0$, we obtain a better bound in \eqref{bootstrap} therefore close the bootstrap argument. Once we prove {\bf{Theorem A}}, we can use the estimates derived in the proof throughout the paper. We emphasize that \emph{everything} in sequel is based on bootstrap assumption \eqref{bootstrap}. The proof is organized as follows: we first show some preliminary estimates, and then prove step by step $\OSzerofour$,$\OSonetwo$, $\OSonefour$ and second derivatives estimates on connection coefficients.

\subsection{Preliminary Estimates}
In this section, we provide some preliminary estimates based on \eqref{bootstrap} without detailed proof. The reader can find details in Section 4 of \cite{K-R-09}.
\begin{lemma} If $\delta$ is sufficiently small, more precisely if $\delta^\frac{1}{2} \Delta_0  \leq 1$, we have
 \begin{equation*}
   \frac{1}{4} \leq \Omega \leq 4.
 \end{equation*}
\end{lemma}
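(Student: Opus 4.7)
The plan is to derive a scalar transport equation for $\Omega$ along the outgoing null direction, integrate it over the short $\ub$-interval, and close with the bootstrap bound on $\omega$. Starting from the defining identity $\omega = -\frac{1}{4} g(D_4 e_3, e_4)$ together with $e_4 = \Omega L$ (with $L$ affinely parametrized so that $D_L L = 0$), one obtains the standard relation
\begin{equation*}
\nabla_4 \log \Omega = -2\omega,
\end{equation*}
which, parametrizing the $L$-integral curves by $\ub$, becomes the scalar ODE $\partial_\ub \Omega = -2\omega$. I would fix the gauge by prescribing $\Omega \equiv 1$ on the incoming cone $\Hb_0$; this is the natural extension of the flat Minkowski data that the paper imposes on the $\ub \le 0$ portion of $H_0$.

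Next, the bootstrap assumption \eqref{bootstrap} gives $\|\omega\|_{L^\infty_{(sc)}} \leq \Delta_0$. Since $\omega$ contains two $e_4$-factors and one $e_3$-factor in its definition, its signature is $1$ and its scale is $-\frac{1}{2}$; translating into the usual pointwise norm, this reads $\|\omega\|_{L^\infty(S_{u,\ub})} \leq \delta^{-\frac{1}{2}} \Delta_0$. Integrating the ODE along any $L$-curve from $\ub = 0$ to a point with $\ub \leq \delta$ then yields
\begin{equation*}
|\Omega(u,\ub) - 1| \,\leq\, 2 \int_0^{\ub} |\omega|(u,\ub')\, d\ub' \,\leq\, 2 \delta \cdot \delta^{-\frac{1}{2}} \Delta_0 \,=\, 2\delta^{\frac{1}{2}} \Delta_0.
\end{equation*}
Under the smallness condition $\delta^{\frac{1}{2}} \Delta_0 \leq 1$ (or a slight quantitative strengthening if one wants the stated explicit constants $\frac{1}{4}$ and $4$), the right-hand side is well below $\frac{3}{4}$, and the claim $\frac{1}{4} \leq \Omega \leq 4$ follows immediately.

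I do not expect a substantive obstacle here. The only point that requires care is the conversion from the scale-invariant $L^\infty$ norm to the pointwise one, which introduces the anomalous factor $\delta^{-\frac{1}{2}}$; this is then exactly absorbed by the $\delta$-length of the $\ub$-interval of integration, leaving a net $\delta^{\frac{1}{2}}$ smallness. This absorption mechanism is the prototypical instance of the short-pulse philosophy that will be used throughout the subsequent estimates, and the lemma serves mainly to confirm that the lapse behaves in a controlled way so that the $L$- and $\nabla_4$-equations can later be compared freely with their $\Omega L = e_4$ counterparts.
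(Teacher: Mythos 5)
Your proof is correct, but it takes a genuinely different route from the paper's. The paper integrates the relation $\omegab = -\tfrac{1}{2}\nabla_3\Omega$ along the incoming ($u$) direction, starting from the gauge condition $\Omega \equiv 1$ on $H_0$: since $\omegab$ has scale $\tfrac{1}{2}$, the bootstrap bound \eqref{bootstrap} yields the \emph{non-anomalous} pointwise estimate $\|\omegab\|_{L^\infty} \lesssim \delta^{\frac{1}{2}}\Delta_0$ directly, and integrating over the unit-length $u$-interval gives $|\Omega-1|\lesssim \delta^{\frac{1}{2}}\Delta_0$ with no balancing of $\delta$-powers needed. You instead integrate $\omega = -\tfrac{1}{2}\nabla_4\log\Omega$ along the outgoing ($\ub$) direction from $\Hb_0$; since $\omega$ has scale $-\tfrac{1}{2}$ it is pointwise anomalous, $\|\omega\|_{L^\infty}\lesssim\delta^{-\frac{1}{2}}\Delta_0$, and the $\delta$-length of the $\ub$-interval exactly absorbs the loss. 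Both arguments close to the same bound. The paper's route is marginally cleaner: it avoids the anomaly altogether and starts from $H_0$, where the gauge normalization $\Omega=1$ is stated explicitly (cf.\ Section 6), whereas your route relies on the additional --- true, but not explicitly stated --- fact that $\Omega \equiv 1$ on $\Hb_0$ because $\Hb_0$ lies in the closure of the past Minkowski region $\ub\le 0$. Your version does, however, usefully foreground the prototypical short-pulse cancellation between an anomalous integrand and a short $\ub$-interval, which recurs throughout the later estimates.
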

The proof is straightforward by integrating $\omegab = -\frac{1}{2}\nabla_3 \Omega$. For $p=2$ or $4$, we have following integral estimates,
\begin{lemma}\label{Integral_Estimates}
  For a given horizontal tensor field $\psi$, if $\delta^{\frac{1}{2}} \Delta_0$ is sufficiently small, we have the following scale invariant estimates:
\begin{equation}\label{integralestimate1sc}
 \|\psi\|_{L_{(sc)}^p(u,\ub)} \lesssim \|\psi\|_{L_{(sc)}^p(u,0)} + \int_0^{\ub} \delta^{-1}\|\nabla_4 \psi \|_{L_{(sc)}^p(u,\ub')}d \ub'
\end{equation}
\begin{equation}\label{integralestimate2sc}
 \|\psi\|_{L_{(sc)}^p(u,\ub)} \lesssim \|\psi\|_{L_{(sc)}^p(0,\ub)} + \int_0^{u} \|\nabla_3 \psi \|_{L_{(sc)}^p(u',\ub)}d u'
\end{equation}
\end{lemma}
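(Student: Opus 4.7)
The plan is to derive an evolution equation for $\|\psi\|^p_{L^p(S_{u,\ub})}$ along the outgoing null generator $\partial_{\ub} \propto e_4$, convert it to a differential inequality via H\"older's inequality, invoke Gr\"onwall to absorb the $\tr\chi$ coefficient, and finally translate the resulting integral inequality to scale invariant norms by tracking the weight $\delta^{-sc(\psi) - 1/p}$. The $\nabla_3$ version is obtained by a symmetric argument along $\partial_u \propto e_3$.

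First, I would compute the evolution of $\int_{S_{u,\ub}} |\psi|^p d\mu_\gamma$. Because the area element on $S_{u,\ub}$ transforms by a multiple of $\Omega \tr\chi$ under the flow of $\partial_{\ub}$, one obtains schematically
\[
\frac{d}{d\ub}\int_{S_{u,\ub}} |\psi|^p d\mu_\gamma = \int_{S_{u,\ub}}\!\big[\Omega \tr\chi \, |\psi|^p + p|\psi|^{p-2}\langle \psi, \nabla_L \psi\rangle\big] d\mu_\gamma,
\]
after which H\"older on the second factor and division by $p\|\psi\|_{L^p(S_{u,\ub})}^{p-1}$ yields
\[
\Big|\frac{d}{d\ub}\|\psi\|_{L^p(S_{u,\ub})}\Big| \lesssim \|\tr\chi\|_{L^\infty(S_{u,\ub})} \|\psi\|_{L^p(S_{u,\ub})} + \|\nabla_4\psi\|_{L^p(S_{u,\ub})},
\]
using the uniform bound $1/4 \leq \Omega \leq 4$ from the preceding lemma.

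Next, I would control the Gr\"onwall coefficient $\|\tr\chi\|_{L^\infty(S_{u,\ub})}$. Decomposing $\tr\chi = \tr\chi_0 + \trchibt$, the explicit Minkowskian piece satisfies $\tr\chi_0 = O(1)$ in the usual sense, while the bootstrap assumption \eqref{bootstrap} forces $\|\trchibt\|_{L^\infty} \lesssim \delta^{1/2} \Delta_0$ (since $sc(\trchibt) = 1/2$). Thus $\int_0^\ub \|\tr\chi\|_{L^\infty} d\ub' \lesssim \ub \leq \delta$, and Gr\"onwall produces at most a universal multiplicative constant, yielding
\[
\|\psi\|_{L^p(S_{u,\ub})} \lesssim \|\psi\|_{L^p(S_{u,0})} + \int_0^{\ub} \|\nabla_4\psi\|_{L^p(S_{u,\ub'})} d\ub'.
\]

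Finally, I would rescale. From the definition $sc(\phi) = -sgn(\phi) + \tfrac{1}{2}$ together with the convention $sgn(\nabla_4 \psi) = sgn(\psi) + 1$, one has $sc(\nabla_4\psi) = sc(\psi) - 1$. Multiplying the previous display by $\delta^{-sc(\psi) - 1/p}$ converts the $L^p$ norms into $L^p_{(sc)}$ norms on both sides and produces precisely the $\delta^{-1}$ weight on the integrand of \eqref{integralestimate1sc}. For \eqref{integralestimate2sc}, the same argument along $\partial_u \propto e_3$ applies; because $sgn(\nabla_3 \psi) = sgn(\psi)$, no signature shift occurs and the rescaling is neutral, so no $\delta^{-1}$ factor appears. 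The main technical obstacle is the Gr\"onwall step: $\tr\chi$ is not small in itself, so one must exploit the combination of the explicit $O(1)$ bound on the flat piece $\tr\chi_0$ with the shortness of the $\ub$ interval $[0,\delta]$ to prevent exponential blow up. The smallness hypothesis $\delta^{1/2}\Delta_0 \ll 1$ in the statement is exactly what makes the bootstrap contribution negligible at this step.
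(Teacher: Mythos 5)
Your overall strategy — differentiating $\int_{S_{u,\ub}}|\psi|^p$ along the null generator, applying H\"older, invoking Gr\"onwall on the area-element coefficient, and then unwinding the $\delta$-weights using the signature arithmetic $sc(\nabla_4\psi)=sc(\psi)-1$ and $sc(\nabla_3\psi)=sc(\psi)$ — is correct, and it matches the standard proof in [K-R-09] that the paper refers to. The rescaling step and the identification of the $\delta^{-1}$ weight in \eqref{integralestimate1sc} versus its absence in \eqref{integralestimate2sc} are exactly right. However, there is a conflation of $\tr\chi$ and $\tr\chib$ in your Gr\"onwall step that is worth untangling, because the two directions behave quite differently.

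You write $\tr\chi=\tr\chi_0+\trchibt$ and cite $sc(\trchibt)=\tfrac12$. But $\trchibt$ is the paper's notation for $\tr\chib-\tr\chib_0$; there is no analogous ``subtract the flat part'' convention for $\tr\chi$, and the paper's bootstrap \eqref{bootstrap} controls $\tr\chi$ directly rather than any remainder of it. Since $sc(\tr\chi)=-\tfrac12$, the bootstrap gives $\|\tr\chi\|_{L^\infty}\lesssim\delta^{-1/2}\Delta_0$, so the honest estimate for \eqref{integralestimate1sc} is $\int_0^{\ub}\|\tr\chi\|_{L^\infty}\,d\ub'\lesssim\delta^{1/2}\Delta_0$, not $\lesssim\delta$ as you claim; this is still small under the stated hypothesis, so your conclusion survives, but the decomposition you wrote is not the mechanism. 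Conversely, the $\nabla_3$ direction, which you dispatch as ``a symmetric argument,'' is precisely where the decomposition $\tr\chib=\tr\chib_0+\trchibt$ \emph{is} essential: the flat piece $\tr\chib_0=-2/r$ is genuinely $O(1)$, the $u$-interval has length $\sim u_*\sim 1$ rather than $\sim\delta$, and so $\int_0^u\|\tr\chib\|_{L^\infty}\,du'$ is $O(1)$ rather than $o(1)$. Gr\"onwall therefore produces a universal but not near-unit multiplicative constant, which $\lesssim$ absorbs; the perturbative piece $\trchibt$ with $sc(\trchibt)=\tfrac12$ contributes only $\delta^{1/2}\Delta_0\cdot u_*\ll 1$. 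So the ideas you deploy are the right ones, but they attach to the opposite direction from where you placed them.
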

We define the transported coordinates along double null foliation. For a given local coordinate system $(\theta_1,\theta_2)$ on $S_{u,0} \subset H_u$,\footnote{\, On $S_{u,0}$, the metric is the round metric with constant Gauss curvature.} we parameterize points along the outgoing null geodesics starting from $S_{u,0}$ by corresponding $(\theta_1,\theta_2)$ and the affine parameter $\ub$; similarly, for a given local coordinate system $(\underline{\theta}_1,\underline{\theta}_2)$ on $S_{0,\ub}\subset \Hb_{\ub}$, we parameterize points along the incoming null geodesics starting from $S_{0,\ub}$ by the corresponding $(\theta_1,\theta_2)$ and the affine parameter $u$. Corresponding metrics are denoted by $\gamma_{ab}$ and $\underline{\gamma}_{ab}$. We have estimates on the geometry of $S_{u,\ub}$,
\begin{proposition}
Let $\gamma^0_{ab}$ be the standard metric on $\mathbb{S}^2$. If $\delta$ is sufficiently small, we have
\begin{equation*}
\|(\gamma_{ab} - \gamma^0_{ab}, \underline{\gamma}_{ab} - \gamma^0_{ab}, \nabla_3 \theta^a, \nabla_4 \underline{\theta}^a)\|_{L^{\infty}} \leq \delta^{\frac{1}{2}}\Delta_0, \|\nabla \theta^a\|_{L^{\infty}} + \|\nabla \underline{\theta}^a\|_{L^{\infty}} \lesssim 1.
\end{equation*}
For Christoffel symbols $\Gamma_{abc}$ and $\underline{\Gamma}_{abc}$ whose signatures are $\frac{1}{2}$, we have
\begin{equation*}
\|(\Gamma_{abc}, \underline{\Gamma}_{abc})\|_{L^2(u,\ub)} \lesssim \OSzerofour + \OSonetwo, \quad \|(\partial_d \Gamma_{abc}, \partial_d \underline{\Gamma}_{abc})\|_{L^2(u,\ub)} \lesssim \OSonefour.
\end{equation*}
\end{proposition}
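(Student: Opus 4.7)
The plan is to integrate transport equations for $\gamma_{ab}$, $\underline{\gamma}_{ab}$, the coordinate functions $\theta^a$, $\underline{\theta}^a$, and their angular derivatives along the null generators, using the bootstrap assumption \eqref{bootstrap} and the scale-invariant integral estimates of Lemma \ref{Integral_Estimates}. Throughout, the scheme is always the same: identify an $L$- or $\Lb$-transport equation for the quantity of interest, match its signature against the bootstrap bound on the source term $\chi$, $\chib$, or $\eta$, $\etab$, and gain a factor of $\delta^{1/2}$ upon integrating over a $\ub$-interval of length $\delta$.

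First I would treat the induced metric. In the transported coordinates $(\theta^1,\theta^2,\ub)$ on $H_u$ the evolution of $\gamma_{ab}$ along $L$ takes the schematic form $\partial_{\ub}\gamma_{ab} = 2\Omega\chi_{ab} = 2\Omega(\chih_{ab} + \tfrac{1}{2}\tr\chi\,\gamma_{ab})$, with initial data on $S_{u,0}$ equal to $\gamma^0_{ab}$. Splitting $\tr\chi = \tr\chi_0 + (\tr\chi - \tr\chi_0)$ and absorbing the round-sphere part into $\gamma^0_{ab}$, the remaining integrand is $(\chih, \tr\chi-\tr\chi_0)$, which the bootstrap assumption \eqref{bootstrap} bounds in $L^\infty_{(sc)}$ by $\Delta_0$. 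After unwinding the scale-invariant norm (both components have signature $1$, so $\|\,\cdot\,\|_{L^\infty}\sim \delta^{-1/2}\|\,\cdot\,\|_{L^\infty_{(sc)}}$ in physical units) and integrating over $\ub\in[0,\delta]$, one obtains $\|\gamma_{ab}-\gamma^0_{ab}\|_{L^\infty}\lesssim \delta^{1/2}\Delta_0$. The analogous bound for $\underline{\gamma}_{ab}$ follows by the symmetric $\Lb$-transport equation along $\Hb_{\ub}$.

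Next, for the coordinate functions, $L\theta^a=0$ by construction, so only $\nabla_3\theta^a$ needs attention. Commuting $\nabla_3$ with $\nabla_4$ on the scalar $\theta^a$ produces a transport equation $\nabla_4(\nabla_3\theta^a) = (\chi,\etab,\eta)\cdot\nabla\theta^a + \cdots$, whose right-hand side is bounded by $\delta^{-1/2}\Delta_0$ in $L^\infty$; integrating in $\ub$ from the vanishing initial datum on $S_{u,0}$ gives the desired $\delta^{1/2}\Delta_0$ bound. For $\|\nabla\theta^a\|_{L^\infty}$, commuting $\nabla$ with $\nabla_4$ yields $\nabla_4(\nabla\theta^a) = \chi\cdot\nabla\theta^a$, and Gr\"onwall's inequality with $O(1)$ initial data closes the estimate. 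The mirror statements for $\underline{\theta}^a$ follow from $\Lb$-transport on $\Hb_{\ub}$.

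Finally, the Christoffel symbol bounds come from differentiating the metric transport equation in the angular directions. Applying $\partial_c$ produces $\partial_{\ub}(\partial_c\gamma_{ab}) = 2\Omega\,\partial_c\chi_{ab} + (\text{lower-order})$, and since $\Gamma_{abc}\sim \gamma^{-1}\partial\gamma$ carries signature $\tfrac{1}{2}$, H\"older in $L^4(S)$ converts the right-hand side into a bound by $\OSzerofour + \OSonetwo$ after integration. A further $\partial_d$ differentiation and the same argument yield $\|\partial\Gamma\|_{L^2(S)}\lesssim \OSonefour$. The main technical obstacle I anticipate is bookkeeping: one must keep strict track of signatures (especially for the anomalous $\chih$, which carries an extra $\delta^{-1/2}$ loss) and of the $\delta^{1/2}$ gains from H\"older's inequality, to verify that no anomaly is amplified and that every transport integration contributes the expected $\delta^{1/2}$ factor. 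This is exactly the scheme Klainerman--Rodnianski employ in Section 4 of \cite{K-R-09}, so the proof should follow theirs verbatim, the only modification being the harmless presence of matter terms which do not appear in these purely geometric identities.
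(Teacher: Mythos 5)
Your proposal is correct and follows the same transport-and-bootstrap scheme the paper relies on: the paper offers no self-contained proof of this proposition, simply deferring to Sections 4.5 and 4.9 of Klainerman--Rodnianski \cite{K-R-09}, which is exactly the argument you sketch (integrate the $\nabla_4$ and $\nabla_3$ transport equations for $\gamma_{ab}$, $\theta^a$, and their angular derivatives, gain $\delta^{1/2}$ from the short $\ub$-interval, and close with Gr\"onwall). The only small inaccuracy, that the commutator $\nabla_4(\nabla_3\theta^a)$ in fact produces $\omega\,\nabla_3\theta^a$ and $(\eta-\etab)\cdot\nabla\theta^a$ rather than a $\chi$-term, does not change the signature count or the resulting $\delta^{1/2}$ gain.
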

We can follow exactly strategy in Section 4.5 and Section 4.9 of \cite{K-R-09} to prove it. The control of the geometry of $S_{u,\ub}$ at the level of metric and Christoffel symbols allows us to derive the standard Sobolev inequalities, elliptic estimates on Hodge systems and trace inequalities.
\begin{lemma}\label{sobolev_trace_estimates}
 For a horizontal tensor $\psi$, $S=S_{u,\ub}$, $H=H_u$ and $\Hb = \Hb_{\ub}$, if $\delta$ is sufficiently small, we have
\begin{equation}\label{sobolev_L4}
 \|\psi\|_{L^4_{(sc)}(S)} \lesssim \|\psi\|^{\frac{1}{2}}_{{L^2_{(sc)}(S)}}\|\nabla \psi\|^{\frac{1}{2}}_{{L^2_{(sc)}(S)}}+ \delta^{\frac{1}{4}}\|\psi\|_{{L^2_{(sc)}(S)}},
\end{equation}
\begin{equation}\label{sobolev_Linfinity}
 \|\psi\|_{L^{\infty}_{(sc)}(S)} \lesssim \|\psi\|^{\frac{1}{2}}_{L^4_{(sc)}(S)}\|\nabla \psi\|^{\frac{1}{2}}_{L^4_{(sc)}(S)}+ \delta^{\frac{1}{4}}\|\psi\|_{L^4_{(sc)}(S)},
\end{equation}
\begin{equation}\label{trace_H}
 \|\psi\|_{L^4_{(sc)}(S)} \lesssim (\delta^{\frac{1}{2}}\|\psi\|_{L^2_{(sc)}(H)}+\|\nabla \psi\|_{L^2_{(sc)}(H)})^{\frac{1}{2}}(\delta^{\frac{1}{2}}\|\psi\|_{L^2_{(sc)}(H)}+\|\nabla_4 \psi\|_{L^2_{(sc)}(H)})^{\frac{1}{2}},
\end{equation}
\begin{equation}\label{trace_Hb}
 \|\psi\|_{L^4_{(sc)}(S)} \lesssim (\delta^{\frac{1}{2}}\|\psi\|_{L^2_{(sc)}(\Hb)}+\|\nabla \psi\|_{L^2_{(sc)}(\Hb)})^{\frac{1}{2}}(\delta^{\frac{1}{2}}\|\psi\|_{L^2_{(sc)}(\Hb)}+\|\nabla_3 \psi\|_{L^2_{(sc)}(\Hb)})^{\frac{1}{2}},
\end{equation}
\begin{equation}\label{sobolev_local}
 \|\psi\|_{L^{\infty}_{(sc)}(S)} \lesssim \sup_{{}^{\delta}\!S_{u,\ub} \subset S_{u,\ub}}(\|\nabla \psi\|_{L^4_{(sc)}({}^{2\delta}\!S_{u,\ub})}+ \|\psi\|_{L^4_{(sc)}({}^{2\delta}\!S_{u,\ub})}).
\end{equation}
\end{lemma}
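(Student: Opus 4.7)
The plan is to transfer the classical Sobolev, Gagliardo--Nirenberg, and trace inequalities from the round sphere and the standard null cylinder to our setting via the uniform geometric control furnished by the preceding proposition, and then to repackage the resulting estimates in scale-invariant form using the weight rules $sc(\nabla\phi)=sc(\phi)-\tfrac{1}{2}$ and $sc(\nabla_4\phi)=sc(\phi)-1$. Because the preceding proposition gives $\|\gamma-\gamma^0\|_{L^\infty}\leq \delta^{1/2}\Delta_0$ together with control on the Christoffel symbols, every $S_{u,\ub}$ is identified with $\mathbb{S}^2$ by a diffeomorphism that is $C^1$-uniformly close to the identity; consequently all classical sphere inequalities hold with constants independent of $(u,\ub)$, and the only non-routine work is the $\delta$-weight bookkeeping together with an integration argument along null generators for the trace statements.

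For the pointwise Sobolev estimates (\ref{sobolev_L4}) and (\ref{sobolev_Linfinity}), the classical $L^2$-based Gagliardo--Nirenberg $\|\phi\|_{L^4(S)}\lesssim \|\phi\|_{L^2(S)}^{1/2}\|\nabla\phi\|_{L^2(S)}^{1/2}+\|\phi\|_{L^2(S)}$ and its $L^\infty$ analogue, applied on each $S_{u,\ub}$ and then multiplied by $\delta^{-sc(\phi)-1/4}$ respectively $\delta^{-sc(\phi)}$, balances the Sobolev term on the right dimensionally with the left, while the lower-order term picks up one extra factor of $\delta^{1/4}$ relative to the $L^4_{(sc)}$ (resp.\ $L^\infty_{(sc)}$) weight, producing exactly the stated scale-invariant inequalities. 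For the localized version (\ref{sobolev_local}), I would cover $S_{u,\ub}$ by finitely many discs ${}^{\delta}\!S_\alpha$ of radius $\delta^{1/2}$ with cutoffs $\eta_\alpha$ supported in ${}^{2\delta}\!S_\alpha$ satisfying $|\nabla\eta_\alpha|\lesssim\delta^{-1/2}$, apply (\ref{sobolev_Linfinity}) to each $\eta_\alpha\phi$, and observe that the commutator contribution $|\nabla\eta_\alpha||\phi|$ translates to a bounded multiple of $\|\phi\|_{L^4_{(sc)}({}^{2\delta}\!S_\alpha)}$ after $\delta$-rescaling; taking the supremum over $\alpha$ gives (\ref{sobolev_local}).

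For the trace inequalities (\ref{trace_H}) and (\ref{trace_Hb}), I would invoke the standard null-hypersurface trace argument (as in Section 4.9 of \cite{K-R-09}), which starts from the evolution identity
$$\frac{d}{d\ub}\int_S|\phi|^4\,dA = \int_S\bigl(4|\phi|^2\langle\phi,\nabla_4\phi\rangle + \Omega\tr\chi\,|\phi|^4\bigr)\,dA,$$
selects an intermediate slice $\ub^*\in[0,\ub]$ adaptively so as to control $\|\phi\|^4_{L^4(S_{u,\ub^*})}$ by the $L^2_{\ub}L^4_{S}$ norm on $H$, absorbs the $\Omega\tr\chi$ Gronwall factor using \eqref{bootstrap}, and after H\"older, Cauchy--Schwarz in $\ub'$, and the Gagliardo--Nirenberg bound applied pointwise in $\ub'$ (which at each slice supplies the $\delta^{1/4}$ correction from the first step), produces the non-scale-invariant inequality
$$\|\phi\|^2_{L^4(S)}\lesssim (\|\phi\|_{L^2(H)}+\|\nabla\phi\|_{L^2(H)})(\|\phi\|_{L^2(H)}+\|\nabla_4\phi\|_{L^2(H)}).$$
Converting this to scale-invariant norms, using $\|\phi\|_{L^2(H)} = \delta^{sc(\phi)+1}\|\phi\|_{L^2_{(sc)}(H)}$, $\|\nabla\phi\|_{L^2(H)} = \delta^{sc(\phi)+1/2}\|\nabla\phi\|_{L^2_{(sc)}(H)}$ and $\|\nabla_4\phi\|_{L^2(H)} = \delta^{sc(\phi)}\|\nabla_4\phi\|_{L^2_{(sc)}(H)}$, pulls out a common factor $\delta^{2sc(\phi)+1/2}$ and produces (\ref{trace_H}) with $\delta^{1/2}$ corrections on the $\|\phi\|_{L^2_{(sc)}(H)}$ terms; the inequality (\ref{trace_Hb}) follows symmetrically, integrating in $u$ along $\Hb_{\ub}$ using $\nabla_3$ in place of $\nabla_4$.

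The main technical obstacle is the $\delta$-weight bookkeeping in the trace step: the intermediate $L^2_\ub L^4_S$ quantity appearing in the null-hypersurface trace argument is neither $L^2_{(sc)}(H)$ nor $L^2_{(sc)}(S)$ in terms of weights, and it is critical that the correction terms $\delta^{1/2}\|\phi\|_{L^2_{(sc)}(H)}$ appearing in (\ref{trace_H})--(\ref{trace_Hb}) carry the sharp power $\delta^{1/2}$ rather than a useless $\delta^0$; this in turn forces the use of the refined Gagliardo--Nirenberg with the $\delta^{1/4}$ lower-order correction from the first step rather than the naive Sobolev embedding. A secondary but routine point is that the Gronwall factor $\Omega\tr\chi$ arising in the $\tfrac{d}{d\ub}$ identity must be absorbed via the bootstrap smallness $\delta^{1/2}\Delta_0\lesssim 1$ so that the integration-in-$\ub$ argument goes through with $(u,\ub)$-uniform constants.
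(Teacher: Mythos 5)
The paper does not give a proof of this lemma: it simply points to Sections 4.5 and 4.9 of Klainerman--Rodnianski \cite{K-R-09}, after first recording the metric and Christoffel control on each $S_{u,\ub}$. Your proposal reconstructs exactly the same argument---transfer of the classical Gagliardo--Nirenberg and $L^\infty$ Sobolev inequalities from the round sphere via the $C^1$-uniform control of $\gamma$, the $\delta$-weight bookkeeping $sc(\nabla\phi)=sc(\phi)-\tfrac12$, $sc(\nabla_4\phi)=sc(\phi)-1$, a mean-slice $\ub$-trace argument for \eqref{trace_H}--\eqref{trace_Hb}, and a cutoff argument for \eqref{sobolev_local}. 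The derivations of \eqref{sobolev_L4} and \eqref{sobolev_Linfinity} are exactly right, and the mechanism producing the $\delta^{1/4}$ corrections is correctly identified. So in approach you and the paper coincide.

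The one place to be careful is the non--scale-invariant trace statement you record in the third paragraph,
\[
\|\phi\|^2_{L^4(S)}\lesssim (\|\phi\|_{L^2(H)}+\|\nabla\phi\|_{L^2(H)})(\|\phi\|_{L^2(H)}+\|\nabla_4\phi\|_{L^2(H)}).
\]
The 1D trace inequality over the $\ub$-interval carries a $1/L$ factor ($|g(\ub)|^2\leq L^{-1}\|g\|_{L^2}^2 + 2\|g\|_{L^2}\|g'\|_{L^2}$), and since $L\sim\delta$ along $H_u$, the correct non--scale-invariant form acquires a $\delta^{-1/2}$ on the $\|\phi\|_{L^2(H)}$ term in the $\nabla_4$-factor; one checks that it is precisely this $\delta^{-1/2}$ which, upon conversion with the weights you list, produces the stated $\delta^{1/2}$ corrections on \emph{both} factors in \eqref{trace_H}. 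Converting the form you actually wrote gives $\delta^{1/2}$ on the angular factor but $\delta^{1}$ on the $\nabla_4$-factor---formally a stronger inequality than \eqref{trace_H}, which is a sign that the intermediate statement was over-claimed rather than that the end result fails. You already flag exactly this bookkeeping as the main technical obstacle, so this is a slip in the displayed intermediate inequality rather than a conceptual gap; tracking the $1/L$ factor through the mean-slice argument closes it cleanly and lands exactly on \eqref{trace_H}. The rest of the proposal, including the cutoff argument for \eqref{sobolev_local}, matches what the cited reference does.
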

Last inequality allows us to rectify the anomalies of $\chih$, $\chibh$ and $\alpha_F$ in $L^\infty_{(sc)}$ estimates. We also have following interpolation estimates,
\begin{lemma}\label{interpolation_estimates} If $\delta$ is sufficiently small, we have
\begin{equation}\label{sobolev_L4_along_outgoing}
\int_{0}^{\ub} \|\Psi\|_{L^4_{(sc)}(u, \ub')}d\ub' \lesssim \delta \|\Psi\|^{\frac{1}{2}}_{L^2_{(sc)}(H_u^{(0,\ub)})}\|\nabla\Psi\|^{\frac{1}{2}}_{L^2_{(sc)}(H_u^{(0,\ub)})}+\delta^{\frac{5}{4}}\|\Psi\|_{L^2_{(sc)}(H_u^{(0,\ub)})},
\end{equation}
\begin{equation}\label{sobolev_L4_along_incoming}
\int_{0}^{u} \|\Psi\|_{L^4_{(sc)}(u', \ub)}d u' \lesssim  \|\Psi\|^{\frac{1}{2}}_{L^2_{(sc)}(\Hb_{\ub}^{(0,u)})}\|\nabla\Psi\|^{\frac{1}{2}}_{L^2_{(sc)}(\Hb_{\ub}^{(0,u)})}+\delta^{\frac{1}{4}}\|\Psi\|_{L^2_{(sc)}(\Hb_{\ub}^{(0,u)})}.
\end{equation}
\end{lemma}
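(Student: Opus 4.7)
The plan is to derive both inequalities from the surface Sobolev estimate \eqref{sobolev_L4} by integration in the appropriate null direction, followed by a Hölder argument in that variable, and finally by converting integrals of scale-invariant $S$-norms into scale-invariant norms along $H_u$ or $\underline{H}_{\ub}$ via the coarea formula. The subtle bookkeeping is entirely in the powers of $\delta$: the definitions give
\[
\int_0^{\ub}\|\phi\|_{L^2_{(sc)}(S_{u,\ub'})}^2\,d\ub' \;\sim\; \delta\,\|\phi\|_{L^2_{(sc)}(H_u^{(0,\ub)})}^2, \qquad
\int_0^{u}\|\phi\|_{L^2_{(sc)}(S_{u',\ub})}^2\,du' \;\sim\; \|\phi\|_{L^2_{(sc)}(\underline{H}_{\ub}^{(0,u)})}^2,
\]
so the outgoing version picks up an extra $\delta$ that the incoming version does not; together with $\ub \leq \delta$ (versus $u\lesssim 1$), this accounts for the different overall factors $\delta$ and $\delta^{5/4}$ appearing in \eqref{sobolev_L4_along_outgoing} as opposed to the $1$ and $\delta^{1/4}$ in \eqref{sobolev_L4_along_incoming}.

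Concretely, for \eqref{sobolev_L4_along_outgoing} I apply \eqref{sobolev_L4} on each $S_{u,\ub'}$ and integrate in $\ub'$ from $0$ to $\ub$. The main term is $\int_0^{\ub}\|\Psi\|_{L^2_{(sc)}(S)}^{1/2}\|\nabla\Psi\|_{L^2_{(sc)}(S)}^{1/2}\,d\ub'$, which I estimate by Hölder with exponents $(4,4,2)$:
\[
\int_0^{\ub}\|\Psi\|_{L^2_{(sc)}(S)}^{1/2}\|\nabla\Psi\|_{L^2_{(sc)}(S)}^{1/2}\,d\ub'
\;\leq\; \ub^{1/2}\Bigl(\int_0^{\ub}\|\Psi\|_{L^2_{(sc)}(S)}^{2}\,d\ub'\Bigr)^{1/4}\Bigl(\int_0^{\ub}\|\nabla\Psi\|_{L^2_{(sc)}(S)}^{2}\,d\ub'\Bigr)^{1/4}.
\]
Substituting the coarea identity above and using $\ub\leq\delta$ yields a bound by $\delta^{1/2}\cdot(\delta\|\Psi\|^2_{L^2_{(sc)}(H)})^{1/4}(\delta\|\nabla\Psi\|^2_{L^2_{(sc)}(H)})^{1/4}=\delta\,\|\Psi\|_{L^2_{(sc)}(H)}^{1/2}\|\nabla\Psi\|_{L^2_{(sc)}(H)}^{1/2}$. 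The lower-order Sobolev remainder $\delta^{1/4}\int_0^{\ub}\|\Psi\|_{L^2_{(sc)}(S)}\,d\ub'$ is handled by Cauchy--Schwarz: $\delta^{1/4}\cdot\ub^{1/2}\cdot(\delta\|\Psi\|^2_{L^2_{(sc)}(H)})^{1/2}\lesssim \delta^{5/4}\|\Psi\|_{L^2_{(sc)}(H)}$.

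For \eqref{sobolev_L4_along_incoming} the same scheme applies on $\underline{H}_{\ub}^{(0,u)}$, but now the coarea identity carries no $\delta$ and $u\leq u_*\lesssim 1$. The Hölder step gives $u^{1/2}\|\Psi\|_{L^2_{(sc)}(\underline{H})}^{1/2}\|\nabla\Psi\|_{L^2_{(sc)}(\underline{H})}^{1/2}\lesssim \|\Psi\|_{L^2_{(sc)}(\underline{H})}^{1/2}\|\nabla\Psi\|_{L^2_{(sc)}(\underline{H})}^{1/2}$, and the remainder becomes $\delta^{1/4}\cdot u^{1/2}\|\Psi\|_{L^2_{(sc)}(\underline{H})}\lesssim \delta^{1/4}\|\Psi\|_{L^2_{(sc)}(\underline{H})}$, matching the claimed inequality.

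There is no real obstacle here; everything is elementary once the scale-invariant bookkeeping is set up. The one point that demands care is verifying that $\nabla\Psi$ has the expected scaling $sc(\nabla\Psi)=sc(\Psi)-\tfrac{1}{2}$, which when combined with the two different normalizations $\delta^{-sc-1}$ on $H_u^{(0,\ub)}$ and $\delta^{-sc-1/2}$ on $\underline{H}_{\ub}^{(0,u)}$ is precisely what produces the asymmetric $\delta$-weights in the two coarea identities above. Once that is checked, the two bounds follow by the Hölder/Cauchy--Schwarz chain described.
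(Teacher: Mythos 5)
Your proposal is correct and is essentially the only natural route to this lemma — the paper itself does not supply a proof (it defers to Section 4 of \cite{K-R-09} for these preliminary estimates), and the argument there is precisely the one you give: apply \eqref{sobolev_L4} on each sphere, integrate in the null parameter, use H\"older with exponents $(2,4,4)$ on the main term and Cauchy--Schwarz on the remainder, and recast the resulting $\int \|\cdot\|_{L^2_{(sc)}(S)}^2$ integrals via the coarea identity $\int_0^{\ub}\|\phi\|^2_{L^2_{(sc)}(S_{u,\ub'})}\,d\ub' \sim \delta\|\phi\|^2_{L^2_{(sc)}(H_u^{(0,\ub)})}$ and its $\delta$-free analogue on $\Hb$. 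Your bookkeeping of the powers of $\delta$ — the extra $\delta$ from the $H$-normalization $\delta^{-sc-1}$ and the additional $\delta^{1/2}$ from $\ub\leq\delta$, as against only $u\lesssim 1$ on the incoming side — is exactly what produces the asymmetric $\delta$ and $\delta^{5/4}$ versus $1$ and $\delta^{1/4}$, and all four exponents check out.
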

Let $\mathcal{D} \in \{\Done,\Donestar,\Dtwo, \Dtwostar\}$\footnote{\, See Appendix \ref{HodgeOperators} for definitions.} be a Hodge operator, we have following elliptic estimates,
\begin{lemma}\label{estimates_for_hogde_systems}
On $S=S_{u, \ub}$, for a solution $\psi$ of the Hodge system $\D \psi = F$, if $\delta$ is sufficiently small, we have
\begin{equation*}
 \|\nabla \psi\|_{{L^2_{(sc)}(S)}} \lesssim \delta^{\frac{1}{2}}\|K\|_{{L^2_{(sc)}(S)}}\|\psi\|_{{L^2_{(sc)}(S)}} + \|F\|_{{L^2_{(sc)}(S)}},
\end{equation*}
\begin{equation*}
 \|\nabla \psi\|_{{L^2_{(sc)}(S)}} \lesssim \delta^{\frac{1}{2}}\|K\|^{\frac{1}{2}}_{{L^2_{(sc)}(S)}}\|\psi\|_{L^4_{(sc)}(S)} + \|F\|_{{L^2_{(sc)}(S)}},
\end{equation*}
\begin{equation}\label{elliptic_Hogde_2nd_order}
 \|\nabla^2 \psi\|_{{L^2_{(sc)}(S)}} \lesssim \delta^{\frac{1}{2}}\|K\|_{{L^2_{(sc)}(S)}}\|\psi\|_{L^{\infty}_{(sc)}} + \delta^{\frac{1}{4}}\|K\|^{\frac{1}{2}}_{{L^2_{(sc)}(S)}}\|\nabla \psi\|_{L^{4}_{(sc)}(S)} + \|\nabla F\|_{{L^2_{(sc)}(S)}}.
\end{equation}
\end{lemma}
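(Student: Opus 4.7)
The plan is to derive the three estimates from a single Bochner-type identity on the two-sphere $S_{u,\ub}$, adapted to the Hodge operators in question. For each $\mathcal{D}\in\{\Done,\Donestar,\Dtwo,\Dtwostar\}$, integration by parts on $S$ yields an identity of the form
\[
\int_S |\nabla\psi|^2 \;+\; c_1\int_S K|\psi|^2 \;=\; c_2\int_S|\mathcal{D}\psi|^2
\]
with universal constants $c_1,c_2>0$ depending only on the tensorial type of $\psi$ (this is standard for $\Done$ on one-forms and for $\Dtwo$ on symmetric traceless two-tensors; cf.\ Chapter 2 of \cite{Ch} or Chapter 4 of \cite{Ch-K}). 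Translating the Lebesgue integrals into scale invariant $L^2_{(sc)}(S)$ norms only adds a uniform $\delta$-weight, so the identity becomes $\|\nabla\psi\|^2_{L^2_{(sc)}(S)} \lesssim \|F\|^2_{L^2_{(sc)}(S)} + \bigl|\int_S K|\psi|^2\bigr|_{(sc)}$.

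For the second estimate, bound the curvature term via scale invariant Hölder \eqref{Holder}:
\[
\Bigl|\int_S K|\psi|^2\Bigr|_{(sc)} \;\lesssim\; \delta\,\|K\|_{L^2_{(sc)}(S)}\|\psi\|^2_{L^4_{(sc)}(S)},
\]
then take square roots (the $\delta^{1/2}$ factor in the statement is exactly what survives). For the first estimate, apply the Sobolev inequality \eqref{sobolev_L4} of Lemma \ref{sobolev_trace_estimates} to replace $\|\psi\|_{L^4_{(sc)}(S)}$ by $\|\psi\|^{1/2}_{L^2_{(sc)}(S)}\|\nabla\psi\|^{1/2}_{L^2_{(sc)}(S)} + \delta^{1/4}\|\psi\|_{L^2_{(sc)}(S)}$, then use the weighted AM-GM inequality $ab \leq \varepsilon a^2 + \varepsilon^{-1}b^2$ to absorb $\|\nabla\psi\|_{L^2_{(sc)}(S)}$ from the right-hand side into the left-hand side.

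For the second-order estimate \eqref{elliptic_Hogde_2nd_order}, differentiate the Hodge system and use the commutator
\[
\mathcal{D}(\nabla\psi) \;=\; \nabla F \;+\; [\mathcal{D},\nabla]\psi,
\]
where $[\mathcal{D},\nabla]\psi$ is schematically of the form $K\cdot\psi$ (in two dimensions the Riemann tensor is fully determined by $K$, so all second-derivative commutators on horizontal tensors reduce to $K$ times the tensor). The derivative $\nabla\psi$ is itself a horizontal tensor of higher rank, and after the usual symmetrization/trace decomposition it solves a Hodge system of the same type with right-hand side $\nabla F + O(K\cdot\psi)$. Applying the already-proven second estimate to $\nabla\psi$ therefore gives
\[
\|\nabla^2\psi\|_{L^2_{(sc)}(S)} \;\lesssim\; \|\nabla F\|_{L^2_{(sc)}(S)} + \|K\cdot\psi\|_{L^2_{(sc)}(S)} + \delta^{1/4}\|K\|^{1/2}_{L^2_{(sc)}(S)}\|\nabla\psi\|_{L^4_{(sc)}(S)},
\]
and the first term on the right is $\lesssim \delta^{1/2}\|K\|_{L^2_{(sc)}(S)}\|\psi\|_{L^\infty_{(sc)}(S)}$ by scale invariant Hölder, which matches the claimed bound.

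The main obstacle is purely bookkeeping: verifying that the correct $\delta$-powers $\delta^{1/2}$ and $\delta^{1/4}$ emerge from each application of \eqref{Holder} and \eqref{sobolev_L4}, and confirming that the commutator $[\mathcal{D},\nabla]\psi$ genuinely reduces to a $K\cdot\psi$ term with no $\nabla\psi$ contributions that would disrupt the iteration. The analytic content is classical elliptic theory on $S^2$; all the work lies in the scale-invariant accounting and in matching the Bochner coefficient $c_1$ to the Gauss-curvature $K$ defined through \eqref{NSE_gauss}.
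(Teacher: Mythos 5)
The paper does not give an explicit proof of this lemma; it simply appeals to ``standard elliptic estimates on Hodge systems'' once the geometry of $S_{u,\ub}$ is controlled, following \cite{K-R-09}. Your Bochner-identity approach is precisely the intended argument, and the overall structure (Bochner, scale-invariant H\"older, Sobolev+absorption, then differentiate the Hodge system once) is sound. Two details deserve more care. First, the sign of the curvature term in the Bochner identity depends on which $\mathcal{D}$ you use (for $\Dtwostar$ it enters with a minus sign), so you cannot assert $c_1>0$ universally --- though this is harmless since you only use the one-sided bound $\int|\nabla\psi|^2\lesssim\int|F|^2 + \int|K||\psi|^2$. Second, and more substantively, your $\delta$-power bookkeeping is internally inconsistent: you claim $\bigl|\int_S K|\psi|^2\bigr|_{(sc)}\lesssim\delta\|K\|_{L^2_{(sc)}}\|\psi\|^2_{L^4_{(sc)}}$, but the weight needed to match $\|\nabla\psi\|^2_{L^2_{(sc)}}$ on the left is $\delta^{-2\,sc(\psi)}$, not $\delta^{-sc(K\psi^2)-1}$, and tracking this carefully (with $sc(K)=-\tfrac12$, $sc(\nabla\psi)=sc(\psi)-\tfrac12$) gives $\delta^{1/2}\|K\|_{L^2_{(sc)}}\|\psi\|^2_{L^4_{(sc)}}$ before the square root, hence $\delta^{1/4}$ after, not $\delta^{1/2}$. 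You then silently revert to $\delta^{1/4}$ when stating your conclusion for the second-order estimate \eqref{elliptic_Hogde_2nd_order}, which is the power the lemma actually asserts there. You should reconcile why the second estimate in the lemma is stated with $\delta^{1/2}$ while the bound iterated to second order comes out as $\delta^{1/4}$; either track down the scale of $K$ that is actually being used, or note that the weaker $\delta^{1/4}$ suffices for all applications.

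For the first estimate, routing through the second one plus Sobolev produces terms $\delta\|K\|_{L^2_{(sc)}}\|\psi\|_{L^2_{(sc)}}$ and $\delta^{3/4}\|K\|^{1/2}_{L^2_{(sc)}}\|\psi\|_{L^2_{(sc)}}$, which is not literally the stated form $\delta^{1/2}\|K\|_{L^2_{(sc)}}\|\psi\|_{L^2_{(sc)}}$; a cleaner derivation is to bound $\int K|\psi|^2\leq\|K\|_{L^2}\|\psi\|^2_{L^4}$, then invoke $\|\psi\|^2_{L^4_{(sc)}}\lesssim\|\psi\|_{L^2_{(sc)}}\|\nabla\psi\|_{L^2_{(sc)}}+\delta^{1/2}\|\psi\|^2_{L^2_{(sc)}}$ and absorb by AM--GM, which produces the $\|K\|_{L^2_{(sc)}}\|\psi\|_{L^2_{(sc)}}$ factor directly. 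Finally, your claim that $\nabla\psi$ ``after the usual symmetrization/trace decomposition solves a Hodge system of the same type'' is acceptable if you think of $\nabla_a\psi$ for fixed $a$ as a tensor of the same type as $\psi$ with $a$ a spectator index; the commutation past the inert index contributes an extra $K\cdot\nabla\psi$ term, which is of lower order and can be absorbed by the same Bochner argument, but this should be said explicitly.
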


We provide localized estimates on $\alpha$ and $\alpha_F$.
\begin{proposition}If $\delta$ is sufficiently small, then we have
\begin{equation*}
 \Fzero^{\delta}[\alpha_F] \lesssim \Fzero^{\delta}[\alpha](0) + \F, \, \Rzero^{\delta}[\alpha] \lesssim \Rzero^{\delta}[\alpha](0) + \R + \F.
\end{equation*}
\end{proposition}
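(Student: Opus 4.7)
The plan is to run a localized energy identity on a $4$-dimensional tube whose future outgoing boundary is the patch $^{\delta}\!H_u$. Starting from the disc $S_\delta \subset S_{u,0}$ that defines $^{\delta}\!H_u$, I would build a tube $\D^\delta(u) \subset \D(u,\delta)$ by sweeping $S_\delta$ backward in $u$ along $\Lb$-integral curves while using incoming null geodesics from $\partial S_\delta$ to form a lateral boundary. The boundary of $\D^\delta(u)$ then consists of $^{\delta}\!H_u$, a corresponding patch $^{\delta}\!H_0 \subset H_0$, and a lateral piece $^{\delta}\!\mathcal{L}$ whose fluxes are controlled by the global $\R,\Rb,\F,\Fb$ norms available from the hypotheses.

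For $\alpha_F$, I would contract the Maxwell energy-momentum tensor $T$ with $e_4$ and apply the divergence theorem on $\D^\delta(u)$. Since $\Divergence T = 0$ by \eqref{Maxwell_Equations}, and since $T(e_4,e_4)=|\alpha_F|^2$ while the transverse components of $T(e_4,\cdot)$ involve only $\rho_F,\sigma_F,\alphab_F$, the identity reads schematically
\begin{equation*}
\int_{^{\delta}\!H_u} |\alpha_F|^2 \lesssim \int_{^{\delta}\!H_0} |\alpha_F|^2 + \int_{^{\delta}\!\mathcal{L}} (|\rho_F|^2 + |\sigma_F|^2 + |\alphab_F|^2) + \int_{\D^\delta(u)} T \cdot {}^{(e_4)}\!\pi.
\end{equation*}
The lateral contribution is controlled by $\F^2 + \Fb^2$. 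The deformation tensor ${}^{(e_4)}\pi$ is built from $\tr\chi,\chih,\omega,\eta,\etab$, all bounded via Theorem A, so the bulk term produces cross terms that are absorbed by a Grönwall argument in $\ub$. Rewriting the resulting inequality in scale-invariant norms delivers the first claim.

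For $\alpha$, the same template applies with the Bel-Robinson tensor $Q(W)$ of the Weyl tensor contracted with $e_4 \otimes e_4 \otimes e_4$, so that the outgoing flux is $Q(e_4,e_4,e_4,e_4)=|\alpha|^2$. The new feature in Einstein-Maxwell is that $Q$ is \emph{not} divergence-free: the Bianchi equations \eqref{NBE_Lb_alpha}--\eqref{NBE_L_alphab} carry explicit $DR$ source terms, and since $R_{\mu\nu}=T_{\mu\nu}$ by \eqref{Einstein_Equations}, these generate a Weyl current of schematic form $J \sim F \cdot \nabla F$. Consequently $\Divergence Q \sim W \cdot J$, and integrating over $\D^\delta(u)$ produces a bulk term bounded by $\R\cdot\F$ via the scale-invariant Hölder inequality \eqref{Holder}. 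The lateral flux through $^{\delta}\!\mathcal{L}$ is controlled by $\R^2+\Rb^2$, the $^{\delta}\!H_0$ flux by $(\Rzero^\delta[\alpha](0))^2$, and a final Grönwall absorption yields the second bound with both $\R$ and $\F$ present on the right.

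The main obstacle will be the careful bookkeeping of the Weyl current $J$ in the Einstein-Maxwell divergence identity for $Q$: it is precisely this structural deviation from the vacuum case that forces $\F$ into the $\alpha$-bound, and one must verify that the products $W\cdot F\cdot \nabla F$ recombine cleanly in the scale-invariant Hölder framework to produce exactly $\R\cdot\F$ rather than a worse power of $\delta^{-1/2}$. A secondary technical point is the control of the lateral flux through $^{\delta}\!\mathcal{L}$, which requires the trace inequalities \eqref{trace_H}, \eqref{trace_Hb} of Lemma \ref{sobolev_trace_estimates} adapted to the tube geometry so that patchwise fluxes are dominated by the global $\R, \Rb, \F, \Fb$ norms.
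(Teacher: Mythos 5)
The paper's own proof takes a far more direct route: it integrates the relevant transport equation along the incoming direction, sphere by sphere, and never leaves the realm of one-dimensional integration. For $\alpha_F$ one writes
\begin{equation*}
\|\alpha_F\|_{L^2_{(sc)}({}^\delta\!S_{u,\ub})} \lesssim \|\alpha_F\|_{L^2_{(sc)}({}^\delta\!S_{0,\ub})} + \int_0^u \Big\|\nabla_3\alpha_F + \tfrac{1}{2}\tr\chib\cdot\alpha_F\Big\|_{L^2_{(sc)}({}^\delta\!S_{u',\ub})},
\end{equation*}
substitutes the right-hand side of \eqref{NM_Lb_alpha} (schematically $\nabla\Upsilon + \psi\cdot\Upsilon_g + \chih\cdot\alphab_F$), applies Hölder together with the bootstrap bound \eqref{bootstrap}, and closes with Gronwall; for $\alpha$ the same pattern runs off \eqref{NBE_Lb_alpha}, with the $D R_{\mu\nu}$ terms rewritten via the null Maxwell equations as $\nabla\Upsilon\cdot\Upsilon$ and $\psi\cdot\Upsilon\cdot\Upsilon$ (which is where the $\F$-dependence enters, exactly as you anticipated). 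The localization is preserved for free because transport along $\Lb$ does not mix angular sectors, and integrating over $\ub\in[0,\delta]$ recovers the ${}^\delta\!H_u$ norm. No four-dimensional domain, no Bel--Robinson tensor, and no deformation-tensor bulk integral is involved.

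Your proposal is genuinely different and, at this stage of the paper, has a gap. The lateral boundary ${}^\delta\!\mathcal{L}$ of the tube you describe is the image of $\partial S_\delta$ under both the $L$- and $\Lb$-flows; its tangent space therefore contains both null directions, so it is a \emph{time-like} hypersurface. On a time-like boundary neither $T[F](e_4,N)$ nor $Q[W](e_4,e_4,e_4,N)$ has a sign, so the lateral flux cannot be discarded and must be bounded in absolute value. You invoke \eqref{trace_H}, \eqref{trace_Hb} to do this, but those estimates control $L^4_{(sc)}$ norms on the two-spheres $S_{u,\ub}$ from null-hypersurface flux norms; they say nothing about $L^2$ integrals over a three-dimensional time-like surface whose cross-sections are the one-dimensional circles $\partial({}^\delta\!S_{u',\ub'})$. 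Restriction of a sphere tensor to a circle costs half a derivative, which is one more derivative than $\Rzero$, $\Fzero$ provide, so a dedicated ``lateral trace'' lemma would have to be proved before your identity closes. That is precisely the overhead the transport-equation argument avoids, which is why the paper can state this localized proposition in a single page.
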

\begin{proof}
By direct integration, we have
\begin{align*}
 \|\alpha_F\|_{L^2_{(sc)}({}^\delta\!S_{u,\ub})} &\lesssim  \|\alpha_F\|_{L^2_{(sc)}({}^\delta\!S_{0,\ub})} + \int_0^u \|\nabla_3 \alpha_F + \frac{1}{2}\tr\chib \cdot \alpha_F\|_{L^2_{(sc)}({}^\delta\!S_{u',\ub})}
\end{align*}
We use \eqref{NM_Lb_alpha} to replace the last integrand, use H\"older inequality and schematic form of \eqref{bootstrap}.
\begin{align*}
 \|\alpha_F\|_{L^2_{(sc)}({}^\delta\!S_{u,\ub})} &\lesssim  \|\alpha_F\|_{L^2_{(sc)}({}^\delta\!S_{0,\ub})} + \int_0^u  \|(\psi \cdot \Upsilon_g,  \nabla \Upsilon, \chih \cdot \alphab_F)\|_{L^2_{(sc)}({}^\delta\!S_{u',\ub})}\\
&\lesssim \|\alpha_F\|_{L^2_{(sc)}({}^\delta\!S_{0,\ub})} +  \Foneb+\int_0^u \delta^{\frac{1}{2}}\|\psi\|_{L^\infty_{(sc)}} (\|\Upsilon_g\|_{L^2_{(sc)}({}^\delta\!S_{u',\ub})} +  \|\alpha_F\|_{L^2_{(sc)}({}^\delta\!S_{u',\ub})}) \\
&\lesssim \|\alpha_F\|_{L^2_{(sc)}({}^\delta\!S_{0,\ub})} + \Foneb + \delta^{\frac{1}{2}}\Delta_0 \Fzerob + \delta^{\frac{1}{2}}\Delta_0 \int_0^u  \|\alpha_F\|_{L^2_{(sc)}({}^\delta\!S_{u',\ub})}.
\end{align*}
Last term can be absorbed by Gronwall's inequality. This yields the desired estimates for $\alpha_F$.

The proof for $\alpha$ is more or less the same. The key is to use \eqref{NBE_L_alphab}. We also have to pay attention to $\Upsilon$ terms form \eqref{NBE_L_alphab}. By applying null Maxwell equations to convert the null derivatives of a component into either tangential derivatives or lower order terms, those terms can be written as
\begin{align*}
 \int_0^u  \|\psi \cdot \Upsilon \cdot \Upsilon\|_{L^2_{(sc)}({}^\delta\!S_{u',\ub})} + \|\nabla \Upsilon \cdot \Upsilon\|_{L^2_{(sc)}({}^\delta\!S_{u',\ub})}.
\end{align*}
This allows one to use H\"{o}lder's inequality and \eqref{bootstrap} to conclude.
\end{proof}

\subsection{Zeroth Derivative Estimates}
In this section, we derive $\OSzerofour$ as well as $\OSzerotwo$ estimates on connection coefficients and Maxwell field.

\subsubsection{$L^4$ Estimates on connection coefficients}
We first derive estimates on $\omegab$, $\eta$, $\chih$ and $\tr\chi$, then we derive estimates on $\omega$, $\etab$, $\chibh$ and $\trchibt$.

For $\omegab$, in view of \eqref{NSE_L_omegab}, we have $\nabla_4 \omegab = \psi_g \cdot \psi + \Psi_g + \Upsilon_g \cdot {\Upsilon}$, therefore,
\begin{align*}
\|\nabla_4 \omegab\|_{L^4_{(sc)}(u, \ub)} & \lesssim \|\psi_g \cdot \psi\|_{L^4_{(sc)}(u, \ub)} + \|\Psi_g\|_{L^4_{(sc)}(u, \ub)} + \| \Upsilon_g \cdot {\Upsilon}\|_{L^4_{(sc)}(u, \ub)}\\
&\lesssim \delta^{\frac12}  \|\psi\|_{L^\infty_{(sc)}(u, \ub)}  \|\psi_g\|_{L^4_{(sc)}(u, \ub)} +  \|\Psi_g\|_{L^4_{(sc)}(u, \ub)} +  \delta^{\frac12}  \|\Upsilon\|_{L^\infty_{(sc)}(u, \ub)}  \|\Upsilon_g\|_{L^4_{(sc)}(u, \ub)}\\
&\lesssim  \delta^{\frac12} \Delta_0 \cdot(\OSzerofour  + \|\Upsilon_g\|_{L^4_{(sc)}(u, \ub)}) + \|\Psi_g\|_{L^4_{(sc)}(u, \ub)}.
\end{align*}
Thus, by integrating along outgoing directions,
\begin{align*}
\|\omegab\|_{L^4_{(sc)}(u, \ub)} &\lesssim \|\omegab\|_{L^4_{(sc)}(u, 0)} + \int_{0}^{\ub} \delta^{-1} \|\nabla_4 \omegab\|_{L^4_{(sc)}(u, \ub')} \\
&\lesssim \Ozero +   \delta^{\frac12} \Delta_0  \cdot \OSzerofour + \int_{0}^{\ub} \delta^{-1} \|\Psi_g\|_{L^4_{(sc)}(u, \ub')} d\ub' +  \Delta_0 \int_{0}^{\ub} \delta^{-\frac12} \|\Upsilon\|_{L^4_{(sc)}(u, \ub')}.
\end{align*}
In view of \eqref{sobolev_L4_along_outgoing}, one concludes
\begin{align}\label{Os4 estimate for omegab}
\|\omegab\|_{L^4_{(sc)}(u, \ub)} &\lesssim \Ozero +  \delta^{\frac12} \Delta_0 \cdot \, \OSzerofour + \Rzero^{\frac{1}{2}}\Rone^{\frac{1}{2}}+\delta^{\frac{1}{4}}\Rzero +\delta^{\frac{1}{2}}\Delta_0 (\Fzero^{\frac{1}{2}}\Fone^{\frac{1}{2}}+\delta^{\frac{1}{4}}\Fzero)\notag\\
&\lesssim \Ozero +  \delta^{\frac12} \Delta_0 \cdot \, \OSzerofour + C.
\end{align}

For $\eta$, similarly, we can show
\begin{equation}\label{Os4 estimate for eta}
\|\eta\|_{L^4_{(sc)}(u, \ub)} \lesssim \Ozero +   \delta^{\frac12} \Delta_0 \cdot \, \OSzerofour + C.
\end{equation}

For $\chih$, according to \eqref{NSE_L_chih}, we have $\nabla_4 \chih = \psi_g \cdot \psi + \alpha$. In view of \eqref{sobolev_L4_along_outgoing} and the fact that $\chih$ has trivial data along $\Hb_{0}$, we have
\begin{align*}
\|\chih\|_{L^4_{(sc)}(u, \ub)} &\lesssim \|\chih\|_{L^4_{(sc)}(u, 0)}+   \delta^{\frac12} \Delta_0 \, \OSzerofour + \int_{0}^{\ub} \delta^{-1} \|\alpha\|_{L^4_{(sc)}(u, \ub')} \\
&\lesssim \delta^{\frac12} \Delta_0 \, \OSzerofour+  \|\alpha\|^{\frac{1}{2}}_{L^2_{(sc)}(H_u^{(0,\ub)})}\|\nabla \alpha\|^{\frac{1}{2}}_{L^2_{(sc)}(H_u^{(0,\ub)})}+\delta^{\frac{1}{4}}\|\alpha\|_{L^2_{(sc)}(H_u^{(0,\ub)})}.
\end{align*}
We multiply both sides by $\delta^{\frac{1}{4}}$ to derive
\begin{equation}\label{Os4 estimate for chih}
\OSzerofour[\chih] \lesssim \delta^{\frac{3}{4}} \Delta_0 \, \OSzerofour + \Rzero[\alpha]^{\frac{1}{2}}\Rone[\alpha]^{\frac{1}{2}}+\Rzero[\alpha].
\end{equation}

We also need localized estimates for $\chih$. The procedure is similar,
\begin{align*}
\|\chih\|_{L^4_{(sc)}({}^\delta\!S_{u,\ub})} &\lesssim \delta^{\frac12} \Delta_0 \OSzerofour + \int_{0}^{\ub} \delta^{-1} \|\alpha\|_{L^4_{(sc)}({}^\delta\!S_{u,\ub'})}\\
&\lesssim \delta^{\frac12} \Delta_0 \OSzerofour + \|\alpha\|_{L^2_{(sc)}({}^{2\delta}H_u^{(0,\ub)})} + \|\nabla \alpha\|_{L^2_{(sc)}({}^{2\delta}H_u^{(0,\ub)})},
\end{align*}
Hence,
\begin{equation}\label{Os4 estimate for chih localized}
\|\chih\|_{L^4_{(sc)}({}^\delta\!S_{u,\ub})} \lesssim \delta^{\frac12} \Delta_0 \OSzerofour + \Rzero^\delta[\alpha] + \Rone[\alpha].
\end{equation}

For $\tr \chi$, in view of \eqref{NSE_L_tr_chi}, we derive
\begin{align*}
\|\tr \chi\|_{L^4_{(sc)}(u, \ub)} &\lesssim \Ozero +   \delta^{\frac12} \Delta_0 \, \OSzerofour +  \int_{0}^{\ub} \delta^{-1} \||\chih|^2 +|\alpha_F|^2 \|_{L^4_{(sc)}(u, \ub')}\\
 &\lesssim \Ozero +   \delta^{\frac12} \Delta_0 \OSzerofour +  \delta^{-\frac{1}{2}}\Delta_0\int_{0}^{\ub} \|\chih \|_{L^4_{(sc)}(u, \ub')} + \|\alpha_F \|_{L^4_{(sc)}(u, \ub')}.
\end{align*}
Combining \eqref{Os4 estimate for chih} and \eqref{sobolev_L4_along_outgoing} for $\alpha_F$, we derive
\begin{equation}\label{Os4 estimate for trchi}
\|\tr \chi\|_{L^4_{(sc)}(u, \ub)} \lesssim \Ozero +   \delta^{\frac{1}{4}} \Delta_0 \OSzerofour +C.
\end{equation}

For $\etab$, in view \eqref{NSE_Lb_etab}, we have $\nabla_3 \etab
= \tr \chib_0 \cdot \etab +\psi_g \cdot \psi + \Psi_g + \Upsilon_g \cdot \Upsilon
$. We bound $\tr \chib_0$ by a constant to derive
\begin{equation*}
\|\nabla_3 \etab\|_{L^4_{(sc)}(u, \ub)} \lesssim  \|\etab\|_{L^4_{(sc)}(u, \ub)} + \delta^{\frac12} \Delta_0 (\OSzerofour + \| \Upsilon\|_{L^4_{(sc)}(u, \ub)})+  \|\Psi_g\|_{L^4_{(sc)}(u, \ub)}.
\end{equation*}
We also observe that $\Psi_g \neq \beta$, thus,
\begin{align*}
\|\etab\|&_{L^4_{(sc)}(u, \ub)} \lesssim \|\etab\|_{L^4_{(sc)}(0, \ub)} + \int_{0}^{u} \|\nabla_3 \etab\|_{L^4_{(sc)}(u', \ub)}\\
&\lesssim \Ozero +   \delta^{\frac12} \Delta_0  \OSzerofour + \int_0^u \|\etab\|_{L^4_{(sc)}(u', \ub)} + \|\Psi_g\|_{L^4_{(sc)}(u', \ub)} +\Delta_0 \delta^{\frac12} \|\Upsilon\|_{L^4_{(sc)}(u', \ub)}.
\end{align*}
Thanks to Gronwall's inequality, we remove $\eta$ on the right-hand side,
\begin{equation}\label{Os4 estimate for etab}
\|\etab\|_{L^4_{(sc)}(u, \ub)} \lesssim \Ozero +   \delta^{\frac12} \Delta_0  \OSzerofour + C.
\end{equation}

For $\omega$, similarly, we derive
\begin{equation}\label{Os4 estimate for omega}
\|\omega\|_{L^4_{(sc)}(u, \ub)} \lesssim \Ozero +   \delta^{\frac12} \Delta_0 \OSzerofour + C.
\end{equation}

For $\chibh$, according to \eqref{NSE_Lb_chibh}, $\nabla_3 \chibh= \tr\chib_0 \, \chibh  + \psi_g \cdot \psi -\alphab$, thus,
\begin{equation*}
\|\nabla_3 \chibh\|_{L^4_{(sc)}(u, \ub)} \lesssim \|\chibh\|_{L^4_{(sc)}(u, \ub)} + \delta^{\frac12} \Delta_0 \OSzerofour +  \|\alphab\|_{L^4_{(sc)}(u, \ub)}.
\end{equation*}
As in \eqref{Os4 estimate for etab}, by Gronwall's inequality, we ignore $\|\chibh\|_{L^4_{(sc)}(u, \ub)}$. Therefore,
\begin{align*}
\|\chibh\|_{L^4_{(sc)}(u, \ub)} & \lesssim \|\chibh\|_{L^4_{(sc)}(0, \ub)}+   \delta^{\frac12} \Delta_0  \OSzerofour  + \int_{0}^{u} \|\alphab\|_{L^4_{(sc)}(u', \ub)}\lesssim \delta^{-\frac{1}{4}}\Ozero+   \delta^{\frac12} \Delta_0 \OSzerofour  + C.
\end{align*}
Thus,
\begin{equation}\label{Os4 estimate for chibh}
\OSzerofour[\chibh] \lesssim \Ozero+   \delta^{\frac{3}{4}} \Delta_0 \OSzerofour  + C\delta^{\frac{1}{4}}.
\end{equation}
We repeat above procedure to derive localized estimates,
\begin{equation}\label{Os4 estimate for omegab localized}
\|\chibh\|_{L^4_{(sc)}({}^\delta\!S_{u,\ub})}\lesssim \|\chibh\|_{L^4_{(sc)}({}^\delta\!S_{0,\ub})} + \delta^{\frac{1}{2}} \Delta_0 \OSzerofour +C.
\end{equation}

For $\trchibt$, since $\nabla_3 u = \Omega^{-1}$ and $\nabla_3 \ub =0$, we have $\nabla_3 \tr\chib_0  = -\frac{1}{4\Omega}(\tr\chib_0) ^2$. Thus, \eqref{NSE_Lb_tr_chib} implies,
\begin{equation*}
\nabla_3 \trchibt = -\tr \chib_0 \cdot \trchibt - \frac{1}{2\Omega}(\Omega-\frac{1}{2}) (\tr\chib_0) ^2 - 2\tr\chib_0 \omegab -\frac{1}{2}(\trchibt)^2-2\omegab \trchibt -|\chibh|^2-|\alphab_F|^2.
\end{equation*}
For the second term, we have $\frac{1}{2\Omega}(\Omega-\frac{1}{2}) = \frac{1}{4}\int_0^u \omegab $, therefore,
\begin{equation*}
\| \frac{1}{2\Omega}(\Omega-\frac{1}{2}) \|_{L^4_{(sc)}(u, \ub)} \lesssim \int_0^u \|\omegab\|_{L^4_{(sc)}(u', \ub)}.
\end{equation*}
The double anomaly $|\chibh|^2$ causes a loss of $\delta^{\frac{1}{4}}$,
\begin{equation*}
\||\chibh|^2\|_{L^4_{(sc)}(u, \ub)} \lesssim \delta^{\frac{1}{2}}  \|\chibh\|_{L^{\infty}_{(sc)}(u, \ub)} \|\chibh\|_{L^4_{(sc)}(u, \ub)}\lesssim \delta^{\frac{1}{4}}\Delta_0 \, \OSzerofour.
\end{equation*}
Combined with the bound of $\omegab$ in \eqref{Os4 estimate for omegab}, we have
\begin{align*}
\|\nabla_3 \trchibt \|_{L^4_{(sc)}(u, \ub)} &\lesssim \|\trchibt \|_{L^4_{(sc)}(u, \ub)}+ \|\omegab\|_{L^4_{(sc)}(u, \ub)}+\int_0^u \|\omegab\|_{L^4_{(sc)}(u', \ub)}  + \delta^{\frac{1}{4}}\Delta_0 \OSzerofour\\
&\lesssim  \|\trchibt \|_{L^4_{(sc)}(u, \ub)} + \Ozero +\delta^{\frac{1}{4}}\Delta_0  \OSzerofour + C.
\end{align*}
Thanks to Gronwall's inequality, we ignore $\|\trchibt \|_{L^4_{(sc)}(u, \ub)}$ on the right-hand side. Thus,
\begin{equation}\label{Os4 estimate for trchibt}
\|\trchibt \|_{L^4_{(sc)}(u, \ub)} \lesssim  \Ozero +\delta^{\frac{1}{4}}\Delta_0  \OSzerofour + C.
\end{equation}

We add \eqref{Os4 estimate for omegab}, \eqref{Os4 estimate for eta}, \eqref{Os4 estimate for chih}, \eqref{Os4 estimate for trchi}, \eqref{Os4 estimate for etab}, \eqref{Os4 estimate for omega}, \eqref{Os4 estimate for chibh} and \eqref{Os4 estimate for trchibt} together to derive
\begin{equation*}
\OSzerofour \lesssim \Ozero +\delta^{\frac{1}{4}}\Delta_0  \OSzerofour + C.
\end{equation*}
If $\delta$ is sufficiently small, $\delta^{\frac{1}{4}}\Delta_0  \OSzerofour$ is absorbed by the left hand side. Next proposition summarizes the estimates derived in this subsection,
\begin{proposition}\label{OZERO_FOUR_ESTIMATES} If $\delta$ is sufficiently small, we have
\begin{equation*}
\OSzerofour \lesssim C.
\end{equation*}
Moreover, we have
\begin{equation}\label{ozerofour_for_chih}
\OSzerofour[\chih] \lesssim   \Rzero[\alpha]^{\frac{1}{2}}\Rone[\alpha]^{\frac{1}{2}}+\Rzero[\alpha] + \delta^{\frac{1}{4}} C, \,\, \OSzerofour[\chibh] \lesssim \Ozero+   \delta^{\frac{1}{4}} C,
\end{equation}
and their localized versions
\begin{equation}\label{localized_L_4_chih}
\|\chih\|_{L^4_{(sc)}({}^\delta\!S_{u,\ub})}\lesssim \delta^{\frac{1}{4}} C  + \Rzero^\delta[\alpha] + \Rone[\alpha], \,\,\|\chibh\|_{L^4_{(sc)}({}^\delta\!S_{u,\ub})}\lesssim \|\chibh\|_{L^4_{(sc)}({}^\delta\!S_{0,\ub})} + C.
\end{equation}
\end{proposition}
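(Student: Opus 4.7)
The proposition is a summary of the individual $L^4$ bounds just established for every connection coefficient, so the plan is to derive those bounds coefficient by coefficient via the null structure equations, then sum and absorb. I would organize the proof into two blocks according to whether the quantity satisfies a $\nabla_4$ or a $\nabla_3$ transport equation, since the integration argument differs in the two cases (no Gronwall needed going out, Gronwall needed coming in, and the available integral estimates \eqref{sobolev_L4_along_outgoing}, \eqref{sobolev_L4_along_incoming} have different powers of $\delta$).

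For the outgoing block $\omegab, \eta, \chih, \tr\chi$, I would apply \eqref{NSE_L_omegab}, \eqref{NSE_L_eta}, \eqref{NSE_L_chih}, \eqref{NSE_L_tr_chi} in schematic form $\nabla_4\psi = \psi_g\cdot\psi + \Psi + \Upsilon\cdot\Upsilon$, use the scale invariant H\"older inequality \eqref{Holder} (each product gaining $\delta^{1/2}$ and pulling out an $L^\infty$ bound via \eqref{bootstrap} to produce a factor $\delta^{1/2}\Delta_0 \OSzerofour$), then invoke \eqref{integralestimate1sc} together with \eqref{sobolev_L4_along_outgoing} applied to the curvature/Maxwell terms $\Psi_g$, $\alpha$, $\Upsilon_g$, $\alpha_F$. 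For $\chih$ and $\tr\chi$ the curvature driver $\alpha$ is anomalous, so multiplication by $\delta^{1/4}$ is needed to recover the scale-invariant norm, yielding the specific form of \eqref{ozerofour_for_chih}. The initial data contribution is absorbed into $\Ozero$ at $\ub=0$.

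For the incoming block $\omega, \etab, \chibh, \trchibt$, I would use \eqref{NSE_Lb_omega}, \eqref{NSE_Lb_etab}, \eqref{NSE_Lb_chibh}, \eqref{NSE_Lb_tr_chib} in the form $\nabla_3\psi = \tr\chib_0\,\psi + \text{lower}$, integrate via \eqref{integralestimate2sc}, and close a Gronwall loop in $u$ to remove the linear $\tr\chib_0\,\psi$ term. The genuine curvature driver $\alphab$ or $\Psi_g$ is then handled by \eqref{sobolev_L4_along_incoming}. For $\trchibt$ the two points to be careful about are (i) the algebraic identity $\nabla_3 \tr\chib_0 = -\tfrac{1}{4\Omega}(\tr\chib_0)^2$ which produces the source term $\frac{1}{2\Omega}(\Omega-\tfrac12)(\tr\chib_0)^2$, handled by integrating the already-estimated $\omegab$, and (ii) the double anomaly $|\chibh|^2$, which after applying \eqref{Holder} loses a factor $\delta^{1/4}$ and therefore only contributes $\delta^{1/4}\Delta_0\OSzerofour$. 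For $\chibh$ itself the datum $\|\chibh\|_{L^4_{(sc)}(0,\ub)}$ is anomalous by construction (it is controlled only after multiplication by $\delta^{1/4}$), giving the separated estimate $\OSzerofour[\chibh]\lesssim \Ozero + \delta^{1/4}C$ displayed in \eqref{ozerofour_for_chih}.

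Summing the eight individual inequalities produces
\[
\OSzerofour \lesssim \Ozero + \delta^{1/4}\Delta_0\,\OSzerofour + C,
\]
and choosing $\delta$ small enough that $\delta^{1/4}\Delta_0 \ll 1$ lets me absorb the middle term into the left-hand side, yielding $\OSzerofour \lesssim C$. For the localized bounds in \eqref{localized_L_4_chih} I would rerun the $\chih$ and $\chibh$ arguments verbatim on the tube ${}^\delta S_{u,\ub}$: the transport is along $L$, so the tube is preserved; the only change is that the integral estimate for $\alpha$ on the tube is replaced by its localized version bounded by $\Rzero^\delta[\alpha] + \Rone[\alpha]$ via \eqref{trace_H}, which is exactly what appears in the stated bound. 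The main technical obstacle I foresee is bookkeeping the anomalous weights consistently so that the $\Delta_0$-dependent error on the right comes with a strictly positive power of $\delta$; in particular the double anomaly in $|\chibh|^2$ needs to lose only $\delta^{1/4}$ and not more, which relies on the fact that one $\chibh$ is placed in $L^\infty_{(sc)}$ (covered by \eqref{bootstrap}) and the other in $L^4_{(sc)}$.
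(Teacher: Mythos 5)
Your plan reproduces the paper's argument step by step: the same split into the $\nabla_4$-transported coefficients $(\omegab,\eta,\chih,\tr\chi)$ and the $\nabla_3$-transported ones $(\omega,\etab,\chibh,\trchibt)$, the same use of the scale-invariant H\"older inequality with the bootstrap $L^\infty$ bound, the same integral estimates \eqref{sobolev_L4_along_outgoing}/\eqref{sobolev_L4_along_incoming} and Gronwall to remove the $\tr\chib_0$ term, the same $L^\infty\times L^4$ handling of the double anomaly $|\chibh|^2$, and the same absorption of the $\delta^{1/4}\Delta_0\,\OSzerofour$ error. The only cosmetic slip is that the $\tr\chi$ anomaly enters via $|\chih|^2+|\alpha_F|^2$ rather than directly through $\alpha$, but this does not affect the closing of the estimate.
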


\subsubsection{$L^4$ Estimates on Maxwell Field}
We introduce $\OSzerofour$ norms for Maxwell field,
\begin{align*}
\OSzerofour[\alpha_F] (u,\ub) &=\delta^{\frac{1}{4}}\|\alpha_F\|_{L^4_{(sc)}(u,\ub)}, \!
\OSzerofour[(\rho_F, \sigma_F, \alphab_F)] (u,\ub)  =\|(\rho_F, \sigma_F, \alphab_F)\|_{L^4_{(sc)}(u,\ub)}.
\end{align*}

For $\Upsilon_g$, in view of \eqref{NM_L_alphab}, \eqref{NM_L_rho} and \eqref{NM_L_sigma}, we have $\nabla_4 \Upsilon_g = \nabla \Upsilon + \psi \cdot \Upsilon$.
In view of \eqref{bootstrap} and Proposition \ref{OZERO_FOUR_ESTIMATES}, we bound $\Upsilon$ in $L^\infty_{(sc)}$ and $\psi$ in  $L^4_{(sc)}$. Thus,
\begin{equation*}
 \|\nabla_4 \Upsilon_g\|_{L^4_{(sc)}(u, \ub)} \lesssim \|\nabla \Upsilon\|_{L^4_{(sc)}(u, \ub)} + \delta^{\frac{1}{2}}\Delta_0 \| \psi\|_{L^4_{(sc)}(u, \ub)} \lesssim \|\nabla \Upsilon\|_{L^4_{(sc)}(u, \ub)} + C \delta^{\frac{1}{4}}\Delta_0.
\end{equation*}
Thus, by direct integration,
\begin{align}\label{maxwell_L4_1}
 \|\Upsilon_g\|_{L^4_{(sc)}(u, \ub)} &\lesssim \|\Upsilon\|_{L^4_{(sc)}(u, 0)} + C \delta^{\frac{1}{4}}\Delta_0+ \delta^{-1}\int_0^{\ub} \|\nabla \Upsilon\|_{L^4_{(sc)}(u, \ub')}\notag\\
&\lesssim \Ozero + C \delta^{\frac{1}{4}}\Delta_0 + \Ftwo[\Upsilon]^{\frac{1}{2}}\Fone[\Upsilon]^{\frac{1}{2}} + \delta^{\frac{1}{4}}\Fone[\Upsilon] \lesssim C.
\end{align}

For $\alpha_F$, we rewrite \eqref{NM_Lb_alpha} as $ \nabla_3 \alpha_F = \tr\chib_0 \cdot \alpha_F + \nabla \Upsilon + \psi \cdot \Upsilon$. Thus,
\begin{align*}
 \|\nabla_3 \alpha_F\|_{L^4_{(sc)}(u, \ub)} &\lesssim \| \alpha_F\|_{L^4_{(sc)}(u, \ub)} + \|\nabla \Upsilon\|_{L^4_{(sc)}(u, \ub)} + \delta^{\frac{1}{2}}\Delta_0 \|\psi\|_{L^4_{(sc)}(u, \ub)}\\
&\lesssim \| \alpha_F\|_{L^4_{(sc)}(u, \ub)} + \|\nabla \Upsilon\|_{L^4_{(sc)}(u, \ub)} + C\delta^{\frac{1}{4}}\Delta_0.
\end{align*}
We ignore $\| \alpha_F\|_{L^4_{(sc)}(u, \ub)}$ on the right-hand side by Gronwall's inequality to derive
\begin{align*}
 \|\alpha_F\|_{L^4_{(sc)}(u, \ub)} &\lesssim \|\alpha_F\|_{L^4_{(sc)}(0, \ub)} + \int_0^{u} \|\nabla \Upsilon\|_{L^4_{(sc)}(u', \ub)}  + C \delta^{\frac{1}{4}}\Delta_0\\
&\lesssim \|\alpha_F\|_{L^4_{(sc)}(0, \ub)}  + \Ftwob[\Upsilon]^{\frac{1}{2}}\Foneb[\Upsilon]^{\frac{1}{2}} + \delta^{\frac{1}{2   }}\Foneb[\Upsilon] + C\delta^{\frac{1}{4}} \Delta_0.
\end{align*}
This implies
\begin{equation}\label{maxwell_L4_2}
 \OSzerofour[\alpha_F](u,\ub) \lesssim \OSzerofour[\alpha_F](0,\ub)  + C \delta^{\frac{1}{4}}.
\end{equation}
We can also localize the proof to derive
\begin{equation}\label{maxwell_L4_3}
 \|\alpha_F\|_{L^4_{(sc)}({}^\delta\!S_{u,\ub})} \lesssim \|\alpha_F\|_{L^4_{(sc)}({}^\delta\!S_{0,\ub})}  + C.
\end{equation}
Next proposition summarizes the estimates derived in this subsection,
\begin{proposition}
If $\delta$ is sufficiently small, we have
\begin{equation}
 \OSzerofour[\alpha_F, \rho_F, \sigma_F, \alphab_F]\lesssim C.
\end{equation}
Moreover, we have
\begin{equation}\label{local alpha_F}
 \|\alpha_F\|_{L^4_{(sc)}({}^\delta\!S_{u,\ub})} \lesssim \delta^{-\frac{1}{4}}\Ozero  + C, \,\,  \|\alpha_F\|_{L^4_{(sc)}({}^\delta\!S_{u,\ub})} \lesssim \|\alpha_F\|_{L^4_{(sc)}({}^\delta\!S_{0,\ub})}  + C.
\end{equation}
\end{proposition}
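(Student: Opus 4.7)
The plan is to estimate the four Maxwell null components separately according to the direction in which each transport equation is most naturally propagated. I split them into the \emph{good} set $\Upsilon_g\in\{\rho_F,\sigma_F,\alphab_F\}$, which I propagate along outgoing null hypersurfaces via \eqref{NM_L_alphab}, \eqref{NM_L_rho} and \eqref{NM_L_sigma}, and the single \emph{bad} component $\alpha_F$, which I propagate along incoming null hypersurfaces via \eqref{NM_Lb_alpha}. This split exactly matches the scaling convention built into the definition of $\OSzerofour$: only $\alpha_F$ carries the extra $\delta^{1/4}$ anomaly weight, reflecting that it can only be controlled by transport from data on $S_{0,\ub}$ which is allowed to be anomalously large.

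For $\Upsilon_g$, each relevant Maxwell equation is schematically $\nabla_4\Upsilon_g=\nabla\Upsilon+\psi\cdot\Upsilon$. Taking the scale invariant $L^4$ norm on $S_{u,\ub}$, the H\"older inequality \eqref{Holder} gains a $\delta^{1/2}$ on the nonlinear term; combined with the $L^\infty$ bootstrap \eqref{bootstrap} for $\Upsilon$ and the already proved $\OSzerofour\lesssim C$ from Proposition~\ref{OZERO_FOUR_ESTIMATES} for $\psi$, this contributes $\lesssim C\delta^{1/2}\Delta_0$. Integrating along $e_4$ via \eqref{integralestimate1sc} costs a $\delta^{-1}$, which is exactly compensated by applying the Sobolev-type interpolation \eqref{sobolev_L4_along_outgoing} to $\nabla\Upsilon$; that step produces a bound of order $\Ftwo^{1/2}\Fone^{1/2}+\delta^{1/4}\Fone$. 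The initial value $\|\Upsilon_g\|_{L^4_{(sc)}(u,0)}$ is subsumed into $\Ozero$, yielding \eqref{maxwell_L4_1}.

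For $\alpha_F$, I rewrite \eqref{NM_Lb_alpha} as $\nabla_3\alpha_F=\tr\chib_0\cdot\alpha_F+\nabla\Upsilon+\psi\cdot\Upsilon$, extracting the constant-coefficient linear part (since $\tr\chib_0$ is bounded by a universal constant). The nonlinear piece is handled identically to the good case. Integrating in $u$ via \eqref{integralestimate2sc} does \emph{not} cost a $\delta^{-1}$, and the linear $\alpha_F$ term is killed by Gronwall's inequality. The derivative contribution reduces to $\Ftwob^{1/2}\Foneb^{1/2}+\delta^{1/2}\Foneb$ through \eqref{sobolev_L4_along_incoming}. The initial value $\|\alpha_F\|_{L^4_{(sc)}(0,\ub)}$ is bounded by $\delta^{-1/4}\OSzerofour[\alpha_F](0,\ub)\lesssim\delta^{-1/4}\Ozero$; multiplying through by the extra $\delta^{1/4}$ in the definition of $\OSzerofour[\alpha_F]$ recovers a bound by $\Ozero+C\delta^{1/4}\lesssim C$, which is \eqref{maxwell_L4_2}.

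The localized estimates in \eqref{maxwell_L4_3} and \eqref{local alpha_F} are obtained by repeating exactly the same transport arguments on the tube over a fixed disc ${}^\delta\!S_{u,\ub}\subset S_{u,\ub}$; the outgoing tube coincides with ${}^{2\delta}\!H_u$, on which the localized trace and Sobolev inequalities of Lemma~\ref{sobolev_trace_estimates} and the interpolation inequalities of Lemma~\ref{interpolation_estimates} continue to control $\nabla\Upsilon$ in $L^4$ in terms of $\Fone$ and $\Ftwo$, and a local version of \eqref{sobolev_L4_along_incoming} governs the $\alpha_F$ transport. I expect the only delicate step to be the bookkeeping of $\delta$-powers so that the anomaly of $\alpha_F$ is concentrated entirely in the $\delta^{-1/4}\Ozero$ initial-data contribution and does not propagate into the universal constant $C=C(\Ozero,\R,\Rb,\F,\Fb)$; everything else is an essentially mechanical application of the tools assembled in Lemmas~\ref{Integral_Estimates}--\ref{estimates_for_hogde_systems} together with the rule of thumb that each H\"older step buys a factor of $\delta^{1/2}$.
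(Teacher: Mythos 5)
Your proposal follows essentially the same route as the paper: split the Maxwell components by propagation direction (good components $\rho_F,\sigma_F,\alphab_F$ along $e_4$ via \eqref{NM_L_alphab}--\eqref{NM_L_sigma}, the anomalous $\alpha_F$ along $e_3$ via \eqref{NM_Lb_alpha} with Gronwall to kill the $\tr\chib_0\cdot\alpha_F$ term), bound the nonlinear $\psi\cdot\Upsilon$ pieces by H\"older with $\Upsilon$ in $L^\infty_{(sc)}$ from the bootstrap and $\psi$ in $L^4_{(sc)}$ from Proposition~\ref{OZERO_FOUR_ESTIMATES}, and absorb the $\delta^{-1}$ from the $e_4$-integration via the interpolation \eqref{sobolev_L4_along_outgoing} (respectively no $\delta^{-1}$ loss and \eqref{sobolev_L4_along_incoming} for $e_3$); then localize. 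One small inaccuracy: you claim the nonlinear term contributes $\lesssim C\delta^{1/2}\Delta_0$, but since \eqref{NM_L_alphab} contains $\chibh\cdot\alpha_F$ and $\|\chibh\|_{L^4_{(sc)}}$ is anomalous of size $\delta^{-1/4}C$, the actual bound is $C\delta^{1/4}\Delta_0$ as in the paper; this does not affect the conclusion since both powers are still absorbable for $\delta$ small.
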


\subsubsection{$L^2$ Estimates}
We move on to $\OSzerotwo$ estimates on $\psi$, $\Psi$ and $\Upsilon$. It is straightforward since we have already established $\Ozerofour$ estimates. Since the procedure is standard, we will only sketch the  proof.
\begin{proposition}
If $\delta$ is sufficiently small, we have
\begin{equation*}
\|(\Psi_g, \Upsilon_g, K)\|_{{L^2_{(sc)}(u,\ub)}} + \Ozerotwo\lesssim C, \,\, \|(\alpha,\alpha_F)\|_{{L^2_{(sc)}(u,\ub)}} \lesssim \delta^{-\frac{1}{2}}C.
\end{equation*}
\end{proposition}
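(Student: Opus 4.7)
The plan is to integrate each of the null transport equations in the appropriate null direction, using the scale invariant H\"older inequality \eqref{Holder} to handle nonlinearities against the $L^4_{(sc)}(S)$ bounds from Proposition \ref{OZERO_FOUR_ESTIMATES}. The key observation is a Cauchy--Schwarz argument that converts hypersurface $L^2$ control supplied by $\R,\Rb,\F,\Fb$ into sphere-$L^2$ control. The elementary identity
\begin{equation*}
\int_0^{\ub}\|\phi\|^2_{L^2_{(sc)}(S_{u,\ub'})}\,d\ub'=\delta\,\|\phi\|^2_{L^2_{(sc)}(H_u^{(0,\ub)})},
\end{equation*}
together with its $\Hb$-analogue and the inequality $\ub\leq\delta$, yields
\begin{equation*}
\int_0^{\ub}\delta^{-1}\|\Psi_g\|_{L^2_{(sc)}(u,\ub')}\,d\ub' \lesssim \R,\qquad \int_0^{\ub}\delta^{-1}\|\nabla\Psi_4\|_{L^2_{(sc)}(u,\ub')}\,d\ub' \lesssim \Rone,
\end{equation*}
with analogous bounds for Maxwell sources and for $e_3$-direction integrations against $\Rb,\Roneb,\F,\Fb$.

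With this tool I would first bound the non-anomalous curvature and Maxwell components. For each $\Psi_g\in\{\beta,\rho,\sigma,\betab,\alphab\}$, pick the null Bianchi equation whose top-order right-hand side is a single $\nabla\Psi_4$ or $\nabla\Psi_g$; integrate in the matching null direction starting from the Minkowskian part of $\Hb_0$ (or from the $H_0$-data); bound that top-order term by the Cauchy--Schwarz trick above; and bound the remaining right-hand side, schematically of the forms $\psi\cdot\Psi$, $\psi_g\cdot\psi$, $\Upsilon_g\cdot\Upsilon$ and $\nabla\Upsilon\cdot\Upsilon$, in $L^2_{(sc)}(S)$ by $\delta^{1/2}$ times a product of $L^4_{(sc)}(S)$ norms from Proposition \ref{OZERO_FOUR_ESTIMATES}. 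A Gronwall argument absorbs any $\|\Psi_g\|_{L^2_{(sc)}(S)}$ appearing linearly on the right, and one concludes $\|\Psi_g\|_{L^2_{(sc)}(S)}\lesssim C$; the same procedure on \eqref{NM_L_rho}--\eqref{NM_Lb_sigma} gives $\|\Upsilon_g\|_{L^2_{(sc)}(S)}\lesssim C$. For the anomalous components $\alpha$ and $\alpha_F$ I integrate \eqref{NBE_Lb_alpha} and \eqref{NM_Lb_alpha} along $e_3$ from $S_{0,\ub}$; inspection of $\Izero$ via \eqref{NSE_L_chih} and the Maxwell $\nabla_4$-equation shows that the initial sphere-$L^2$ norms of $\alpha$ and $\alpha_F$ on $S_{0,\ub}$ already carry a factor $\delta^{-1/2}$ (the anomaly), and this is the only loss not absorbed by the transport argument, giving $\|\alpha\|_{L^2_{(sc)}(S)}+\|\alpha_F\|_{L^2_{(sc)}(S)}\lesssim \delta^{-1/2}C$.

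The bound $\Ozerotwo\lesssim C$ then follows by running the identical transport scheme on each of the null structure equations \eqref{NSE_L_tr_chi}--\eqref{NSE_Lb_chih}, now with $\Psi_g,\Upsilon_g$ already controlled in $L^2_{(sc)}(S)$ and with Gronwall handling the $\tr\chib_0$ linear coefficient in the $\etab,\omega,\chibh,\trchibt$ estimates exactly as in the $L^4$ proofs. Finally $K$ is algebraic: from the Gauss equation \eqref{NSE_gauss}, $K=-\tfrac14\tr\chi\,\tr\chib+\tfrac12\chih\cdot\chibh-\rho+\tfrac14 T_{43}$ with $T_{43}\sim|\rho_F|^2+|\sigma_F|^2$, and each summand lies in $L^2_{(sc)}(S)$ by scale-invariant H\"older against $\Ozerofour$ and the just-established $\|(\Psi_g,\Upsilon_g)\|_{L^2_{(sc)}(S)}$ bounds. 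The main obstacle throughout is bookkeeping anomalies: whenever a nonlinear source pairs two individually anomalous factors (such as $\chih\cdot\chih$, $\chibh\cdot\chibh$, or $\chih\cdot\alpha_F$), the two $L^4_{(sc)}$-factors together lose a full $\delta^{-1}$; one must check at every step that the $\delta^{1/2}$ gained from H\"older plus whatever $\delta$-power comes from the transport integral beats this loss and leaves no worse than a $\lesssim C$ contribution.
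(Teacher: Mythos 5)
Your proposal is correct and follows essentially the same route as the paper: integrate the null transport equations, convert the flux norms $\R,\Rb,\F,\Fb$ into sphere-$L^2$ control of the $\nabla\Psi$ and $\nabla\Upsilon$ sources via the Cauchy--Schwarz identity you state, handle quadratic sources with scale-invariant H\"older against the $\Ozerofour$ bounds, trace the anomaly of $\alpha,\alpha_F$ back to $S_{0,\ub}\subset H_0$ (the paper does this by integrating $\nabla_4\alpha$ along $H_0$ against $\Rone[\alpha](0)$, a slight variant of your route through $\nabla_4\chih$ and $\Izero$), and read off $K$ algebraically from the Gauss equation. One small quantitative slip in your closing paragraph: a pair of anomalous $L^4_{(sc)}$-factors such as $\chih\cdot\chih$ costs $\delta^{-1/2}$ (each contributes $\delta^{-1/4}$), not $\delta^{-1}$; this is exactly cancelled by the $\delta^{1/2}$ gain from the scale-invariant H\"older inequality, with no additional power needed from the transport integral, but this does not affect the correctness of the argument.
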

\begin{proof}
For $\alpha$, in view of \eqref{NBE_Lb_alpha}, we have $\nabla_3 \alpha = \tr\chib_0 \cdot \alpha + \nabla \Psi + \psi \cdot \Psi + \Upsilon \cdot \nabla \Upsilon$.
We proceed exactly as before:  integrating along $\Hb_{\ub}$ and removing $\tr\chib_0 \cdot \alpha$ by Gronwall's inequality. This yields
\begin{equation*}
 \|\alpha\|_{{L^2_{(sc)}(S_{u,\ub})}} \lesssim  \|\alpha\|_{L^2_{(sc)}(0, \ub)} + C.
\end{equation*}
For $\|\alpha\|_{L^2_{(sc)}(0, \ub)}$, we can bound it as follows,
\begin{equation*}
\|\alpha\|_{L^2_{(sc)}(0, \ub)} \lesssim \delta^{-1}\int_0^{\ub}\|\nabla_4 \alpha\|_{L^2_{(sc)}(0, \ub')} \lesssim \delta^{-\frac{1}{2}} \Rone[\alpha](0),
\end{equation*}
which yields the estimates on $\alpha$.

For $\Psi_g$, in view of \eqref{NBE_L_beta}, \eqref{NBE_L_sigma},  \eqref{NBE_L_rho}, \eqref{NBE_L_betab} and \eqref{NBE_L_alphab},  $\nabla_4 \Psi_g = \nabla \Psi + \psi \cdot \Psi + \Upsilon\cdot \nabla \Upsilon$. This allows one to derive the bound for $\Psi_g$.

For Gaussian curvature $K$, according to \eqref{NSE_gauss}, we have $K = \rho + \psi \cdot \psi + \Upsilon \cdot \Upsilon$. Therefore,
\begin{equation*}
\| K \|_{{L^2_{(sc)}(S_{u,\ub})}} \lesssim \|\rho\|_{{L^2_{(sc)}(S_{u,\ub})}} + \delta^{\frac{1}{2}}\|(\psi, \Upsilon)\|^2_{{L^4_{(sc)}(S_{u,\ub})}}\lesssim C + \OSzerofour^2 +\OSzerofour^2[\Upsilon]^2 \lesssim C.
\end{equation*}

For $\psi_4 \in \{\tr\chi,\, \chih, \,\omegab, \,\eta\}$, the corresponding null structure equations read as $\nabla_4 \psi_4 = \psi \cdot\psi + \Psi +\Upsilon\cdot \Upsilon$. Thus,
\begin{equation*}
\|\nabla_4 \psi\|_{L^2_{(sc)}(u, \ub)} \lesssim \delta^{\frac{1}{2}}\|(\psi, \Upsilon)\|^2_{L^4_{(sc)}(u, \ub)} + \|\Psi\|_{L^2_{(sc)}(u, \ub)} \lesssim C + \|\Psi\|_{L^2_{(sc)}(0, \ub)}.
\end{equation*}
We then integrate over $H_u$ to derive
\begin{equation*}
\|\psi\|_{{L^2_{(sc)}(S_{u,\ub})}} \lesssim  C + \|\Psi\|_{L^2_{(sc)}(H_u^{(0,\ub)})}.
\end{equation*}
One key observation is, in above expressions,  $\Psi$ is not anomalous unless $(\psi, \Psi) = (\chih, \alpha)$. Thus,
\begin{equation*}
\|(\tr\chi, \, \omegab, \, \eta)\|_{{L^2_{(sc)}(S_{u,\ub})}} \lesssim  C^2 + \Rzero, \quad \|\chih\|_{{L^2_{(sc)}(S_{u,\ub})}} \lesssim  C^2 + \delta ^{-\frac{1}{2}}\Rzero[\alpha].
\end{equation*}

We also prove estimates on $\chibh$, $\trchibt$, $\omega$ and $\etab$ in the same manner.  The derivation of estimates on $\Upsilon$ is also similar. This completes the proof.
\end{proof}

\subsection{First Derivative Estimates}

\subsubsection{Transport-Hodge System for connection coefficients}
We use $(\Theta, \psi)$ to denote one of pairs
\begin{equation*}
\{(\nabla \tr\chi, \chih), (\nabla \trchibt, \chibh); (\mu, \eta), (\mub,  \etab); (\kappa, \langle \omega \rangle), (\kappab, \langle \omegab \rangle) \}.
\end{equation*}
The definition of $\mu$, $\mub$, $\langle \omega \rangle$ and $\langle \omegab \rangle$ will be given in the context. We will show that $(\Theta, \psi)$ satisfies a Transport-Hodge type system which has following schematic form,
\begin{equation}\label{L_transport_one_derivative}
\nabla_4 \Theta = \psi \cdot (\nabla \psi + \Psi + \Upsilon\cdot\Upsilon) + \tr \chib_0 \cdot \psi \cdot \psi_g +\psi \cdot \psi \cdot \psi_g + \nabla\Upsilon \cdot \Upsilon,
\end{equation}
\begin{equation}\label{Lb_transport_one_derivative}
\nabla_3 \Theta =\tr \chib_0 \cdot \nabla \psi + \psi \cdot (\nabla \psi + \Psi + \Upsilon\cdot\Upsilon) + \tr \chib_0 \cdot \psi \cdot \psi_g +\psi \cdot \psi \cdot \psi_g + \nabla\Upsilon \cdot \Upsilon,
\end{equation}
\begin{equation}\label{Hodge_one_derivative}
\D \psi = \Theta + \Psi_g + \tr\chib_0 \cdot \psi_g + \psi \cdot \psi + \Upsilon \cdot \Upsilon. \footnote{\, $\D$ is a Hodge operator.}
\end{equation}

For $(\Theta, \psi) = (\nabla \tr\chi, \chih)$, in view of \eqref{NSE_div_chih}, $\psi = \chih$ satisfies \eqref{Hodge_one_derivative}. For $\Theta = \nabla \tr \chi$, we commute $\nabla$ with \eqref{NSE_L_tr_chi} to derive
\begin{align*}
\nabla_4 \Theta &= [\nabla_4, \nabla] \tr \chi + \nabla \nabla_4 \tr \chi =\psi_g\cdot \nabla_4 \tr \chi + \psi \cdot \nabla \tr \chi + \nabla \nabla_4 \tr \chi\\
&=\psi_g\cdot (\psi \cdot\psi + \Upsilon \cdot\Upsilon) + \psi\cdot \nabla \psi + \nabla (\psi\cdot \psi + \Upsilon \cdot\Upsilon).
\end{align*}
which is clearly in the form of \eqref{L_transport_one_derivative}.

For $(\Theta, \psi) = (\nabla \trchibt, \chibh)$, similarly, we can show that it satisfies \eqref{Lb_transport_one_derivative} and \eqref{Hodge_one_derivative}.

For $(\Theta, \psi) = (\mu, \eta)$, we first define the mass aspect function
\begin{equation}\label{mu}
\mu = -\divergence \eta - \rho.
\end{equation}
In view of \eqref{NSE_L_eta}, we have $\nabla_4 \eta = \psi_g \cdot \psi -\beta + \Upsilon\cdot \Upsilon$. We remark that we must keep tracking the exact coefficient for $\beta$. Thus, we can derive
\begin{align*}
\nabla_4 (\divergence \eta) &=[\nabla_4, \divergence] \eta + \divergence \nabla_4 \eta =\psi \cdot \nabla \eta + \psi_g \cdot \nabla_4 \eta + (\Psi_g + \Upsilon\cdot\Upsilon)\cdot\eta +\psi_g \cdot \psi \cdot \eta +\divergence \nabla_4 \eta\\
&=\psi \cdot \nabla \psi + \psi \dot (\Psi_g + \Upsilon \cdot\Upsilon)+\psi_g \cdot\nabla_4 \eta + \divergence \nabla_4 \eta\\
&=\psi\cdot \nabla \psi + \psi\cdot(\Psi_g + \Upsilon \cdot \Upsilon)+\psi_g \cdot\psi\cdot \psi + \divergence (\psi_g  \cdot\psi -\beta + \Upsilon\cdot\Upsilon).
\end{align*}
Thus,
\begin{equation}\label{mu1}
\nabla_4 (\divergence \eta) = -\divergence \beta + \psi \cdot\nabla \psi + \psi\cdot(\Psi_g + \Upsilon\cdot\Upsilon)+\psi_g \cdot\psi\cdot \psi +\Upsilon \cdot \nabla \Upsilon.
\end{equation}
In view of \eqref{NBE_L_rho}, we have
\begin{equation}\label{mu2}
\nabla_4 \rho = \divergence \beta + \psi\cdot \Psi + \Upsilon \cdot \nabla \Upsilon.
\end{equation}
We also record the exact coefficient for $\beta$ in\eqref{mu2}. Adding up \eqref{mu}, \eqref{mu1} and \eqref{mu2},
\begin{equation*}
\nabla_4 \mu = \psi \cdot (\nabla \psi + \Psi_g +\Upsilon \cdot \Upsilon ) + \psi \cdot \psi \cdot \psi +\Upsilon \cdot \nabla \Upsilon.
\end{equation*}
Hence, $\Theta = \mu$ satisfies \eqref{L_transport_one_derivative}. We also combine \eqref{mu} with \eqref{NSE_curl_eta} to derive,
\begin{equation*}
\divergence \eta =-\mu-\rho = -\Theta - \Psi_g , \,\, \curl\eta = \chibh \wedge \chih +\sigma = \Psi_g + \psi \cdot \psi.
\end{equation*}
So $\psi = \eta$ satisfies \eqref{Hodge_one_derivative}.

For $(\Theta, \psi) = (\mub, \etab)$, we define the mass aspect function $\mub = -\divergence \etab - \rho$. Similarly, we can show that $(\mub, \etab)$ satisfies \eqref{Lb_transport_one_derivative} and \eqref{Hodge_one_derivative}.

For $(\Theta, \psi) = (\kappab, \langle \omegab \rangle)$ or $(\kappa, \langle \omega \rangle)$, we first define $\langle \omega \rangle$ and $\langle \omegab \rangle$. We introduce two auxiliary functions $\omegabt$ and $\omegat$ defined by following transport equations
\begin{equation}
\nabla_3 \omegat = \frac{1}{2}\sigma, \,\omegat = 0 \,\,\text{on}\,\, H_0;\, \nabla_4 \omegabt = \frac{1}{2}\sigma,\, \omegabt = 0 \,\,\text{on}\,\, \Hb_0.
\end{equation}
with initial data $\omegat = 0$ on $H_0$ and $\omegabt = 0$ on $\Hb_0$. We then define
 \begin{equation}
 \langle \omegab \rangle = (\omegab, \omegabt), \quad \langle \omega \rangle = (-\omega, \omegat).
 \end{equation}
\begin{remark}
We enlarge the set of connection coefficients by adding two non-anomalous scalars $\omegabt$ and $\omegat$. We also extend $\mathcal{O}$-norms as well as \eqref{bootstrap} to include them. It is easy to show that the $\Ozerofour$ and $\Ozerotwo$ estimates hold for $\omegabt$ and $\omegat$.
\end{remark}
Since $\Donestar$ acts as $\Donestar \omegabpair = -\nabla \omegab + \nablastar \omegabt$, $\Donestar \omegapair = -\nabla \omega + \nablastar \omegat$,
in view of \eqref{NSE_L_omegab}, we derive
\begin{align}\label{kappab_1}
\nabla_4 \,\Donestar \omegabpair
&=\Donestar (\frac{1}{2}\rho + \psi_g \cdot \psi_g + \Upsilon \cdot \Upsilon, \frac{1}{2}\sigma) + \psi \cdot \nabla(\omegab, \omegabt) + \psi_g \cdot(\nabla_4 \omegab + \nabla_4 \omegabt)\notag\\
& =\frac{1}{2}\,\Donestar (\rho, \sigma) + \psi\cdot(\nabla \psi +\Psi_g +\Upsilon \cdot \Upsilon)  +\psi_g \cdot \psi\cdot \psi +\Upsilon\cdot \nabla \Upsilon.
\end{align}
We have to keep tracking of the exact coefficient of $\Donestar (\rho, \sigma)$. In view of \eqref{NBE_L_betab},
\begin{equation}\label{kappab_2}
\nabla_4 \betab = \Donestar(\rho,\sigma)+\psi \cdot \Psi+\Upsilon \cdot \nabla \Upsilon.
\end{equation}
If we introduce mass aspect function
\begin{equation}\label{kappab}
\kappab = \Donestar \omegabpair -\frac{1}{2}\betab = -\nabla \omegab + \nablastar \omegabt  -\frac{1}{2}\betab.
\end{equation}
By suitably adding up \eqref{kappab_1} and \eqref{kappab_2}, we derive
\begin{equation}
\nabla_4 \kappab =  \psi \cdot (\nabla \psi +\Psi_g +\Upsilon \cdot \Upsilon)  +\psi_g \cdot \psi \cdot \psi +\Upsilon \cdot \nabla \Upsilon,
\end{equation}
which is clearly of type \eqref{L_transport_one_derivative}. By definition, we also have
\begin{equation}
\Donestar \omegabpair = \kappab + \frac{1}{2}\betab,
\end{equation}
which is clearly of type \eqref{Hodge_one_derivative}. Hence, we showed that $(\kappab, \langle \omegab \rangle)$ satisfies \eqref{L_transport_one_derivative} and \eqref{Hodge_one_derivative}.

Similarly, by using the mass aspect function
\begin{equation}\label{kappa}
\kappa = \Donestar \omegapair -\frac{1}{2}\beta= \nabla \omega + \nablastar \omegat  -\frac{1}{2}\beta,
\end{equation}
we can show that $(\kappa, \langle \omega \rangle)$ satisfies \eqref{Lb_transport_one_derivative} and \eqref{Hodge_one_derivative}.

\subsubsection{First Derivative Estimates on connection coefficients}
For $\Theta \in \{\nabla \tr\chi, \mu, \kappab\}$, they satisfy \eqref{L_transport_one_derivative}. On $S_{u,\ub}$, we bound $\psi$ or $\Upsilon$ in $L^\infty_{(sc)}$ norm to derive
\begin{align*}
\|\nabla_4 \Theta\|_{{L^2_{(sc)}}} & \lesssim \delta^{\frac{1}{2}}\Delta_0(\|\nabla \psi\|_{{L^2_{(sc)}}}+ \|\nabla \Upsilon\|_{{L^2_{(sc)}}} + \|\Psi\|_{{L^2_{(sc)}}} + \|\psi_g\|_{{L^2_{(sc)}}}+ \delta^{\frac{1}{2}}\Delta_0\|\Upsilon\|_{{L^2_{(sc)}}})\\
&\lesssim \delta^{\frac{1}{2}}\Delta_0 \|\nabla \psi\|_{{L^2_{(sc)}(S_{u,\ub})}} +\delta^{\frac{1}{2}}\Delta_0 \|\nabla \Upsilon\|_{{L^2_{(sc)}(S_{u,\ub})}}+ C.
\end{align*}
Since $\Theta$'s are trivial along $\Hb_0$, we have
\begin{align}\label{est_L}
\|\Theta\|_{{L^2_{(sc)}(S_{u,\ub})}} &\lesssim \int_{0}^{\ub} \delta^{-1} \|\nabla_4 \Theta\|_{L^2_{(sc)}(u, \ub')} \lesssim C + \delta^{\frac{1}{2}} \Delta_0 \cdot \Oonetwo  +\delta^{\frac{1}{2}} \Delta_0 \int_{0}^{\ub} \delta^{-1} \|\nabla \Upsilon\|_{L^2_{(sc)}(u, \ub')}\notag\\
&\lesssim C + \delta^{\frac{1}{2}}\Delta_0 \cdot \Oonetwo.
\end{align}

For $\Theta \in \{\nabla \trchibt, \mub, \kappa\}$, similarly, we use \eqref{Lb_transport_one_derivative} to derive estimates. The presence of $\tr \chib_0 \cdot \nabla \psi$ in \eqref{Lb_transport_one_derivative} leads to a bit more complicated estimates,
\begin{equation}\label{est_Lb}
\|\Theta\|_{{L^2_{(sc)}(S_{u,\ub})}} \lesssim C + \delta^{\frac{1}{2}}\Delta_0 \cdot \Oonetwo + \int_{0}^{u} \| \nabla \psi \|_{L^2_{(sc)}(u', \ub)} .
\end{equation}

We turn to the Hodge system \eqref{Hodge_one_derivative}, in view of Lemma \ref{estimates_for_hogde_systems}, on $S_{u,\ub}$, we have
\begin{align*}
\| \nabla \psi \|_{{L^2_{(sc)}}} &\lesssim \delta^{\frac{1}{4}}\| K \|^{\frac{1}{2}}_{{L^2_{(sc)}}} \| \psi \|_{{L^4_{(sc)}}} + \| \D \psi \|_{{L^2_{(sc)}}}\lesssim  C + \|\Theta\|_{{L^2_{(sc)}}}  +  \|(\Psi_g,\psi_g)\|_{{L^2_{(sc)}}} + \delta^{\frac{1}{2}}\|(\psi, \Upsilon)\|^2_{{L^4_{(sc)}}}.
\end{align*}
Hence,
\begin{equation} \label{est_hodge}
\| \nabla \psi \|_{{L^2_{(sc)}(S_{u,\ub})}} \lesssim  C + \| \Theta \|_{{L^2_{(sc)}(S_{u,\ub})}}.
\end{equation}
We combine \eqref{est_L}, \eqref{est_Lb} and \eqref{est_hodge} to deduce,
\begin{equation*}
\|\Theta\|_{{L^2_{(sc)}(S_{u,\ub})}} \lesssim C + \delta^{\frac{1}{2}}\Delta_0 \Oonetwo+ \int_{0}^{u} \| \nabla \psi \|_{L^2_{(sc)}(u', \ub)} \lesssim C + \int_{0}^{u}  \| \Theta \|_{L^2_{(sc)}(u', \ub)}+ \delta^{\frac{1}{2}}\Delta_0 \cdot \Oonetwo.
\end{equation*}
Thanks to Gronwall's inequality, we can remove the integral on the right-hand side.
Combined with \eqref{est_hodge} again, we deduce
\begin{equation*}
\Oonetwo \lesssim C + \delta^{\frac{1}{2}}\Delta_0 \cdot \Oonetwo.
\end{equation*}
When $\delta$ is sufficiently small, this yields the following proposition,
\begin{proposition} \label{Oone_estimates}
If $\delta^{\frac{1}{4}} \Delta_0$ is sufficiently small, for $\Theta \in \{\mu, \mub, \kappab, \kappa\}$, we have
\begin{equation*}
\| \Theta \|_{{L^2_{(sc)}(S_{u,\ub})}} + \OSonetwo \lesssim C.
\end{equation*}
\end{proposition}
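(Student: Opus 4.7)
The plan is to exploit the Transport-Hodge structure established in the preceding subsection: each pair $(\Theta,\psi)$ in $\{(\nabla\tr\chi,\chih),(\nabla\trchibt,\chibh),(\mu,\eta),(\mub,\etab),(\kappab,\langle\omegab\rangle),(\kappa,\langle\omega\rangle)\}$ obeys a transport equation for $\Theta$ (either \eqref{L_transport_one_derivative} or \eqref{Lb_transport_one_derivative}) coupled with a Hodge system \eqref{Hodge_one_derivative} for $\psi$. I would split the six mass-aspect-type quantities into the outgoing family $\{\nabla\tr\chi,\mu,\kappab\}$, which satisfies \eqref{L_transport_one_derivative} with trivial data on $\Hb_0$, and the incoming family $\{\nabla\trchibt,\mub,\kappa\}$, which satisfies \eqref{Lb_transport_one_derivative}.

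For the outgoing family I would apply the integral estimate \eqref{integralestimate1sc} along $H_u$. The right-hand side of \eqref{L_transport_one_derivative} is schematically $\psi\cdot(\nabla\psi+\Psi+\Upsilon\cdot\Upsilon)+\tr\chib_0\cdot\psi\cdot\psi_g+\psi\cdot\psi\cdot\psi_g+\nabla\Upsilon\cdot\Upsilon$; the non-derivative pieces are controlled by $C$ via the zeroth-derivative propositions and scale-invariant H\"older's inequality \eqref{Holder}, while every occurrence of $\nabla\psi$ or $\nabla\Upsilon$ is preceded by a $\psi$ or $\Upsilon$, hence gains a factor $\delta^{\frac12}\Delta_0$ when $\psi,\Upsilon$ are placed in $L^\infty_{(sc)}$ via \eqref{bootstrap}. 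This leads to the bound \eqref{est_L}, that is, $\|\Theta\|_{L^2_{(sc)}(S_{u,\ub})}\lesssim C+\delta^{\frac12}\Delta_0\cdot\OSonetwo$. For the incoming family I would use \eqref{integralestimate2sc} instead; the only genuine new term is $\tr\chib_0\cdot\nabla\psi$, which is merely bounded (not small) in scale invariant norms and therefore contributes the extra Gronwall-type integral $\int_0^u\|\nabla\psi\|_{L^2_{(sc)}(u',\ub)}$ displayed in \eqref{est_Lb}.

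To close the coupling I would invoke the second elliptic estimate of Lemma \ref{estimates_for_hogde_systems} on the Hodge system \eqref{Hodge_one_derivative}. Placing $\psi$ in $L^4_{(sc)}$ (controlled by Proposition \ref{OZERO_FOUR_ESTIMATES}) and $K$ in $L^2_{(sc)}$ (controlled in the previous subsection), and bounding the nonlinearities $\psi\cdot\psi$ and $\Upsilon\cdot\Upsilon$ by the H\"older rule \eqref{Holder}, yields \eqref{est_hodge}: $\|\nabla\psi\|_{L^2_{(sc)}(S_{u,\ub})}\lesssim C+\|\Theta\|_{L^2_{(sc)}(S_{u,\ub})}$. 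Substituting back into the incoming estimate produces an inequality of the form $\|\Theta\|_{L^2_{(sc)}(S_{u,\ub})}\lesssim C+\delta^{\frac12}\Delta_0\OSonetwo+\int_0^u\|\Theta\|_{L^2_{(sc)}(u',\ub)}$, and Gronwall in $u$ removes the integral.

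Taking the supremum over $(u,\ub)$ in the resulting pointwise bound and combining once more with \eqref{est_hodge} gives $\OSonetwo\lesssim C+\delta^{\frac12}\Delta_0\cdot\OSonetwo$, and the bootstrap factor $\delta^{\frac12}\Delta_0$ is absorbed by choosing $\delta$ small enough. The only delicate step is the simultaneous closing of the outgoing family bound and the Hodge elliptic bound without letting the $\tr\chib_0\cdot\nabla\psi$ factor on the incoming side degrade the scheme; this is handled precisely by the Gronwall argument above and the fact that Proposition \ref{OZERO_FOUR_ESTIMATES} already provides a $C$-level control of $K$, $\psi$ in the lower-order norms needed on the right-hand side of Lemma \ref{estimates_for_hogde_systems}.
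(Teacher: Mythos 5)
Your proposal reproduces the paper's argument faithfully: the same split into the outgoing family $\{\nabla\tr\chi,\mu,\kappab\}$ (trivial data on $\Hb_0$, integrated via \eqref{integralestimate1sc} to get \eqref{est_L}) and the incoming family $\{\nabla\trchibt,\mub,\kappa\}$ (where $\tr\chib_0\cdot\nabla\psi$ forces the extra Gronwall integral in \eqref{est_Lb}), the same use of Lemma \ref{estimates_for_hogde_systems} to produce \eqref{est_hodge}, and the same final loop — substitute the Hodge bound into the incoming transport bound, apply Gronwall in $u$, then take suprema and absorb $\delta^{\frac12}\Delta_0\cdot\OSonetwo$ for $\delta$ small. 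This matches the paper's proof step for step.
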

As a byproduct of above argument,  we have
\begin{corollary}\label{onederivativeonhhb}
We have
\begin{equation*}
\|(\nabla_4 \mu, \nabla_4 \kappab)\|_{L^2_{(sc)}(H_u)} + \|(\nabla_3\mub, \nabla_3 \kappa) \|_{L^2_{(sc)}{(\Hb_{\ub})}} \lesssim C.
\end{equation*}
\end{corollary}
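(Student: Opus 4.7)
The four bounds are a direct consequence of the transport equations \eqref{L_transport_one_derivative} and \eqref{Lb_transport_one_derivative} established in the previous subsection, combined with Proposition \ref{Oone_estimates} and the zeroth-order estimates already in hand. The plan is to take the $L^2_{(sc)}$ norm on $H_u$ (respectively $\Hb_{\ub}$) of the right-hand side of \eqref{L_transport_one_derivative} (respectively \eqref{Lb_transport_one_derivative}) and to control each schematic term separately via the Fubini-type identities
\begin{equation*}
\|f\|^2_{L^2_{(sc)}(H_u)} \simeq \int_0^{\delta}\delta^{-1}\|f\|^2_{L^2_{(sc)}(S_{u,\ub'})}\,d\ub', \qquad \|f\|^2_{L^2_{(sc)}(\Hb_{\ub})} \simeq \int_0^{u_*}\|f\|^2_{L^2_{(sc)}(S_{u',\ub})}\,du',
\end{equation*}
together with scale-invariant H\"older on each slice.

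For $\Theta\in\{\mu,\kappab\}$, applying this procedure to \eqref{L_transport_one_derivative} yields bounds such as $\|\psi\cdot\nabla\psi\|_{L^2_{(sc)}(H_u)}\lesssim \delta^{1/2}\Delta_0\,\OSonetwo \lesssim C$, $\|\psi\cdot\Psi_g\|_{L^2_{(sc)}(H_u)}\lesssim \delta^{1/2}\Delta_0\,\R \lesssim C$, and $\|\nabla\Upsilon\cdot\Upsilon\|_{L^2_{(sc)}(H_u)}\lesssim \delta^{1/2}\Delta_0\,\Fone \lesssim C$. The cubic pieces $\psi\cdot\Upsilon\cdot\Upsilon$, $\tr\chib_0\cdot\psi\cdot\psi_g$ and $\psi\cdot\psi\cdot\psi_g$ each gain an additional H\"older factor of $\delta^{1/2}$ and are therefore smaller still, so summing all contributions produces $\|\nabla_4\Theta\|_{L^2_{(sc)}(H_u)}\lesssim C$. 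For $\Theta\in\{\mub,\kappa\}$, the same argument applied to \eqref{Lb_transport_one_derivative} gives the analogous bound on $\Hb_{\ub}$; the only new feature is the term $\tr\chib_0\cdot\nabla\psi$, which does not gain a $\delta^{1/2}$, but since $|\tr\chib_0|\lesssim 1$ and $\|\nabla\psi\|_{L^2_{(sc)}(\Hb_{\ub})}\lesssim u_*^{1/2}\,\OSonetwo\lesssim C$ by Proposition \ref{Oone_estimates}, this contribution is already acceptable on its own.

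The one bookkeeping point that deserves attention is checking that the Maxwell derivative $\nabla\Upsilon$ appearing in \eqref{L_transport_one_derivative} (resp.~\eqref{Lb_transport_one_derivative}) is always of the ``good'' type for the hypersurface in question: on $H_u$ it must be $\nabla\Upsilon_4\neq\nabla\alphab_F$, so that it is controlled by $\Fone$, while on $\Hb_{\ub}$ it must be $\nabla\Upsilon_g\neq\nabla\alpha_F$, controlled by $\Foneb$. Inspection of the divergence terms produced in the derivation of \eqref{mu1} (and of the parallel computations for $\kappab$, $\mub$, $\kappa$) confirms that the anomalous Maxwell component never appears on the wrong cone, so no obstruction arises and the estimate closes.
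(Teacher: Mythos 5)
Your proof is correct and follows the same route the paper intends when it says the corollary is "a byproduct of the above argument": one takes the $L^2_{(sc)}(S_{u,\ub})$ bound on $\nabla_4\Theta$, $\nabla_3\Theta$ already derived en route to Proposition \ref{Oone_estimates}, substitutes the now-established $\OSonetwo\lesssim C$ together with the curvature/Maxwell flux norms $\R$, $\Fone$, $\Foneb$, and integrates via Fubini to pass from the slice norm to the hypersurface norm. Your explicit treatment of the $\tr\chib_0\cdot\nabla\psi$ term on $\Hb_{\ub}$ (which does not gain $\delta^{1/2}$ but is harmless since it produces only $\OSonetwo$ integrated over the bounded $u$-range) and your bookkeeping remark that the Maxwell component appearing in $\nabla\Upsilon\cdot\Upsilon$ must be of the flux-compatible type ($\nabla\Upsilon_4$ on $H_u$, $\nabla\Upsilon_g$ on $\Hb_{\ub}$) are both points the paper leaves implicit; the remark is worth keeping, since the signature/scale accounting that rules out the anomalous component on the wrong cone is not entirely obvious from the schematic form of \eqref{L_transport_one_derivative}--\eqref{Lb_transport_one_derivative} alone.
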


\subsubsection{Improved One Derivative Estimates on Maxwell Field}

For $\Upsilon_g$, in view of \eqref{NM_L_alphab}, \eqref{NM_L_rho} and \eqref{NM_L_sigma}, we have $\nabla_4 \Upsilon_g  = \nabla \Upsilon + \psi \cdot \Upsilon$. We commute with $\nabla$ to derive
\begin{equation*}
 \nabla_4 \nabla \Upsilon_g = \psi \cdot \nabla \Upsilon_g + (\beta + \psi_g \cdot \psi + \Upsilon \cdot \Upsilon)\cdot \Upsilon_g + \psi_g \cdot \nabla \Upsilon  + \nabla^2 \Upsilon.
\end{equation*}
Integrating this equation along $H_u$, it is routine to derive
\begin{equation*}
 \|\nabla(\rho_F, \sigma_F, \alphab_F)\|_{L^{2}_{(sc)}(u,\ub)} \lesssim C + \delta^{\frac{1}{2}}\Delta_0 \|\nabla(\alpha_F, \rho_F, \sigma_F, \alphab_F)\|_{L^{2}_{(sc)}(u,\ub)}.
\end{equation*}
Combined with Gronwall's inequality, similar argument along $\Hb_{\ub}$ yields
\begin{equation*}
 \|\nabla(\alpha_F, \rho_F, \sigma_F)\|_{L^{2}_{(sc)}(u,\ub)} \lesssim C + \delta^{\frac{1}{2}}\Delta_0 \|\nabla(\alpha_F, \rho_F, \sigma_F, \alphab_F)\|_{L^{2}_{(sc)}(u,\ub)}.
\end{equation*}
Putting those estimates together, we have
\begin{equation}\label{nabla4nablaF}
\|\nabla \Upsilon\|_{L^{2}_{(sc)}(u,\ub)} +  \|\nabla_3 \nabla \Upsilon_4 \|_{L^{2}_{(sc)}(H_u)} + \|\nabla_4 \nabla\Upsilon_g\|_{L^{2}_{(sc)}(\Hb_{\ub})} \lesssim C.
\end{equation}

We move on to the $L^\infty_{(sc)}$ bound on $\Upsilon$. For $\Upsilon_g$, in view of \eqref{sobolev_Linfinity},
\begin{equation*}
 \|\Upsilon_g\|_{L^{\infty}_{(sc)}} \lesssim \|\Upsilon_g\|^{\frac{1}{2}}_{L^{4}_{(sc)}(u,\ub)}\|\nabla \Upsilon_g\|^{\frac{1}{2}}_{L^{4}_{(sc)}(u,\ub)} + \delta^{\frac{1}{4}} \|\Upsilon_g\|^{\frac{1}{2}}_{L^{4}_{(sc)}(u,\ub)}
\lesssim C \|\nabla \Upsilon_g\|^{\frac{1}{2}}_{L^{4}_{(sc)}(u,\ub)} +  C\delta^{\frac{1}{4}}.
\end{equation*}
According to \eqref{trace_H} and \eqref{nabla4nablaF},
\begin{equation*}
 \|\nabla \Upsilon_g\|_{L^{4}_{(sc)}(u,\ub)} \lesssim C(C\delta^{\frac{1}{2}}+\|\nabla_4 \nabla \Upsilon_g\|_{L^2_{(sc)}(H_u)})^{\frac{1}{2}} \lesssim C.
\end{equation*}
Thus, $ \|\Upsilon_g|_{L^{\infty}_{(sc)}}$ is bounded by $C$. For $\|\alpha_F\|_{L^\infty_{(sc)}}$, in view of \eqref{trace_Hb} and \eqref{nabla4nablaF}, we have
\begin{equation*}
 \|\nabla \alpha_F\|_{L^{4}_{(sc)}(u,\ub)} \lesssim C(C\delta^{\frac{1}{2}}+\|\nabla_3 \nabla \alpha_F\|_{L^2_{(sc)}(H_u)})^{\frac{1}{2}} \lesssim C.
\end{equation*}
To rectify the anomalous behavior of $\alpha_F$, we have to use localized Sobolev estimates \eqref{sobolev_local},
\begin{equation*}
 \|\alpha_F\|_{L^{\infty}_{(sc)}(u,\ub)} \lesssim \sup_{{}^{\delta}\!S_{u,\ub}}(\|\nabla \alpha_F\|_{L^4_{(sc)}({}^{2\delta}\!S_{u,\ub})}+ \|\alpha_F\|_{L^4_{(sc)}({}^{2\delta}\!S_{u,\ub})})\lesssim C + \sup_{{}^{\delta}\!S_{u,\ub} \subset S_{u,\ub}} \|\alpha_F\|_{L^4_{(sc)}({}^{2\delta}\!S_{u,\ub})}.
\end{equation*}
Thanks to \eqref{local alpha_F}, we have the desired estimates on $\alpha_F$. As conclusions,
\begin{proposition}\label{prop_L_infty_Psi_F}
 If $\delta$ is sufficiently small, we have
\begin{equation*}
\|\Upsilon\|_{L^{\infty}_{(sc)}} + \|\nabla \Upsilon\|_{L^{4}_{(sc)}(u,\ub)}  \lesssim C.
\end{equation*}
Moreover,
\begin{equation*}
  \delta^{\frac{1}{4}}\|\nabla_3 \alpha_F\|_{L^{4}_{(sc)}(u,\ub)}+ \|(\nabla_4\rho_F, \nabla_3 \rho_F, \nabla_4 \sigma_F, \nabla_3 \sigma_F, \nabla_4 \alphab_F)\|_{L^{4}_{(sc)}(u,\ub)}  \lesssim C.
\end{equation*}
\end{proposition}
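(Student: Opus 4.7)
The plan is to establish the claimed bounds in two stages: first upgrade the $L^2_{(sc)}$ control on $\nabla \Upsilon$, both on spheres and on the null cones (cf.\ \eqref{nabla4nablaF}), to $L^4_{(sc)}(S)$ and then $L^\infty_{(sc)}(S)$ via the trace and Sobolev inequalities of Lemma \ref{sobolev_trace_estimates}; then read off the transversal-derivative bounds in the ``moreover'' clause algebraically from the null Maxwell equations \eqref{NM_L_alphab}--\eqref{NM_Lb_sigma}.

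For the first stage I would commute $\nabla$ into each Maxwell equation. Along $H_u$, commuting into \eqref{NM_L_alphab}, \eqref{NM_L_rho}, \eqref{NM_L_sigma} yields a schematic transport equation
\[
\nabla_4 \nabla \Upsilon_g = \nabla^2 \Upsilon_4 + \psi_g \cdot \nabla \Upsilon + (\beta + \psi_g \cdot \psi + \Upsilon \cdot \Upsilon) \cdot \Upsilon_g,
\]
and analogously along $\Hb_{\ub}$ commuting into \eqref{NM_Lb_alpha}, \eqref{NM_Lb_rho}, \eqref{NM_Lb_sigma} gives $\nabla_3 \nabla \Upsilon_4 = \tr\chib_0 \cdot \nabla \Upsilon_4 + \nabla^2 \Upsilon_g + (\text{l.o.t.})$. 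The $\nabla^2 \Upsilon_4$ and $\nabla^2 \Upsilon_g$ terms are controlled in $L^2_{(sc)}(H_u)$ and $L^2_{(sc)}(\Hb_{\ub})$ by $\Ftwo$ and $\Ftwob$ respectively, both $\lesssim C$. Integrating in $L^2_{(sc)}$ along each null direction, using \eqref{bootstrap} together with the previously established $\OSzerofour$, $\OSonetwo$, $\Rzero$, $\Fone$ bounds, and absorbing the $\tr\chib_0 \cdot \nabla \Upsilon_4$ factor by Gronwall on the incoming side, I would recover \eqref{nabla4nablaF}; the cross-products with $\nabla \Upsilon$ itself on the right-hand side are absorbed thanks to the smallness factor $\delta^{1/2}\Delta_0$ coming from \eqref{Holder} and the bootstrap.

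With \eqref{nabla4nablaF} in hand, the trace inequality \eqref{trace_H} applied to $\nabla \Upsilon_g$ and \eqref{trace_Hb} applied to $\nabla \alpha_F$ yield $\|\nabla \Upsilon\|_{L^4_{(sc)}(u,\ub)} \lesssim C$. The $L^\infty_{(sc)}$ bound on $\Upsilon_g$ then follows from the sphere Sobolev \eqref{sobolev_Linfinity}; for $\alpha_F$, the anomaly forces us to use the localized Sobolev \eqref{sobolev_local} together with the localized $L^4$ bound \eqref{local alpha_F}, which rectifies the $\delta^{-1/4}$ loss on discs of radius $\delta^{1/2}$. For the ``moreover'' clause I would read the Maxwell equations as algebraic identities for the transversal derivatives: \eqref{NM_Lb_alpha} gives $\nabla_3 \alpha_F = \nabla \Upsilon_g + \psi \cdot \Upsilon + \tr\chib_0 \cdot \alpha_F$ schematically, and taking $L^4_{(sc)}$ with $\|\alpha_F\|_{L^4_{(sc)}} \lesssim \delta^{-1/4} C$ accounts precisely for the factor $\delta^{1/4}$ on the left-hand side; equations \eqref{NM_L_rho}, \eqref{NM_Lb_rho}, \eqref{NM_L_sigma}, \eqref{NM_Lb_sigma}, \eqref{NM_L_alphab} then bound $\nabla_4 \rho_F, \nabla_3 \rho_F, \nabla_4 \sigma_F, \nabla_3 \sigma_F, \nabla_4 \alphab_F$ in $L^4_{(sc)}$ by $C$ without any $\delta$-loss, since none of the components on the left are anomalous.

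The principal obstacle is the closure in the first stage: the commuted transport equations reintroduce $\nabla \Upsilon$ nonlinearly through products with $\psi$ or $\Upsilon$, and these must be absorbed into the left-hand side using the smallness factor $\delta^{1/2}\Delta_0$ supplied by \eqref{Holder} and \eqref{bootstrap}. The second delicate point is that the anomaly of $\alpha_F$ forces the systematic use of localized estimates whenever it enters an $L^\infty$ bound, requiring us to carry both the global and the $\delta$-localized $L^4$ controls in parallel throughout the argument.
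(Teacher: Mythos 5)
Your proposal matches the paper's own proof in both structure and the specific tools used: commute $\nabla$ into the null Maxwell equations, integrate the resulting transport systems to obtain the $L^2_{(sc)}$ bounds of \eqref{nabla4nablaF} (with $\Ftwo$, $\Ftwob$ controlling $\nabla^2 \Upsilon$ and Gronwall absorbing the $\tr\chib_0$ factor on the incoming side), then apply the trace inequalities \eqref{trace_H}/\eqref{trace_Hb} for $L^4_{(sc)}$ and the sphere/localized Sobolev inequalities \eqref{sobolev_Linfinity}/\eqref{sobolev_local} for $L^\infty_{(sc)}$, with \eqref{local alpha_F} rectifying the anomaly of $\alpha_F$, and finally read the ``moreover'' bounds off the Maxwell equations algebraically. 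This is precisely the route taken in the paper, at essentially the same level of detail.
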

This is an immediate consequence of null Maxwell equations. We remark that, compared to \eqref{bootstrap}, we have obtained better bound on $\|\Upsilon\|_{L^{\infty}_{(sc)}}$.

\subsection{Second Derivative Estimates}

\subsubsection{Second Angular Derivative Estimates on connection coefficients}
For $\Theta \in \{\nabla \tr\chi, \mu, \kappab\}$, we commute angular derivative with \eqref{L_transport_one_derivative} to derive
\begin{align*}
\nabla_4 \nabla \Theta &=  \nabla[\psi \cdot(\nabla \psi + \Psi + \Upsilon \cdot \Upsilon) + \psi \cdot\psi_g +\psi\cdot \psi\cdot \psi_g + \Upsilon \cdot \nabla\Upsilon ] + [\nabla_4, \nabla] \Theta\\
&= \psi \cdot (\nabla \psi + \nabla^2 \psi+ \nabla\Psi + \nabla \Theta + \Upsilon\cdot \nabla \Upsilon) + \nabla \psi \cdot( \nabla \psi + \Psi + \Upsilon\cdot \Upsilon )\\
&\quad + \psi\cdot(\psi \cdot\nabla \psi + \psi \cdot\Theta)  + (\Psi + \Upsilon \cdot\Upsilon)\cdot\Theta  + (\Upsilon\cdot \nabla^2 \Upsilon + \nabla \Upsilon \cdot\nabla \Upsilon) + \psi \cdot\nabla_4 \Theta\\
&= T_1 + T_2 + T_3 + T_4 + T_5 + T_6.
\end{align*}
Similarly, for $\Theta \in \{\nabla \trchibt, \mub, \kappa\}$, we have
\begin{align*}
\nabla_3 \nabla \Theta &= \psi\cdot(\nabla \psi + \nabla^2 \psi+ \nabla\Psi + \nabla \Theta + \Upsilon \cdot\nabla \Upsilon) + \nabla \psi\cdot( \nabla \psi + \Psi + \Upsilon\cdot \Upsilon )\\
&\qquad + \psi\cdot(\psi \cdot\nabla \psi + \psi \Theta)  + (\Psi + \Upsilon \cdot\Upsilon)\cdot\Theta  + (\Upsilon \cdot\nabla^2 \Upsilon + \nabla \Upsilon \cdot\nabla \Upsilon) + \psi \nabla_3 \Theta\\
&\qquad \tr\chib_0 \cdot(\nabla \psi + \nabla^2 \psi+ \nabla\Psi + \nabla \Theta + \Upsilon \cdot\nabla \Upsilon +\psi \cdot\nabla \psi + \psi \cdot \Theta )\\
&= S_1 + S_2 + S_3 + S_4 + S_5 + S_6 + S_7.
\end{align*}
where the additional term $S_7$ collects all terms involving $\tr\chib_0$.

We also apply $\D^*$ on Hodge system \eqref{Hodge_one_derivative} to derive
\begin{align*}
\triangle \psi &= K\cdot\psi + \D^* \D\psi =K \psi + \D^*( \Theta + \Psi_g + \tr\chib_0 \cdot \psi_g + \psi \cdot \psi + \Upsilon \cdot \Upsilon)\\
&=K\cdot \psi + \nabla \Theta + \nabla \Psi + \nabla \psi + \psi\cdot \nabla \psi + \Upsilon \cdot \nabla \Upsilon.
\end{align*}

Therefore, we obtain a transport-Hogde systems,
\begin{equation}\label{L_second}
\nabla_4 \nabla \Theta = T_1 + T_2 + T_3 + T_4 + T_5 + T_6, \qquad \Theta \in \{\nabla \tr\chi, \mu, \kappab\},
\end{equation}
\begin{equation}\label{Lb_second}
\nabla_3 \nabla \Theta = S_1 + S_2 + S_3 + S_4 + S_5 + S_6 + S_7, \qquad \Theta \in \{\nabla \trchibt, \mub, \kappa\},
\end{equation}
\begin{equation}\label{hodge_second}
\triangle \psi=K \cdot \psi + \nabla \Theta + \nabla \Psi + \nabla \psi + \psi\cdot \nabla \psi + \Upsilon\cdot \nabla \Upsilon.
\end{equation}

We start with \eqref{Lb_second} which are slightly more difficult than \eqref{L_second} because of the presence of $\tr\chib_0$. Estimates on \eqref{L_second} can be derived in a similar way. In \eqref{Lb_second}, we replace $\Theta$ by $\Theta = \nabla \psi +\Psi$ and replace $\nabla \Theta$ by $\nabla \Theta = \nabla^2\psi + \nabla \Psi$. By H\"older's inequality and the estimates derived so far, on $S_{u,\ub}$ we have
\begin{align*}
\|(S_1, S_3, S_7)\|_{{L^2_{(sc)}}} 
&\lesssim C + \|\nabla^2 \psi\|_{{L^2_{(sc)}}} + \|\nabla\Psi\|_{{L^2_{(sc)}}},
\end{align*}
\begin{equation*}
\|(S_2, S_4)\|_{{L^2_{(sc)}}}\lesssim \delta^{\frac12}( \| \nabla\psi\|_{{L^4_{(sc)}}}+\| \Psi\|_{{L^4_{(sc)}}})( \| \nabla\psi\|_{{L^4_{(sc)}}} +  \|\Psi\|_{{L^4_{(sc)}}} + \delta^\frac{1}{2}C),
\end{equation*}
\begin{equation*}
\|S_5\|_{{L^2_{(sc)}}} \lesssim \delta^{\frac12}(\|\Upsilon\|_{L^{\infty}_{(sc)}} \| \nabla^2 \Upsilon \|_{{L^2_{(sc)}}} + \|\nabla\Upsilon\|^2_{{L^4_{(sc)}}}) \lesssim C \delta^{\frac12}( \| \nabla^2 \Upsilon \|_{{L^2_{(sc)}}} + 1),
\end{equation*}
\begin{equation*}\label{s6}
\|S_6\|_{{L^2_{(sc)}(S_{u,\ub})}} \lesssim \delta^{\frac12}\Delta_0 \|\nabla_3 \Theta\|_{{L^2_{(sc)}(S_{u,\ub})}}.
\end{equation*}
In view of Lemma \ref{sobolev_trace_estimates}, on $S_{u,\ub}$ we have
\begin{align*}
\|\nabla \psi\|_{{L^4_{(sc)}}} & \lesssim \|\nabla \psi\|^{\frac{1}{2}}_{{L^2_{(sc)}}} \|\nabla^2 \psi\|^{\frac{1}{2}}_{{L^2_{(sc)}}}+\delta^{\frac{1}{4}}\|\nabla \psi\|_{{L^2_{(sc)}}}\lesssim \|\nabla^2 \psi\|^{\frac{1}{2}}_{{L^2_{(sc)}}} + C\delta^{\frac{1}{4}},\\
\|\Psi\|_{{L^4_{(sc)}}} & \lesssim \|\Psi\|^{\frac{1}{2}}_{{L^2_{(sc)}}} \|\nabla \Psi\|^{\frac{1}{2}}_{{L^2_{(sc)}}}+\delta^{\frac{1}{4}}\|\Psi\|_{{L^2_{(sc)}}}\lesssim C\delta^{-\frac{1}{4}}\|\nabla \Psi\|^{\frac{1}{2}}_{{L^2_{(sc)}}} + C\delta^{-\frac{1}{4}}.
\end{align*}
Thus,
\begin{equation}\label{si}
\|(S_1, S_2, S_3, S_4,S_7)\|_{{L^2_{(sc)}(S_{u,\ub})}} \lesssim C + \|\nabla^2 \psi\|_{{L^2_{(sc)}(S_{u,\ub})}} + \|\nabla\Psi\|_{{L^2_{(sc)}(S_{u,\ub})}}.
\end{equation}
We observe that $S_6$ can be estimated by the Corollary \ref{onederivativeonhhb}. Thus, for $\Theta \in \{\nabla \trchibt, \mub, \kappa\}$, we have
\begin{align}\label{eq1}
\|\nabla\Theta\|_{{L^2_{(sc)}(S_{u,\ub})}} &\lesssim \|\nabla\Theta\|_{L^2_{(sc)}(0, \ub)} + \int_0^u \|\nabla_3 \nabla \Theta\|_{L^2_{(sc)}(u', \ub)} \notag\\
&\lesssim C + \int_0^u \|\nabla^2 \psi\|_{L^2_{(sc)}(u', \ub)}+ \|\nabla \Psi\|_{L^2_{(sc)}(u', \ub)}+ \|\nabla^2 \Upsilon\|_{L^2_{(sc)}(u', \ub)}\notag\\
&\lesssim C + \int_0^u \|\nabla^2 \psi\|_{L^2_{(sc)}(u', \ub)}.
\end{align}

Similarly, for $\Theta \in \{\nabla \tr\chi, \mu, \kappab\}$, we can use \eqref{L_second} to derive
\begin{equation}\label{eq2}
\|\nabla\Theta\|_{{L^2_{(sc)}(S_{u,\ub})}} \lesssim C + \delta^{-1}\int_0^{\ub} \|\nabla^2 \psi\|_{L^2_{(sc)}(u, \ub')}.
\end{equation}

We make use of elliptic estimates on \eqref{hodge_second} on $S_{u,\ub}$ to derive,
\begin{align*}
\|\nabla^2 \psi\|_{{L^2_{(sc)}}} &\lesssim \delta^{\frac{1}{2}}\Delta_0 \|(K,\psi,\nabla \Upsilon)\|_{{L^2_{(sc)}}} + \|\nabla \Theta\|_{{L^2_{(sc)}}} +\|\nabla \Psi\|_{{L^2_{(sc)}}}+ \|\nabla \psi\|_{{L^2_{(sc)}}}\lesssim C + \|\nabla \Theta\|_{{L^2_{(sc)}}}.
\end{align*}
We plug it in \eqref{eq1} and \eqref{eq2} to derive
\begin{align*}
\|\nabla\Theta\|_{{L^2_{(sc)}(S_{u,\ub})}}&\lesssim C + \int_0^u \|\nabla \Theta \|_{L^2_{(sc)}(u', \ub)}du' \qquad \text{for}\quad \Theta \in \{\nabla \trchibt, \mub, \kappa\},\\
\|\nabla\Theta\|_{{L^2_{(sc)}(S_{u,\ub})}}&\lesssim C +\delta^{-1} \int_0^{\ub} \|\nabla \Theta\|_{L^2_{(sc)}(u, \ub')}d\ub' \qquad \text{for}\quad \Theta \in \{\nabla \tr\chi, \mu, \kappab\}.
\end{align*}
Thus, Gronwall inequality yields the following proposition,
\begin{proposition}\label{secondangularderivative}
If $\delta$ is sufficiently small, then for $\psi \in \{\tr\chi, \trchibt,\chih,\chibh,\eta, \etab, \omega, \omegab\}$ and $\Theta \in \{\nabla \tr \chi, \nabla \trchibt, \mu, \mub, \kappa, \kappab\}$, we have
\begin{equation*}
\|\nabla\Theta\|_{{L^2_{(sc)}(S_{u,\ub})}} + \|\nabla^2\psi\|_{L^2_{(sc)}(H_u)} +  \|\nabla^2\psi\|_{L^2_{(sc)}(\Hb_{\ub})}\lesssim C.
\end{equation*}
\end{proposition}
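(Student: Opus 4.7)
The plan is to read off the conclusion from the coupled transport-Hodge system $\{\eqref{L_second},\eqref{Lb_second},\eqref{hodge_second}\}$ derived just before the statement. The strategy is to view $\|\nabla\Theta\|_{L^2_{(sc)}(S)}$ and $\|\nabla^2\psi\|_{L^2_{(sc)}(S)}$ as the two unknowns in a coupled Gronwall argument: the transport equations \eqref{L_second}--\eqref{Lb_second} bound the first in terms of the integral of the second, while the Hodge equation \eqref{hodge_second} bounds the second in terms of the first. Matching $\Theta \leftrightarrow \psi$ pairs is fixed by the list in the previous subsection, so both $\psi_4$-type quantities $(\tr\chi,\chih,\eta,\omegab$, together with $\omegat)$ and $\psi_3$-type quantities $(\trchibt,\chibh,\etab,\omega$, together with $\omegabt)$ are handled symmetrically, the former by integration along $H_u$ and the latter along $\Hb_{\ub}$.

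First I would estimate each of the schematic nonlinear terms $T_i,S_i$ in \eqref{L_second}--\eqref{Lb_second}. The crucial point is that every such term, after applying scale-invariant H\"older and the Sobolev embedding \eqref{sobolev_L4}, falls under the general bound
\begin{equation*}
\|T_i\|_{L^2_{(sc)}(S)} + \|S_i\|_{L^2_{(sc)}(S)} \lesssim C + \|\nabla^2\psi\|_{L^2_{(sc)}(S)} + \|\nabla\Psi\|_{L^2_{(sc)}(S)} + \delta^{\frac12}\Delta_0\|\nabla_{3,4}\Theta\|_{L^2_{(sc)}(S)},
\end{equation*}
where the last piece (coming from $T_6$, $S_6$) is already under control on $H_u$ or $\Hb_{\ub}$ by Corollary \ref{onederivativeonhhb}. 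Here I must use Proposition \ref{OZERO_FOUR_ESTIMATES}, Proposition \ref{Oone_estimates}, Proposition \ref{prop_L_infty_Psi_F} and the $L^2_{(sc)}$ bounds on curvature and Maxwell components; anomalies are never multiplied against anomalies because the schematic form carefully forces a $\psi_g$, $\Psi_g$ or $\Upsilon_g$ to sit opposite every $\chih,\chibh,\alpha,\alpha_F$.

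Next I would apply the second-order elliptic estimate \eqref{elliptic_Hogde_2nd_order} to \eqref{hodge_second}. Using $\|K\|_{L^2_{(sc)}}\lesssim C$, $\|\psi\|_{L^\infty_{(sc)}}\lesssim \Delta_0$, and the previously established $\|\nabla\Psi\|_{L^2_{(sc)}}$, $\|\nabla\Upsilon\|_{L^2_{(sc)}}$ bounds (which follow from the definitions of $\R,\Rb,\F,\Fb$ and Proposition \ref{prop_L_infty_Psi_F}), this gives
\begin{equation*}
\|\nabla^2\psi\|_{L^2_{(sc)}(S)} \lesssim C + \|\nabla\Theta\|_{L^2_{(sc)}(S)}.
\end{equation*}
Substituting this back into \eqref{eq1} and \eqref{eq2} produces linear Gronwall-type inequalities in $\|\nabla\Theta\|_{L^2_{(sc)}(S_{u,\ub})}$, in $u$ for the $\psi_3$-system and in $\ub$ (scaled by $\delta^{-1}$, so literally in $\ub/\delta\in[0,1]$) for the $\psi_4$-system. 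Either form closes by Gronwall, yielding $\|\nabla\Theta\|_{L^2_{(sc)}(S)}\lesssim C$, and then the elliptic inequality gives the pointwise-in-$(u,\ub)$ bound on $\|\nabla^2\psi\|_{L^2_{(sc)}(S)}$; integrating this in $\ub\in[0,\delta]$ respectively $u\in[0,u_*]$ produces the $L^2_{(sc)}(H_u)$ and $L^2_{(sc)}(\Hb_{\ub})$ norms in the statement.

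The main obstacle is bookkeeping rather than analysis: one must be sure that no $\delta^{-\frac12}$ anomaly survives in the estimates for $T_i$ and $S_i$ (in particular the double-anomalous pairings $|\chibh|^2$, $\chih\cdot\alpha$ etc. must be properly saved by pairing with a good $\psi_g$ or by the $\delta^{\frac12}$ gain in scale-invariant H\"older), and that the $\tr\chib_0\cdot\nabla^2\psi$ term sitting inside $S_7$ is absorbed only through Gronwall in $u$, not by smallness. Once these structural features are respected, the proof is a direct assembly of the pieces already established above.
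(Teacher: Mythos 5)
Your proposal is correct and follows essentially the same route as the paper: estimate the $T_i,S_i$ terms in the commuted transport equations \eqref{L_second}--\eqref{Lb_second}, use \eqref{elliptic_Hogde_2nd_order} on \eqref{hodge_second} to get $\|\nabla^2\psi\|_{L^2_{(sc)}(S)}\lesssim C+\|\nabla\Theta\|_{L^2_{(sc)}(S)}$, substitute into the integrated transport inequalities, close by Gronwall, and integrate the pointwise $S$-bound to get the $H_u$ and $\Hb_{\ub}$ norms. (One small bookkeeping slip: in your setup you have $\omegat$ with the $\nabla_4$ group and $\omegabt$ with the $\nabla_3$ group, whereas $\omegabt$ lives in $\langle\omegab\rangle$ which pairs with $\kappab$ under the $\nabla_4$ transport, and $\omegat$ lives in $\langle\omega\rangle$ which pairs with $\kappa$ under the $\nabla_3$ transport; the symmetry of the argument means this doesn't affect anything.)
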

As a corollary of Proposition \ref{secondangularderivative} and Lemma \ref{sobolev_trace_estimates}, we have
\begin{corollary}[$\Oonefour$ Estimates]\label{Oonefour}
If $\delta$ is sufficiently small, then
\begin{equation*}
\Oonefour \lesssim C.
\end{equation*}
\end{corollary}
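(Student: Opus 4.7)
The plan is to upgrade the $L^2_{(sc)}(S)$ control of $\nabla\psi$ already obtained in Proposition \ref{Oone_estimates} to an $L^4_{(sc)}(S)$ bound, by combining the Sobolev inequality on $S_{u,\ub}$ with the second-order elliptic bounds from Proposition \ref{secondangularderivative}.

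First I would apply the Sobolev inequality \eqref{sobolev_L4} from Lemma \ref{sobolev_trace_estimates} to the horizontal tensor $\nabla\psi$, obtaining
\[
\|\nabla\psi\|_{L^4_{(sc)}(S)} \lesssim \|\nabla\psi\|^{1/2}_{L^2_{(sc)}(S)}\,\|\nabla^2\psi\|^{1/2}_{L^2_{(sc)}(S)} + \delta^{1/4}\|\nabla\psi\|_{L^2_{(sc)}(S)}.
\]
The first-order factor is immediately bounded by $C$ via the $\OSonetwo \lesssim C$ bound of Proposition \ref{Oone_estimates}, so everything hinges on controlling $\|\nabla^2\psi\|_{L^2_{(sc)}(S)}$.

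To handle the second-order factor I would invoke the elliptic estimate \eqref{elliptic_Hogde_2nd_order} applied to the Hodge system \eqref{Hodge_one_derivative}, schematically $\D\psi = \Theta + \Psi_g + \tr\chib_0 \cdot \psi_g + \psi\cdot\psi + \Upsilon\cdot\Upsilon$. This reduces matters to $\|\nabla F\|_{L^2_{(sc)}(S)}$, where the principal piece $\|\nabla\Theta\|_{L^2_{(sc)}(S)}$ is bounded by $C$ thanks to Proposition \ref{secondangularderivative}, the quadratic terms $\nabla(\psi\cdot\psi)$ and $\nabla(\Upsilon\cdot\Upsilon)$ pick up a $\delta^{1/2}$ gain via scale-invariant H\"older applied with $\OSzeroinfinity$, $\OSonetwo$ and $\OSzerofour$ bounds, and the linear $\tr\chib_0\cdot\nabla\psi_g$ piece is absorbed by $\OSonetwo$. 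Substituting the resulting bound $\|\nabla^2\psi\|_{L^2_{(sc)}(S)} \lesssim C$ back into the Sobolev inequality yields $\|\nabla\psi\|_{L^4_{(sc)}(S)} \lesssim C$, which is exactly $\Oonefour \lesssim C$.

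The main technical point I anticipate is the curvature contribution $\|\nabla\Psi_g\|_{L^2_{(sc)}(S)}$ sitting inside $\|\nabla F\|_{L^2_{(sc)}(S)}$, since the available norms $\R,\Rb$ only control $\nabla\Psi$ along the null hypersurfaces $H_u$ and $\Hb_{\ub}$ rather than on individual spheres. This is precisely the situation already handled in the proof of Proposition \ref{secondangularderivative}, and I would import the same treatment here, absorbing this term into $C$ either directly from the finiteness of $\R$ and $\Rb$ or by inserting a short trace-type argument based on \eqref{trace_H}--\eqref{trace_Hb}. Once this subtlety is dispatched, the remainder of the argument is routine scale-invariant bookkeeping.
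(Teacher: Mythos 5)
Your main chain of reasoning has a genuine structural gap. The Sobolev inequality \eqref{sobolev_L4} applied to $\nabla\psi$ requires a \emph{per-sphere} bound on $\|\nabla^2\psi\|_{L^2_{(sc)}(S_{u,\ub})}$, but Proposition~\ref{secondangularderivative} only supplies $\|\nabla^2\psi\|_{L^2_{(sc)}(H_u)}$ and $\|\nabla^2\psi\|_{L^2_{(sc)}(\Hb_{\ub})}$, i.e.\ bounds along null hypersurfaces, not on individual spheres — and this is by design, not by omission. When you try to manufacture a per-sphere bound by running the elliptic estimate \eqref{elliptic_Hogde_2nd_order} on \eqref{Hodge_one_derivative}, the term $\|\nabla\Psi_g\|_{L^2_{(sc)}(S_{u,\ub})}$ appearing in $\|\nabla F\|_{L^2_{(sc)}(S)}$ cannot be absorbed into $C$: the norms $\R,\Rb$ control $\nabla\Psi_g$ in $L^2_{(sc)}$ \emph{along} $H_u$ or $\Hb_{\ub}$, so on a single sphere $\|\nabla\Psi_g\|_{L^2_{(sc)}(S)}$ may be as large as $\delta^{-\frac{1}{2}}C$. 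Your first fallback (``directly from the finiteness of $\R$ and $\Rb$'') therefore fails, and your second (a trace estimate aimed at $\nabla\Psi_g$) would in turn require control of $\nabla^2\Psi$ and $\nabla_N\nabla\Psi$ along null hypersurfaces, which is not available. You also cannot claim that the treatment in Proposition~\ref{secondangularderivative} handles $\|\nabla\Psi_g\|_{L^2_{(sc)}(S)}$: there, the offending term only ever appears inside a $u$-- or $\ub$--integral where it integrates back to $\R$ or $\Rb$; no per-sphere bound is extracted.

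The correct route — and the paper's intended one, executed explicitly in the proof of Proposition~\ref{close_bootstrap} — is to bypass the per-sphere elliptic estimate entirely and apply the trace inequality \eqref{trace_H} (resp.\ \eqref{trace_Hb}) \emph{directly to $\nabla\psi$}:
\begin{equation*}
\|\nabla\psi\|^2_{L^4_{(sc)}(S)} \lesssim \bigl(\delta^{\frac{1}{2}}\|\nabla\psi\|_{L^2_{(sc)}(H)}+\|\nabla^2\psi\|_{L^2_{(sc)}(H)}\bigr)\bigl(\delta^{\frac{1}{2}}\|\nabla\psi\|_{L^2_{(sc)}(H)}+\|\nabla_4\nabla\psi\|_{L^2_{(sc)}(H)}\bigr).
\end{equation*}
Here $\|\nabla^2\psi\|_{L^2_{(sc)}(H)}\lesssim C$ comes exactly from Proposition~\ref{secondangularderivative}, and $\|\nabla_4\nabla\psi\|_{L^2_{(sc)}(H)}\lesssim C$ follows for $\psi\in\{\tr\chi,\chih,\eta,\omegab,\trchibt,\chibh\}$ by commuting $\nabla$ with the $\nabla_4$ null structure equations (the curvature source there is $\Psi_4\neq\alphab$, whose $\nabla$--derivative is controlled on $H_u$ by $\Rone$); one treats $\psi\in\{\etab,\omega\}$ via \eqref{trace_Hb} and the $\nabla_3$ equations on $\Hb_{\ub}$ instead. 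This avoids ever needing $\|\nabla\Psi_g\|$ on a single sphere. Your ``short trace-type argument'' hints in this direction, but it targets the wrong object — the trace inequality must be applied to $\nabla\psi$, not to $\nabla\Psi_g$ — and without that shift the argument does not close.
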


To predict formation of trapped surfaces, we need a slightly refined estimate on $\nabla^2 \eta$.
\begin{proposition}\label{refined estimates on nabla 2 eta} If $\delta$ is sufficiently small, then
\begin{align*}
\|\nabla^2 \eta\|_{L^2_{(sc)}(H_u)} &\lesssim \|\nabla \rho\|_{L^2_{(sc)}(H_u)} + \|\nabla \sigma\|_{L^2_{(sc)}(H_u)} + \delta^{\frac{1}{4}}C,\\
\|\nabla^2 \eta\|_{L^2_{(sc)}(\Hb_{\ub})} &\lesssim \|\nabla \rho\|_{L^2_{(sc)}(\Hb_{\ub})} + \|\nabla \sigma\|_{L^2_{(sc)}(\Hb_{\ub})} + \delta^{\frac{1}{4}}C.
\end{align*}
\end{proposition}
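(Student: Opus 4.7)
The plan is to use the first-order Hodge system for $\eta$ derived above, namely $\divergence\eta=-\mu-\rho$ and $\curl\eta=\sigma+\chibh\wedge\chih$, combined with the elliptic estimate \eqref{elliptic_Hogde_2nd_order}. On each fixed sphere $S_{u,\ub}$ this gives, schematically,
\begin{equation*}
\|\nabla^2\eta\|_{L^2_{(sc)}(S)} \lesssim \|\nabla\mu\|_{L^2_{(sc)}(S)}+\|\nabla\rho\|_{L^2_{(sc)}(S)}+\|\nabla\sigma\|_{L^2_{(sc)}(S)}+\|\nabla(\chibh\wedge\chih)\|_{L^2_{(sc)}(S)}+\text{l.o.t.},
\end{equation*}
where the lower-order terms arise from the $\delta^{1/2}\|K\|\|\eta\|_{L^\infty_{(sc)}}+\delta^{1/4}\|K\|^{1/2}\|\nabla\eta\|_{L^4_{(sc)}}$ part of \eqref{elliptic_Hogde_2nd_order} and are already bounded by $C$ via Theorem~A and Corollary~\ref{Oonefour}. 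Squaring and integrating in $\ub$ (for $H_u$) or in $u$ (for $\Hb_{\ub}$) by Minkowski's inequality then yields the corresponding $L^2_{(sc)}(H_u)$ and $L^2_{(sc)}(\Hb_{\ub})$ bounds with the desired $\nabla\rho,\nabla\sigma$ terms on the right, modulo the error produced by $\nabla\mu$, $\nabla(\chibh\wedge\chih)$, and the lower-order pieces.

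The central point is to show that these error contributions are in fact $O(\delta^{1/4}C)$. For $\nabla(\chibh\wedge\chih)$, scale-invariant H\"older gives $\|\nabla(\chibh\wedge\chih)\|_{L^2_{(sc)}(H_u)}\lesssim \delta^{1/2}(\|\chih\|_{L^\infty_{(sc)}}\|\nabla\chibh\|_{L^2_{(sc)}(H_u)}+\|\chibh\|_{L^\infty_{(sc)}}\|\nabla\chih\|_{L^2_{(sc)}(H_u)})$, and since one of the two factors is an anomaly one loses a $\delta^{-1/2}$ but the other factor is good, so the net bound is $\delta^{1/4}C$ (and identically on $\Hb_{\ub}$, using the $\chibh$ anomaly instead). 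For $\nabla\mu$, I exploit the fact, recorded in the proof of Proposition~\ref{Oone_estimates}, that $\mu$ vanishes on $\Hb_0$, together with the transport equation \eqref{L_second}
\begin{equation*}
\nabla_4\nabla\mu = T_1+T_2+T_3+T_4+T_5+T_6.
\end{equation*}
Integrating from $\ub=0$ along the outgoing direction and applying scale-invariant H\"older, each $T_i$ carries at least one $\psi$, $\psi_g$, $\Upsilon$ or $\Upsilon_g$ factor controlled in $L^\infty_{(sc)}$ by $C$ (via Theorem~A and Proposition~\ref{prop_L_infty_Psi_F}), so that the $L^2_{(sc)}(H_u)$ norms of $T_1,\ldots,T_6$ are each bounded by $\delta^{1/2}C$; this gives $\|\nabla\mu\|_{L^2_{(sc)}(H_u)}\lesssim\delta^{1/4}C$ after a Cauchy--Schwarz in $\ub$. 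The corresponding bound on $\Hb_{\ub}$ is obtained by first deducing the pointwise-in-$S$ estimate and then integrating in $u$.

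The main obstacle is therefore a careful bookkeeping of the smallness in the six terms of \eqref{L_second}: the terms $T_2$ and $T_4$, which contain products of non-$L^\infty_{(sc)}$ objects like $\nabla\psi\cdot\Psi$ and $\Psi\cdot\Theta$, require using \eqref{sobolev_L4} combined with Proposition~\ref{secondangularderivative} and Corollary~\ref{Oonefour} to trade one $L^2$ norm for an $L^4$ pair, and $T_5$ requires the $\F_2$-type control of $\nabla^2\Upsilon$ from the definition of $\F$. In all cases the inequality $\delta^{1/2}\Delta_0\leq 1$ supplied by the bootstrap ensures the resulting bound is $\delta^{1/4}C$. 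Once these are in place, combining everything yields the two inequalities claimed in the proposition.
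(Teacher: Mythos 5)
Your proposal is correct and follows essentially the same route as the paper's own proof: both reduce $\nabla^2\eta$ to $\nabla\rho$, $\nabla\sigma$, $\nabla\mu$, and lower-order via the Hodge system $\divergence\eta=-\mu-\rho$, $\curl\eta=\sigma+\chibh\wedge\chih$ together with the second-order elliptic estimate, and both control $\nabla\mu$ by integrating the $\nabla_4$-transport equation from $\Hb_0$ (where $\nabla\mu$ vanishes), identifying $\nabla\psi\cdot\alpha$ as the dangerous term and using Corollary~\ref{Oonefour} plus $L^4_{(sc)}$ norms to recover the factor $\delta^{1/4}$. The one small wrinkle in your write-up is the initial claim that every $T_i$ carries a factor controlled in $L^\infty_{(sc)}$, which you then correctly override for $T_2$ and $T_4$; otherwise the argument matches the paper's.
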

\begin{proof}
Recall the following Transport-Hodge system for $\eta$ and $\mu$,
\begin{align*}
 \curl \eta &= \sigma + \psi \cdot \psi, \qquad \divergence \eta = -\mu -\rho,\\
 \nabla_4 \mu &= \psi \cdot (\nabla \psi + \Psi +\Upsilon \cdot \Upsilon + \psi \cdot \psi) + \Upsilon \cdot \nabla \Upsilon.
\end{align*}
The first and second equations lead to
\begin{equation}\label{lap_eta}
 \triangle \eta = \nabla \sigma + \nabla \rho + \nabla \mu + \psi \cdot \nabla \psi + K \cdot \eta.
\end{equation}
The third equation leads to
\begin{align*}
 \nabla_4 \nabla \mu &= \psi \cdot(\nabla \psi + \psi \cdot\nabla \psi + \nabla \Psi + \psi\cdot \Psi + \Upsilon \cdot\Upsilon) + \nabla \psi \cdot(\nabla \psi + \Psi + \Upsilon\cdot \Upsilon) \\
&\qquad + (\nabla \Upsilon \cdot\nabla \Upsilon + \Upsilon \cdot\nabla^2 \Upsilon) + \psi_g \cdot\psi\cdot \psi \cdot\psi.
\end{align*}
When we derive estimates, because each use of H\"older's inequality gains $\delta^{\frac{1}{2}}$, without loss of generality, we ignore most of the nonlinear terms in previous expression. Thus, we have
\begin{equation}\label{transprot_eta}
 \nabla_4 \nabla \mu = \psi \cdot(\nabla \psi + \nabla \Psi) + \nabla \psi \cdot(\nabla \psi + \Psi) + (\nabla \Upsilon \cdot \nabla \Upsilon + \Upsilon \cdot \nabla^2 \Upsilon).
\end{equation}
Equipped with \eqref{lap_eta} and \eqref{transprot_eta}, we proceed exactly as we argued in Proposition \ref{secondangularderivative}. The most dangerous term from \eqref{transprot_eta} is $ \nabla \psi \cdot \Psi$ since $\Psi$ can be $\alpha$. In this case, we use Corollary \ref{Oonefour} and $L^4_{(sc)}$ norms to save a $\delta^{\frac{1}{4}}$. Since $\nabla \mu$ has trivial data on $\Hb_0$, we can conclude
\begin{equation}\label{nablamu}
 \|\nabla \mu\|_{{L^2_{(sc)}(S_{u,\ub})}} \lesssim \delta ^{\frac{1}{4}}C.
\end{equation}
Combined with elliptic estimates on \eqref{lap_eta}, we complete the proof.
\end{proof}

\subsubsection{First Derivative Estimates in Null Direction}
For Maxwell field, as an immediate consequence of \eqref{NM_Lb_alpha}-\eqref{NM_Lb_rho}, we have
\begin{proposition}\label{first_null_derivative_Maxwell}
 If $\delta$ is sufficiently small, then
\begin{equation}
 \delta^{\frac{1}{2}}\|\nabla_3 \alpha_F\|_{{L^2_{(sc)}(S_{u,\ub})}}+ \|( \nabla_3 \rho_F, \nabla_3 \sigma_F, \nabla_4 \rho_F, \nabla_4 \sigma_F, \nabla_4 \alphab_F)\|_{{L^2_{(sc)}(S_{u,\ub})}}\lesssim C.
\end{equation}
\end{proposition}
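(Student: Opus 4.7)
The plan is to extract each null derivative of a Maxwell component algebraically from the null Maxwell equations \eqref{NM_L_alphab}, \eqref{NM_Lb_alpha}, \eqref{NM_L_rho}, \eqref{NM_L_sigma}, \eqref{NM_Lb_rho} and \eqref{NM_Lb_sigma}, and then bound each resulting term on $S_{u,\ub}$ using the estimates already established in this section. Schematically, every such equation has the structure $\nabla_4 \Upsilon_g = \nabla \Upsilon + \psi \cdot \Upsilon + \tr\chib_0 \cdot \Upsilon$ or $\nabla_3 \Upsilon_4 = \nabla \Upsilon + \psi \cdot \Upsilon + \tr\chib_0 \cdot \Upsilon$, so the whole proof reduces to controlling one linear angular-derivative contribution together with a handful of bilinear products $\psi \cdot \Upsilon$.

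For the angular-derivative contributions I will invoke \eqref{nabla4nablaF}, which already provides $\|\nabla \Upsilon\|_{L^2_{(sc)}(u,\ub)} \lesssim C$ for \emph{every} Maxwell component, including the anomalous $\alpha_F$. For the bilinear contributions I will apply the scale invariant H\"older inequality \eqref{Holder} together with Proposition \ref{OZERO_FOUR_ESTIMATES}, Proposition \ref{prop_L_infty_Psi_F} and the $L^\infty_{(sc)}$ part of the bootstrap \eqref{bootstrap}: a generic good product $\psi_g \cdot \Upsilon_g$ enjoys a H\"older gain $\delta^{\frac{1}{2}}$ and is of size $O(C\delta^{\frac{1}{2}})$, while each term of type $\tr \chib_0 \cdot \Upsilon_g$ is handled directly, since $\tr\chib_0$ is a bounded scalar and the corresponding $\Upsilon_g \in \{\rho_F,\sigma_F,\alphab_F\}$ is non-anomalous on $S_{u,\ub}$.

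The delicate check sits in $\nabla_3 \alpha_F$ and $\nabla_4 \alphab_F$, the two equations where anomalies collide. In $\nabla_4 \alphab_F$ the worst term is $\chibh \cdot \alpha_F$: both factors are $L^4_{(sc)}$-anomalous of size $C\delta^{-\frac{1}{4}}$, but the H\"older gain $\delta^{\frac{1}{2}}$ exactly consumes both losses, yielding $\|\chibh \cdot \alpha_F\|_{L^2_{(sc)}(S)} \lesssim C$ and hence a non-anomalous bound on $\nabla_4 \alphab_F$. In $\nabla_3 \alpha_F$ the analogous product $\chih \cdot \alphab_F$ carries only a single $\delta^{-\frac{1}{4}}$ anomaly from $\chih$ and is therefore dominated by $C\delta^{\frac{1}{4}}$. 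The one term that genuinely forces the prefactor $\delta^{\frac{1}{2}}$ on the left-hand side is $\tfrac{1}{2}\tr\chib \cdot \alpha_F$ in \eqref{NM_Lb_alpha}: $\tr\chib$ is only $L^\infty$-bounded while $\alpha_F$ is itself $L^2_{(sc)}(S)$-anomalous of size $\delta^{-\frac{1}{2}} C$, so this contribution is of size $\delta^{-\frac{1}{2}}C$, matching the asserted $\delta^{\frac{1}{2}}\|\nabla_3\alpha_F\|_{L^2_{(sc)}(S)} \lesssim C$ exactly.

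I do not anticipate a real obstacle: the proposition amounts to a careful bookkeeping exercise guided by the rule of thumb ``each H\"older estimate gains a $\delta^{\frac{1}{2}}$'', applied term by term to the six null Maxwell equations. The only non-mechanical step is to check that the double $L^4$-anomaly in $\chibh \cdot \alpha_F$ is exactly absorbed by the single H\"older gain; once that is verified, $\nabla_4 \alphab_F$ lies in the non-anomalous class and the remaining terms are all strictly better.
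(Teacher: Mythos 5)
Your proof is correct and follows the same route the paper intends: the paper gives no separate argument for Proposition \ref{first_null_derivative_Maxwell}, simply remarking that it is an immediate consequence of the null Maxwell equations \eqref{NM_L_alphab}--\eqref{NM_Lb_sigma} together with \eqref{nabla4nablaF} and the $L^4_{(sc)}$/$L^\infty_{(sc)}$ bounds already in hand. Your bookkeeping of the two delicate products $\chibh\cdot\alpha_F$ (double $\delta^{-1/4}$ anomaly cancelled by the single H\"older gain) and $\tr\chib_0\cdot\alpha_F$ (source of the $\delta^{1/2}$ prefactor on $\nabla_3\alpha_F$) is exactly the content the paper leaves implicit.
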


For connection coefficients, we have
\begin{proposition}\label{Remaininig_First_Derivative_Estimates}
 If $\delta$ is sufficiently small, on $S_{u,\ub}$ we have
\begin{equation}\label{est_NSE_4}
 \|(\nabla_4 \tr \chi, \nabla_4 \eta, \nabla_4 \omegab, \nabla_4 \tr \chib, \nabla_3 \trchibt, \nabla_3 \etab, \nabla_3 \omega, \nabla_3 \tr \chi)\|_{{L^2_{(sc)}}} \lesssim C,
\end{equation}
\begin{equation}\label{est_NSE_anomalous}
 \|(\nabla_4 \chih, \nabla_4 \chibh, \nabla_3 \chih, \nabla_3 \chibh)\|_{{L^2_{(sc)}}}\lesssim C \delta^{-\frac{1}{2}},
\end{equation}
\begin{equation}\label{est_noNSE}
 \|(\nabla_4 \etab, \nabla_4 \omega, \nabla_3 \eta, \nabla_3 \omegab)\|_{{L^2_{(sc)}}} \lesssim C.
\end{equation}
\end{proposition}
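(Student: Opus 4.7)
\textbf{Proof plan for Proposition \ref{Remaininig_First_Derivative_Estimates}.} The estimates \eqref{est_NSE_4} and \eqref{est_NSE_anomalous} are read off directly from the null structure equations. For each quantity on the left-hand side of \eqref{est_NSE_4}, the corresponding equation among \eqref{NSE_L_tr_chi}, \eqref{NSE_L_eta}, \eqref{NSE_L_omegab}, \eqref{NSE_L_tr_chib}, \eqref{NSE_Lb_tr_chib}, \eqref{NSE_Lb_etab}, \eqref{NSE_Lb_omega}, \eqref{NSE_Lb_tr_chi} expresses the transport derivative schematically as $\psi \cdot \psi + \tr\chib_0 \cdot \psi + \nabla \psi + \Psi_g + \Upsilon \cdot \Upsilon$, with no $\alpha$, $\alphab$ or product of two anomalies. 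Scale-invariant H\"older's inequality \eqref{Holder} gains a $\delta^{1/2}$ per product; bounding the factors via $\OSzeroinfinity$, $\OSzerofour$, $\OSonetwo$, and Proposition~\ref{prop_L_infty_Psi_F} yields the $C$-bound. For \eqref{est_NSE_anomalous}, the equations \eqref{NSE_L_chih}, \eqref{NSE_Lb_chibh}, \eqref{NSE_L_chibh}, \eqref{NSE_Lb_chih} are used instead; the $\delta^{-1/2}$ loss appears because either $\alpha$, $\alphab$ (which are anomalous in $L^2_{(sc)}(S)$), or anomalous products of the form $\tr\chi \cdot \chibh$, $\tr\chib_0 \cdot \chih$ enter the right-hand sides.

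For \eqref{est_noNSE} there is no transport equation in the list with $\nabla_4 \etab$, $\nabla_3 \eta$, $\nabla_4 \omega$, or $\nabla_3 \omegab$ on the left, so we appeal to the gauge identities
\[
\eta + \etab = 2 \nabla \log \Omega, \quad 2\omega = -\nabla_4 \log \Omega, \quad 2\omegab = -\nabla_3 \log \Omega.
\]
Applying $\nabla_4$ to the first identity and using the commutator $[\nabla_4, \nabla]$ (whose action on a scalar $f$ is schematically $\psi \cdot \nabla f$), we obtain
\[
\nabla_4 \etab = -\nabla_4 \eta - 4 \nabla \omega + \psi \cdot (\eta + \etab).
\]
Here $\nabla_4 \eta$ is already controlled by \eqref{est_NSE_4}, $\nabla \omega$ is bounded via $\OSonetwo$, and the commutator piece is handled by H\"older with the previous $\psi$-bounds. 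This gives $\|\nabla_4 \etab\|_{L^2_{(sc)}(S)} \lesssim C$. The symmetric argument applied to $\nabla_3(\eta + \etab)$ treats $\nabla_3 \eta$ using $\nabla_3 \etab$ from \eqref{est_NSE_4} and $\nabla \omegab$ from $\OSonetwo$.

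For $\nabla_4 \omega$ and $\nabla_3 \omegab$, I would use the second-order mixed commutator applied to $\log \Omega$: computing $[\nabla_4, \nabla]\log\Omega$ in two ways (directly via the frame commutator $[e_4, e_a]$, which involves $\chi$, $\eta$, $\etab$, and via the identities above) yields an expression for $\nabla \omega$ in terms of $\nabla(\eta + \etab)$ and connection-coefficient products. Iterating this strategy (or, symmetrically, using $[\nabla_3, \nabla_4]\log\Omega$ together with the already-known equations \eqref{NSE_L_omegab} and \eqref{NSE_Lb_omega}) supplies the transport-type identities that control $\nabla_4 \omega$ and $\nabla_3 \omegab$ by already-estimated quantities. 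The main obstacle is precisely here: these two quantities have no direct null structure equation in the given list, so one must carefully produce an auxiliary identity from the gauge conditions, taking care that every resulting commutator term is of the schematic form $\psi \cdot (\text{controlled})$ so that H\"older gains a $\delta^{1/2}$ and no anomaly appears on the right-hand side.
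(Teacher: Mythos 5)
Your treatment of \eqref{est_NSE_4} and \eqref{est_NSE_anomalous} is the paper's argument: read off the null structure equations and apply scale-invariant H\"older, with the $\delta^{-1/2}$ in \eqref{est_NSE_anomalous} coming from $\alpha$, $\alphab$ or from $\tr\chib_0\cdot\chih$, $\tr\chib_0\cdot\chibh$. (Minor correction to your description: $|\chih|^2$ and $|\alpha_F|^2$ in the $\nabla_4\tr\chi$ and $\nabla_3\trchibt$ equations \emph{are} products of anomalies, but H\"older's $\delta^{1/2}$ gain cancels the combined $\delta^{-1/2}$ loss, so the conclusion is unchanged; and $\tr\chi\cdot\chibh$ is not actually anomalous since $\tr\chi\neq\tr\chib_0$ lets you use H\"older.)

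For $\nabla_4\etab$ and $\nabla_3\eta$ your route via the gauge identity $\eta+\etab=2\nabla\log\Omega$ is genuinely different from the paper's and appears to work. The paper instead commutes $\nabla_4$ with the $\nabla_3$-transport equation \eqref{NSE_Lb_etab}, uses \eqref{NBE_L_betab} to convert $\nabla_4\betab$ into angular derivatives of curvature, and closes with Gronwall along $\Hb_{\ub}$. Your version
\[
\nabla_4\etab = -\nabla_4\eta - 4\nabla\omega + [\nabla_4,\nabla]\log\Omega\text{-terms}
\]
trades the transport/Gronwall step for known quantities $\nabla_4\eta$ (from \eqref{est_NSE_4}), $\nabla\omega$ (from $\Oonetwo$), and a quadratic commutator, which is cleaner. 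What it buys is algebraic simplicity; what it loses is that the method does not generalize to the $\omega$-terms, as the next paragraph explains, whereas the paper's commutation method is uniform across all four quantities in \eqref{est_noNSE}.

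There is a genuine gap in your treatment of $\nabla_4\omega$ and $\nabla_3\omegab$. The identities you invoke do not produce these quantities. Computing $[\nabla_4,\nabla]\log\Omega$ in two ways gives (again) the relation between $\nabla_4(\eta+\etab)$ and $\nabla\omega$ — the same identity you already used — and tells you nothing about $\nabla_4\omega$. Likewise $[\nabla_3,\nabla_4]\log\Omega$ only produces
\[
-2\nabla_3\omega + 2\nabla_4\omegab = [e_3,e_4](\log\Omega) = \psi\cdot\psi,
\]
a relation between $\nabla_3\omega$ and $\nabla_4\omegab$, both of which already lie in \eqref{est_NSE_4}; it yields nothing new and in particular does not involve $\nabla_4\omega$ or $\nabla_3\omegab$. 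To get $\nabla_4\omega$ one needs essentially $\nabla_4^2\log\Omega$, for which there is no gauge identity in hand. The correct approach here is to mimic the paper's treatment of $\nabla_4\etab$: commute $\nabla_4$ with \eqref{NSE_Lb_omega} to get
\[
\nabla_3\nabla_4\omega = [\nabla_3,\nabla_4]\omega + \nabla_4\bigl(2\omegab\omega + \tfrac12\rho + \psi\cdot\psi + \Upsilon\cdot\Upsilon\bigr),
\]
replace $\nabla_4\rho$ by angular derivatives using \eqref{NBE_L_rho}, verify that the error term $E$ integrates to a bound $\int_0^u\|E\|_{L^2_{(sc)}} \lesssim C$, and close with Gronwall along the incoming direction. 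The symmetric argument commuting $\nabla_3$ with \eqref{NSE_L_omegab} and using \eqref{NBE_Lb_rho} handles $\nabla_3\omegab$.
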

\begin{proof}
Those terms from \eqref{est_NSE_4} and \eqref{est_NSE_anomalous} appear also on the left hand side of \eqref{NSE_L_tr_chi}-\eqref{NSE_Lb_chih}. They can be estimated directly with the help of those equations. To illustrate the idea, we estimate $\nabla_4 \chih$ by virtue of \eqref{NSE_L_chih}, i.e. $\nabla_4 \chih = \psi_g \cdot \chih + \alpha$. Therefore,
\begin{equation*}
  \|\nabla_4 \chih\|_{{L^2_{(sc)}(S_{u,\ub})}} \lesssim \delta^{\frac{1}{2}}\|\psi_g\|_{{L^4_{(sc)}(S_{u,\ub})}}\|\chih\|_{{L^4_{(sc)}(S_{u,\ub})}} + C\delta^{-\frac{1}{2}} \lesssim \delta^{\frac{1}{4}}C+C\delta^{-\frac{1}{2}}.
\end{equation*}
The other terms in \eqref{est_NSE_4} and \eqref{est_NSE_anomalous} can be estimated in the same way. We omit the details.

\eqref{est_noNSE} is more delicate because $\nabla_4 \etab, \nabla_4 \omega, \nabla_3 \eta$ and $\nabla_3 \omegab$  do not appear in the null structure equations. In order to derive estimates, we have to commute derivatives. We only estimate $\nabla_4 \etab$. The remaining terms can be estimated exactly in the same way.

We commute \eqref{NSE_Lb_etab} with $\nabla_4$ to derive,
\begin{align*}
\nabla_3 \nabla_4 \etab &= [\nabla_3,\nabla_4] \etab + \nabla_4 (-\frac{1}{2}\tr\chib \cdot (\etab-\eta)-\chibh \cdot (\etab -\eta)+\betab-\frac{1}{2}{\sigma_F}\cdot \,^*\!{\alphab_F} + \frac{1}{2}{\alphab_F} \cdot {\rho_F})\\
&=\tr\chib_0 \cdot \nabla_4 \etab + \nabla_4 \betab + E.
\end{align*}
One checks easily that error term $E$ enjoys a better bound (notice that $\nabla_4 \Upsilon$ in $E$ is controlled by Proposition \ref{first_null_derivative_Maxwell}). In fact, we can show $ \int_0^{u} \|E\|_{L^2_{(sc)}(u', \ub)}  \lesssim C$. This bound is good enough for our purpose. We use \eqref{NBE_L_betab} to replace $\nabla_4 \betab$ by $\psi\cdot \nabla \Psi + \Upsilon \cdot \nabla \Upsilon$. It is easy to see that they can also be absorbed into error term $E$, thus we have $\nabla_3 \nabla_4 \etab = \tr\chib_0 \cdot \nabla_4 \etab  + E$ which implies
\begin{equation*}
\|\nabla_3 \nabla_4 \etab \|_{{L^2_{(sc)}(S_{u,\ub})}} \lesssim C + \|\nabla_4 \etab\|_{{L^2_{(sc)}(S_{u,\ub})}} + \|E\|_{{L^2_{(sc)}(S_{u,\ub})}}.
\end{equation*}
Therefore,
\begin{align*}
\|\nabla_4 \etab \|_{{L^2_{(sc)}(S_{u,\ub})}} &\lesssim \| \nabla_4 \etab \|_{L^2_{(sc)}(0, \ub)}  + \int_0^{u} \|\nabla_3 \nabla_4 \etab \|_{L^2_{(sc)}(u', \ub)} \\
&\lesssim C+ \int_0^{u} \|\nabla_4 \etab\|_{L^2_{(sc)}(u', \ub)} +\|E \|_{L^2_{(sc)}(u', \ub)} \lesssim C+\int_0^{u} \|\nabla_4 \etab\|_{L^2_{(sc)}(u', \ub)}.
\end{align*}
Finally, we use Gronwall's inequality to complete the proof.
\end{proof}
As a byproduct of the proof, we have
\begin{corollary}\label{nabla_3_nabla_4_eta_etab}
If $\delta$ is sufficiently small, then
\begin{equation}
 \|\nabla_3 \nabla_4 \etab\|_{L^{2}_{(sc)}({\Hb}_{\ub})} + \|\nabla_4 \nabla_3 \eta\|_{L^{2}_{(sc)}(H_u)} \lesssim C.
\end{equation}
\end{corollary}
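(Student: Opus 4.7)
The plan is to revisit the transport equation already derived in the proof of Proposition \ref{Remaininig_First_Derivative_Estimates} and, instead of reading off only pointwise-in-$S$ estimates, integrate it along the incoming (respectively outgoing) null hypersurface. Concretely, the same commutation that produced
\begin{equation*}
\nabla_3 \nabla_4 \etab = \tr\chib_0\cdot\nabla_4 \etab + E,
\end{equation*}
with $E$ composed of terms of the schematic form $\psi\cdot\nabla\Psi$, $\psi\cdot\psi\cdot\Psi$, $\Upsilon\cdot\nabla\Upsilon$ and lower-order connection/Maxwell products, already delivers all the information we need. Since $\tr\chib_0$ is a bounded scalar in scale-invariant $L^\infty$ and Proposition \ref{Remaininig_First_Derivative_Estimates} gives $\|\nabla_4\etab\|_{L^2_{(sc)}(S_{u,\ub})}\lesssim C$ uniformly, integrating along $\Hb_{\ub}$ yields $\|\tr\chib_0\cdot\nabla_4\etab\|_{L^2_{(sc)}(\Hb_\ub)}\lesssim C$ at once.

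Thus the whole task reduces to controlling $\|E\|_{L^2_{(sc)}(\Hb_\ub)}$. Every summand of $E$ is nonlinear, so each application of the scale-invariant H\"older inequality \eqref{Holder} gains a factor $\delta^{1/2}$. For the connection/curvature products such as $\psi\cdot\nabla\Psi$ and $\psi\cdot\nabla\betab$, we bound one factor by $\OSzeroinfinity\lesssim C$ and the other by the already established $\Rb$-norm on $\Hb_{\ub}$ (using $\Roneb$ for $\nabla\betab$ and Proposition \ref{secondangularderivative} for $\nabla^2\psi$ type contributions, which do not actually appear here but are absorbed the same way). For the Maxwell contributions $\nabla_4\Upsilon$ and $\Upsilon\cdot\nabla\Upsilon$, the first-null-derivative bounds of Proposition \ref{first_null_derivative_Maxwell} combined with the $L^\infty_{(sc)}$ bounds of Proposition \ref{prop_L_infty_Psi_F} and the Maxwell $\Fb$-norms produce a contribution $\lesssim C$ on $\Hb_{\ub}$. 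Putting these together gives $\|E\|_{L^2_{(sc)}(\Hb_\ub)}\lesssim C$, which, together with the $\tr\chib_0$ piece, closes the estimate for $\nabla_3\nabla_4\etab$.

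For the second half, I would repeat the argument in the dual direction: commute $\nabla_3$ with \eqref{NSE_L_eta} to obtain
\begin{equation*}
\nabla_4\nabla_3\eta = \tr\chi\cdot\nabla_3\eta + \nabla_3\beta + E',
\end{equation*}
with $E'$ of the same schematic shape as $E$ (with $3\leftrightarrow 4$). Then use \eqref{NBE_Lb_beta} to replace $\nabla_3\beta$ by $\nabla\rho+{}^*\!\nabla\sigma+\text{l.o.t.}+\Upsilon\cdot\nabla\Upsilon$, which are absorbed into $E'$. Since the integration is now along the outgoing direction $H_u$ and the normalization of the scale-invariant $L^2$-norm absorbs the extra $\delta^{-1/2}$ in \eqref{integralestimate1sc}, the analogous bound follows from $\|\nabla_3\eta\|_{L^2_{(sc)}(S_{u,\ub})}\lesssim C$ (which is the counterpart of $\nabla_4\etab$ in Proposition \ref{Remaininig_First_Derivative_Estimates}) and from $\R+\F\lesssim C$ on $H_u$.

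The main subtlety — really the only one — is bookkeeping inside $E$ and $E'$ to verify that no truly borderline term escapes: the potentially delicate ones are those containing $\alpha$ or $\alphab$ (which are anomalous) and $\nabla\beta$ or $\nabla\betab$. In each such case one pairs the anomalous factor with an $L^\infty_{(sc)}$-bounded partner so that the $\delta^{1/2}$ gained from H\"older's inequality exactly compensates the $\delta^{-1/2}$ loss from the anomaly, leaving a clean $\lesssim C$. Once this bookkeeping is carried out, the corollary is immediate from the argument already laid out in the proof of Proposition \ref{Remaininig_First_Derivative_Estimates}, and no further estimate beyond those established in the preceding sections is needed.
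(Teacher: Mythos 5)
Your argument is essentially identical to the paper's: the corollary is indeed a byproduct of the transport identity $\nabla_3\nabla_4\etab = \tr\chib_0\cdot\nabla_4\etab + E$ (and its $\eta$-analogue), the uniform $L^2_{(sc)}(S_{u,\ub})$ bound on $\nabla_4\etab$ (resp.\ $\nabla_3\eta$) from Proposition~\ref{Remaininig_First_Derivative_Estimates}, and the observation that every summand of $E$ obeys a uniform $L^2_{(sc)}(S)$ bound (with H\"older gains compensating any anomaly), hence an $L^2_{(sc)}(\Hb_\ub)$ (resp.\ $L^2_{(sc)}(H_u)$) bound. One small wording slip: $\tr\chib_0$ is bounded in the \emph{ordinary} $L^\infty$ sense, not in $L^\infty_{(sc)}$ — that is exactly what makes it a no-gain, no-loss multiplier — but this does not affect the correctness of your argument.
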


\subsubsection{Remaining Second Derivative Estimates}
We start with a lemma on commutator estimates which allows us to freely switch the order of differentiations in second derivative estimates. For example, the order of differentiations in Proposition \ref{2 der est on max} does not effect the estimates.
\begin{lemma}\label{est_commutator}
If $\delta$ is sufficiently small, for all connection coefficients $\psi$ and null components of Maxwell field $\Upsilon$, we have
\begin{equation}\label{est_commutator_S}
\|([\nabla, \nabla_4] \psi, [\nabla, \nabla_3] \psi)\|_{{L^2_{(sc)}(S_{u,\ub})}} +\|([\nabla, \nabla_4] \Upsilon, [\nabla, \nabla_3] \Upsilon)\|_{{L^2_{(sc)}(S_{u,\ub})}} \lesssim C,
\end{equation}
\begin{equation}\label{est_commutator_H_Hb}
\|([\nabla, \nabla_4] \psi, [\nabla, \nabla_3]\psi) \|_{L^2_{(sc)}(H_u)} + \|([\nabla, \nabla_4] \psi, [\nabla, \nabla_3] \psi)\|_{L^2_{(sc)}(\Hb_{\ub})} \lesssim C.
\end{equation}
\begin{equation}\label{est_commutator_H_Hb_F}
\|([\nabla, \nabla_4] \Upsilon, [\nabla, \nabla_3]\Upsilon) \|_{L^2_{(sc)}(H_u)} + \|([\nabla, \nabla_4] \Upsilon, [\nabla, \nabla_3] \Upsilon)\|_{L^2_{(sc)}(\Hb_{\ub})} \lesssim C.
\end{equation}
\end{lemma}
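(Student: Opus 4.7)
The plan is to reduce everything to the standard commutator formulas for horizontal tensor fields in the double-null formalism. For a horizontal tensor $\phi$ of rank $k$, one has the schematic identities
\begin{align*}
[\nabla,\nabla_4]\phi &= \chi\cdot\nabla\phi+\psi\cdot\nabla_4\phi+(\chi\cdot\psi+\beta)\cdot\phi,\\
[\nabla,\nabla_3]\phi &= \chib\cdot\nabla\phi+\psi\cdot\nabla_3\phi+(\chib\cdot\psi+\betab)\cdot\phi,
\end{align*}
and identical expressions with $\phi$ replaced by a null Maxwell component $\Upsilon$. The first step is to write these formulas out carefully (cf.\ Chapter 4 of \cite{Ch}), split $\chi=\frac12\tr\chi\,\delta+\chih$ and $\chib=\frac12\tr\chib_0\,\delta+\frac12\trchibt\,\delta+\chibh$, and then estimate each of the three types of terms separately.

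For the pointwise $S$-estimates \eqref{est_commutator_S} I would apply scale-invariant H\"older's inequality in the form \eqref{Holder}. The benign pieces, namely $\tr\chi\cdot\nabla\phi$ and $\tr\chib_0\cdot\nabla\phi$, are controlled by $\|\nabla\phi\|_{L^2_{(sc)}}$ times a bounded scalar with no $\delta$-loss; all other pairings $\chih\cdot\nabla\phi$, $\chibh\cdot\nabla\phi$, $\psi_g\cdot\nabla_4\phi$, $\psi_g\cdot\nabla_3\phi$, and the $(\chi\cdot\psi+\beta)\cdot\phi$ tails are product terms in which H\"older's inequality returns a gain of $\delta^{1/2}$, so they are absorbed by $\OSzerofour$, $\OSonefour$, Proposition \ref{first_null_derivative_Maxwell}, Proposition \ref{Remaininig_First_Derivative_Estimates} and Corollary \ref{nabla_3_nabla_4_eta_etab}. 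The dangerous pairings are $\chih\cdot\nabla_4\phi$ (when $\phi$ itself is anomalous like $\chih$ or $\alpha_F$) and $\chibh\cdot\nabla_3\phi$ (similarly with $\chibh$, $\alphab_F$): here I would estimate both factors in $L^4_{(sc)}$ using \eqref{ozerofour_for_chih}, the localized estimates \eqref{localized_L_4_chih}, Proposition \ref{prop_L_infty_Psi_F} and Proposition \ref{Remaininig_First_Derivative_Estimates}, noting that a double anomaly only costs $\delta^{-1/2}$ whereas the H\"older gain is $\delta^{1/2}$, so the product closes at $C$.

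For the integrated estimates \eqref{est_commutator_H_Hb} and \eqref{est_commutator_H_Hb_F}, the strategy depends on which side is anomalous. The $\chi\cdot\nabla\phi$ and $\chib\cdot\nabla\phi$ terms are bounded by placing the anomalous factor in $L^\infty_{(sc)}$ on spherical slices (via Proposition \ref{OZERO_FOUR_ESTIMATES} and Sobolev) and integrating $\|\nabla^2\psi\|_{L^2_{(sc)}(H_u)}$ or $\|\nabla^2\psi\|_{L^2_{(sc)}(\Hb_{\ub})}$ from Proposition \ref{secondangularderivative}. The $\psi\cdot\nabla_4\phi$ pieces on $H_u$ and $\psi\cdot\nabla_3\phi$ pieces on $\Hb_{\ub}$ are handled by the $\R_1$, $\Rb_1$, $\F_1$, $\Fb_1$ norms (for $\phi=\Psi_4$, $\Upsilon_4$, $\alpha$, $\alpha_F$), while the non-null-pair components of $\nabla_4\psi$, $\nabla_3\psi$ are controlled on $S$-slices by Proposition \ref{Remaininig_First_Derivative_Estimates} and converted to $H/\Hb$ norms by integration. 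The curvature tails $\beta\cdot\phi$ on $H_u$ and $\betab\cdot\phi$ on $\Hb_{\ub}$ use that $\beta$ is non-anomalous on $H_u$ and $\betab$ non-anomalous on $\Hb_{\ub}$, so they are bounded by $\R_0$, $\Rb_0$ paired in $L^2_tL^4_x$ with $\phi\in L^\infty_t L^4_x$ via the trace inequalities \eqref{trace_H}, \eqref{trace_Hb}.

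The main obstacle will be the two derivative terms combined with anomalies: in $[\nabla,\nabla_4]\chih$ on $\Hb_{\ub}$ one encounters $\chih\cdot\nabla_4\chih$ and $\chi\cdot\nabla\chih$ where both factors are anomalous on $\Hb_{\ub}$; I expect to handle this by using the localized $\Rzero^\delta[\alpha]$ norm (via the relation $\nabla_4\chih=-\alpha+\psi_g\cdot\chih$) together with \eqref{localized_L_4_chih} and \eqref{sobolev_local}, so that the product of two $\delta^{-1/2}$-anomalous factors is compensated by a $\delta^{1/2}$ H\"older gain plus the outgoing integration length $\delta$. A symmetric argument with the $\Fzero^\delta[\alpha_F]$ norm handles the Maxwell analogue. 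Once this anomalous case is disposed of, all remaining terms close trivially via the estimates already proved, and we obtain the bound $\lesssim C$ in all three inequalities.
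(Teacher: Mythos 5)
Your overall strategy matches the paper's: write a schematic commutator formula, use the scale-invariant H\"older inequality, and invoke the first- and second-derivative bounds from Theorem A. But your schematic formula is incorrect in a way that buries the one observation the paper's proof actually hinges on. You write $[\nabla,\nabla_4]\phi = \chi\cdot\nabla\phi+\psi\cdot\nabla_4\phi+\dots$ with a \emph{general} $\psi$ multiplying $\nabla_4\phi$. In fact, the coefficient of the null derivative in the double-null commutator is always $\eta+\etab$ (or $\zeta$), never $\chih$, $\chibh$; the paper's formula has $\psi_g\cdot\nabla_4\psi$, not $\psi\cdot\nabla_4\psi$. Since the angular derivative $\nabla\psi$ is also never anomalous, this means no term in the commutator contains \emph{two} anomalous factors. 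That structural fact immediately reduces \eqref{est_commutator_S} to the $\OSonetwo$ bound plus Proposition \ref{Remaininig_First_Derivative_Estimates}, with a single H\"older gain of $\delta^{1/2}$ absorbing the single loss of $\delta^{-1/2}$. Your formula manufactures a ``dangerous pairing'' $\chih\cdot\nabla_4\chih$ that simply does not occur, and you then spend the bulk of your argument, localized norms and sharp $L^4$ bounds on $\nabla_4\chih$, addressing it. The $L^4$ arithmetic you give for it happens to close, but the accompanying explanation is confused: you say each factor costs $\delta^{-1/2}$ (which would give a $\delta^{-1}$ loss, more than H\"older compensates) and then invoke ``the outgoing integration length $\delta$,'' which is irrelevant for the $\Hb_{\ub}$ integration where $u$ ranges over a unit interval.

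The second inaccuracy is that you treat the $H_u$ and $\Hb_{\ub}$ bounds \eqref{est_commutator_H_Hb}, \eqref{est_commutator_H_Hb_F} as requiring a separate argument, bringing in $\R_1$, $\F_1$, trace inequalities \eqref{trace_H}, \eqref{trace_Hb}, and $L^\infty_t L^4_x$ pairings. In the scale-invariant framework these are simple consequences of the $S$-slice estimate \eqref{est_commutator_S}: the prefactors in the definitions of $L^2_{(sc)}(H_u)$ and $L^2_{(sc)}(\Hb_{\ub})$ are exactly such that $\|\cdot\|_{L^2_{(sc)}(H_u)}^2 = \delta^{-1}\int_0^{\ub}\|\cdot\|_{L^2_{(sc)}(S)}^2\,d\ub' \lesssim \sup_{\ub'}\|\cdot\|_{L^2_{(sc)}(S)}^2$ and $\|\cdot\|_{L^2_{(sc)}(\Hb_{\ub})}^2 = \int_0^u\|\cdot\|_{L^2_{(sc)}(S)}^2\,du' \lesssim \sup_{u'}\|\cdot\|_{L^2_{(sc)}(S)}^2$, so the $S$-bound $\lesssim C$ propagates directly. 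You should tighten your commutator formula to reflect that the $\nabla_4\phi$ and $\nabla_3\phi$ coefficients lie in $\psi_g$, discard the spurious pairing, and then observe that \eqref{est_commutator_H_Hb} and \eqref{est_commutator_H_Hb_F} follow from \eqref{est_commutator_S} by direct integration.
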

\begin{proof} We prove \eqref{est_commutator_S}, while \eqref{est_commutator_H_Hb} and \eqref{est_commutator_H_Hb_F} are simple consequences of \eqref{est_commutator_S}. We use the following schematic formulas (it is of the same form once we replace $\psi$ by $\Upsilon$),
\begin{align*}
[\nabla_4, \nabla] \psi& = \psi \cdot (\nabla \psi + \Psi_g + \Upsilon \cdot\Upsilon) + \psi_g \cdot \nabla_4 \psi, \\
[\nabla_3, \nabla] \psi& = \tr\chib_0 \cdot \nabla \psi + \psi \cdot (\nabla \psi + \Psi_g + \Upsilon \cdot \Upsilon) + \psi_g \cdot \nabla_3 \psi,
\end{align*}
These formulas allows one to use previously obtained $\Oonetwo$ estimates to bound the commutators. We observe that possible anomalies are due to the presence of $\nabla_4 \psi$ and $\nabla_3 \psi$ at the right-hand side of the above formulas. They may cause a loss of $\delta^{-\frac{1}{2}}$. But H\"older's inequality gains $\delta^{\frac{1}{2}}$ which compensates this loss. This completes the proof.
\end{proof}
We turn to the estimates on second derivatives of Maxwell field.
\begin{proposition}\label{2 der est on max}
If $\delta$ is sufficiently small, we have
\begin{equation}
 \|(\nabla \nabla_4 \alphab_F, \nabla \nabla_3 \alpha_F, \nabla \nabla_4 \rho_F, \nabla \nabla_4 \sigma_F)\|_{L^2_{(sc)}(H_u)} \lesssim C,
\end{equation}
\begin{equation}
  \|(\nabla \nabla_4 \alphab_F, \nabla \nabla_3 \alpha_F, \nabla \nabla_3 \rho_F, \nabla \nabla_3 \sigma_F)\|_{L^2_{(sc)}(\Hb_{\ub})} \lesssim C,
\end{equation}
\end{proposition}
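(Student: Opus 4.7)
The plan is to differentiate each null Maxwell equation \eqref{NM_L_alphab}--\eqref{NM_Lb_sigma} once in the angular direction, exhibiting every mixed derivative in the statement as a pure second angular derivative of a \emph{different} Maxwell null component --- which falls under either $\Ftwo$ on $H_u$ or $\Ftwob$ on $\Hb_{\ub}$ --- plus nonlinear remainders. Since $\rho_F$ and $\sigma_F$ are simultaneously of type $\Upsilon_4$ and $\Upsilon_g$, while $\alpha_F$ is only $\Upsilon_4$ and $\alphab_F$ is only $\Upsilon_g$, the principal term in each case is automatically controlled on the correct hypersurface by the already-established $\F$ and $\Fb$ bounds.

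Concretely, from \eqref{NM_L_alphab} and \eqref{NM_Lb_alpha} one has schematically
\begin{equation*}
\nabla_4 \alphab_F = -\nabla\rho_F - \nablastar \sigma_F + \psi\cdot\Upsilon, \qquad \nabla_3 \alpha_F = -\nabla\rho_F + \nablastar \sigma_F + \psi\cdot\Upsilon,
\end{equation*}
so after applying $\nabla$ (reordering with Lemma \ref{est_commutator} when convenient) the principal terms of $\nabla\nabla_4\alphab_F$ and $\nabla\nabla_3\alpha_F$ are $\nabla^2\rho_F, \nabla^2\sigma_F$, controlled by $\Ftwo$ on $H_u$ and by $\Ftwob$ on $\Hb_{\ub}$. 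In the same way, \eqref{NM_L_rho}--\eqref{NM_L_sigma} produce principal terms $\nabla^2\alpha_F$ for $\nabla\nabla_4\rho_F, \nabla\nabla_4\sigma_F$ on $H_u$, and \eqref{NM_Lb_rho}--\eqref{NM_Lb_sigma} produce principal terms $\nabla^2\alphab_F$ for $\nabla\nabla_3\rho_F, \nabla\nabla_3\sigma_F$ on $\Hb_{\ub}$.

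The nonlinear remainders have the schematic form $\nabla\psi\cdot\Upsilon + \psi\cdot\nabla\Upsilon + \nabla\Upsilon\cdot\Upsilon$, and I would bound them in $L^2_{(sc)}(H_u)$ or $L^2_{(sc)}(\Hb_{\ub})$ by placing one factor in $L^\infty_{(sc)}$ (via $\OSzeroinfinity$ for $\psi$ and Proposition \ref{prop_L_infty_Psi_F} for $\Upsilon$), while $\nabla\psi$ and $\nabla\Upsilon$ are absorbed by $\OHtwo$, $\Fone$, and Proposition \ref{secondangularderivative}; each H\"older pairing gains a factor $\delta^{1/2}$ from \eqref{Holder}, so these terms contribute at worst $\delta^{1/2}C$. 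The one delicate point I expect is the coupling of a $\nabla\psi$ factor with the anomalous Maxwell component $\alpha_F$ (entering, for example, through the $\chibh\cdot\alpha_F$ term on the right of \eqref{NM_L_alphab} after the $\nabla$-differentiation), which I would handle exactly as in Proposition \ref{prop_L_infty_Psi_F}: invoke the localized $L^4$ bound \eqref{local alpha_F} together with the trace inequalities \eqref{trace_H}--\eqref{trace_Hb} to rectify the anomaly, so that the net gain of at least $\delta^{1/4}$ dominates the $\delta^{-1/2}$ cost and yields the claimed uniform bound by $C$ on both $H_u$ and $\Hb_{\ub}$.
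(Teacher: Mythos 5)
Your plan is exactly the paper's (one-line) proof: apply $\nabla$ to the null Maxwell equations \eqref{NM_L_alphab}--\eqref{NM_Lb_sigma}, observe the principal terms become second angular derivatives controlled by $\Ftwo$ on $H_u$ and $\Ftwob$ on $\Hb_{\ub}$ once one tracks which components are $\Upsilon_4$ versus $\Upsilon_g$, and absorb the lower-order nonlinear remainders by H\"older. One small overcautious remark: the term $\nabla\chibh\cdot\alpha_F$ is not actually delicate, since $\|\alpha_F\|_{L^\infty_{(sc)}}\lesssim C$ by Proposition~\ref{prop_L_infty_Psi_F} carries no anomaly, so a plain $L^\infty_{(sc)}\times L^2_{(sc)}$ H\"older estimate (or $L^4_{(sc)}\times L^4_{(sc)}$, the $\delta^{-1/4}$ anomaly in $\|\alpha_F\|_{L^4_{(sc)}}$ being more than compensated by the $\delta^{1/2}$ gain from \eqref{Holder}) suffices; the localized $L^4$ bound \eqref{local alpha_F} and the trace inequalities are not needed here.
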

\begin{proof}
The proof is straightforward. We first apply $\nabla$ on \eqref{NM_L_alphab}-\eqref{NM_Lb_sigma}. The angular derivative cleans up all linear anomalies. Thus we can use the estimates derived so far to complete the estimates.  We omit the details.
\end{proof}

Next proposition collects remaining second derivative estimates on connection coefficients.
\begin{proposition}\label{twoderivativeRicciCoefficients}
 If $\delta$ is sufficiently small, then
\begin{equation}\label{est_4_H}
 \|(\nabla \nabla_4 \tr \chi, \nabla \nabla_4 \chih, \nabla  \nabla_4 \eta, \nabla \nabla_4 \etab, \nabla  \nabla_4 \omegab,\nabla \nabla_4 \trchibt,\nabla \nabla_4 \chibh)\|_{L^2_{(sc)}(H_u)} \lesssim C,
\end{equation}
\begin{equation}\label{est_4_Hb}
  \|(\nabla \nabla_4 \tr \chi, \nabla  \nabla_4 \eta, \nabla \nabla_4 \etab,\nabla  \nabla_4 \omegab,\nabla \nabla_4 \trchibt,\nabla \nabla_4 \chibh)\|_{L^2_{(sc)}(\Hb_{\ub})} \lesssim C,
\end{equation}
\begin{equation}\label{est_3_Hb}
  \|(\nabla \nabla_3 \tr \chi, \nabla \nabla_3 \chih, \nabla  \nabla_3 \etab, \nabla \nabla_3 \eta, \nabla  \nabla_3 \omega,\nabla \nabla_3 \trchibt,\nabla \nabla_3 \chibh)\|_{L^2_{(sc)}(\Hb_{\ub})} \lesssim C
\end{equation}
\begin{equation}\label{est_3_H}
  \|(\nabla \nabla_3 \tr \chi, \nabla \nabla_3 \chih, \nabla  \nabla_3 \etab, \nabla \nabla_3 \eta, \nabla  \nabla_3 \omega,\nabla \nabla_3 \trchibt)\|_{L^2_{(sc)}(H_u)} \lesssim C,
\end{equation}
\end{proposition}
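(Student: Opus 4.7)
The plan is to split the list into two groups according to whether the relevant $\nabla_4\psi$ (resp.\ $\nabla_3\psi$) already appears as the left-hand side of one of the null structure equations \eqref{NSE_L_tr_chi}--\eqref{NSE_Lb_chih}. For the components in the first group I would apply $\nabla$ directly to the corresponding structure equation and estimate in $L^2_{(sc)}(H_u)$ or $L^2_{(sc)}(\Hb_{\ub})$. For the four missing components $\nabla_4\etab$, $\nabla_4\omega$, $\nabla_3\eta$, $\nabla_3\omegab$ I would use a commutation argument parallel to the one already carried out in the proof of Proposition \ref{Remaininig_First_Derivative_Estimates}.

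For the direct group, the structure equations take the schematic form $\nabla_4\psi=\psi_g\cdot\psi+\Psi+\Upsilon\cdot\Upsilon+\nabla\psi$ (the $\nabla\psi$ piece appears only in \eqref{NSE_L_tr_chib} and \eqref{NSE_L_chibh}). Applying $\nabla$ and taking the $L^2_{(sc)}$ norm along $H_u$ or $\Hb_{\ub}$, the curvature contributions are absorbed by $\R+\Rb\lesssim C$ via $\Rone$ and $\Roneb$; the Maxwell contributions by $\F+\Fb\lesssim C$; the second angular derivative $\nabla^2\psi$ by Proposition \ref{secondangularderivative}; and the quadratic pieces by H\"older coupled with Corollary \ref{Oonefour}, each product paying for itself with a factor $\delta^{1/2}$. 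The sole structural difficulty is when the RHS contains $\alpha$ (in \eqref{NSE_L_chih}) or $\alphab$ (in \eqref{NSE_Lb_chibh}): then $\nabla\alpha$ is controlled only on $H_u$ and $\nabla\alphab$ only on $\Hb_{\ub}$, each via the $\delta^{1/2}$-renormalized $\Rone$ or $\Roneb$. This is exactly why $\nabla\nabla_4\chih$ is excluded from \eqref{est_4_Hb} and $\nabla\nabla_3\chibh$ is excluded from \eqref{est_3_H}.

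For the complementary group I would commute and appeal to a Bianchi equation to absorb the top-order derivative. Illustrating on $\nabla\nabla_4\etab$: applying $\nabla\nabla_4$ to \eqref{NSE_Lb_etab} yields
\begin{equation*}
\nabla_3(\nabla\nabla_4\etab)=\tr\chib_0\cdot\nabla\nabla_4\etab+\nabla\nabla_4\betab+\mathcal{E},
\end{equation*}
where $\mathcal{E}$ collects commutator terms (controlled by Lemma \ref{est_commutator} together with Corollary \ref{nabla_3_nabla_4_eta_etab}) and angular derivatives of the lower-order RHS of \eqref{NSE_Lb_etab}, all bounded by $C$. Using \eqref{NBE_L_betab} to rewrite $\nabla_4\betab=\Donestar(\rho,\sigma)+\psi\cdot\Psi+\Upsilon\cdot\nabla\Upsilon$, the top-order piece becomes $\nabla\Donestar(\rho,\sigma)$, controlled in $L^2_{(sc)}(\Hb_{\ub})$ by $\Rb$. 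A Gronwall argument in $u$ then produces the $\Hb_{\ub}$ bound; the $H_u$ bound is obtained by applying $\nabla\nabla_3$ to the analogous $\nabla_4$-transport equation and integrating in $\ub$. The cases $\nabla\nabla_4\omega$, $\nabla\nabla_3\eta$, $\nabla\nabla_3\omegab$ are handled symmetrically, pairing \eqref{NSE_L_omegab}, \eqref{NSE_Lb_omega}, \eqref{NSE_L_eta} with the appropriate Bianchi equations.

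The principal obstacle throughout is the bookkeeping of anomalies: one must never multiply two anomalous factors without harvesting a compensating $\delta^{1/2}$ from H\"older, and every occurrence of $\nabla\alpha$ or $\nabla\alphab$ must be sequestered on the hypersurface where it is non-anomalous. The omissions in \eqref{est_4_Hb} and \eqref{est_3_H} reflect exactly these two obstructions; every entry that does appear is accessible through the systematic procedure described above, so the proof reduces to routine but careful verification.
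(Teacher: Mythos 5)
Your treatment of the direct group (components whose $\nabla_4$- or $\nabla_3$-derivative is the left-hand side of one of \eqref{NSE_L_tr_chi}--\eqref{NSE_Lb_chih}) matches the paper exactly, and your explanation for why $\nabla\nabla_4\chih$ is missing from \eqref{est_4_Hb} and $\nabla\nabla_3\chibh$ from \eqref{est_3_H} is precisely the right one. The problems are in the exceptional group.

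First, a bookkeeping error: you list $\nabla\nabla_4\omega$ and $\nabla\nabla_3\omegab$ among the four cases to be handled by commutation, and claim they are dealt with ``symmetrically.'' They are not in the proposition at all, and the paper states explicitly --- just below equations \eqref{a3}--\eqref{ahodge} --- that ``we do not have estimates on $\nabla\nabla_4\omega$ and $\nabla\nabla_3\omegab$.'' The omission is structural, not an oversight, so claiming these are recoverable by the same argument signals that the argument is not correctly calibrated to the available controls.

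Second, and more seriously, the key step for $\nabla\nabla_4\etab$ fails. You propagate $\nabla_3(\nabla\nabla_4\etab)=\tr\chib_0\cdot\nabla\nabla_4\etab+\nabla\nabla_4\betab+\mathcal{E}$ and substitute \eqref{NBE_L_betab}, arriving at a top-order term $\nabla\Donestar(\rho,\sigma)$ which you then claim is ``controlled in $L^2_{(sc)}(\Hb_{\ub})$ by $\Rb$.'' This is a \emph{second} angular derivative of curvature, and the norms $\R=\Rzero+\Rone$ and $\Rb=\Rzerob+\Roneb$ contain only \emph{one} angular derivative of the curvature components ($\|\nabla\Psi_4\|_{L^2_{(sc)}(H)}$ and $\|\nabla\Psi_g\|_{L^2_{(sc)}(\Hb)}$, respectively). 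Nothing in \textbf{Theorem C}, the bootstrap, or the established $\mathcal{O}$ estimates gives $\nabla^2\rho$ or $\nabla^2\sigma$ in $L^2$ of a null hypersurface. The first-derivative version of this commutation in the proof of Proposition \ref{Remaininig_First_Derivative_Estimates} works precisely because there the substitution produces only $\nabla\rho$, $\nabla\sigma$. Iterating it once more overshoots the available regularity of curvature.

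The paper circumvents this by never taking two angular derivatives of a Bianchi equation: it instead commutes $\nabla_4$ with the \emph{Hodge system} $\D\etab=\mub+\rho+\sigma+\psi\cdot\psi$ to obtain $\D(\nabla_4\etab)=\nabla_4\mub+\nabla_4(\rho,\sigma)+\ldots$, and then elliptic estimates on $S_{u,\ub}$ trade the extra angular derivative on $\etab$ for the \emph{null} derivative $\nabla_4(\rho,\sigma)$, which \eqref{NBE_L_rho}--\eqref{NBE_L_sigma} express in terms of one angular derivative of curvature plus lower-order nonlinearity, hence controllable; and $\nabla_4\mub$ is handled by its own transport equation with a Gronwall argument. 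To close your proof you would either have to adopt this Hodge-system route or produce a genuine renormalization that writes $\nabla\Donestar(\rho,\sigma)$ as $\nabla_3$ of a controllable quantity (along the lines of the $\omegat,\omegabt$ constructions elsewhere in the paper) plus bounded errors; as written, the step is simply not justified.
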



\begin{proof} Except for $\nabla \nabla_4 \etab$ and $\nabla \nabla_3 \eta$, all the terms can be estimated directly from \eqref{NSE_L_tr_chi}-\eqref{NSE_Lb_chih}.
For $\psi \in \{\tr\chi, \chih,\omegab, \eta, \trchibt, \chibh\}$, the corresponding null structure equations in $\nabla_4$ direction read as $\nabla_4 \psi = \tr\chib_0 \cdot \psi + \psi \cdot \psi + \nabla \psi + \Psi_4 + \Upsilon \cdot \Upsilon$. Thus,
\begin{equation*}
\nabla \nabla_4 \psi = \tr\chib_0 \cdot \nabla \psi + \psi \cdot \nabla \psi + \nabla^2 \psi + \nabla \Psi_4 + \Upsilon \cdot \nabla \Upsilon.
\end{equation*}
Every term on the right-hand side is bounded along $H_u$. Thus,
\begin{equation*}
\| \nabla \nabla_4 \psi \|_{L^2_{(sc)}(H_u)} \lesssim C.
\end{equation*}
Similarly, along $\Hb_{\ub}$, we have
\begin{equation}\label{a4}
\| \nabla \nabla_4 \psi \|_{L^2_{(sc)}(\Hb_{\ub})} \lesssim C.
\end{equation}
Since the norm $\Roneb$ does not include $\nabla \alpha$, $\|\nabla \nabla_4 \chih\|_{L^2_{(sc)}(\Hb_{\ub})}$ is absent in \eqref{a4}. We can proceed in the same way to prove the those estimates in $\nabla_3$ directions and we omit the details.

It remains to control the most difficult terms $\nabla \nabla_4 \etab$ and $\nabla \nabla_3 \eta$. Since the arguments are similar for both, we only derive estimates on $\nabla \nabla_4 \etab$. Recall the transport-Hodge system for $(\mub,\etab)$,
\begin{equation}\label{a3}
 \nabla_3 \mub =\tr\chib_0  \cdot (\psi_g \cdot \psi + \nabla \psi) + \psi \cdot (\psi_g \cdot \psi + \nabla \psi + \Psi_3 + \Upsilon \cdot \Upsilon),
\end{equation}
\begin{equation}\label{ahodge}
 \D \etab = \mub + \rho + \sigma + \psi \cdot \psi.
\end{equation}
We observe that $\omega$ and $\omegab$ do not appear among $\nabla \psi$'s. This is extremely important since we do not have estimates on $\nabla \nabla_4 \omega$ and $\nabla \nabla_3 \omegab$. Another observation is equally important: there is no double anomalous terms in the form $\tr\chib_0 \cdot \Psi$, $\tr\chib_0 \cdot \chih$ or $\tr\chib_0 \cdot\chibh$.

We derive a transport-Hodge system for $(\nabla_4 \mub, \nabla_4 \etab)$. In view of \eqref{a3},
we derive
\begin{align*}
 &\nabla_3(\nabla_4 \mub) = (\psi\cdot \nabla \mub + \omegab \cdot \nabla_4 \mub + \omega \cdot \nabla_3 \mub)+ (\psi_g \cdot \psi + \nabla \psi) + (\psi \cdot\nabla_4 \psi + \nabla_4 \nabla \psi) \\
&+ \nabla_4 \psi \cdot (\psi_g \cdot \psi + \nabla \psi + \Psi_3 + \Upsilon \cdot \Upsilon) + \psi \cdot (\psi \cdot \nabla_4\psi + \nabla_4 \nabla \psi +\nabla_4 \Psi_3 + \Upsilon \cdot \nabla_4 \Upsilon).
\end{align*}
where $\Psi_3 \in \{ \beta, \rho, \sigma, \alphab\}$. We have replaced $\tr\chib_0$ and $\nabla_4 \tr\chib_0$ by the constant 1. Without loss of generality, we can drop some terms which enjoy easier and better estimates. Thus above equation is reduced to the following form,
\begin{align*}
 \nabla_3(\nabla_4 \mub) 
& =  \psi \cdot \nabla \mub + \omegab \cdot \nabla_4 \mub + \omega \cdot \nabla_3 \mub  + (1+\psi)\cdot\nabla_4 \nabla \psi + \nabla_4 \psi\cdot (\psi + \nabla \psi + \Psi_3 + \Upsilon \cdot \Upsilon) \\&+ \psi \cdot (\nabla_4 \Psi_3 + \Upsilon\cdot \nabla_4 \Upsilon) = T_1 + T_2 +T_3+ T_4 +T_5 + T_6.
\end{align*}
Commuting $\nabla_4$ with \eqref{ahodge}, we also have,
\begin{equation}\label{est_hodge_2}
 \D (\nabla_4 \etab) = \nabla_4 \mub + (\nabla_4 \rho + \nabla_4\sigma) + \psi \cdot (\nabla_4 \psi + \nabla \etab + \Psi_4 + \psi_g \cdot \psi).
\end{equation}

We control $\int_{0}^u \|T_i\|_{L^2_{(sc)}(u',\ub)}du'$. For $T_1$, in view of \eqref{mu}, we have
\begin{align*}
 \int_{0}^u \|T_1\|_{L^2_{(sc)}(u',\ub)} &\lesssim \int_{0}^u \|\psi  \cdot (\nabla^2 \psi + \nabla \Psi_g)\|_{L^2_{(sc)}(u',\ub)}\lesssim C\delta^{\frac{1}{2}}.
\end{align*}

For $T_2$, we have
\begin{align*}
 \int_{0}^u \|T_2\|_{L^2_{(sc)}(u',\ub)} &\lesssim \int_{0}^u \|\omegab \nabla_4 \mub\|_{L^2_{(sc)}(u',\ub)}  \lesssim \delta^{\frac{1}{2}}\Delta_0 \int_{0}^u \|\nabla_4 \mub\|_{L^2_{(sc)}(u',\ub)}.
\end{align*}

For $T_3$, in view of \eqref{a3}, we also have $\int_{0}^u \|T_3\|_{L^2_{(sc)}(u',\ub)}du' \lesssim C$.

For $T_5$ (we come back to $T_4$ later), it consists of four terms. The one involving Maxwell field is easy since it is a cubic nonlinearity. Each of the rest three terms can be estimated by virtue of Sobolev inequality. We only treat one of them to illustrate the idea. The others can be derived in the same manner.
First of all, we have
\begin{equation*}
 \int_{0}^u \|\nabla_4 \psi \cdot \Psi_3\|_{L^2_{(sc)}(u',\ub)} \lesssim \delta^{\frac{1}{2}} \int_{0}^u \|\nabla_4 \psi \|_{L^4_{(sc)}(u',\ub)} \|\Psi_3 \|_{L^4_{(sc)}(u',\ub)} .
\end{equation*}
In view of Proposition \ref{Remaininig_First_Derivative_Estimates} and Lemma \ref{sobolev_trace_estimates}, on $S_{u,\ub}$ we have
\begin{equation*}
 \|\nabla_4 \psi \|_{L^4_{(sc)}} \lesssim \|\nabla \nabla_4 \psi \|^{\frac{1}{2}}_{{L^2_{(sc)}}} \|\nabla_4 \psi \|^{\frac{1}{2}}_{{L^2_{(sc)}}} + \delta^{\frac{1}{4}}\|\nabla_4 \psi \|_{{L^2_{(sc)}}} \lesssim \delta^{-\frac{1}{4}}C ( \|\nabla \nabla_4 \psi \|^{\frac{1}{2}}_{{L^2_{(sc)}}} + 1),
\end{equation*}
\begin{equation*}
 \|\Psi_3 \|_{L^4_{(sc)}} \lesssim \|\nabla \Psi_3 \|^{\frac{1}{2}}_{{L^2_{(sc)}}} \|\Psi_3 \|^{\frac{1}{2}}_{{L^2_{(sc)}}} + \delta^{\frac{1}{4}}\|\Psi_3 \|_{{L^2_{(sc)}}} \lesssim C ( \|\nabla \Psi_3 \|^{\frac{1}{2}}_{{L^2_{(sc)}}} + \delta^{\frac{1}{4}}).
\end{equation*}
Thus,
\begin{align*}
 \int_{0}^u \|\nabla_4 \psi \cdot \Psi_3\|_{L^2_{(sc)}(u',\ub)} &\lesssim \delta^{\frac{1}{4}} C \int_{0}^u ( \|\nabla \nabla_4 \psi \|^{\frac{1}{2}}_{{L^2_{(sc)}(S_{u',\ub})}} + 1)( \|\nabla \Psi_3 \|^{\frac{1}{2}}_{{L^2_{(sc)}(S_{u',\ub})}} + 1)\\
&\lesssim \delta^{\frac{1}{4}} C \int_{0}^u ( \|\nabla \nabla_4 \psi \|_{{L^2_{(sc)}(S_{u',\ub})}} +  \|\nabla \Psi_3 \|_{{L^2_{(sc)}(S_{u',\ub})}} + 1) \\
&\lesssim C +  \|\nabla \nabla_4 \psi \|_{L^2_{(sc)}(\Hb_{\ub})}.
\end{align*}
Recall that in Proposition \ref{twoderivativeRicciCoefficients} we have just proved $\|\nabla \nabla_4 \psi \|_{L^2_{(sc)}(\Hb_{\ub})} \lesssim C$. Since $\omega$ and $\omegab$ are absent in \eqref{a3} and \eqref{ahodge}, we have $\int_{0}^u \|\nabla_4 \psi \cdot \Psi_3\|_{L^2_{(sc)}(u',\ub)}  \lesssim C$. Hence, $\int_{0}^u \| T_5 \|_{L^2_{(sc)}(u',\ub)} \lesssim C$.

For $T_4$, we have to treat different components in different ways,
\begin{align*}
 \int_{0}^u \|T_4\|_{L^2_{(sc)}(u',\ub)} & \lesssim \int_{0}^u \|\nabla_4 \nabla \psi \|_{L^2_{(sc)}(u',\ub)} \lesssim \int_{0}^u \|\nabla_4 \nabla \etab \|_{L^2_{(sc)}(u',\ub)} + \sum_{\psi \notin\{ \etab,\omegab,\omega\}} \|\nabla_4 \nabla \psi \|_{L^2_{(sc)}(u',\ub)}\\
&\lesssim C + \int_{0}^u \|\nabla \nabla_4 \etab \|_{L^2_{(sc)}(u',\ub)} +  \sum_{\psi \notin\{ \etab,\omegab,\omega\}} \int_{0}^u \|\nabla \nabla_4 \psi \|_{L^2_{(sc)}(u',\ub)}.
\end{align*}
Hence, $\int_{0}^u \|T_4\|_{L^2_{(sc)}(u',\ub)} \lesssim C + \int_{0}^u \|\nabla \nabla_4 \etab \|_{L^2_{(sc)}(u',\ub)}$.

For $T_6$, it has two terms. We estimate them one by one. According to null Bianchi equations, $\nabla_4 \Psi_3 = \nabla \Psi_g + \psi \cdot \Psi + \Upsilon \cdot D \Upsilon$,
we derive
\begin{equation*}
 \int_{0}^u \| \psi \cdot \nabla_4 \Psi_3\|_{L^2_{(sc)}(u',\ub)} \lesssim \delta^{\frac{1}{2}}\Delta_0\int_{0}^u \|\nabla \Psi \|_{L^2_{(sc)}(u',\ub)} + \delta^{\frac{1}{2}}\Delta_0 \|(\Psi_g,D\Upsilon)\|_{L^2_{(sc)}(u',\ub)} \lesssim C.
\end{equation*}
Thus, $ \int_{0}^u T_6 \lesssim C$. Putting all the $T_i$'s together, we have
\begin{align*}
 \sum_{i=1}^6 \int_{0}^u \|T_i\|_{L^2_{(sc)}(u',\ub)}du' &\lesssim C + \int_{0}^u \|\nabla_4 \mub\|_{L^2_{(sc)}(u',\ub)} + \int_{0}^u \|\nabla_4 \nabla \etab \|_{L^2_{(sc)}(u',\ub)}.
\end{align*}
Thus,
\begin{equation*}
\|\nabla_4 \mub\|_{{L^2_{(sc)}(S_{u,\ub})}} \lesssim  C + \int_{0}^u \|\nabla_4 \mub\|_{L^2_{(sc)}(u',\ub)} + \int_{0}^u \|\nabla \nabla_4 \etab \|_{L^2_{(sc)}(u',\ub)}.
\end{equation*}
Thanks to Gronwall's inequality, we have
\begin{equation}\label{est_nabla_mub}
\|\nabla_4 \mub\|_{{L^2_{(sc)}(S_{u,\ub})}} \lesssim C + \int_{0}^u \|\nabla \nabla_4 \etab \|_{L^2_{(sc)}(u',\ub)}.
\end{equation}
We use elliptic estimates on Hodge system \eqref{est_hodge_2} on $S_{u,\ub}$ to derive
\begin{align*}
\|\nabla \nabla_4 \etab \|_{{L^2_{(sc)}}} &\lesssim \|\nabla_4 \mub \|_{{L^2_{(sc)}}} + \|\nabla_4 \rho\|_{{L^2_{(sc)}}} +  \|\nabla_4 \sigma\|_{{L^2_{(sc)}}} + \|\psi \cdot (\nabla_4 \psi + \nabla \etab + \Psi_4 + \psi_g \cdot \psi) \|_{{L^2_{(sc)}}}\\
&\lesssim \|\nabla_4 \mub \|_{{L^2_{(sc)}}} +  \|\nabla_4 \rho\|_{{L^2_{(sc)}}} +  \|\nabla_4 \sigma\|_{{L^2_{(sc)}}} + C.
\end{align*}
Combined with \eqref{est_nabla_mub}, we have
\begin{align*}
\|\nabla_4 \mub\|_{{L^2_{(sc)}(S_{u,\ub})}} 
&\lesssim C + \int_{0}^u \| \nabla_4 \mub \|_{L^2_{(sc)}(u',\ub)}.
\end{align*}
Thanks to Gronwall's inequality, we have $\|\nabla_4 \mub\|_{{L^2_{(sc)}(S_{u,\ub})}} \lesssim C$. We now go back to \eqref{ahodge} again to derive
\begin{equation*}
\|\nabla \nabla_4 \etab\|_{L^2_{(sc)}(H_u)} + \|\nabla \nabla_4 \etab\|_{L^2_{(sc)}(\Hb_{\ub})} \lesssim C.
\end{equation*}
This completes the proof.
\end{proof}

\subsection{End of the Bootstrap Argument for Theorem A}
Combining all estimates derived so far, we close the bootstrap argument for {\bf{Theorem A}} by showing next proposition.
\begin{proposition}\label{close_bootstrap}
If $\delta$ is sufficiently small, we have
\begin{equation}
 \OSzeroinfinity \lesssim C.
\end{equation}
\end{proposition}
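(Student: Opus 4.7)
The plan is to close the bootstrap argument by estimating $\OSzeroinfinity$ through the Sobolev inequalities of Lemma \ref{sobolev_trace_estimates}, splitting the treatment according to whether the connection coefficient is anomalous.

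For any non-anomalous coefficient $\psi_g$ (i.e.\ $\psi_g \neq \chih, \chibh$), I would apply the standard Sobolev embedding \eqref{sobolev_Linfinity}:
\[
\|\psi_g\|_{L^\infty_{(sc)}(S_{u,\ub})} \lesssim \|\psi_g\|^{1/2}_{L^4_{(sc)}(S_{u,\ub})}\|\nabla \psi_g\|^{1/2}_{L^4_{(sc)}(S_{u,\ub})} + \delta^{1/4}\|\psi_g\|_{L^4_{(sc)}(S_{u,\ub})}.
\]
By Proposition \ref{OZERO_FOUR_ESTIMATES} we have $\|\psi_g\|_{L^4_{(sc)}} \lesssim C$ (the $\psi_g$-part of $\OSzerofour$ carries no $\delta^{-1/4}$ loss), and by Corollary \ref{Oonefour} we have $\|\nabla \psi_g\|_{L^4_{(sc)}} \lesssim \Oonefour \lesssim C$, so the right-hand side is bounded by $C$.

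The components $\chih$ and $\chibh$ require the localized Sobolev estimate \eqref{sobolev_local}, because their unlocalized $L^4$ norms are anomalous:
\[
\|\chih\|_{L^\infty_{(sc)}(S_{u,\ub})} \lesssim \sup_{{}^\delta S_{u,\ub} \subset S_{u,\ub}} \bigl(\|\nabla \chih\|_{L^4_{(sc)}({}^{2\delta}S_{u,\ub})} + \|\chih\|_{L^4_{(sc)}({}^{2\delta}S_{u,\ub})}\bigr).
\]
The gradient term is bounded by $\Oonefour \lesssim C$. The zeroth-order term is controlled via the localized bound in \eqref{localized_L_4_chih}, which gives $\|\chih\|_{L^4_{(sc)}({}^{2\delta}S)} \lesssim \delta^{1/4} C + \Rzero^\delta[\alpha] + \Rone[\alpha] \lesssim C$. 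The treatment of $\chibh$ is identical, invoking the $\chibh$-estimate in \eqref{localized_L_4_chih} together with the finite initial-data contribution $\|\chibh\|_{L^4_{(sc)}({}^\delta S_{0,\ub})} \lesssim \Oinitial$.

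Combining the two cases yields $\OSzeroinfinity \lesssim C$, where $C$ depends only on $\Ozero, \R, \Rb, \F, \Fb$ and not on $\Delta_0$. Together with Proposition \ref{prop_L_infty_Psi_F}, which supplies $\|\Upsilon\|_{L^\infty_{(sc)}} \lesssim C$, this strictly improves the bootstrap hypothesis \eqref{bootstrap} once $\Delta_0$ is chosen sufficiently large relative to $C$ and $\delta$ is taken small, closing the argument and completing the proof of \textbf{Theorem A}. No serious obstacle is anticipated: the substantive analytic work --- the non-anomalous first-derivative $L^4$ bound and the localized $L^4$ bounds on $\chih, \chibh$ --- has already been performed; this step merely harvests those estimates through the embedding best suited to each case, using the localized Sobolev precisely where the global one would fail due to anomaly.
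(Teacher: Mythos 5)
Your proof is correct and follows essentially the same approach as the paper: the standard Sobolev inequality \eqref{sobolev_Linfinity} for the non-anomalous coefficients and the localized inequality \eqref{sobolev_local} combined with the localized $L^4$ bounds \eqref{localized_L_4_chih} for $\chih,\chibh$. The only cosmetic difference is that the paper re-derives the $L^4_{(sc)}$ bound on $\nabla\psi$ via the trace inequalities \eqref{trace_H}--\eqref{trace_Hb} together with Proposition~\ref{twoderivativeRicciCoefficients} rather than citing Corollary~\ref{Oonefour}, which makes explicit the dependence on the mixed-derivative bounds $\|\nabla_4\nabla\psi\|_{L^2_{(sc)}(H_u)}$ and $\|\nabla_3\nabla\psi\|_{L^2_{(sc)}(\Hb_{\ub})}$ established only after that corollary.
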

\begin{proof}
For $\psi \in \{\tr\chi, \chih, \eta, \omegab\}$, in view of Lemma \ref{sobolev_trace_estimates} and Proposition \ref{twoderivativeRicciCoefficients}, along $H=H_u$,
\begin{align*}
 \|\nabla \psi\|^2_{L^4_{(sc)}(u,\ub)}& \lesssim (\delta^{\frac{1}{2}}\|\nabla \psi\|_{L^2_{(sc)}(H)}+\|\nabla^2 \psi\|_{L^2_{(sc)}(H)}) (\delta^{\frac{1}{2}}\|\nabla \psi\|_{L^2_{(sc)}(H)}+\|\nabla_4 \nabla \psi\|_{L^2_{(sc)}(H)}) \lesssim C.
\end{align*}
 For $\psi \in \{\trchibt, \chibh, \etab, \omega\}$, similarly, we have $\|\nabla \psi\|_{L^4_{(sc)}(u,\ub)} \lesssim C$. Thus, for a non-anomalous $\psi_g$,
\begin{equation*}
 \|\psi_g\|_{L^{\infty}_{(sc)}(u,\ub)} \lesssim \|\psi_g\|^{\frac{1}{2}}_{L^4_{(sc)}(u,\ub)}\|\nabla \psi_g\|^{\frac{1}{2}}_{L^4_{(sc)}(u,\ub)}+ \delta^{\frac{1}{4}}\|\psi_g\|_{L^4_{(sc)}(u,\ub)} \lesssim C.
\end{equation*}
For an anomalous $\psi \in \{\chih, \chibh\}$, we use \eqref{localized_L_4_chih} and the localized Sobolev inequality \eqref{sobolev_local} to derive
\begin{equation*}
 \|\psi\|_{L^{\infty}_{(sc)}(u,\ub)} \lesssim \sup_{{}^{\delta}\!S_{u,\ub} \subset S_{u,\ub}}(\|\nabla \psi\|_{L^4_{(sc)}({}^{2\delta}\!S_{u,\ub})}+ \|\psi\|_{L^4_{(sc)}({}^{2\delta}\!S_{u,\ub})}) \lesssim C.
\end{equation*}
This completes the proof.
\end{proof}
In the rest of the section, we derive more $L^4_{(sc)}$ estimates for later use. For curvatures,
\begin{proposition} If $\delta$ is sufficiently small, we have
\begin{equation*}
 \delta^{\frac{1}{4}}\|\alpha\|_{{L^4_{(sc)}(S_{u,\ub})}} + \|(\beta, \rho, \sigma, \betab, \alphab, K)\|_{{L^4_{(sc)}(S_{u,\ub})}} \lesssim C
\end{equation*}
\end{proposition}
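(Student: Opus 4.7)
The plan is to upgrade the $L^2_{(sc)}(S_{u,\ub})$ bounds on null curvature (already obtained in the $L^2$ Estimates subsection of Section 2.2.3) to $L^4_{(sc)}(S_{u,\ub})$ bounds, component by component. Two tools are available: the scale-invariant Sobolev inequality \eqref{sobolev_L4} on the two-sphere $S$, and the trace inequalities \eqref{trace_H}--\eqref{trace_Hb} that convert $L^2$ data on a null hypersurface directly into $L^4(S)$. For each component I will choose the hypersurface minimizing the number of anomalous inputs and, where necessary, substitute a null transversal derivative of curvature by a tangential one through the null Bianchi equations, whose quadratic corrections are controlled via H\"older by Proposition \ref{close_bootstrap} and Proposition \ref{prop_L_infty_Psi_F}.

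For $\alpha$, apply \eqref{trace_H} along $H_u$. Both $\|\alpha\|_{L^2_{(sc)}(H)}$ and $\|\nabla_4\alpha\|_{L^2_{(sc)}(H)}$ carry the $\delta^{-1/2}$ anomaly from $\R_0$ and $\R_1$, while $\|\nabla\alpha\|_{L^2_{(sc)}(H)}\lesssim\R_1$ is not; the geometric mean produces $\|\alpha\|_{L^4_{(sc)}(S)}\lesssim\delta^{-1/4}C$, which matches the $\delta^{1/4}$ prefactor in the statement. For $\beta,\rho,\sigma,\betab$, again use \eqref{trace_H} along $H_u$: these are non-anomalous on $H$, their angular derivatives lie in $\nabla\Psi_4\subset\R_1$, and each null transversal derivative $\nabla_4 \Psi$ is rewritten, via \eqref{NBE_L_beta}--\eqref{NBE_L_betab}, as an expression $\nabla\Psi + \psi\cdot\Psi + \Upsilon\cdot\nabla\Upsilon$ bounded in $L^2(H)$ by $\R_1$ plus H\"older-safe cubic products. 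Gauss curvature is reduced through \eqref{NSE_gauss} to $\rho$ plus H\"older products of $L^\infty_{(sc)}$-bounded quantities.

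The single genuinely delicate component is $\alphab$: the trace \eqref{trace_Hb} along $\Hb_{\ub}$ is polluted by the anomaly of $\nabla_3\alphab$ inside $\Roneb$ and would only yield $\delta^{-1/4}C$, while \eqref{trace_H} along $H_u$ lacks a direct non-anomalous bound on $\|\nabla\alphab\|_{L^2_{(sc)}(H)}$. I bypass these by applying the Sobolev inequality \eqref{sobolev_L4} on $S$, reducing the task to a non-anomalous bound $\|\nabla\alphab\|_{L^2_{(sc)}(S)}\lesssim C$. Treating $\alphab$ as a symmetric traceless $2$-tensor and invoking Lemma \ref{estimates_for_hogde_systems} on $\Dtwo\alphab = \divergence\alphab$, we get $\|\nabla\alphab\|_{L^2_{(sc)}(S)}\lesssim \|\divergence\alphab\|_{L^2_{(sc)}(S)}$ modulo an absorbable term $\delta^{1/2}\|K\|^{1/2}_{L^2_{(sc)}}\|\alphab\|_{L^4_{(sc)}(S)}$. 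The null Bianchi equation \eqref{NBE_Lb_betab} then writes
\begin{equation*}
\divergence\alphab = -\nabla_3\betab - 2\tr\chib\,\betab - 2\omegab\betab + \etab\cdot\alphab + \tfrac12(D_b R_{33}-D_3 R_{3b}),
\end{equation*}
reducing the task to a pointwise-in-$(u,\ub)$ $L^2(S)$ bound on $\nabla_3\betab$.

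The hard part is this last bound, since $\nabla_3\betab$ does not appear in any of the standard norms and the Bianchi identity \eqref{NBE_Lb_betab} relates it only to $\divergence\alphab$ itself. My plan is to extract it via a transport-Hodge argument analogous to Section 2.3.1: commute an angular derivative through the $\nabla_3$ Bianchi equation for $\beta$ \eqref{NBE_Lb_beta}, invert the resulting elliptic Hodge system on $S$ in terms of $\nabla\rho,\nabla\sigma$ (bounded in $L^2(\Hb)$ by $\R_1$) and cubic corrections, and close via Gronwall along $\Hb_{\ub}$; angular-momentum estimates of \textbf{Theorem B} then upgrade the $\Hb$-integrated control to the required pointwise-on-$S$ control. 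Once this bound is in hand, the Sobolev feedback $\delta^{1/2}\|\alphab\|_{L^4_{(sc)}(S)}$ is absorbed by the left-hand side for $\delta$ sufficiently small, concluding $\|\alphab\|_{L^4_{(sc)}(S)}\lesssim C$.
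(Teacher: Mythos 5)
Your plan for $\beta, \rho, \sigma, \betab$ (trace along $H_u$, then trade $\nabla_4\Psi$ via the $\nabla_4$-direction null Bianchi equations) and for $K$ via \eqref{NSE_gauss} matches the paper's proof. For $\alpha$ you apply \eqref{trace_H} along $H_u$, while the paper applies \eqref{trace_Hb} along $\Hb_{\ub}$ and then uses \eqref{NBE_Lb_alpha} to replace $\nabla_3\alpha$; both yield the same $\delta^{-1/4}C$ bound, and your choice is arguably cleaner since $\|\alpha\|_{L^2_{(sc)}(H)}$, $\|\nabla\alpha\|_{L^2_{(sc)}(H)}$, $\|\nabla_4\alpha\|_{L^2_{(sc)}(H)}$ are all read off directly from $\Rzero$ and $\Rone$.

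The gap is in your treatment of $\alphab$. Your diagnosis is correct: a naive application of \eqref{trace_Hb} is polluted by the $\delta^{-1/2}$ anomaly of $\nabla_3\alphab$ in $\Roneb$, and reducing via the Sobolev inequality \eqref{sobolev_L4} to a bound on $\|\nabla\alphab\|_{L^2_{(sc)}(S)}$ via the Hodge operator $\Dtwo\alphab=\divergence\alphab$ is a natural idea. But the route you sketch for $\|\nabla_3\betab\|_{L^2_{(sc)}(S)}$ does not close. You cite \eqref{NBE_Lb_beta}, which is the Bianchi equation for $\nabla_3\beta$, not for $\nabla_3\betab$; commuting $\nabla$ through it produces control of $\nabla\beta$ along $\Hb$, which is not the quantity you need. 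The equation actually governing $\nabla_3\betab$ is \eqref{NBE_Lb_betab}, and that equation expresses $\nabla_3\betab$ precisely as $-\divergence\alphab - 2\tr\chib\,\betab - \cdots$, so feeding it back into the elliptic estimate for $\divergence\alphab$ is circular and yields nothing. By contrast, the paper simply includes $\alphab$ alongside $\sigma,\rho,\betab$ in the trace estimate along $H_u$, treating $\nabla_4\alphab$ via \eqref{NBE_L_alphab} (which exchanges it for $\nabla\betab\in\Rone$ plus lower order). This avoids $\nabla_3\betab$ entirely, at the cost of needing $\|\nabla\alphab\|_{L^2_{(sc)}(H_u)}\lesssim C$, a point the paper treats tersely; your detour, however, does not resolve it either and introduces the circularity above.
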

\begin{proof}
For curvature component $\Psi \in \{\sigma, \rho, \betab, \alphab\}$, in view of Lemma \ref{sobolev_trace_estimates}, we have
\begin{align*}
\|\Psi\|_{{L^4_{(sc)}(S_{u,\ub})}} &\lesssim (\delta^\frac12 \|\Psi\|_{L^2_{(sc)}(H_u)} + \| \nabla \Psi\|_{L^2_{(sc)}(H_u)})^\frac{1}{2} \cdot (\delta^\frac12 \|\Psi\|_{L^2_{(sc)}(H_u)} + \| \nabla_4 \Psi\|_{L^2_{(sc)}(H_u)})^\frac{1}{2}\\
&\lesssim C(\delta^\frac{1}{2} C + \| \nabla_4 \Psi\|_{L^2_{(sc)}(H_u)})^\frac{1}{2}.
\end{align*}
To bound $\|\nabla_4 \Psi\|_{L^2_{(sc)}(H_u)}$, we use \eqref{NBE_L_sigma}, \eqref{NBE_L_rho} or \eqref{NBE_L_betab} to replace $\nabla_4 \Psi$ by $\nabla_4 \Psi  = \nabla \Psi + \psi \cdot \Psi + \Upsilon \cdot \nabla \Upsilon$. Those terms can be easily bounded. This yields the desired bound.
Similarly, we can bound $\beta$. The estimates on $K$ is directly from \eqref{NSE_gauss}.

It remains to control $\alpha$.
\begin{align*}
\|\alpha\|_{{L^4_{(sc)}(S_{u,\ub})}} &\lesssim (\delta^\frac12 \|\alpha\|_{L^2_{(sc)}(\Hb_{\ub})} + \| \nabla \alpha\|_{L^2_{(sc)}(\Hb_{\ub})})^\frac{1}{2} \cdot (\delta^\frac12 \|\alpha\|_{L^2_{(sc)}(\Hb_{\ub})} + \| \nabla_3 \alpha\|_{L^2_{(sc)}(\Hb_{\ub})})^\frac{1}{2}\\
&\lesssim C(\delta^\frac{1}{2} C + \| \nabla_3 \alpha\|_{L^2_{(sc)}(\Hb_{\ub})})^\frac{1}{2}.
\end{align*}
In view of \eqref{NBE_Lb_alpha}, $\nabla_3 \alpha = \alpha + \nabla \Psi + \psi \cdot \Psi + \Upsilon \cdot \Upsilon$. In view of the anomalous estimates on $\alpha$, this gives the bound.
\end{proof}
For connection coefficients, we have
\begin{proposition}
 If $\delta$ is sufficiently small, we have
\begin{equation}\label{B_1}
 \|\nabla \psi\|_{{L^4_{(sc)}(S_{u,\ub})}} + \|(\nabla_4 \tr \chi, \nabla_4 \eta, \nabla_4 \omegab, \nabla_4 \tr \chib, \nabla_3 \trchibt, \nabla_3 \etab, \nabla_3 \omega, \nabla_3 \tr \chi)\|_{{L^4_{(sc)}(S_{u,\ub})}}\lesssim C,
\end{equation}
\begin{equation}\label{B_2}
 \|(\nabla_4 \chih, \nabla_4 \chibh, \nabla_3 \chih, \nabla_3 \chibh)\|_{{L^4_{(sc)}(S_{u,\ub})}}\lesssim C \delta^{-\frac{1}{4}}
\end{equation}
\begin{equation}
 \|(\nabla_4 \etab, \nabla_3 \eta)\|_{{L^4_{(sc)}(S_{u,\ub})}} \lesssim C,
\end{equation}
\end{proposition}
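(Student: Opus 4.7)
\smallskip

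The plan is to derive every estimate by combining the trace inequalities of Lemma~\ref{sobolev_trace_estimates} with the hypersurface $L^2$--bounds already established in Proposition~\ref{secondangularderivative}, Proposition~\ref{twoderivativeRicciCoefficients} and Corollary~\ref{nabla_3_nabla_4_eta_etab}, together with Hölder's inequality and the $L^\infty_{(sc)}$ bounds from Proposition~\ref{close_bootstrap}. The general principle is that wherever a null structure equation expresses $\nabla_4\psi$ or $\nabla_3\psi$ as a schematic sum of terms we already control in $L^4_{(sc)}(S)$, we use that equation directly; otherwise (as for $\nabla_4\etab$ and $\nabla_3\eta$) we invoke the trace inequality.

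For $\|\nabla \psi\|_{L^4_{(sc)}(S)}$ I would apply \eqref{trace_H} with $\phi = \nabla\psi$ when $\psi\in\{\tr\chi,\chih,\eta,\omegab\}$ and \eqref{trace_Hb} when $\psi\in\{\trchibt,\chibh,\etab,\omega\}$. The factor $\|\nabla^2\psi\|_{L^2_{(sc)}(H)}$ (or its $\Hb$ analogue) is bounded by $C$ via Proposition~\ref{secondangularderivative}, and the factor $\|\nabla_4\nabla\psi\|_{L^2_{(sc)}(H)}$ is controlled by Proposition~\ref{twoderivativeRicciCoefficients} after absorbing the commutator via Lemma~\ref{est_commutator}. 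The remaining contributions $\delta^{1/2}\|\nabla\psi\|_{L^2_{(sc)}(H)}$ are harmless because $\Oonetwo\lesssim C$. This yields $\|\nabla\psi\|_{L^4_{(sc)}(S)}\lesssim C$. For the $\nabla_4,\nabla_3$ components in \eqref{B_1} (resp. \eqref{B_2}), I would use the corresponding null structure equations \eqref{NSE_L_tr_chi}--\eqref{NSE_Lb_chih} to rewrite the left-hand side schematically as $\tr\chib_0\cdot\psi+\psi\cdot\psi+\nabla\psi+\Psi_g+\Upsilon\cdot\Upsilon$ (with $\Psi_g$ replaced by $\alpha$ or $\alphab$ precisely in the four cases appearing in \eqref{B_2}). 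Taking $L^4_{(sc)}(S)$-norms and distributing the products via Hölder gains $\delta^{1/2}$ per factor, so using Proposition~\ref{close_bootstrap}, the previous step for $\nabla\psi$, and the already-proven $L^4_{(sc)}$ bounds on non-anomalous curvature, all contributions are $\lesssim C$; the $\alpha,\alphab$ terms contribute the anomaly $\delta^{-1/4}C$ accounting exactly for \eqref{B_2}.

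The remaining pair $\|\nabla_4\etab\|_{L^4_{(sc)}(S)}$ and $\|\nabla_3\eta\|_{L^4_{(sc)}(S)}$ is the essential obstacle, as these quantities do not appear as principal terms in any null structure equation in the respective direction. Here I would apply \eqref{trace_Hb} to $\phi=\nabla_4\etab$:
\[
\|\nabla_4\etab\|_{L^4_{(sc)}(S)}\lesssim\bigl(\delta^{1/2}\|\nabla_4\etab\|_{L^2_{(sc)}(\Hb)}+\|\nabla\nabla_4\etab\|_{L^2_{(sc)}(\Hb)}\bigr)^{1/2}\bigl(\delta^{1/2}\|\nabla_4\etab\|_{L^2_{(sc)}(\Hb)}+\|\nabla_3\nabla_4\etab\|_{L^2_{(sc)}(\Hb)}\bigr)^{1/2}.
\]
The pointwise bound $\|\nabla_4\etab\|_{L^2_{(sc)}(S)}\lesssim C$ from Proposition~\ref{Remaininig_First_Derivative_Estimates} controls the first term in each factor, $\|\nabla\nabla_4\etab\|_{L^2_{(sc)}(\Hb)}\lesssim C$ is supplied by Proposition~\ref{twoderivativeRicciCoefficients}, and $\|\nabla_3\nabla_4\etab\|_{L^2_{(sc)}(\Hb)}\lesssim C$ is precisely the content of Corollary~\ref{nabla_3_nabla_4_eta_etab}. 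The treatment of $\nabla_3\eta$ is symmetric, using \eqref{trace_H} on $H_u$ and the companion bound $\|\nabla_4\nabla_3\eta\|_{L^2_{(sc)}(H_u)}\lesssim C$ from the same corollary. The hard analytic work has therefore been done in those earlier propositions; the present step is a bookkeeping exercise that packages them into the stated $L^4_{(sc)}(S)$ estimates.
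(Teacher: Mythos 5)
Your overall plan coincides with the paper's proof: $\|\nabla\psi\|_{L^4_{(sc)}}$ from the trace-inequality argument already carried out in Proposition~\ref{close_bootstrap}, the $\nabla_N\psi$ terms schematically from the null structure equations, and $\nabla_4\etab,\nabla_3\eta$ from the trace inequalities \eqref{trace_H}--\eqref{trace_Hb} combined with Corollary~\ref{nabla_3_nabla_4_eta_etab} and Proposition~\ref{twoderivativeRicciCoefficients}. That part is fine and matches the source.

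The one place your accounting goes wrong is the source of the anomaly in \eqref{B_2}. You write that "the $\alpha,\alphab$ terms contribute the anomaly $\delta^{-1/4}C$," and that after distributing Hölder the remaining contributions are $\lesssim C$. This is incorrect on both counts. First, $\alphab$ is \emph{not} anomalous in $L^4_{(sc)}(S_{u,\ub})$: the preceding curvature $L^4$ proposition gives $\|\alphab\|_{L^4_{(sc)}(S)}\lesssim C$, so the $\alphab$ term in \eqref{NSE_Lb_chibh} is harmless. Second, the term $\tr\chib_0\cdot\psi$ does not gain a $\delta^{1/2}$ from Hölder — $\tr\chib_0$ is a bounded scalar, so $\|\tr\chib_0\cdot\phi\|_{L^4_{(sc)}}\lesssim\|\phi\|_{L^4_{(sc)}}$ with no gain (see the footnote attached to the scale-invariant Hölder inequality). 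When $\phi\in\{\chih,\chibh\}$ this already costs $\delta^{-1/4}$. Going through the four cases: $\nabla_4\chih$ picks up its anomaly from the $\alpha$ term in \eqref{NSE_L_chih}; but $\nabla_4\chibh$ (through $-\tfrac12\tr\chib\,\chih$ in \eqref{NSE_L_chibh}), $\nabla_3\chih$ (through $\tfrac12\tr\chib\,\chih$ in \eqref{NSE_Lb_chih}), and $\nabla_3\chibh$ (through $\tr\chib\,\chibh$ in \eqref{NSE_Lb_chibh}) all inherit their $\delta^{-1/4}$ from the $\tr\chib_0\cdot\chih$ or $\tr\chib_0\cdot\chibh$ factor. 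This is exactly what the paper flags ("the potential danger comes from $\tr\chib_0\cdot\psi$, when $\psi\in\{\chih,\chibh\}$"). The final bound you state is correct, but the attribution is wrong in three of the four cases and introduces a non-existent anomaly in $\alphab$; you should redo the case-by-case check of the $\tr\chib_0\cdot\psi$ products.
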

\begin{proof}
In view of the proof of Proposition \ref{close_bootstrap}, we have estimates on $\|\nabla \psi\|_{{L^4_{(sc)}(S_{u,\ub})}}$; for the remaining terms in \eqref{B_1} and \eqref{B_2}, they can be directly derived from null structure equations. Schematically, for $N=3$ or $4$, they satisfy
\begin{equation*}
 \nabla_N \psi = \tr\chib_0 \cdot \psi + \psi \cdot \psi + \nabla \psi + \Psi + \Upsilon \cdot \Upsilon.
\end{equation*}
Each term on the right-hand side can be bounded by the estimates derived so far. The potential danger comes from $\tr\chib_0 \cdot \psi$. When $\psi \in \{\chih, \chibh\}$,  it leads to anomalous behavior in \eqref{B_2}.

It remains to deal with $\|\nabla_4 \etab\|_{{L^4_{(sc)}(S_{u,\ub})}} $ and $\|\nabla_3 \eta\|_{{L^4_{(sc)}(S_{u,\ub})}}$. They can be estimated directly from Corollary \ref{nabla_3_nabla_4_eta_etab}, Proposition \ref{twoderivativeRicciCoefficients} and Lemma \ref{sobolev_trace_estimates}. This completes the proof.
\end{proof}
\begin{remark}
All the estimates derived in the bootstrap argument of {\bf{Theorem A}} will be valid throughout the paper. Notice that we do not have the ${L^4_{(sc)}(S_{u,\ub})}$ estimates on $\nabla_4 \omega$ and $\nabla_3 \omegab$.
\end{remark}


\section{Theorem B - Trace Estimates and Angular Momentums}\label{Section_Trace_Rotation}
\subsection{Trace Norms}
We recall the definitions of trace norms introduced in \cite{K-R-09}.
\begin{definition}
For a tensor field $\psi$ along $H = H_u^{(0,\ub)}$, relative to the transported coordinates $(\ub,\theta)$, its scale invariant trace norm is defined as
\begin{equation*}
\|\psi\|_{Tr_{(sc)}(H)} = \delta^{-sc(\psi)-\frac{1}{2}} (\sup_{\theta \in S(u,0)} \int_0^{\ub} |\psi(u,\ub',\theta)|^2d\ub')^{\frac{1}{2}};
\end{equation*}
for a tensor field $\psi$ along $\Hb = \Hb_{\ub}^{(0,u)}$, relative to the transported coordinates $(u,\thetab)$, its scale invariant trace norm is defined as
\begin{equation*}
\|\psi\|_{Tr_{(sc)}(\Hb)} = \delta^{-sc(\psi)} (\sup_{\thetab \in S(0,\ub)} \int_0^{\ub} |\psi(u',\ub,\thetab)|^2du')^{\frac{1}{2}}
\end{equation*}
\end{definition}
We recall the sharp trace theorem below. We refer to \cite{K-R-09} for a proof.
\begin{proposition}\label{trace theorem}
 If $\delta$ is sufficiently small, for any $\psi$ along $H = H_u^{(0,\ub)}$, we have
\begin{align*}
\|\nabla_4 \psi\|_{Tr_{(sc)}(H)} &\lesssim (\| \nabla^2_4 \psi\|_{L^2_{(sc)}(H)} + \| \psi\|_{L^2_{(sc)}(H)} + \delta^{\frac{1}{2}}C\sup_{\ub}(\| \psi\|_{L^{\infty}_{(sc)}(u,\ub)} + \| \nabla_4 \psi\|_{L^{4}_{(sc)}(u,\ub)}))^\frac{1}{2}\\
&\times (\| \nabla^2 \psi\|_{L^2_{(sc)}(H)} + \delta^{\frac{1}{2}}C\sup_{\ub}(\| \psi\|_{L^{\infty}_{(sc)}(u,\ub)} + \| \nabla \psi\|_{L^{4}_{(sc)}(u,\ub)}))^\frac{1}{2}\\
&+ \| \nabla_4 \nabla \psi\|_{L^2_{(sc)}(H)} +  \|\nabla \psi\|_{L^2_{(sc)}(H)}+\delta^{\frac{1}{2}}C\sup_{\ub}(\| \psi\|_{L^{\infty}_{(sc)}(u,\ub)} + \| \nabla \psi\|_{L^{4}_{(sc)}(u,\ub)})),
\end{align*}
and for any $\psi$ along $\Hb = \Hb_{\ub}^{(0,u)}$, we have
\begin{align*}
\|\nabla_3 \psi\|_{Tr_{(sc)}(\Hb)} &\lesssim (\| \nabla^2_3 \psi\|_{L^2_{(sc)}(\Hb)} + \| \psi\|_{L^2_{(sc)}(\Hb)} + \delta^{\frac{1}{2}}C\sup_{u}(\| \psi\|_{L^{\infty}_{(sc)}(u,\ub)} + \| \nabla_3 \psi\|_{L^{4}_{(sc)}(u,\ub)}))^\frac{1}{2}\\
&\times (\| \nabla^2 \psi\|_{L^2_{(sc)}(\Hb)} + \delta^{\frac{1}{2}}C\sup_{u}(\| \psi\|_{L^{\infty}_{(sc)}(u,\ub)} + \| \nabla \psi\|_{L^{4}_{(sc)}(u,\ub)}))^\frac{1}{2}\\
&+ \| \nabla_3\nabla \psi\|_{L^2_{(sc)}(\Hb)} +  \|\nabla \psi\|_{L^2_{(sc)}(\Hb)}+\delta^{\frac{1}{2}}C\sup_{\ub}(\| \psi\|_{L^{\infty}_{(sc)}(u,\ub)} + \| \nabla \psi\|_{L^{4}_{(sc)}(u,\ub)})).
\end{align*}
\end{proposition}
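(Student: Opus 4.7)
The plan is to prove the estimate on $H = H_u^{(0,\ub)}$; the $\Hb$-version then follows by the symmetric argument exchanging $(u,\ub)$ and $(\nabla_3,\nabla_4)$. Since this is the scale-invariant version of the sharp trace theorem of Klainerman--Rodnianski (cf.\ \cite{K-R-09}), the strategy I would adopt is a Littlewood--Paley decomposition on the 2-surfaces $S_{u,\ub}$ combined with transport estimates along the null generators of $H_u$, tracking the $\delta$-weights carefully to preserve scale invariance throughout.

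First, I would fix a family of spectral projectors $\{P_k\}_{k \in \mathbb{Z}}$ on $S_{u,\ub}$, associated with the heat semigroup of the induced Laplacian $\triangle$, and decompose
\begin{equation*}
\nabla_4 \psi = P_{\leq 0}(\nabla_4 \psi) + \sum_{k > 0} P_k(\nabla_4 \psi).
\end{equation*}
For the low-frequency piece, Bernstein on $S^2$ gives $\|P_{\leq 0} \phi\|_{L^\infty(S_{u,\ub})} \lesssim \|\phi\|_{L^4(S_{u,\ub})}$, so integration in $\ub$ plus Cauchy--Schwarz reduces matters to controlling $\|\nabla_4 \psi\|_{L^4_{(sc)}(u,\ub)}$ pointwise in $\ub$ together with a tangential $\nabla$-derivative along $H$. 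These contribute the subcritical terms $\|\nabla_4 \nabla \psi\|_{L^2_{(sc)}(H)} + \|\nabla \psi\|_{L^2_{(sc)}(H)}$ on the right-hand side, together with the error $\delta^{\frac{1}{2}} C(\|\psi\|_{L^\infty_{(sc)}(u,\ub)} + \|\nabla \psi\|_{L^4_{(sc)}(u,\ub)})$ from the low-frequency remainder.

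For the high-frequency pieces ($k > 0$), the sharp Bernstein inequality on $S^2$ gives $\|P_k \phi\|_{L^\infty(S_{u,\ub})}^2 \lesssim 2^k \|P_k \phi\|_{L^2(S_{u,\ub})}\|\nabla P_k \phi\|_{L^2(S_{u,\ub})}$. Integrating in $\ub$ and exchanging one factor of $2^k$ for a $\nabla_4$-derivative by an integration by parts along $\ub$ produces the geometric-mean structure of the main term: one factor involves $\|\nabla_4^2 \psi\|_{L^2_{(sc)}(H)}$ (from integrating $\nabla_4$ onto $P_k \nabla_4 \psi$, generating also the boundary contribution $\|\psi\|_{L^2_{(sc)}(H)}$ at $\ub = 0$), and the other involves $\|\nabla^2 \psi\|_{L^2_{(sc)}(H)}$ (from the spare $\nabla$ coming out of the angular projector). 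Summation over $k$ is then carried out by Schur's test on the Littlewood--Paley tree, using the rapid off-diagonal decay of the heat kernel.

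The principal obstacle will be controlling the commutator $[\nabla_4, P_k]$, which does not vanish because $P_k$ is built from the induced connection on $S_{u,\ub}$ and $e_4$ moves between different spheres: it generates curvature error terms involving $\chi$ and $\nabla \chi$. These must be absorbed using the bootstrap estimates from Theorem A, and this is precisely the mechanism that produces the $\delta^{\frac{1}{2}} C$-weighted correction terms involving $\|\psi\|_{L^\infty_{(sc)}(u,\ub)}$, $\|\nabla \psi\|_{L^4_{(sc)}(u,\ub)}$ and $\|\nabla_4 \psi\|_{L^4_{(sc)}(u,\ub)}$ on the right-hand side, each factor of $\delta^{\frac{1}{2}}$ arising from the scale-invariant H\"older inequality \eqref{Holder}. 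Finally, tracking the overall weight $\delta^{-sc(\psi)-\frac{1}{2}}$ through the Littlewood--Paley decomposition yields the estimate in the scale-invariant form stated.
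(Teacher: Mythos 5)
The paper itself does not prove this proposition; it cites \cite{K-R-09} for the proof and simply records the statement. So there is no in-paper argument to compare against; the relevant benchmark is the Klainerman--Rodnianski sharp trace theorem and its scale-invariant adaptation. Your proposal is consistent in outline with the geometric Littlewood--Paley route that K-R take (heat-flow projectors on $S_{u,\ub}$, transport in $\ub$, and absorption of commutator errors using the $\mathcal{O}$-estimates), so you have identified the right toolbox.

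There is, however, a genuine muddle in the mechanism you give for the leading geometric-mean term. You claim to produce it by ``exchanging one factor of $2^k$ for a $\nabla_4$-derivative by an integration by parts along $\ub$.'' This does not work as stated: $2^k$ is an angular Littlewood--Paley frequency, and an integration by parts in the $\ub$-variable moves $\nabla_4$-derivatives between factors but cannot trade angular frequency for a null derivative. In the actual argument the two halves of the geometric mean come from two different sources. The IBP identity $\int_0^{\ub}|\nabla_4\psi|^2\,d\ub' = -\int_0^{\ub}\psi\cdot\nabla_4^2\psi\,d\ub' + [\psi\cdot\nabla_4\psi]_0^{\ub}$ is what produces the $\|\nabla_4^2\psi\|_{L^2_{(sc)}(H)}$ factor and the boundary contribution $\|\psi\|_{L^2_{(sc)}(H)}$ plus the $\sup_{\ub}\|\psi\|_{L^{\infty}_{(sc)}}$-error, while the passage from $L^2(S)$-norms to $L^{\infty}_\theta$ on the fibers --- which is where Bernstein and the Littlewood--Paley decomposition actually enter --- is what produces the $\|\nabla^2\psi\|_{L^2_{(sc)}(H)}$ factor. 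The geometric mean is a consequence of pairing these two inputs via Cauchy--Schwarz in $\ub$ and Sobolev interpolation on the sphere, not of a frequency-for-$\nabla_4$ trade. As written, your high-frequency step would not close, because the factor of $2^k$ that Bernstein leaves behind must be absorbed by an extra \emph{angular} derivative (using the spectral localization $\|\nabla P_k\phi\|\sim 2^k\|P_k\phi\|$), not by $\nabla_4$.

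A secondary point: the commutator $[\nabla_4,P_k]$ discussion is in the right spirit, but in the K-R framework this is precisely where the heavy machinery of \cite{K-R}, \cite{K-R-Sharp} lives, and one cannot simply invoke the scale-invariant H\"older inequality to place all of those errors at order $\delta^{\frac12}C$: the commutator contains $\chi\cdot\nabla\psi$-type terms that are the same strength as the main term and must be handled by a bootstrap on the trace norm itself, not merely by H\"older. You should acknowledge that this requires the full commutator calculus rather than folding it silently into the error.
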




\subsection{Trace Estimates on Curvature and Maxwell Field}
\begin{proposition}
If $\delta$ is sufficiently small, for $\Psi_g \neq \alphab$, we have
\begin{equation}
\delta^{\frac{1}{4}}(\|\alpha\|_{Tr_{(sc)}(H_u)}+\|\alphab\|_{Tr_{(sc)}(\Hb_{\ub})}) +\|(\beta,\betab)\|_{Tr_{(sc)}(H_u)} + \|(\beta,\betab)\|_{Tr_{(sc)}(\Hb_{\ub})}
\lesssim C.
\end{equation}
\end{proposition}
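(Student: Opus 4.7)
The plan is, for each curvature component appearing on the left-hand side, to express it as a $\nabla_4$-derivative (when bounding the trace along $H_u$) or $\nabla_3$-derivative (along $\Hb_{\ub}$) of an appropriate connection coefficient, plus lower-order terms, and then invoke the sharp trace theorem (Proposition~\ref{trace theorem}) in combination with the second-derivative bounds from Section~2. For $\alpha$ along $H_u$, solve~\eqref{NSE_L_chih} as $\alpha = -\nabla_4 \chih - \tr\chi\,\chih - 2\omega\chih$ and apply Proposition~\ref{trace theorem} with $\psi = \chih$; the right-hand side of that theorem is controlled by Propositions~\ref{secondangularderivative}, \ref{Remaininig_First_Derivative_Estimates}, \ref{twoderivativeRicciCoefficients}, and \ref{close_bootstrap}, while the anomaly of $\chih$ propagates into the factor $\delta^{-1/4}$ on $\|\alpha\|_{Tr_{(sc)}(H_u)}$. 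The case of $\alphab$ along $\Hb_{\ub}$ is symmetric via~\eqref{NSE_Lb_chibh} applied to $\chibh$. For $\beta$ along $H_u$, rearrange~\eqref{NSE_L_eta} as $\beta = -\nabla_4\eta -\chi\cdot(\eta-\etab) - \frac{1}{2}T_{b4}$ and apply the trace theorem to the non-anomalous $\eta$; analogously $\betab$ along $\Hb_{\ub}$ via~\eqref{NSE_Lb_etab} applied to $\etab$.

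\textbf{Off-diagonal cases.} The mixed components $\betab$ on $H_u$ and $\beta$ on $\Hb_{\ub}$ do not appear as a $\nabla_4$- or $\nabla_3$-derivative of any single connection coefficient. Here the mass-aspect identities from Section~2, i.e.\ $\betab = 2\,\Donestar\omegabpair - 2\kappab$ from~\eqref{kappab} and $\beta = 2\,\Donestar\omegapair - 2\kappa$ from~\eqref{kappa}, are used. For the $\Donestar$-terms one commutes $\nabla$ through the $\nabla_4$-transport equation~\eqref{NSE_L_omegab} for $\omegab$ and through $\nabla_4\omegabt = \tfrac12\sigma$, using Lemma~\ref{est_commutator}, and then applies Proposition~\ref{trace theorem} to the scalars $\omegab$ and $\omegabt$ individually, reducing everything to the second-derivative bounds of Proposition~\ref{twoderivativeRicciCoefficients}. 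The mass aspect $\kappab$ itself satisfies a $\nabla_4$-transport equation of type~\eqref{L_transport_one_derivative}; combined with Corollary~\ref{onederivativeonhhb} and the $L^\infty_{(sc)}$--$L^4_{(sc)}$ bounds from Section~2, this supplies the input needed for the trace theorem applied to an antiderivative of the transport equation. The case of $\beta$ along $\Hb_{\ub}$ is completely symmetric, using the $\nabla_3$-transport equations for $\omega$, $\omegat$, and $\kappa$.

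\textbf{Main obstacle.} The principal difficulty is precisely the off-diagonal pair, since neither $\betab$ (on $H_u$) nor $\beta$ (on $\Hb_{\ub}$) admits a direct representation as a null derivative of a single connection coefficient. The mass-aspect formalism sidesteps this, but the book-keeping is delicate: one must simultaneously track the $\nabla_4$-transport of $\omegab$, $\omegabt$, and $\kappab$, and verify that the commutator errors generated by $[\nabla_4,\nabla]$ and the borderline Bianchi contributions (such as the $\nabla\rho$, $\nabla\sigma$ appearing in~\eqref{NBE_L_betab}) each gain enough powers of $\delta$ through the scale-invariant H\"older inequality to fit inside the bound of Proposition~\ref{trace theorem}. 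Once this off-diagonal step is navigated, every other estimate in the proposition follows from a straightforward application of the sharp trace theorem and the quantitative control established in Section~2.
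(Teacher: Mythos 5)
Your treatment of the diagonal cases — $\alpha$ on $H_u$, $\alphab$ on $\Hb_{\ub}$, $\beta$ on $H_u$, $\betab$ on $\Hb_{\ub}$ — is essentially the paper's: solve the relevant null structure equation for the curvature component as a null derivative of a connection coefficient and apply Proposition~\ref{trace theorem}. One discrepancy worth noting: the paper renormalizes by $\Omega$, writing $\alphab = -\Omega\nabla_3\chibh' - \tr\chib\,\chibh$ with $\chibh' = \Omega^{-1}\chibh$, explicitly ``to avoid $\omegab$'', so that $\nabla^2_3\chibh'$ never produces $\nabla_3\omegab$. You apply the trace theorem directly to $\chih$ (resp.\ $\chibh$), so your $\nabla^2_4\chih$ contains a $\nabla_4\omega\cdot\chih$ term; this can be absorbed via the scale-invariant H\"older inequality and the $L^2_{(sc)}(S)$ bound on $\nabla_4\omega$ from Proposition~\ref{Remaininig_First_Derivative_Estimates}, but since the paper goes out of its way to eliminate that term, the point deserves a sentence rather than being folded into ``controlled by Propositions \dots''.

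The genuine gap is in your off-diagonal argument for $\|\betab\|_{Tr_{(sc)}(H_u)}$ and $\|\beta\|_{Tr_{(sc)}(\Hb_{\ub})}$. The decomposition $\betab = 2\Donestar\omegabpair - 2\kappab$ is a reasonable starting point, but the step ``applies Proposition~\ref{trace theorem} to the scalars $\omegab$ and $\omegabt$ individually'' does not yield $\|\Donestar\omegabpair\|_{Tr_{(sc)}(H_u)}$: with $\psi = \omegab$, Proposition~\ref{trace theorem} bounds $\|\nabla_4\omegab\|_{Tr_{(sc)}(H_u)}$, which by \eqref{NSE_L_omegab} is the trace of $\rho$ plus lower order, not the trace of $\nabla\omegab$. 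Tracing an angular derivative requires the auxiliary-tensor device (introduce $\phi$ with $\nabla_4\phi = \Donestar\omegabpair$, or directly $\nabla_4\phi = \betab$), after which the trace theorem demands $\|\nabla^2\phi\|_{L^2_{(sc)}(H_u)}$; since $\nabla_4\nabla^2\phi$ involves two angular derivatives of curvature, this must be renormalized — exactly the machinery the paper develops only in the later Sections~3.3.1--3.3.4 for $\nabla\chih$, $\nabla\eta$, etc., which you cannot yet invoke here. You acknowledge an antiderivative only for $\kappab$, not for $\Donestar\omegabpair$, and the renormalization of $\nabla^2\phi$ is not addressed at all. It is fair to observe that the paper's own proof is silent on the off-diagonal cases (``similarly'' most plausibly covers only the remaining diagonal ones), those norms are not used elsewhere, and the statement carries no $\delta^{1/4}$ weight on $\|\beta\|_{Tr_{(sc)}(\Hb_{\ub})}$ even though $\beta$ is anomalous on $\Hb$ in $\Rzerob$ — so the off-diagonal claims may simply be an imprecision in the proposition. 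But the argument you wrote for them does not close as stated.
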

\begin{proof}
We first derive estimates on $\alphab$ which relies on \eqref{NSE_Lb_chibh}. To avoid $\omegab$, we rewrite \eqref{NSE_Lb_chibh} in following form
\begin{equation}\label{nablathreeomegaprime}
 \alphab = -\Omega \cdot \nabla_3 \chibh' - \tr \chib \cdot \chibh,
\end{equation}
where the modified term $\chibh' = \frac{1}{\Omega}\chibh$ enjoys the same estimates as $\chibh$. We can ignore $\tr\chib \cdot \chih$ since it enjoys better trace estimates. In view of Proposition \ref{trace theorem}, we have
\begin{align*}
\|\alphab\|_{Tr_{(sc)}(\Hb_{\ub})} &\lesssim (\| \nabla^2_3 \chibh'\|_{L^2_{(sc)}(\Hb)} + C + \delta^{\frac{1}{2}}C\| \nabla_3 \chibh'\|_{L^{4}_{(sc)}(u,\ub)})^\frac{1}{2}(C + \delta^{\frac{1}{2}}C\| \nabla \chibh'\|_{L^{4}_{(sc)}(u,\ub)})^\frac{1}{2}\\
&+ \| \nabla_3\nabla\chibh'\|_{L^2_{(sc)}(\Hb)} + C\delta^{\frac{1}{2}}\| \nabla\chibh'\|_{L^{4}_{(sc)}(u,\ub)}+C \lesssim \| \nabla^2_3 \chibh'\|^{\frac{1}{2}}_{L^2_{(sc)}(\Hb)} + C.
\end{align*}
To estimate $\|\nabla^2_3 \chibh'\|_{L^2_{(sc)}(\Hb)}$, we differentiate \eqref{nablathreeomegaprime} to derive
\begin{equation*}
\nabla_3^2 \chibh' = \nabla_3 \alphab + \tr\chib_0 \cdot \nabla_3  \chibh + \tr\chib \cdot \nabla_3  \chibh+\chibh \cdot \nabla_3 \tr \chib.
\end{equation*}
Thus,
\begin{equation*}
\| \nabla^2_3 \chibh'\|_{L^2_{(sc)}(\Hb_{\ub})} \lesssim \| \nabla_3 \alphab\|_{L^2_{(sc)}(\Hb_{\ub})}  + \| \tr\chib_0 \cdot \nabla_3 \chibh\|_{L^2_{(sc)}(\Hb_{\ub})} + C \lesssim C \delta^{-\frac{1}{2}}.
\end{equation*}
This proves the desired estimates on  $\|\alphab\|_{Tr_{(sc)}(\Hb_{\ub})}$.

We now derive estimates on $\betab$. It relies on \eqref{NSE_Lb_etab}. The proof goes exactly as above. Similarly, we can derive estimates for other quantities. This completes the proof.
\end{proof}

We now turn to the trace estimates for Maxwell components.
\begin{proposition}
If $\delta$ is sufficiently small, we have
\begin{equation}
\delta^{\frac{1}{4}}(\|\nabla_4 \alpha_F\|_{Tr_{(sc)}(H_u)}+\|\nabla_3 \alphab_F\|_{Tr_{(sc)}(\Hb_{\ub})}) +\|\nabla \alpha_F\|_{Tr_{(sc)}(H_u)} + \|\nabla \alphab_F\|_{Tr_{(sc)}(\Hb_{\ub})}\lesssim C.
\end{equation}
\end{proposition}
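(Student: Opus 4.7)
The plan is to adapt the strategy of the preceding (curvature) proposition to the null Maxwell components, applying the sharp trace theorem (Proposition \ref{trace theorem}) to appropriate Maxwell components and converting between derivatives via the null Maxwell equations \eqref{NM_L_alphab}--\eqref{NM_Lb_sigma}.

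For $\|\nabla_4 \alpha_F\|_{Tr_{(sc)}(H_u)}$, I would apply Proposition \ref{trace theorem} directly with $\psi=\alpha_F$ on $H_u$. Since $\alpha_F\in\Upsilon_4$, the bounds on $\|\nabla_4^2\alpha_F\|_{L^2_{(sc)}(H_u)}$ and $\|\nabla^2\alpha_F\|_{L^2_{(sc)}(H_u)}$ come from $\F_2$, with the former carrying the $\delta^{-1/2}$ anomaly and the latter non-anomalous. The remaining mixed term $\|\nabla_4\nabla\alpha_F\|_{L^2_{(sc)}(H_u)}$ is controlled via the commutator $\nabla_4\nabla\alpha_F = \nabla\nabla_4\alpha_F + [\nabla_4,\nabla]\alpha_F$, with Lemma \ref{est_commutator} handling the commutator. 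The interpolation structure of Proposition \ref{trace theorem} then converts the single $\delta^{-1/2}$ anomaly from $\nabla_4^2\alpha_F$ into a $\delta^{-1/4}$ loss, which is exactly absorbed by the $\delta^{1/4}$ prefactor in the statement. For $\|\nabla_3\alphab_F\|_{Tr_{(sc)}(\Hb_{\ub})}$ I would run a symmetric argument using $\alphab_F\in\Upsilon_g$, $\Ftwob$, equations \eqref{NM_Lb_rho}--\eqref{NM_Lb_sigma}, and the $\Hb_{\ub}$-version of Proposition \ref{2 der est on max}.

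For $\|\nabla\alpha_F\|_{Tr_{(sc)}(H_u)}$, the quantity $\nabla\alpha_F$ is tangential to $S_{u,\ub}$, so its trace norm must be bounded directly rather than as a $\nabla_4$ of something. The approach is a pointwise-in-$\theta$ transport identity: at fixed $\theta$, one has
\begin{equation*}
|\nabla\alpha_F(u,\ub,\theta)|^2 = |\nabla\alpha_F(u,0,\theta)|^2 + \int_0^\ub \nabla_4|\nabla\alpha_F|^2\,d\ub',
\end{equation*}
where $\nabla_4|\nabla\alpha_F|^2 = 2\langle\nabla_4\nabla\alpha_F,\nabla\alpha_F\rangle$ plus lower-order connection terms. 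Integrating in $\ub$, applying Cauchy--Schwarz, and invoking the $L^2_{(sc)}(H_u)$ bounds on $\nabla\alpha_F$ (from Theorem A, specifically \eqref{nabla4nablaF}) and on $\nabla_4\nabla\alpha_F$ (established in the previous step), together with a scale-invariant sphere Sobolev embedding on $S_{u,0}$ to control $\sup_\theta|\nabla\alpha_F(u,0,\theta)|$ from $\Finitial$, gives $\|\nabla\alpha_F\|_{Tr_{(sc)}(H_u)}\lesssim C$. The estimate for $\|\nabla\alphab_F\|_{Tr_{(sc)}(\Hb_{\ub})}$ follows by the same argument transported along the integral curves of $\Lb$.

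The main obstacle is establishing the mixed second-derivative bound $\|\nabla\nabla_4\alpha_F\|_{L^2_{(sc)}(H_u)}$, which is conspicuously \emph{not} supplied by Proposition \ref{2 der est on max}. Unlike the curvature case --- where the analogous mixed derivatives are produced by commuting null structure equations --- the component $\nabla_4\alpha_F$ has no direct null Maxwell equation, so a transport argument is unavailable. The workaround is to use \eqref{NM_L_rho}--\eqref{NM_L_sigma} to realize the Hodge system $\D_1\alpha_F = (-\nabla_4\rho_F,-\nabla_4\sigma_F) + (\text{l.o.t.})$, then to apply the elliptic estimate \eqref{elliptic_Hogde_2nd_order} of Lemma \ref{estimates_for_hogde_systems} on $S_{u,\ub}$ pointwise in $\ub$ (integrated in $\ub$), using the bounds on $\nabla\nabla_4\rho_F$ and $\nabla\nabla_4\sigma_F$ from Proposition \ref{2 der est on max}. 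This Hodge-theoretic substitute for a transport equation is the crucial new ingredient beyond what appears in the curvature argument.
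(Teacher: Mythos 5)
Your overall strategy --- apply the sharp trace theorem (Proposition \ref{trace theorem}) and convert between derivatives via the null Maxwell equations --- is aligned with the paper's, but both of the ``hard'' steps in your proposal contain genuine gaps, and in both places the paper uses a different (and essential) device.

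For $\|\nabla_4\alpha_F\|_{Tr_{(sc)}(H_u)}$, you correctly identify that the obstruction is the non-squared term $\|\nabla_4\nabla\alpha_F\|_{L^2_{(sc)}(H_u)}$ in Proposition \ref{trace theorem}, which must come out at $C\delta^{-1/4}$ so that the $\delta^{1/4}$ prefactor absorbs it. Your proposed workaround does not produce this bound: applying $\Dtwostar$-type elliptic theory to $\D_1\alpha_F=(-\nabla_4\rho_F,-\nabla_4\sigma_F)+l.o.t.$ via \eqref{elliptic_Hogde_2nd_order} yields $\|\nabla^2\alpha_F\|_{L^2_{(sc)}(S_{u,\ub})}$ in terms of $\|\nabla\nabla_4\rho_F\|$, $\|\nabla\nabla_4\sigma_F\|$ --- but $\nabla^2\alpha_F$ is \emph{not} the mixed derivative $\nabla\nabla_4\alpha_F$, and $\|\nabla^2\alpha_F\|$ is already supplied by $\Ftwo$. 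If you instead commute $\nabla_4$ through the Hodge system to get $\D_1(\nabla_4\alpha_F)\sim(\nabla_4^2\rho_F,\nabla_4^2\sigma_F)$, the resulting elliptic bound on $\nabla\nabla_4\alpha_F$ inherits the full $\delta^{-1/2}$ anomaly of $\nabla_4^2\Upsilon_4$, which is a factor $\delta^{-1/4}$ too large. The paper gets the correct exponent by an integration by parts on the null hypersurface, schematically $\|\nabla_3\nabla\alphab_F\|^2_{L^2(\Hb)}=\int_{\Hb}\nabla^2\alphab_F\cdot\nabla_3\nabla_3\alphab_F+E$, so the anomalous factor appears only linearly (not quadratically) and the square root produces exactly $C\delta^{-1/4}$; the analogous identity on $H_u$ gives the $\nabla_4\nabla\alpha_F$ bound. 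This algebraic trick is the content you are missing.

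For $\|\nabla\alpha_F\|_{Tr_{(sc)}(H_u)}$, the pointwise-in-$\theta$ transport argument does not close. The trace norm is an $L^\infty_\theta L^2_{\ub}$ quantity; after integrating your transport identity in $\ub$ and applying Cauchy--Schwarz at fixed $\theta$, the bulk term is
$\delta\sup_\theta\bigl(\int_0^{\ub}|\nabla_4\nabla\alpha_F|^2\,d\ub'\bigr)^{1/2}\bigl(\int_0^{\ub}|\nabla\alpha_F|^2\,d\ub'\bigr)^{1/2}$,
and both factors are themselves $L^\infty_\theta L^2_{\ub}$ trace-type quantities, not $L^2(H_u)=L^2_\theta L^2_{\ub}$ quantities. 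Invoking the $L^2_{(sc)}(H_u)$ bounds, as you propose, controls the $\theta$-average but not the $\theta$-supremum, so the argument is circular at the crucial step. The paper circumvents this by introducing an auxiliary tensor $\varphi$ with $\nabla_3\varphi=\nabla\alphab_F$ (resp.\ $\nabla_4\varphi=\nabla\alpha_F$), so that $\|\nabla\alphab_F\|_{Tr_{(sc)}(\Hb)}=\|\nabla_3\varphi\|_{Tr_{(sc)}(\Hb)}$ can be attacked by Proposition \ref{trace theorem} applied to $\varphi$. The one nontrivial ingredient is $\|\nabla^2\varphi\|_{L^2_{(sc)}(\Hb)}\lesssim C$, which the paper obtains by a renormalization: commuting $\triangle$ with the transport equation for $\varphi$ and using \eqref{NM_Lb_rho}--\eqref{NM_Lb_sigma} (after applying $\Dtwostar$) to replace $\nabla\triangle\alphab_F$ by $\nabla_3\nabla^2(\rho_F,\sigma_F)+l.o.t.$, so that $\nabla_3(\triangle\varphi-\nabla^2(\rho_F,\sigma_F))=l.o.t.$ admits a Gronwall argument. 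None of this machinery appears in your proposal, and without it the tangential-derivative trace bounds do not follow.
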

\begin{proof}
We derive estimates for $\alphab_F$. The bound for $\alpha_F$ is similar so that we omit the details. We start with $\nabla_3 \alphab_F$. According to Proposition \ref{trace theorem}, we have
\begin{align*}
\|\nabla_3 \alphab_F\|_{Tr_{(sc)}(\Hb)} &\lesssim (\| \nabla^2_3 \alphab_F\|_{L^2_{(sc)}(\Hb)} + C + C\delta^{\frac{1}{2}}\sup_{u} \| \nabla_3 \alphab_F\|_{L^{4}_{(sc)}(u,\ub)})^\frac{1}{2} (\| \nabla^2 \alphab_F\|_{L^2_{(sc)}(\Hb)} + C)^\frac{1}{2}\\
&+ \| \nabla_3\nabla \alphab_F\|_{L^2_{(sc)}(\Hb)} +  \|\nabla \alphab_F\|_{L^2_{(sc)}(\Hb)}+C \delta^{\frac{1}{2}}\\
&\lesssim (C\delta^{-\frac{1}{2}} + C)^{\frac{1}{2}} + \| \nabla_3\nabla \alphab_F\|_{L^2_{(sc)}(\Hb)}+C \lesssim  C \delta^{-\frac{1}{4}} + \| \nabla_3\nabla \alphab_F\|_{L^2_{(sc)}(\Hb)}.
\end{align*}
It suffices to show that $\|\nabla_3\nabla \alphab_F\|_{L^2_{(sc)}(\Hb)} \lesssim C\delta^{-\frac{1}{4}}$. The idea is, by integration by parts, we can move bad derivatives to one component to save a $\delta^{\frac{1}{4}}$. In fact,
\begin{align*}
\|\nabla_3\nabla \alphab_F\|^2_{{L^2}(\Hb)} = \int_{\Hb} \nabla_3\nabla \alphab_F \cdot \nabla_3\nabla \alphab_F = \int_{\Hb} \nabla^2 \alphab_F \cdot \nabla_3\nabla_3 \alphab_F + E,
\end{align*}
where error term $E$ comes from boundary terms of integration by parts and commutator of $[\nabla_3,\nabla]$. It is easy to see that in this form, we have $\|\nabla_3\nabla \alphab_F\|^2_{L^2_{(sc)}(\Hb)} \lesssim C\delta^{-\frac{1}{2}}$. This yields the desired estimates.

For the bound of $\nabla \alphab_F$, we introduce an auxiliary tensor $\varphi$
\begin{equation}\label{auxillary_varphi}
 \nabla_3 \varphi = \nabla \alphab_F, \quad \varphi(0,\ub) = 0.
\end{equation}
By commuting derivatives, together with Gronwall's inequality and all the estimates derived so far, one can easily show that
\begin{equation*}
 \|\varphi\|_{{L^2_{(sc)}(S_{u,\ub})}} + \|\varphi\|_{{L^4_{(sc)}(S_{u,\ub})}} + \|\nabla \varphi\|_{{L^2_{(sc)}(S_{u,\ub})}} + \|\nabla_3 \nabla \varphi\|_{L^2_{(sc)}(\Hb_{\ub})} \lesssim C.
\end{equation*}
We claim a more serious bound on $\varphi$,
\begin{equation}
 \|\nabla^2 \varphi\|_{L^2_{(sc)}(\Hb_{\ub})} \lesssim  C.
\end{equation}
We use $l.o.t.$ to denote terms which either are much easier to estimate or enjoy better estimates. We commute $\triangle$ with \eqref{auxillary_varphi} to derive
\begin{equation*}
\nabla_3 \triangle \varphi = \nabla \triangle \alphab_F + l.o.t.
\end{equation*}
We apply $\Dtwostar$ on \eqref{NM_Lb_rho} and \eqref{NM_Lb_sigma} to replace $\triangle \alphab_F$ by $\nabla_3 \nabla (\rho, \sigma) + l.o.t$. This allows us to do a renormalization to rectify $\nabla \triangle \alphab_F$ as follows,
\begin{equation*}
\nabla_3 \triangle \varphi =\nabla \nabla_3 \nabla (\rho, \sigma)+ l.o.t. =\nabla_3 \nabla^2 (\rho, \sigma) + l.o.t.
\end{equation*}
Hence, we derive
\begin{equation*}
\nabla_3 (\triangle \varphi- \nabla^2 (\rho, \sigma))= l.o.t.
\end{equation*}
This equation allows one to derive estimates on $\|\triangle \varphi - \nabla^2 (\rho, \sigma)\|_{L^2_{(sc)}(\Hb_{\ub})}$ hence on $\|\nabla^2 \varphi \|_{L^2_{(sc)}(\Hb_{\ub})}$. Finally, we use Proposition \ref{trace theorem} and Lemma \ref{sobolev_trace_estimates} to control $\|\nabla_3 \varphi\|_{Tr_{(sc)}(\Hb)} $. In view of the definition of $\varphi$, this completes the proof.
\end{proof}
\begin{remark}
Similarly, we can easily derive that, for $D\Upsilon$ = $\nabla \rho_F$, $\nabla \sigma_F$, $\nabla_3 \alpha_F$, $\nabla_4\alphab_F$, $\nabla_4 \rho_F$, $\nabla_3 \rho_F$, $\nabla_4 \sigma_F$ or $\nabla_3 \sigma_F$, we have
\begin{equation*}
\|D\Upsilon\|_{Tr_{(sc)}(H_u)} + \|D\Upsilon\|_{Tr_{(sc)}(\Hb_{\ub})}\lesssim C.
\end{equation*}
\end{remark}

\subsection{Trace Estimates on connection coefficients}

\subsubsection{Estimates on $\| \nabla \chih\|_{Tr_{(sc)}(H_u)}$ and $\| \nabla \chibh\|_{Tr_{(sc)}(\Hb_{\ub})}$}
In order to control $\| \nabla \chih\|_{Tr_{(sc)}(H_u)}$ and $\| \nabla \chibh\|_{Tr_{(sc)}(\Hb_{\ub})}$, we introduce two auxiliary tensors $\phi$ and $\phib$,
\begin{align}\label{def_phi_phib}
 \nabla_4 \phi = \nabla \chih \quad &\text{on} \quad H_u,  \qquad \phi(u,0)=0, \\
\nabla_3 \phib = \nabla \chibh \quad &\text{on} \quad \Hb_{\ub},  \qquad \phib(0,\ub)=0.
\end{align}
\begin{proposition}\label{trace_estimates_chih_chibh} If $\delta$ is sufficiently small, we have
\begin{equation}\label{tr_phi_easy}
(1).\quad \|(\phi, \nabla \phi, \nabla_4 \phi)\|_{{L^2_{(sc)}(S_{u,\ub})}} + \|\phi\|_{{L^4_{(sc)}(S_{u,\ub})}} +  \|(\nabla \nabla_4 \phi, \nabla^2_4 \phi)\|_{L^2_{(sc)}(H_u)}  \lesssim C,
\end{equation}
\begin{equation}\label{tr_phi_hard}
 \|\nabla^2 \phi\|_{L^2_{(sc)}(H_u)} \lesssim C + \|\nabla^3 \tr \chi\|_{L^2_{(sc)}(H_u)},
\end{equation}
\begin{equation}\label{tr_phib_easy}
(2).\quad  \|(\phib, \nabla \phib, \nabla_3 \phib) \|_{{L^2_{(sc)}(S_{u,\ub})}} + \|\phib\|_{{L^4_{(sc)}(S_{u,\ub})}} +  \|(\nabla \nabla_3 \phib, \nabla^2_3 \phib)\|_{L^2_{(sc)}(\Hb_{\ub})} \lesssim C,
\end{equation}
\begin{equation}\label{tr_phib_hard}
 \|\nabla^2 \phib\|_{L^2_{(sc)}(\Hb_{\ub})} \lesssim C + \|\nabla^3 \tr \chib\|_{L^2_{(sc)}(\Hb_{\ub})},
\end{equation}
\end{proposition}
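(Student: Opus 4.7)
Parts (1) and (2) are perfectly symmetric under the interchange $\nabla_3 \leftrightarrow \nabla_4$, $H_u \leftrightarrow \Hb_{\ub}$ and $\chih \leftrightarrow \chibh$; the plan is therefore to prove (1) in detail and read off (2) by the same argument. Everything rests on the defining transport equation $\nabla_4 \phi = \nabla \chih$ with trivial initial data on $\Hb_0$.

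For \eqref{tr_phi_easy} I would argue by direct integration. Since $\|\nabla \chih\|_{L^2_{(sc)}(S_{u,\ub})} \lesssim C$ by Proposition \ref{secondangularderivative}, the integral estimate \eqref{integralestimate1sc} immediately gives $\|\phi\|_{L^2_{(sc)}(S_{u,\ub})} \lesssim C$, and Sobolev \eqref{sobolev_L4} upgrades this to $L^4_{(sc)}$ once $\nabla \phi$ is under control. Commuting $\nabla$ through the equation gives
\begin{equation*}
\nabla_4 \nabla \phi = \nabla^2 \chih + [\nabla_4,\nabla]\phi,
\end{equation*}
and the right-hand side is controlled by Proposition \ref{secondangularderivative} and Lemma \ref{est_commutator}; integration in $\nabla_4$ yields $\|\nabla \phi\|_{L^2_{(sc)}} \lesssim C$. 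The identity $\nabla\nabla_4 \phi = \nabla^2 \chih$ is immediate from the defining equation, and $\nabla_4^2 \phi = \nabla\nabla_4\chih + [\nabla_4,\nabla]\chih$; both $L^2(H_u)$ bounds then follow from Propositions \ref{secondangularderivative} and \ref{twoderivativeRicciCoefficients} together with Lemma \ref{est_commutator}.

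The serious estimate is \eqref{tr_phi_hard}. Commuting $\nabla^2$ with the defining equation produces $\nabla_4 \nabla^2 \phi = \nabla^3 \chih + [\nabla_4,\nabla^2]\phi$; the commutator is routine via Lemma \ref{est_commutator} and Theorem A, but the source $\nabla^3 \chih$ is not directly accessible. The plan is to exploit the Codazzi equation \eqref{NSE_div_chih}, which casts $\chih$ as the solution of a $\D_2$ Hodge system with source
\begin{equation*}
\tfrac{1}{2}\nabla \tr\chi - \beta + \psi \cdot \psi + \Upsilon \cdot \Upsilon.
\end{equation*}
A twice-differentiated version of Lemma \ref{estimates_for_hogde_systems} then bounds $\|\nabla^3 \chih\|_{L^2_{(sc)}(S_{u,\ub})}$ by $\|\nabla^3 \tr\chi\|_{L^2_{(sc)}(S_{u,\ub})} + \|\nabla^2 \beta\|_{L^2_{(sc)}(S_{u,\ub})}$ plus nonlinear error already under the control of Theorem A. To dispose of the unwanted $\|\nabla^2 \beta\|$ I would renormalize at the level of $\phi$: replace $\phi$ by $\widetilde \phi = \phi + c\,\beta + Q$, where $c$ is a numerical constant and $Q$ is a zeroth-order expression in $\chih$, $\psi$ and $\Upsilon$, chosen so that upon commuting $\nabla^2$ through the equation for $\widetilde \phi$ and invoking the Bianchi equation \eqref{NBE_L_beta} $\nabla_4 \beta = \divergence \alpha + \ldots$, the $\nabla^2 \beta$ contribution cancels exactly and what remains is $\nabla^3 \tr\chi$ together with terms already bounded by $C$. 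Integration along $H_u$ and Gronwall's inequality then deliver \eqref{tr_phi_hard}.

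The hard part will be the choice of $Q$ and the verification of the cancellation: $\alpha$, $\chih$ and $\alpha_F$ are all anomalous, so the $\delta^{1/2}$ gain from the scale-invariant H\"older inequality \eqref{Holder} must be accounted for in every commutator and every pairing, to make sure no anomaly escapes. Once that bookkeeping is complete, the remaining steps are a routine transport-plus-elliptic loop of the same flavour that was used in Propositions \ref{secondangularderivative} and \ref{twoderivativeRicciCoefficients}, and the argument for part (2) is identical with the direction of integration reversed.
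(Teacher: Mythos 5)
Your overall scheme is right up to the renormalization step, and the easy estimates \eqref{tr_phi_easy}, \eqref{tr_phib_easy} are handled correctly. But the renormalization you propose for \eqref{tr_phi_hard} does not work, and the error is in which equation you invoke. You suggest $\widetilde{\phi}=\phi+c\,\beta+Q$ and then ``invoking the Bianchi equation \eqref{NBE_L_beta} $\nabla_4\beta=\divergence\alpha+\cdots$'' to cancel $\nabla^2\beta$. Trace through what actually happens: differentiating the source of $\widetilde\phi$ twice and applying \eqref{NBE_L_beta} replaces $\nabla_4\nabla^2\beta$ by $\nabla^2\divergence\alpha + \cdots$, so you do not cancel $\nabla^2\beta$ --- you simply introduce a new term $\nabla^2\divergence\alpha$ into the source. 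That is strictly worse: $\alpha$ is anomalous, and the curvature norms $\R,\Rb$ control only one angular derivative of $\alpha$ on $H_u$, not $\nabla^3\alpha$. There is also a scaling obstruction to the ansatz itself: $sgn(\phi)=\tfrac12$ while $sgn(\beta)=\tfrac32$, so $\phi+c\beta$ with $c$ a numerical constant is not a scale-invariant combination.

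The correct mechanism (which the paper uses) is to invoke the null structure equation \eqref{NSE_L_eta}, $\nabla_4\eta=-\chi\cdot(\eta-\etab)-\beta-\tfrac12 T_{b4}$, not the Bianchi equation. Solving for $\beta$ and differentiating twice gives
\begin{equation*}
\nabla^2\beta = -\nabla_4\nabla^2\eta + [\nabla_4,\nabla^2]\eta + (\text{lower order}),
\end{equation*}
and the key point is that the offending $\nabla_4\nabla^2\eta$ is a \emph{transport} derivative which can be moved to the left-hand side. Concretely, after also accounting for the $K\chih$ term that \eqref{NSE_div_chih} produces, the renormalized transport equation has the form
\begin{equation*}
\nabla_4\bigl(\triangle\phi - \nabla^2\eta - K\,\phi\bigr) = \nabla^3\tr\chi + (\text{error bounded by } C + C\delta^{1/2}\|\nabla^2\phi\|_{L^2_{(sc)}(H_u)}),
\end{equation*}
and one closes by integrating, applying elliptic estimates to convert $\triangle\phi$ back to $\nabla^2\phi$, and Gronwall. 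This is the analogue of \eqref{renormalized_1} in the paper. Note that the renormalized quantity lives at the second-derivative level, not at the level of $\phi$ itself. Finally, be aware that your remark about ``perfect symmetry'' between parts (1) and (2) understates the asymmetry: along $\Hb_{\ub}$ the factor $\tr\chib_0\sim -2/r$ is not $O(\delta^{1/2})$, which is why the paper works out part (2) in detail and part (2) requires the Gronwall step against the $\tr\chib_0\cdot\nabla^2\phib$ term rather than treating it as a small perturbation.
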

\begin{remark}\label{remark nabla chih chihb trace}
As consequences of Lemma \ref{sobolev_trace_estimates} and Proposition \ref{trace theorem}, we have
\begin{equation}
 \|\phi\|_{L^{\infty}_{(sc)}(H_u)} + \|(\nabla_4 \phi, \nabla \chih)\|_{Tr_{(sc)}(H_u)} \lesssim C + \|\nabla^3 \tr \chi\|^{\frac{1}{2}}_{L^2_{(sc)}(H_u)},
\end{equation}
\begin{equation}
 \|\phib\|_{L^{\infty}_{(sc)}(\Hb_{\ub})} + \|(\nabla_3 \phi,\nabla \chibh)\|_{Tr_{(sc)}(\Hb_{\ub})} \lesssim C + \|\nabla^3 \tr \chib\|^{\frac{1}{2}}_{L^2_{(sc)}(\Hb_{\ub})}.
\end{equation}
\end{remark}
\begin{proof}
Because of the presence of $\tr\chib_0$, we expect the estimates along incoming null hypersurfaces are more challenging. We will only prove the second part of the proposition. We observe that \eqref{tr_phib_easy} is a direct use of the estimates derived through the proof of {\bf{Theorem A}}. We turn to the proof of \eqref{tr_phib_hard}.

Let $\psi_3 \in \{\trchibt, \chibh, \eta, \etab\}$. In particular, $\omega$ and $\omegab$ are prohibited.
 We commute derivatives to derive
\begin{equation}
 [\nabla_3, \nabla] \phib = \chib \cdot \nabla\phib + \nabla \psi_3 \cdot \phib + \psi_3 \cdot \nabla_3\phib + \Upsilon \cdot \Upsilon \cdot \phib + \chib \cdot \psi_3 \cdot \phib,
\end{equation}
\begin{align*}
 [\nabla_3, \nabla^2] \phib &= \chib \cdot \nabla^2 \phib + \psi_3 \cdot (\nabla_3 \nabla \phib + \nabla \nabla_3 \phib) + \nabla \psi_3 \cdot (\nabla \phib + \nabla_3 \phib) + \nabla^2 \psi_3 \cdot \phib\\
&\quad + \Upsilon \cdot \Upsilon  \cdot \nabla \phib + \Upsilon \cdot \nabla \Upsilon  \cdot \phib + \chib \cdot \psi_3 \cdot \nabla \phib + \psi_3 \cdot \nabla \psi_3 \cdot \phib + \chib \cdot \nabla \psi_3 \cdot \phib.
\end{align*}
Hence, by taking trace, we have
\begin{align*}
 [\nabla_3, \triangle] \phib &= \tr \chib_0 \cdot \nabla^2 \phib + \chibh \cdot \nabla^2 \phib + E_1.
\end{align*}
We claim that error term $E_1$ satisfies $\|E_1\|_{L^2_{(sc)}(\Hb_{\ub})} \lesssim C + C\delta^{\frac{1}{2}} \|\nabla^2 \phib\|_{L^2_{(sc)}(\Hb_{\ub})}$. The idea is to use either $L^4_{(sc)}$ estimates on $\nabla \phib$ or $L^\infty_{(sc)}$ estimates on $\phib$. In view of Lemma \ref{sobolev_trace_estimates}, in either case the quantity is bounded by $\|\nabla^2 \phib\|_{L^2_{(sc)}(\Hb_{\ub})}$. We only deal with one term in $E_1$, say $\Upsilon \cdot \nabla \Upsilon  \cdot \phib$,  to illustrate the idea.
\begin{align*}
\|\Upsilon \cdot  \nabla \Upsilon &  \cdot \phib\|_{L^2_{(sc)}(\Hb_{\ub})} \lesssim \delta \|\Upsilon\|_{L^{\infty}_{(sc)}} \|\nabla \Upsilon\|_{L^2_{(sc)}(\Hb_{\ub})}\|\phib\|_{L^{\infty}_{(sc)}(\Hb_{\ub})}\lesssim C \delta \|\phib\|_{L^{\infty}_{(sc)}(\Hb_{\ub})}\\
&\lesssim C \delta (\|\nabla^2 \phib\|_{L^2_{(sc)}(\Hb_{\ub})}+\|\nabla \nabla_3 \phib\|_{L^2_{(sc)}(\Hb_{\ub})} + \| \phib\|_{L^2_{(sc)}(\Hb_{\ub})})\lesssim C + C\delta^{\frac{1}{2}} \|\nabla^2 \phib\|_{L^2_{(sc)}(\Hb_{\ub})}.
\end{align*}
Thus,
\begin{align*}
\nabla_3 \triangle \phib &= \triangle \nabla \chih + \tr \chib_0 \nabla^2 \phib + \chibh \cdot \nabla^2 \phib + E_1=\nabla  \triangle \chih + \tr \chib_0 \nabla^2 \phib + \chibh \cdot \nabla^2 \phib + [\triangle, \nabla] \chih + E_1
\end{align*}
Since, on $\Hb_{\ub}$,
\begin{equation*}
\|[\triangle, \nabla] \chih \|_{L^2_{(sc)}} = \|K \nabla \phib + \nabla K \cdot \phib \|_{L^2_{(sc)}}\lesssim \delta^{\frac{1}{2}}(\|K\|_{L^4_{(sc)}}\|\phib\|_{L^4_{(sc)}}+\|\nabla K\|_{L^2_{(sc)}}\|\phib\|_{L^{\infty}_{(sc)}})\lesssim C,
\end{equation*}
we derive
\begin{equation}\label{tteq_1}
\nabla_3 \triangle \phib = \nabla  \triangle \chih + \tr \chib_0 \nabla^2 \phib + \chibh \cdot \nabla^2 \phib + E_2,
\end{equation}
with error term $E_2$ satisfying $\|E_2\|_{L^2_{(sc)}(\Hb_{\ub})} \lesssim C + C\delta^{\frac{1}{2}} \|\nabla^2 \phib\|_{L^2_{(sc)}(\Hb_{\ub})}$. We then apply $\Dtwostar$ to \eqref{NSE_div_chibh} to derive
\begin{align}\label{laplacian_chibh}
 \triangle \chibh & = \Dtwostar \betab +\nabla^2 \tr\chib + \nabla (\tr\chib \cdot \psi_3 + \psi_3 \cdot \psi_3 + \Upsilon \cdot \Upsilon) + K \cdot \chibh.
\end{align}
We differentiate the equation once more to derive
\begin{align*}
 \nabla \triangle \chibh 
& = \nabla^2 \betab +\nabla^3 \tr\chib + K \cdot \nabla_3 \phib + E_3,
\end{align*}
with error term $\|E_3\|_{L^2_{(sc)}(\Hb_{\ub})} \lesssim C$. Combined with \eqref{tteq_1}, we have
\begin{align}\label{tteq_2}
\nabla_3 \triangle \phib =  \nabla^2 \betab + K \cdot \nabla_3 \phib + \nabla^3 \tr\chib  + \tr \chib_0 \cdot \nabla^2 \phib + \chibh \cdot \nabla^2 \phib + E_2 + E_3 .
\end{align}
For $\nabla^2 \betab$, in view of \eqref{NSE_Lb_etab}, we transform it into a term involving $\nabla_3$ derivative,
\begin{equation*}
 \nabla^2 \betab = \nabla^2 \nabla_3 \etab + \nabla^2 (\chib \cdot \psi_3) + \nabla^2(\Upsilon \cdot \Upsilon) = \nabla_3 \nabla^2 \etab + E_4,
\end{equation*}
with error term $\|E_4\|_{L^2_{(sc)}(\Hb_{\ub})} \lesssim C$. Combined with \eqref{tteq_2}, we have
\begin{equation*}
\nabla_3 \triangle \phib = \nabla_3 \nabla^2 \etab + \nabla_3 (K \cdot \phib) + \nabla^3 \tr\chib  + \tr \chib_0 \nabla^2 \phib + \chibh \cdot \nabla^2 \phib + E_5,
\end{equation*}
where $E_5 = E_2 + E_3 + E_4 - \nabla_3 K  \cdot \phib$. Since $\nabla_3 K \cdot \phib$ can be easily bounded according to \eqref{NSE_gauss}, we have
 \begin{equation*}
 \|E_5\|_{L^2_{(sc)}(\Hb_{\ub})} \lesssim C + C\delta^{\frac{1}{2}} \|\nabla^2 \phib\|_{L^2_{(sc)}(\Hb_{\ub})}.
\end{equation*}
Putting things together, we have derived the renormalized equation
\begin{equation}\label{renormalized_1}
\nabla_3 (\triangle \phib - \nabla^2 \etab - K \cdot \phib) =  \nabla^3 \tr\chib  + \tr \chib_0 \cdot \nabla^2 \phib + \chibh \cdot \nabla^2 \phib + E_5.
\end{equation}
This formula allows us to derive
\begin{equation*}
\|\triangle \phib - \nabla^2 \etab - K \cdot \phib \|_{{L^2_{(sc)}(S_{u,\ub})}} \lesssim C + \|\nabla^3\tr\chib + E_5\|_{L^2_{(sc)}(\Hb_{\ub})} +  \int_0^u \|\nabla^2 \phib\|_{{L^2_{(sc)}(S_{u,\ub})}},
\end{equation*}
thus, on $S=S_{u,\ub}$ and $\Hb=\Hb_{\ub}$, we have
\begin{align*}
\| \triangle \phib \|_{{L^2_{(sc)}(S)}} &\lesssim C + \| \nabla^2 \etab \|_{{L^2_{(sc)}(S)}} + C\delta^{\frac{1}{2}} \|\nabla^2 \phib\|_{L^2_{(sc)}(\Hb)}+ \|\nabla^3\tr\chib\|_{L^2_{(sc)}(\Hb)}+ \int_0^u \|\nabla^2 \phib\|_{{L^2_{(sc)}(S_{u,\ub})}}.
\end{align*}
Standard elliptic estimates imply
\begin{align*}
 \|\nabla^2 \phib\|_{{L^2_{(sc)}(S_{u,\ub})}} &\lesssim  \delta^{\frac{1}{2}}\|K\|_{{L^2_{(sc)}(S_{u,\ub})}}\|\phib\|_{L^{\infty}_{(sc)}} + \delta^{\frac{1}{4}}\|K\|^{\frac{1}{2}}_{{L^2_{(sc)}(S_{u,\ub})}}\|\nabla \phib\|_{L^{4}_{(sc)}(u,\ub)} +\|\triangle \phib\|_{{L^2_{(sc)}(S_{u,\ub})}}\\
& \lesssim \|\triangle \phib \|_{{L^2_{(sc)}(S_{u,\ub})}} + C + C \delta^{\frac{1}{2}}\|\nabla^2 \phib\|_{L^2_{(sc)}(\Hb_{\ub})}.
\end{align*}
Hence,
\begin{align*}
\| \nabla^2 \phib \|_{{L^2_{(sc)}(S)}} &\lesssim C + \| \nabla^2 \etab \|_{{L^2_{(sc)}(S)}}  + C\delta^{\frac{1}{2}} \|\nabla^2 \phib\|_{L^2_{(sc)}(\Hb)} + \|\nabla^3\tr\chib\|_{L^2_{(sc)}(\Hb)} + \int_0^u \|\nabla^2 \phib\|_{{L^2_{(sc)}(S_{u,\ub})}}
\end{align*}
We remove the last term by Gronwall's inequality to derive
\begin{align*}
\| \nabla^2 \phib \|_{{L^2_{(sc)}(S_{u,\ub})}} &\lesssim C + \| \nabla^2 \etab \|_{{L^2_{(sc)}(S_{u,\ub})}} + \|\nabla^3\tr\chib\|_{L^2_{(sc)}(\Hb_{\ub})}+ C\delta^{\frac{1}{2}} \|\nabla^2 \phib\|_{L^2_{(sc)}(\Hb_{\ub})}.
\end{align*}
We then integrate on $\Hb_{\ub}$ and use the smallness of $\delta$ to complete the proof.
\end{proof}


\subsubsection{Estimates on $\|\nabla^3 \tr \chi\|_{L^2_{(sc)}(H_u)}$ and $\| \nabla^3 \tr\chib\|_{L^2_{(sc)}(\Hb_{\ub})}$}\label{L_2_estimates_for_nabla^3_tr_chi}

\begin{proposition}
If $\delta$ is sufficiently small, we have
\begin{equation}\label{trace_trchib}
\|\nabla^3 \tr \chi\|_{L^2_{(sc)}(H_u)}+\|\nabla^3 \tr \chib\|_{L^2_{(sc)}(\Hb_{\ub})}  \lesssim C.
\end{equation}
\end{proposition}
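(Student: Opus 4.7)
The plan is to commute $\nabla^3$ with the null structure equations \eqref{NSE_L_tr_chi} and \eqref{NSE_Lb_tr_chib}, integrate the resulting transport equations along $H_u$ and $\Hb_{\ub}$ respectively, and close the estimate via a coupled bootstrap with the bounds \eqref{tr_phi_hard} and \eqref{tr_phib_hard} for the auxiliary tensors $\phi,\phib$ introduced in Proposition \ref{trace_estimates_chih_chibh}. The principal difficulty is that the commuted equation produces a top-order term of the form $\chih\cdot\nabla^3\chih$, together with a Maxwell analogue $\alpha_F\cdot\nabla^3\alpha_F$, neither of which is directly controlled by any previous norm; they must be renormalized using $\phi$ together with a Maxwell analogue $\phi_F$ constructed exactly as in \eqref{auxillary_varphi} but in the outgoing direction, satisfying $\nabla_4\phi_F=\nabla\alpha_F$.

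Concretely, I would first commute $\nabla^3$ with \eqref{NSE_L_tr_chi} to obtain schematically
\begin{equation*}
\nabla_4\nabla^3\tr\chi = -2\,\chih\cdot\nabla^3\chih - 2\,\alpha_F\cdot\nabla^3\alpha_F + R,
\end{equation*}
where the remainder $R$ collects the commutator $[\nabla^3,\nabla_4]\tr\chi$, the lower-order products $\nabla^j\chih\cdot\nabla^k\chih$ and $\nabla^j\alpha_F\cdot\nabla^k\alpha_F$ with $j,k\le 2$, and $\nabla^3$ of the tame terms $\omega\tr\chi$ and $(\tr\chi)^2$. Thanks to Theorem A, Proposition \ref{twoderivativeRicciCoefficients}, Proposition \ref{2 der est on max} and Lemma \ref{est_commutator}, together with the scale-invariant H\"older inequality which gains a $\delta^{1/2}$ at every product, one obtains $\|R\|_{L^2_{(sc)}(H_u)}\lesssim C$. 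The top-order terms are disposed of by the identity
\begin{equation*}
\chih\cdot\nabla^3\chih = \nabla_4\bigl(\chih\cdot\nabla^2\phi\bigr) - (\nabla_4\chih)\cdot\nabla^2\phi + \chih\cdot[\nabla^2,\nabla_4]\phi,
\end{equation*}
which follows from $\nabla_4\phi=\nabla\chih$ and the Leibniz rule, and its obvious analogue for $\alpha_F$ using $\phi_F$. The commutator piece is lower order and bounded by $C$ via Lemma \ref{est_commutator} and Proposition \ref{trace_estimates_chih_chibh}(1); the cross term $(\nabla_4\chih)\cdot\nabla^2\phi$ is handled by rewriting $\nabla_4\chih$ through \eqref{NSE_L_chih} in terms of $\alpha$, using the trace bound on $\alpha$ from the previous subsection, and applying $L^4$ interpolation on $\nabla^2\phi$, which produces a bound of the form $C + C\delta^{1/2}\|\nabla^2\phi\|_{L^2_{(sc)}(H_u)}$.

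Putting this together, the renormalized quantity $X=\nabla^3\tr\chi+2\,\chih\cdot\nabla^2\phi+2\,\alpha_F\cdot\nabla^2\phi_F$ satisfies a transport equation of the form $\nabla_4 X=R'$ with $\|R'\|_{L^2_{(sc)}(H_u)}\lesssim C+C\delta^{1/2}\|\nabla^2\phi\|_{L^2_{(sc)}(H_u)}$. Since $\chih$, $\alpha_F$ and all higher angular derivatives of $\tr\chi-\tr\chi_0$ vanish on $\Hb_0$, the renormalization terms vanish there and integration along the outgoing direction yields
\begin{equation*}
\|\nabla^3\tr\chi\|_{L^2_{(sc)}(H_u)}\lesssim C + C\delta^{1/2}\|\nabla^2\phi\|_{L^2_{(sc)}(H_u)}.
\end{equation*}
Feeding in \eqref{tr_phi_hard} gives $\|\nabla^3\tr\chi\|_{L^2_{(sc)}(H_u)}\lesssim C + C\delta^{1/2}\|\nabla^3\tr\chi\|_{L^2_{(sc)}(H_u)}$, so the right-hand derivative is absorbed for $\delta$ small. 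The estimate on $\nabla^3\tr\chib$ proceeds symmetrically along $\Hb_{\ub}$ with $\phib$ and \eqref{tr_phib_hard}; the only extra ingredient is the $\tr\chib_0$ factor in the $\nabla_3$ transport, which, as in Proposition \ref{Oone_estimates}, acts as a bounded scalar multiplier absorbed by Gronwall's inequality. The main obstacle is precisely the renormalization that trades the uncontrolled $\nabla^3\chih$ for a $\nabla_4$-exact derivative plus a manageable cross term; closing the resulting circular dependence on $\|\nabla^2\phi\|_{L^2_{(sc)}(H_u)}$ is what ultimately forces the short-pulse smallness of $\delta$.
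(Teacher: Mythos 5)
Your overall strategy is the one the paper actually follows: commute three angular derivatives with the transport equation for the trace, renormalize the top-order terms $\nabla^3\chih$ (resp.\ $\nabla^3\chibh$) using the auxiliary tensor $\phi$ (resp.\ $\phib$), do the same for the Maxwell contribution, and then close against \eqref{tr_phi_hard}, \eqref{tr_phib_hard} by absorbing a $C\delta^{1/2}$ factor. (The paper's Maxwell renormalization uses the null Maxwell equations to trade $\triangle\alphab_F$ for $\nabla_3\nabla^2\Upsilon_g$ rather than introducing a new primitive $\phi_F$, but that is a cosmetic difference.)

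However, there is a concrete gap in the way you absorb the $\omega\tr\chi$ (resp.\ $\omegab\tr\chib$) term into the ``tame'' remainder $R$. Applying $\nabla^3$ to $-2\omega\tr\chi$ produces, among other things, $\nabla^3\omega\cdot\tr\chi$, and since $\tr\chi$ does not vanish (its background value is $\tr\chi_0 = 2/(\ub-u+r_0)$), this is a genuine top-order term. No estimate on $\nabla^3\omega$ (nor $\nabla^3\omegab$) is available at this stage: the highest angular-derivative control on $\omega$, $\omegab$ established so far is $\|\nabla^2\omega\|_{L^2_{(sc)}(H_u)}\lesssim C$ from Proposition~\ref{secondangularderivative}, and the trace-estimate machinery built around $\kappa,\kappab$ only treats $\langle\omega\rangle,\langle\omegab\rangle$ up to two derivatives. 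The paper eliminates this term before it is ever created, by replacing $\tr\chib$ with $\tr\chib' = \Omega^{-1}\tr\chib$ (and analogously $\tr\chi'=\Omega^{-1}\tr\chi$), so that the normalized transport equation \eqref{normalize_NSE_tr_chib} contains no $\omegab$ (resp.\ $\omega$) at all, and the only extra contributions come through $\nabla\Omega=\tfrac{1}{2}(\eta+\etab)$, which is a good connection coefficient. Without this normalization, your remainder $R$ contains an uncontrolled term and the argument does not close. You should state that you work with $\Omega^{-1}\tr\chi$ and $\Omega^{-1}\tr\chib$ from the outset; once that is done, the rest of your renormalization and absorption argument goes through.

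One smaller remark: your bound on the cross term $(\nabla_4\chih)\cdot\nabla^2\phi$ via $L^4$ interpolation on $\nabla^2\phi$ is delicate, since $\|\nabla^2\phi\|_{L^4_{(sc)}}$ would in turn require $\nabla^3\phi$, which is not in Proposition~\ref{trace_estimates_chih_chibh}. The paper avoids this by controlling $\nabla_3\chibh$ (and hence $\alphab$) in the scale-invariant trace norm on $\Hb_\ub$ and pairing it with $\|\triangle\phib\|_{L^2_{(sc)}(\Hb_\ub)}$, which is exactly the quantity \eqref{tr_phib_hard} controls. You should use the trace-norm pairing rather than $L^4$ interpolation at this step.
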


\begin{remark}As a consequence, we have
\begin{equation}\label{remark nabla chibh nabla chih trace norm}
 \|(\nabla \tr\chi, \nabla \tr\chib)\|_{L^{\infty}_{(sc)}} + \|(\nabla \chih, \nabla \chibh) \|_{L^{4}_{(sc)}(u,\ub)} +  \|\nabla \chibh\|_{Tr_{(sc)}(\Hb_{\ub})}+\|\nabla \chibh\|_{Tr_{(sc)}(\Hb_{\ub})} \lesssim C.
\end{equation}
\end{remark}

\begin{proof}
We bound the second term of \eqref{trace_trchib}. The first one follows in the same manner.  To avoid unpleasant derivatives on $\omegab$, we define $\tr \chib' = \Omega^{-1}\tr\chib$ and use the following form of \eqref{NSE_Lb_tr_chib},
\begin{equation}\label{normalize_NSE_tr_chib}
 \nabla_3 \tr \chib' = -\frac{1}{2}\Omega(\tr \chib ')^2- \Omega^{-1}|\chibh|^2- \Omega^{-1}|{\alphab_F}|^2.
\end{equation}
By a direct but tedious commutation of derivative, we can show that
\begin{align*}
 [\nabla_3, \nabla^3] \tr\chib' &= \chib \cdot \nabla^3 \tr \chib' + \psi_3 \cdot (\nabla_3 \nabla^2 \tr \chib' + \nabla \nabla_3 \nabla \tr \chib' + \nabla^2 \nabla_3 \tr \chib') \\
& + [(\nabla_3 \psi_3 + \nabla \chib + \psi_3 \cdot \psi_3 + \Upsilon \cdot \Upsilon) \cdot \nabla^2 \tr \chib' + \nabla \psi_3 \cdot (\nabla_3 \nabla \tr \chib' + \nabla \nabla_3 \tr \chib') ]\\
&+ [(\nabla^2 \psi_3 + \psi_3 \cdot \nabla \psi_3 + \Upsilon \cdot \nabla \Upsilon) \cdot \nabla \tr \chib' + \nabla^2 \psi_3 \cdot \nabla_3 \tr \chib']\\
&=\chib \cdot \nabla^3 \tr \chib' + F_1.
\end{align*}
We claim that error term $F_1$ enjoys
\begin{equation}\label{eq error e_1}
 \|F_1\|_{L^2_{(sc)}(\Hb_{\ub})} \lesssim C + C \delta^{\frac{1}{2}} \|\nabla^3 \tr \chib\|^{\frac{1}{2}}_{L^2_{(sc)}(\Hb_{\ub})}.
\end{equation}
To prove \eqref{eq error e_1}, we first observe that, since $\nabla \Omega = \frac{1}{2}(\eta + \etab)$, we can simply ignore all $\Omega$'s in estimates. The second observation is the following Sobolev estimates,
\begin{equation}\label{kf_1}
 \|\nabla^2 \tr\chib\|_{{L^4_{(sc)}(S_{u,\ub})}} +  \|\nabla \tr\chib\|_{L^{\infty}_{(sc)}} \lesssim C \delta^{\frac{1}{4}} +  \|\nabla^3 \tr \chib\|^{\frac{1}{2}}_{L^2_{(sc)}(\Hb_{\ub})}
\end{equation}
The bound \eqref{eq error e_1}for $F_1$ is immediate by combining those two observations and the estimates derived so far.

Hence, we derive
\begin{equation*}
 \nabla_3 \nabla \triangle  \tr\chib' = \chibh \cdot  \nabla \triangle \chibh + \nabla \chibh \cdot \nabla^2 \chibh + \chib \cdot \nabla^3 \tr\chib + \nabla \chib \cdot \nabla^2 \tr\chib +\nabla \triangle  (|{\alphab_F}|^2) + F_2,
\end{equation*}
with error term $\|F_2\|_{L^2_{(sc)}(\Hb_{\ub})} \lesssim C + C \delta^{\frac{1}{2}} \|\nabla^3 \tr \chib\|_{L^2_{(sc)}(\Hb_{\ub})}$. We turn to the renormalization of terms with three angular derivatives. For $\nabla \triangle  (|{\alphab_F}|^2)$, in view of \eqref{NM_Lb_rho} and \eqref{NM_Lb_sigma}, we have $\nabla_3 ({\rho_F}, {\sigma_F}) = \D_1 {\alphab_F} + \psi_3 \cdot \Upsilon$. After using $\Donestar$ and commuting derivatives, this equation is reduced to
\begin{equation*}
 \nabla_3 \nabla ( {\rho_F}, {\sigma_F}) = \triangle {\alphab_F} + \psi_3 \cdot \nabla \Upsilon + \nabla \psi_3 \cdot \Upsilon + \psi_3 \cdot \Upsilon.
\end{equation*}
We differentiate the equation once more to derive
\begin{equation*}
 \nabla_3 \nabla^2 \Upsilon_g = \nabla \triangle {\alphab_F} + F_3,
\end{equation*}
with error term $\|F_3\|_{L^2_{(sc)}(\Hb_{\ub})} \lesssim C$. In view of \eqref{tteq_1} in the previous subsection,
we have
\begin{align*}
 \nabla_3 \nabla \triangle  \tr\chib' &= \chibh \cdot  \nabla_3 \triangle \phib  + \nabla \chibh \cdot \nabla^2 \chibh + \chib \cdot  \nabla^3 \tr\chib + \nabla \chib\cdot \nabla^2 \tr\chib + \nabla_3 \nabla^2 \Upsilon_g  \cdot {\alphab_F}  \\
&\qquad + \nabla^2 \alphab_F \cdot \nabla \alphab_F + \chih \cdot(\tr \chib_0 \nabla^2 \phib + \chibh \cdot \nabla^2 \phib + E_2) + F_2.
\end{align*}
Hence,
\begin{align}\label{renormalized_2}
 &\quad \nabla_3 (\nabla \triangle  \tr\chib' - \chih \triangle \phib - {\alphab_F} \nabla^2 \Upsilon_g) =T_1 + T_2 + \cdots + T_5 + F_4\\
&= \nabla_3 \chibh \cdot \triangle \phib  + \nabla \chibh \cdot \nabla^2 \chibh + \nabla \chib\cdot \nabla^2 \tr\chib + \nabla^2 \Upsilon_g \cdot \nabla_3 {\alphab_F} + \nabla^2 {\alphab_F} \cdot \nabla {\alphab_F} + F_4, \notag
\end{align}
with error term $ \|F_4\|_{L^2_{(sc)}(\Hb_{\ub})} \lesssim C + C \delta^{\frac{1}{2}} \|\nabla^3 \tr \chib\|^{\frac{1}{2}}_{L^2_{(sc)}(\Hb_{\ub})}$.
In the derivation of \eqref{renormalized_2}, we have ignored $\chib \cdot\nabla^3 \tr\chib$ since it will be eventually eliminated by the Gronwall's inequality.

We the estimate $T_i$'s along $\Hb_{\ub}$ one by one. For $T_1$, in view of Proposition \ref{trace_estimates_chih_chibh}, we have
\begin{equation*}
 \|T_1\|_{L^{2}_{(sc)}(\Hb_{\ub})} \lesssim \delta^{\frac{1}{2}}\|\nabla_3 \chibh \|_{Tr_{(sc)}(\Hb_{\ub})} \|\triangle \phib\|_{L^{2}_{(sc)}(\Hb_{\ub})}\lesssim \delta^{\frac{1}{2}}\|\nabla_3 \chibh \|_{Tr_{(sc)}(\Hb_{\ub})}(\|\nabla^3 \tr \chib\|_{L^2_{(sc)}(\Hb_{\ub})}+C)
\end{equation*}
In view of \eqref{NSE_Lb_chibh}, we have $\|\nabla_3 \chibh \|_{Tr_{(sc)}(\Hb_{\ub})} \lesssim C + \|\alphab\|_{Tr_{(sc)}(\Hb_{\ub})}$. Thus, we have
\begin{equation*}
 \|T_1\|_{L^{2}_{(sc)}(\Hb_{\ub})} \lesssim C \delta^{\frac{1}{4}}\|\nabla^3 \tr \chib\|_{L^2_{(sc)}(\Hb_{\ub})}+C.
\end{equation*}

For $T_2$, in view of Remark \ref{remark nabla chih chihb trace}, we have
\begin{equation*}
 \|T_2\|_{L^{2}_{(sc)}(\Hb_{\ub})} \lesssim \delta^{\frac{1}{2}}\|\nabla \chibh \|_{Tr_{(sc)}(\Hb_{\ub})} \|\nabla^2 \chibh\|_{L^{2}_{(sc)}(\Hb_{\ub})} \lesssim C \delta^{\frac{1}{2}} + C \delta^{\frac{1}{2}}\|\nabla^3 \tr \chib\|^{\frac{1}{2}}_{L^2_{(sc)}(\Hb_{\ub})}.
\end{equation*}

For $T_3$, we use two more derivative to bound $\|\nabla \tr\chib \|_{L^{\infty}_{(sc)}}$, thus,
\begin{align*}
 \|T_3\|_{L^{2}_{(sc)}(\Hb_{\ub})} &\lesssim \delta^{\frac{1}{2}}\|\nabla \tr\chib \|_{L^{\infty}_{(sc)}} \|\nabla^2 \tr\chib\|_{L^{2}_{(sc)}(\Hb_{\ub})} + \delta^{\frac{1}{2}}\|\nabla \chibh \|_{Tr_{(sc)}(\Hb_{\ub})} \|\nabla^2 \tr\chib\|_{L^{2}_{(sc)}(\Hb_{\ub})}\\
&\lesssim C\delta^{\frac{1}{2}} + C \delta^{\frac{1}{2}}\|\nabla^3 \tr \chib\|^{\frac{1}{2}}_{L^2_{(sc)}(\Hb_{\ub})}.
\end{align*}

For $T_4$, we have
\begin{equation*}
 \|T_4\|_{L^{2}_{(sc)}(\Hb_{\ub})} \lesssim \delta^{\frac{1}{2}}\|\nabla_3 {\alphab_F} \|_{Tr_{(sc)}(\Hb_{\ub})} \|\nabla^2 \Upsilon\|_{L^{2}_{(sc)}(\Hb_{\ub})} \lesssim C.
\end{equation*}

For $T_5$, we have
\begin{equation*}
 \|T_5\|_{L^{2}_{(sc)}(\Hb_{\ub})} \lesssim \delta^{\frac{1}{2}}\|\nabla {\alphab_F}\|_{Tr_{(sc)}(\Hb_{\ub})} \|\nabla^2 {\alphab_F}\|_{L^{2}_{(sc)}(\Hb_{\ub})} \lesssim C  \delta^{\frac{1}{2}}.
\end{equation*}
Thus \eqref{renormalized_2} implies
\begin{align*}
 \|\nabla \triangle  \tr\chib'\|_{L^{2}_{(sc)}(u,\ub)} & \lesssim C+\|(\chih \cdot \nabla \phib, \,\,\alphab_F \cdot \nabla^2 \Upsilon_g)\|_{L^{2}_{(sc)}(u,\ub)} + C\delta^{\frac{1}{4}}\|\nabla^3 \tr \chib\|_{L^2_{(sc)}(\Hb_{\ub})}\\
&\lesssim  C+ C \delta^{\frac{1}{4}} \|\nabla^3 \tr \chib\|_{L^2_{(sc)}(\Hb_{\ub})}.
\end{align*}
Thus, we have $\|\nabla ^3 \tr\chib\|_{L^{2}_{(sc)}(u,\ub)} \lesssim  C+ C \delta^{\frac{1}{4}} \|\nabla^3 \tr \chib\|_{L^2_{(sc)}(\Hb_{\ub})}$. We then integrate on $\Hb_{\ub}$ and use the smallness of $\delta$ to complete the proof.
\end{proof}


\subsubsection{Estimates on $\|(\nabla \eta, \nabla \etab)\|_{Tr_{(sc)}(H_u)}$ and $\| (\nabla \eta, \nabla \etab)\|_{Tr_{(sc)}(\Hb_{\ub})}$}\label{Tr_estimates_for_nabla_eta_etab}
In order to obtain trace estimates on $\nabla \eta$ and $\nabla \etab$, we introduce the following auxiliary tensors.
\begin{align}\label{def_phift}
 \nabla_4 (\phif, \phibf)  = (\nabla \eta, \nabla \etab) \quad &\text{on} \quad H_u,  \quad (\phif(u,0), \phibf(u,0))=0, \notag\\
 \nabla_3 (\phit,\phibt) = (\nabla \eta, \nabla \etab) \quad &\text{on} \quad \Hb_{\ub},  \quad (\phit(0,\ub),\phibt(0,\ub))=0.
\end{align}
We also define $\varphi = (\phif,\phibf)$ and $\varphib = (\phit,\phibt)$. They can be estimated as follows.
\begin{proposition} If $\delta$ is sufficiently small, we have
\begin{equation}\label{estimates_varphi_4_easy}
 \|(\varphi, \nabla \varphi, \nabla_4 \varphi)\|_{L^{2}_{(sc)}(u,\ub)} + \|\varphi\|_{L^{4}_{(sc)}(u,\ub)} + \|(\nabla \nabla_4 \varphi, \nabla_4^2  \varphi)\|_{L^2_{(sc)}(H_u)} \lesssim C,
\end{equation}
\begin{equation}\label{estimates_varphi_4_hard}
\|\nabla^2  \varphi\|_{L^2_{(sc)}(H_u)} \lesssim \|\nabla^2  \mu\|_{L^2_{(sc)}(H_u)} + C,
\end{equation}
\begin{equation}\label{estimates_varphi_3_easy}
 \|(\varphib, \nabla \varphib, \nabla_3 \varphib)\|_{L^{2}_{(sc)}(u,\ub)} + \|\varphib\|_{L^{4}_{(sc)}(u,\ub)} + \|(\nabla \nabla_3 \varphib, \nabla_3^2  \varphib)\|_{L^2_{(sc)}(\Hb_{\ub})} \lesssim C,
\end{equation}
\begin{equation}\label{estimates_varphi_3_hard}
\|\nabla^2  \varphib\|_{L^2_{(sc)}(\Hb_{\ub})} \lesssim \|\nabla^2  \mub\|_{L^2_{(sc)}(\Hb_{\ub})} + C.
\end{equation}
\end{proposition}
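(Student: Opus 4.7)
The proposition is the natural analog of Proposition \ref{trace_estimates_chih_chibh}, now for the pairs $(\eta, \mu)$ and $(\etab, \mub)$ in place of $(\chih, \tr\chi)$ and $(\chibh, \tr\chib)$. The plan is to follow the same three-step scheme of that proof: establish the easy integral estimates directly, derive a renormalized transport equation for $\triangle \phif$ (and analogously for $\triangle \phibf$, $\triangle \phit$, $\triangle \phibt$), and convert to the $\nabla^2$ bound via elliptic theory on $S_{u,\ub}$.

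The easy estimates \eqref{estimates_varphi_4_easy} and \eqref{estimates_varphi_3_easy} follow by integrating \eqref{def_phift} from the trivial initial data. The ingredients needed, namely the $\OSonetwo$ bounds from Proposition \ref{Oone_estimates}, the bounds $\|\nabla^2(\eta,\etab)\|_{L^2_{(sc)}(H_u)} \lesssim C$ from Proposition \ref{secondangularderivative}, the first null derivative bounds from Proposition \ref{Remaininig_First_Derivative_Estimates}, the commutator estimates of Lemma \ref{est_commutator}, and the Sobolev inequality \eqref{sobolev_L4}, are all in place; one argues exactly as in the first half of the proof of Proposition \ref{trace_estimates_chih_chibh} to produce the claimed $L^2_{(sc)}(S)$, $L^4_{(sc)}(S)$, $L^2_{(sc)}(H_u)$ and $L^2_{(sc)}(\Hb_{\ub})$ bounds on $\varphi, \nabla \varphi, \nabla_4 \varphi, \nabla \nabla_4 \varphi, \nabla_4^2 \varphi$ and on their $\varphib$ counterparts.

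For the hard estimate \eqref{estimates_varphi_4_hard} we focus on $\phif$. Commuting a Laplacian through $\nabla_4 \phif = \nabla \eta$ and using the Hodge system
\begin{equation*}
\divergence \eta = -\mu - \rho, \qquad \curl \eta = \sigma + \chibh \wedge \chih,
\end{equation*}
one obtains, schematically,
\begin{equation*}
\nabla_4 \triangle \phif = -\nabla^2 \mu + \nabla \Donestar(\rho, \sigma) + (\text{lower order}) + (\text{commutators}).
\end{equation*}
The null Bianchi equation \eqref{NBE_L_betab} permits the renormalization $\nabla \Donestar(\rho,\sigma) = \nabla_4 \nabla \betab + (\text{lower order})$, which yields the renormalized identity
\begin{equation*}
\nabla_4\bigl(\triangle \phif - \nabla \betab\bigr) = -\nabla^2 \mu + E,
\end{equation*}
where the error satisfies $\|E\|_{L^2_{(sc)}(H_u)} \lesssim C + C\delta^{\frac{1}{2}}\|\nabla^2 \phif\|_{L^2_{(sc)}(H_u)}$ after invoking Theorem A and the trace inequalities of Lemma \ref{sobolev_trace_estimates}. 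Integrating along $H_u$ from the trivial initial data for $\phif$, controlling $\|\nabla \betab\|_{{L^2_{(sc)}(S_{u,\ub})}}$ by writing $\betab = -\nabla_3 \etab + (\text{lower order})$ from \eqref{NSE_Lb_etab} and invoking Proposition \ref{twoderivativeRicciCoefficients}, and finally applying the elliptic estimate \eqref{elliptic_Hogde_2nd_order} to pass from $\|\triangle \phif\|$ to $\|\nabla^2 \phif\|$, yields \eqref{estimates_varphi_4_hard}. The bound on $\phibf$ is analogous (with $\eta,\mu$ replaced by $\etab,\mub$). The estimate \eqref{estimates_varphi_3_hard} for $\varphib$ is obtained by the same strategy, with $\nabla_4, \betab$ replaced by $\nabla_3, \beta$, the relevant Bianchi substitution being \eqref{NBE_Lb_beta}; the additional $\tr\chib_0 \cdot \nabla^2 \varphib$ term produced by the $\nabla_3$-commutators is absorbed by Gronwall's inequality exactly as in the proof of Proposition \ref{trace_estimates_chih_chibh}.

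The main technical obstacle is the bookkeeping of the error $E$. One must verify that all products involving anomalous components, chiefly $\chih, \chibh$, entering $E$ through $\nablastar(\chibh \wedge \chih)$ and through $K \cdot \eta$ in the Hodge nonlinearity, as well as through the reduction $\nabla \betab \sim \nabla \nabla_3 \etab$, can be distributed between $L^4_{(sc)}$ norms and trace norms so that no uncontrolled loss in $\delta$ occurs, and that every remaining contribution is either bounded by $C$ or of the form $C\delta^{\frac{1}{2}}\|\nabla^2 \varphi\|_{L^2_{(sc)}(H_u)}$ and hence absorbable into the left-hand side.
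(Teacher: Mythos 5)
The overall scheme (commute $\triangle$ through the transport equation, renormalize the principal $\nabla^2$-curvature terms, close with elliptic estimates and Gronwall) is the right one and matches the paper's, and the easy estimates \eqref{estimates_varphi_4_easy}, \eqref{estimates_varphi_3_easy} are handled as you say. But the renormalization you choose for the hard estimates has a concrete structural gap. Writing the Hodge system out with the conventions of Appendix \ref{HodgeOperators}, since $\Donestar\Done=-\triangle+K$ and $\Donestar(F_1,F_2)=-\nabla F_1+\nablastar F_2$, one finds $\triangle\eta = K\eta - \nabla\mu - (\nabla\rho + \nablastar\sigma) - \nablastar(\chibh\wedge\chih)$, while $\triangle\etab = K\etab - \nabla\mub - (\nabla\rho - \nablastar\sigma) + \nablastar(\chih\wedge\chibh)$. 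Meanwhile \eqref{NBE_L_betab} gives $\nabla_4\betab = -\nabla\rho + \nablastar\sigma + \ldots$. So for the $\eta$-component $\phif$ (the one whose Hodge nonlinearity produces $\nabla^2\mu$, which is what the proposition allows on the right), the surviving second-order curvature term is $-\nabla(\nabla\rho+\nablastar\sigma)$, which is \emph{not} $\nabla_4\nabla\betab$; after the proposed renormalization $\triangle\phif-\nabla\betab$ a term of the size of $\nabla\nablastar\sigma$ survives, and this is two angular derivatives of a curvature component, for which no bound is available at this stage. Conversely, for $\phibf$ the sign of the $(\rho,\sigma)$ piece does match $\nabla_4\betab$, but then what remains on the right is $\nabla^2\mub$, not $\nabla^2\mu$; as the subsequent proposition shows, $\mub$ is controllable only along $\Hb_{\ub}$ (via its $\nabla_3$ transport), so a bound in terms of $\|\nabla^2\mub\|_{L^2_{(sc)}(H_u)}$ is not usable. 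The same two difficulties appear, with the roles of $\phit$ and $\phibt$ interchanged, in your treatment of \eqref{estimates_varphi_3_hard} via $\nabla_3\beta$.

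The paper's renormalization is specifically designed to avoid both problems. It does not use a Bianchi equation for $\beta$ or $\betab$ (which only encodes one fixed linear combination of $\nabla\rho$ and $\nablastar\sigma$), but instead the auxiliary scalar transport equations $\nabla_3(\omega,\omegat)=\tfrac12(\rho,\sigma)+\ldots$ (respectively $\nabla_4(\omegab,\omegabt)=\tfrac12(\rho,\sigma)+\ldots$ for the $\nabla_4$ case), which give $\nabla^2\rho$ and $\nabla^2\sigma$ \emph{separately} and hence renormalize whatever linear combination occurs for either $\eta$ or $\etab$; and it eliminates the ``wrong'' mass aspect function ($\mu$ in the $\nabla_3$ case, $\mub$ in the $\nabla_4$ case) by writing $\divergence\eta$ or $\divergence\etab$ in terms of $\nabla_3\tr\chi$ (from \eqref{NSE_Lb_tr_chi}) or $\nabla_4\tr\chib$ (from \eqref{NSE_L_tr_chib}). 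This gives the renormalized equation of the form $\nabla_3\bigl(\triangle\varphib - \nabla^2(\omega,\omegat) - \nabla^2\tr\chi\bigr) = \nabla^2\mub + \tr\chib_0\cdot\nabla^2\varphib + \chibh\cdot\nabla^2\varphib + G$, and the analogue in the $\nabla_4$ direction. Your proposal would need to be augmented with these $(\omegab,\omegabt)$- and $\tr\chib$-subtractions (and their $\nabla_3$ counterparts) in order to close; as written, the $\nabla\betab$ subtraction alone is insufficient.
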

\begin{remark}
As consequences of Lemma \eqref{sobolev_trace_estimates} and Proposition \ref{trace theorem}, we have
\begin{equation*}
\|\varphi\|_{L^{\infty}_{(sc)}} +\|\nabla_4  \varphi\|_{Tr_{(sc)}(H_u)}+  \|(\nabla \eta, \nabla \etab)\|_{Tr_{(sc)}(H_u)} \lesssim \|\nabla^2  \mu\|_{L^2_{(sc)}(H_u)} + C,
\end{equation*}
\begin{equation*}
\|\varphib\|_{L^{\infty}_{(sc)}} +\|\nabla_3  \varphib\|_{Tr_{(sc)}(\Hb_{\ub})} + \|(\nabla \eta, \nabla \etab)\|_{Tr_{(sc)}(\Hb_{\ub})} \lesssim \|\nabla^2  \mub\|_{L^2_{(sc)}(\Hb_{\ub})} + C.
\end{equation*}
\end{remark}
\begin{proof}
\eqref{estimates_varphi_4_easy} and \eqref{estimates_varphi_3_easy} are extremely easy to prove by using the estimates derived so far. For the remaining estimates, we shall prove \eqref{estimates_varphi_3_hard}; similarly, one can derive \eqref{estimates_varphi_4_hard}.

First of all, we commute $\triangle$ with \eqref{def_phift} to derive
\begin{equation}\label{nabla_3_2derivative_varphib}
\nabla_3 \triangle \varphib = \nabla \triangle (\eta, \etab) + \tr\chib_0 \cdot \nabla^2 \varphib + \chibh \cdot \nabla^2 \varphib + G_1
\end{equation}
with error term $\|G_1\|_{L^2_{(sc)}(\Hb_{\ub})} \lesssim C+ C^{\frac{1}{2}}\|\nabla^2 \varphib\|_{L^2_{(sc)}(\Hb_{\ub})}$.

Secondly, we commute $\Donestar$ with the following Hodge system
\begin{equation*}
\divergence \etab = -\mub -\rho, \quad \curl \etab = -\sigma + \chih \wedge \chibh,
\end{equation*}
to derive
\begin{equation*}
\nabla \triangle \etab = \nabla^2 \mub + \nabla^2(\rho, \sigma) + G'_2,
\end{equation*}
with error term $\|G'_2\|_{L^2_{(sc)}(\Hb_{\ub})} \lesssim C$.
Similarly, we can derive
\begin{equation*}
\nabla \triangle \eta = \nabla_3 \nabla^2 \tr\chi + \nabla^2(\rho, \sigma) + G''_2,
\end{equation*}
with error term $\|G''_2\|_{L^2_{(sc)}(\Hb_{\ub})} \lesssim C$. Putting things together, we have
\begin{equation}\label{3derivatives_etas}
\nabla \triangle (\eta,\etab) = \nabla_3 \nabla^2 \tr\chi + \nabla^2 \mub + \nabla^2(\rho, \sigma) + G_2
\end{equation}
with error term $\|G_2\|_{L^2_{(sc)}(\Hb_{\ub})} \lesssim C$. Recall the following transport equation for $(\omega, \omegat)$ and we keep the exact coefficients of $\rho$ and $\sigma$.
\begin{equation*}
\nabla_3 (\omega,\omegat) =\frac{1}{2}(\rho,\sigma) + \psi \cdot \psi + \Upsilon \cdot \Upsilon
\end{equation*}
By commuting derivatives, we have
\begin{equation}\label{2derivatives_omegas}
\nabla^2(\rho,\sigma) =\nabla_3 \nabla^2 (\omega, \omegat) + G_3,
\end{equation}
with error term $\|G_3\|_{L^2_{(sc)}(\Hb_{\ub})} \lesssim C$. Putting \eqref{nabla_3_2derivative_varphib}, \eqref{3derivatives_etas} and \eqref{2derivatives_omegas} together, we have the renormalized equation
\begin{equation}\label{gr_1}
\nabla_3 (\triangle \varphib - \nabla^2 (\omega,\omegat) - \nabla^2 \tr\chi) = \nabla^2 \mub + \tr\chib_0 \cdot \nabla^2 \varphib + \chibh \cdot \nabla^2 \varphib + G_4
\end{equation}
with error term $\|G_4\|_{L^2_{(sc)}(\Hb_{\ub})} \lesssim C+ C^{\frac{1}{2}}\|\nabla^2 \varphib\|_{L^2_{(sc)}(\Hb_{\ub})}$. Now we can combine Gronwall's inequality, Lemma \ref{sobolev_trace_estimates} and the estimates derived so far to prove \eqref{estimates_varphi_3_hard}. Since this is standard, we omit the details.
\end{proof}

\subsubsection{Estimates on $\|\nabla^2 \mu\|_{L^2_{(sc)}(H_u)}$ and $\|\nabla^2 \mub\|_{L^2_{(sc)}(\Hb_{\ub})}$}
\begin{proposition}
If $\delta$ is sufficiently small, we have
\begin{equation}\label{twoderivative_mu}
\|\nabla^2 \mu\|_{L^2_{(sc)}(H_u)}  + \|\nabla^2 \mub\|_{L^2_{(sc)}(\Hb_{\ub})} \lesssim C,
\end{equation}
\begin{equation}\label{aaa}
 \|\nabla(\eta, \etab)\|_{Tr_{(sc)}(H_u)} + \|\nabla(\eta,\etab)\|_{Tr_{(sc)}(\Hb_{\ub})}  \lesssim C.
\end{equation}
\end{proposition}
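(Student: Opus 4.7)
The strategy is to first establish \eqref{twoderivative_mu} and then deduce the trace estimate \eqref{aaa} as an immediate consequence via the sharp trace theorem (Proposition \ref{trace theorem}). I would commute $\nabla^2$ with the transport equations \eqref{L_transport_one_derivative} for $\mu$ and \eqref{Lb_transport_one_derivative} for $\mub$. Since $\mu\equiv 0$ on $\Hb_0$ and $\mub\equiv 0$ on $H_0$ (both mass aspect functions have trivial initial data on the appropriate null hypersurfaces), the transport equations can be integrated along $H_u$ and $\Hb_{\ub}$ respectively to produce $\|\nabla^2\mu\|_{L^2_{(sc)}(S_{u,\ub})}$ and $\|\nabla^2\mub\|_{L^2_{(sc)}(S_{u,\ub})}$, and then integrated again in $\ub$ or $u$ to give the desired $L^2$ bounds along the null hypersurfaces.

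Commuting $\nabla^2$ with the transport equations produces three problematic types of top-order contributions on the right-hand side: (i) $\nabla^3 \psi$ for various connection coefficients, (ii) $\nabla^2 \Psi_g$ for curvature components other than $\alpha,\alphab$, and (iii) $\Upsilon\cdot\nabla^3\Upsilon$ from the Maxwell sector. I would treat (i) by applying $\nabla^2$ to the Hodge system \eqref{Hodge_one_derivative}, which converts $\nabla^3\psi$ into $\nabla^2\Theta + \nabla^2\Psi_g$ plus terms bounded via Theorem A. The resulting $\nabla^2\Theta$ splits into $\nabla^3\tr\chi, \nabla^3\trchibt$ (already controlled in \eqref{trace_trchib} of Section \ref{L_2_estimates_for_nabla^3_tr_chi}), $\nabla^2\mu,\nabla^2\mub$ themselves (absorbed via Gronwall), and $\nabla^2\kappa,\nabla^2\kappab$ (handled by a parallel argument on their own transport-Hodge system). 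Type (iii) terms are handled by trading $\nabla^3\Upsilon$ for $\nabla^2\nabla_3\Upsilon$ or $\nabla^2\nabla_4\Upsilon$ via the null Maxwell equations, which are controlled by Proposition \ref{2 der est on max}. Type (ii) terms are the most delicate: I would use the null Bianchi equations to exchange an angular derivative for a null derivative, then renormalize by absorbing the remaining top-order pieces into a $\nabla_3$-derivative of an auxiliary combination, exactly in the spirit of the renormalizations \eqref{renormalized_1} and \eqref{gr_1} from the preceding subsections.

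The main obstacle is organizing the renormalization for $\mub$ along $\Hb_{\ub}$, where $\tr\chib_0$ appears in \eqref{Lb_transport_one_derivative} and creates a double-anomalous structure that must be carefully tracked. I expect the renormalized identity to take the form $\nabla_3(\nabla^2\mub + \text{auxiliary}) = \tr\chib_0\cdot\nabla^2\mub + G$ with $\|G\|_{L^2_{(sc)}(\Hb_{\ub})}\lesssim C + C\delta^{1/2}\|\nabla^2\mub\|_{L^2_{(sc)}(\Hb_{\ub})}$, so that Gronwall closes the estimate along the $u$-direction. The argument for $\mu$ along $H_u$ is analogous but simpler because the problematic $\tr\chib_0$ coefficient is absent from \eqref{L_transport_one_derivative}, and $\delta^{-1}$-factor from the $\nabla_4$ integration provides additional smallness via the scale invariant H\"older's inequality \eqref{Holder}.

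Once \eqref{twoderivative_mu} is established, \eqref{aaa} follows by applying Proposition \ref{trace theorem} to $\nabla_4 \varphi = \nabla(\eta,\etab)$ and $\nabla_3 \varphib = \nabla(\eta,\etab)$ from \eqref{def_phift}. The sharp trace theorem bounds $\|\nabla(\eta,\etab)\|_{Tr_{(sc)}(H_u)}$ by $\|\nabla^2_4\varphi\|_{L^2_{(sc)}(H_u)}$, $\|\nabla^2\varphi\|_{L^2_{(sc)}(H_u)}$, $\|\nabla\nabla_4\varphi\|_{L^2_{(sc)}(H_u)}$ together with lower-order $L^\infty_{(sc)}$ and $L^4_{(sc)}$ norms of $\varphi, \nabla\varphi$. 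All these ingredients are furnished by \eqref{estimates_varphi_4_easy} and \eqref{estimates_varphi_4_hard}, and the analog along $\Hb_{\ub}$ uses \eqref{estimates_varphi_3_easy} and \eqref{estimates_varphi_3_hard}; the previously unbounded $\|\nabla^2\mu\|_{L^2_{(sc)}(H_u)}$ and $\|\nabla^2\mub\|_{L^2_{(sc)}(\Hb_{\ub})}$ that appear on the right-hand side of those estimates are now controlled by $C$ via \eqref{twoderivative_mu}, completing the proof.
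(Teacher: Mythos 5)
Your deduction of \eqref{aaa} from \eqref{twoderivative_mu} via Proposition \ref{trace theorem} together with \eqref{estimates_varphi_4_easy}--\eqref{estimates_varphi_3_hard} is correct and is exactly how the paper finishes. The argument you sketch for \eqref{twoderivative_mu}, however, has a genuine gap in the Maxwell sector. You propose to handle the top-order type~(iii) term $\Upsilon\cdot\nabla^3\Upsilon$ by trading $\nabla^3\Upsilon$ for $\nabla^2\nabla_3\Upsilon$ or $\nabla^2\nabla_4\Upsilon$ via the null Maxwell equations and then invoking Proposition \ref{2 der est on max}; but that proposition controls only the second-order quantities $\nabla\nabla_3\Upsilon$, $\nabla\nabla_4\Upsilon$, whereas $\nabla^2\nabla_3\Upsilon$ is still third order. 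No bound on three derivatives of the Maxwell field is available anywhere --- not in $\F$, $\Fb$, nor in Proposition \ref{2 der est on max} --- so this step does not close.

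The deeper problem is that commuting $\nabla^2$ with the schematic transport equation \eqref{Lb_transport_one_derivative} and renormalizing afterward discards precisely the structure the proof must exploit; the paper flags this explicitly (``we can not afford to any loss of information for highest derivative terms''). The missing idea, the genuinely new ingredient of the Einstein--Maxwell setting relative to \cite{K-R-09}, is the divergence-free identity $\Divergence T = 0$. Computing $\nabla_3\mub$ directly from the explicit forms of $\nabla_3\,\divergence\etab$ (via \eqref{NSE_Lb_etab} and \eqref{NSE_div_chibh}) and $\nabla_3\rho$ (via \eqref{NBE_Lb_rho}) produces the bracket $\frac{1}{2}(D_3 R_{34}-D_4 R_{33})+\divergence T_{b3}$, which $\Divergence T=0$ collapses into $\nabla_3 R_{34}$ plus lower order. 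That $\nabla_3 R_{34}$ is moved to the left and, together with the substitutions $\rho\to 2\nabla_3\omega$ and $\alphab\to -\nabla_3\chibh$ from \eqref{nabla_3_curvature_components}, one forms $\mub^1 = \mub + R_{34} - 2\tr\chib\cdot\omega + \frac{1}{2}|\chibh|^2$, whose $\nabla_3$-transport has no dangerous curvature or Maxwell right-hand side. Only \emph{after} this is $\triangle$ applied, followed by the further shift to $\mub^2 = \triangle\mub^1 - (\eta+\etab)\cdot\triangle\etab - \chib\cdot\triangle\phib$, and then Gronwall closes along $\Hb_{\ub}$. Renormalizing before applying angular derivatives is essential: once $\nabla^2$ hits the schematic equation, the uncontrolled $\Upsilon\cdot\nabla^3\Upsilon$ is already present, and none of the tools you cite --- Maxwell equations, Proposition \ref{2 der est on max}, or the earlier renormalizations \eqref{renormalized_1}, \eqref{gr_1} (which are purely curvature renormalizations and never invoke $\Divergence T$) --- can remove it.
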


\begin{proof} \eqref{aaa} is a direct consequence of \eqref{twoderivative_mu}. We only derive estimates on $\|\nabla^2 \mub\|_{L^2_{(sc)}(\Hb_{\ub})}$; the other one can be derived in the same manner.

We start to derive a transport equation on $\nabla_3 \mub$. We can not afford to any loss of information for highest derivative terms, since those terms are potentially dangerous. They can only be handled through a renormalization process. Commuting derivative with \eqref{NSE_Lb_etab}, we have
\begin{align*}
 \nabla_3 \,\divergence \etab & = -\frac{1}{2}\tr\chib \cdot \divergence \etab - \chibh \cdot \nabla \etab + \betab \cdot \etab -\eta \cdot \chibh \cdot \etab + \frac{1}{2} \tr\chib \cdot(\eta \cdot \etab) \\
&\quad + \frac{1}{2} (\eta + \etab)\cdot(-\chib \cdot (\etab -\eta) + \betab - \frac{1}{2}T_{b3}) +  \divergence (-\chib \cdot (\etab -\eta) + \betab + \frac{1}{2}T_{b3}).
\end{align*}
We can use \eqref{NSE_div_chibh} to eliminate $\divergence \chibh$ to derive
\begin{align*}
\nabla_3 \,\divergence \etab & = \divergence \betab +  \frac{1}{2} \divergence T_{b3} + \frac{1}{2} (\eta + 3\etab) \cdot \betab + (\eta + \etab)\cdot \nabla \tr\chib +(\nabla \eta + \nabla \etab)\cdot\chib + l.o.t.
\end{align*}
Added up with \eqref{NBE_Lb_rho} and ignoring some irrelevant coefficients, it is reduced to
\begin{align*}
 \nabla_3 \mub &= \tr\chib \cdot \rho + \chih \cdot \alphab + (\eta +\etab)\cdot \betab + (\eta + \etab)\cdot \nabla \tr\chib \\
&\qquad + \frac{1}{2} [\frac{1}{2}(D_3 R_{34}-D_4 R_{33})+\divergence T_{b3}]+(\nabla \eta + \nabla \etab)\cdot \chib + l.o.t.
\end{align*}
For the bracket, we use the key fact $\text{Div} T = 0$ of the energy-momentum tensor to derive
\begin{align*}
 \frac{1}{2}(D_3 R_{34}-D_4 R_{33})+\divergence T_{b3} &= \frac{1}{2} \nabla_3 (R_{34}) + (D^3 R_{33} + D^a R_{a3}) + l.o.t.\\
&= \frac{1}{2} \nabla_3 (R_{34}) -D^4 R_{43} + l.o.t. = \nabla_3 (R_{34}) + l.o.t.
\end{align*}
Thus, we derive
\begin{equation}\label{nab_3_mu_R}
 \nabla_3 (\mub + R_{34}) = \tr\chib \cdot \rho + \chih \cdot \alphab + (\eta +\etab)\cdot \betab  + (\eta + \etab)\cdot\nabla  \tr \chib +(\nabla \eta + \nabla \etab)\cdot \chib + l.o.t.
\end{equation}
Thanks to \eqref{NSE_Lb_chibh}, \eqref{NSE_Lb_etab} and \eqref{NSE_Lb_omega}, we have
\begin{equation}\label{nabla_3_curvature_components}
\nabla_3 (\chibh, \etab, \omega) = (-\alphab, \betab, \frac{1}{2}\rho) + l.o.t.
\end{equation}
We replace $\alphab$ and $\rho$ (but keep $\betab$) by the right-hand side of \eqref{nabla_3_curvature_components} in \eqref{nab_3_mu_R} to derive
\begin{align*}
 \nabla_3 (\mub + R_{34}) &= 2 \tr\chib \cdot \nabla_3 \omega - \chih \cdot \nabla_3 \chih  + (\eta +\etab)\cdot (\betab +\nabla \tr \chib) +(\nabla \eta + \nabla \etab)\cdot\chib + l.o.t.\\
&=  \nabla_3 (2\tr\chib \cdot \omega-\frac{1}{2}|\chibh|^2 )+ (\eta +\etab)\cdot (\betab + \nabla \tr \chib) +(\nabla \eta + \nabla \etab)\cdot\chib  + l.o.t.
\end{align*}
In view of \eqref{NSE_L_tr_chib}, we can regard $\omega \cdot \nabla_3\tr\chib$ as $l.o.t.$ Thus, we have
\begin{equation*}
 \nabla_3 \mub^1 = (\eta +\etab)\cdot \betab + (\eta + \etab) \cdot \nabla \tr \chib +(\nabla \eta + \nabla \etab)\cdot\chib + l.o.t.
\end{equation*}
where $\mub^1 = \mub + R_{34}-2\tr\chib \cdot \omega + \frac{1}{2}|\chibh|^2.$
Commuting again with $\triangle$, we derive
\begin{align*}
 \nabla_3 \triangle \mub^1 &= (\nabla^2\eta +\nabla^2 \etab)\cdot \betab + (\eta +\etab)\cdot \triangle \betab+ (\eta + \etab)\cdot \nabla^3 \tr \chib +(\nabla^2\eta +\nabla^2 \etab) \cdot \nabla \tr\chib\\
&\qquad +(\nabla \triangle \eta + \nabla \triangle \etab)\cdot\chib + (\nabla \eta +\nabla \etab) \cdot \triangle \chib + (\nabla^2\eta +\nabla^2 \etab) \cdot \nabla \chibh + H_1,
\end{align*}
with error term $\|H_1\|_{L^2_{(sc)}(\Hb_{\ub})} \lesssim C$. We now use \eqref{nabla_3_curvature_components} to replace $\triangle \betab$, thus
\begin{align*}
 \nabla_3 [\triangle \mub^1-(\eta +\etab)\cdot\triangle \etab] &= (\nabla^2\eta +\nabla^2 \etab)\cdot \betab + (\eta + \etab)\cdot \nabla^3 \tr \chib +(\nabla^2\eta +\nabla^2 \etab) \cdot (\nabla \tr\chib + \nabla \chibh)\\
&\quad +(\nabla \triangle \eta + \nabla \triangle \etab)\cdot \chib + (\nabla \eta +\nabla \etab) \cdot (\triangle \chibh + \nabla^2 \tr\chib) + H_2,
\end{align*}
with error term $\|H_2\|_{L^2_{(sc)}(\Hb_{\ub})} \lesssim C$. We still need to renormalize the terms involving $\nabla \triangle \eta$, $\nabla \triangle \etab\chib$ and $\triangle \chih$. In view of \eqref{nabla_3_2derivative_varphib}
and \eqref{laplacian_chibh},
if we define $\mub^2 = \triangle \mub^1-(\eta +\etab)\cdot\triangle \etab-\chib \cdot\triangle \phib$, we then have
\begin{align*}
 \nabla_3 \mub^2 &= (\nabla^2\eta +\nabla^2 \etab)\cdot \betab + (\eta + \etab)\cdot \nabla^3 \tr \chib +(\nabla^2\eta +\nabla^2 \etab)\cdot (\nabla \tr\chib + \nabla \chibh)\\
&\qquad +\chib \cdot \nabla^2 \varphib -\triangle \varphib \cdot \nabla_3 \chib + (\nabla \eta +\nabla \etab)\cdot(\nabla \betab + \nabla^2 \tr\chib) + H_3\\
&= T_1 +T_2 + \cdots + T_6 + H_3,
\end{align*}
with error term $\|H_3\|_{L^2_{(sc)}(\Hb_{\ub})} \lesssim C +  C \delta^{\frac{1}{2}}\|\nabla^2 \varphib\|_{L^2_{(sc)}(\Hb_{\ub})}$.

We estimate $T_i$'s one by one. For $T_1$, we have
\begin{equation*}
 \|T_1\|_{L^2_{(sc)}(\Hb_{\ub})} \lesssim \delta^{\frac{1}{2}}\|(\nabla^2 \eta, \nabla^2 \etab)\|_{L^2_{(sc)}(\Hb_{\ub})}\|\betab\|_{Tr_{(sc)}(\Hb_{\ub})}\lesssim C\delta^{\frac{1}{2}};
\end{equation*}

For $T_2$, according to Section \ref{L_2_estimates_for_nabla^3_tr_chi}, we have
\begin{equation*}
 \|T_2\|_{L^2_{(sc)}(\Hb_{\ub})} \lesssim C \delta^{\frac{1}{2}}\|\nabla^3\tr\chib\|_{L^2_{(sc)}(\Hb_{\ub})} \lesssim C \delta^{\frac{1}{2}};
\end{equation*}

For $T_3$, thanks again to Section \ref{L_2_estimates_for_nabla^3_tr_chi}, we have
\begin{equation*}
 \|T_3\|_{L^2_{(sc)}(\Hb_{\ub})} \lesssim \delta^{\frac{1}{2}}\|(\nabla^2 \eta, \nabla^2 \etab)\|_{L^2_{(sc)}(\Hb_{\ub})}(\|\nabla \tr\chib\|_{L^{\infty}_{(sc)}} + \|\nabla \chibh\|_{Tr_{(sc)}(\Hb_{\ub})})\lesssim C\delta^{\frac{1}{2}};
\end{equation*}

For $T_4$, in view of \eqref{estimates_varphi_3_hard} or \eqref{gr_1}, we have
\begin{equation*}
 \int_{0}^{u} \|T_4\|_{L^2_{(sc)}(S_{u,\ub})} \lesssim (1+C\delta^{\frac{1}{2}}) \int_{0}^{u}\|\nabla \varphib\|_{L^2_{(sc)}(S_{u,\ub})} \lesssim (1+C\delta^{\frac{1}{2}}) \int_{0}^{u}\| \nabla^2 \mub\|_{L^2_{(sc)}(S_{u,\ub})} + C;
\end{equation*}

For $T_5$, in view of \eqref{NSE_L_tr_chib}, we replace $\nabla_3 \chib$ by $\alphab$ to derive
\begin{equation*}
 \|T_5\|_{L^2_{(sc)}(\Hb_{\ub})} \lesssim \delta^{\frac{1}{2}}\|\nabla^2 \varphib\|_{L^2_{(sc)}(\Hb_{\ub})}\|\alphab\|_{Tr_{(sc)}(\Hb_{\ub})}+ C \delta^{\frac{1}{2}} \lesssim C \delta^{\frac{1}{2}}\|\nabla^2 \mub\|_{L^2_{(sc)}(\Hb_{\ub})}+ C \delta^{\frac{1}{2}};
\end{equation*}

For $T_6$, in view of the estimates Section \ref{Tr_estimates_for_nabla_eta_etab}, we have
 \begin{align*}
 \|T_6\|_{L^2_{(sc)}(\Hb_{\ub})} &\lesssim \delta^{\frac{1}{2}}\|(\nabla \eta, \nabla \etab)\|_{Tr_{(sc)}(\Hb_{\ub})}(\|\nabla\betab\|_{L^2_{(sc)}(\Hb_{\ub})}+\|\nabla^2 \tr\chib\|_{L^2_{(sc)}(\Hb_{\ub})})\\
&\lesssim C\delta^{\frac{1}{2}}\|\nabla^2 \mub\|_{L^2_{(sc)}(\Hb_{\ub})}+C.
\end{align*}
Putting things together, we have
\begin{align*}
\|\nabla^2 \mub\|_{{L^2_{(sc)}(S_{u,\ub})}} &\lesssim C + C\delta^{\frac{1}{2}}\|\nabla^2 \mub\|_{L^2_{(sc)}(\Hb_{\ub})} + (1+C\delta^{\frac{1}{2}}) \int_{0}^{u}\| \nabla^2 \mub\|_{L^2_{(sc)}(S_{u,\ub})}.
\end{align*}
Last term can be removed by Gronwall's inequality. We then perform an integration on the incoming null hypersurfaces to complete the proof.
\end{proof}

We now show the trace estimates on $\rho$ and $\sigma$.
\begin{proposition}
If $\delta$ is sufficiently small, we have
\begin{equation}
\|(\rho,\sigma)\|_{Tr_{(sc)}(H_u)} + \|(\rho,\sigma)\|_{Tr_{(sc)}(\Hb_{\ub})} \lesssim C.
\end{equation}
\end{proposition}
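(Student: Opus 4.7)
The plan is to derive these trace bounds by renormalizing $\rho$ and $\sigma$ as null derivatives of the connection coefficients $\omega,\omegab,\omegat,\omegabt$, and then applying the sharp trace theorem (Proposition \ref{trace theorem}) to those connection coefficients. The key identities are, from \eqref{NSE_Lb_omega}, \eqref{NSE_L_omegab} and the defining transport equations for $\omegat,\omegabt$,
\begin{align*}
\rho &= 2\nabla_3 \omega + \mathrm{l.o.t.}, & \sigma &= 2\nabla_3 \omegat, \\
\rho &= 2\nabla_4 \omegab + \mathrm{l.o.t.}, & \sigma &= 2\nabla_4 \omegabt,
\end{align*}
where each $\mathrm{l.o.t.}$ is a schematic sum of terms of the form $\psi \cdot \psi$ or $\Upsilon \cdot \Upsilon$ (with at most one anomalous factor). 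These nonlinear remainders are harmless for trace estimates: by H\"older on $\Hb_{\ub}$ or $H_u$ and the $L^\infty_{(sc)}$ and $L^4_{(sc)}$ bounds from Proposition \ref{close_bootstrap}, they contribute at most $C$ to the relevant trace norm. Hence it suffices to control $\|\nabla_3\omega\|_{Tr_{(sc)}(\Hb_{\ub})}$, $\|\nabla_3\omegat\|_{Tr_{(sc)}(\Hb_{\ub})}$, $\|\nabla_4\omegab\|_{Tr_{(sc)}(H_u)}$ and $\|\nabla_4\omegabt\|_{Tr_{(sc)}(H_u)}$.

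For the incoming case I would plug $\psi=\omega$ into Proposition \ref{trace theorem}. The ingredients needed are: $\|\nabla^2\omega\|_{L^2_{(sc)}(\Hb_{\ub})}\lesssim C$ (Proposition \ref{secondangularderivative}); $\|\nabla_3\nabla\omega\|_{L^2_{(sc)}(\Hb_{\ub})}\lesssim C$ (Proposition \ref{twoderivativeRicciCoefficients}, equation \eqref{est_3_Hb}); together with the lower order pointwise and $L^4$ bounds already proved. The only missing piece is $\|\nabla_3^2\omega\|_{L^2_{(sc)}(\Hb_{\ub})}$, which I would obtain by differentiating \eqref{NSE_Lb_omega}:
\begin{equation*}
\nabla_3^2\omega=\tfrac{1}{2}\nabla_3\rho+\nabla_3(\mathrm{l.o.t.}),
\end{equation*}
and then invoking \eqref{NBE_Lb_rho} to rewrite $\nabla_3\rho=-\divergence\betab+\text{(harmless)}$. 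Since $\|\nabla\betab\|_{L^2_{(sc)}(\Hb_{\ub})}\lesssim\Rb\lesssim C$ and all the nonlinearities are handled as above, one gets $\|\nabla_3^2\omega\|_{L^2_{(sc)}(\Hb_{\ub})}\lesssim C$. For $\omegat$ the analogous computation is simpler: $\nabla_3\omegat=\tfrac{1}{2}\sigma$ by definition, and $\nabla_3^2\omegat=\tfrac{1}{2}\nabla_3\sigma$ is bounded by \eqref{NBE_Lb_sigma} in terms of $\divergence\,{}^*\!\betab$ and similar controlled quantities. Collecting these estimates in Proposition \ref{trace theorem} gives $\|\nabla_3\omega\|_{Tr_{(sc)}(\Hb_{\ub})}+\|\nabla_3\omegat\|_{Tr_{(sc)}(\Hb_{\ub})}\lesssim C$, hence the claim on $\Hb_{\ub}$.

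For the outgoing case, the argument is symmetric, applied to $\omegab$ and $\omegabt$ along $H_u$. Here I use \eqref{NSE_L_omegab} to reduce $\rho$ to $2\nabla_4\omegab$ modulo l.o.t., and the identity $\nabla_4\omegabt=\tfrac{1}{2}\sigma$ for $\sigma$. The requisite inputs into Proposition \ref{trace theorem} are $\|\nabla^2\omegab\|_{L^2_{(sc)}(H_u)}$ and $\|\nabla\nabla_4\omegab\|_{L^2_{(sc)}(H_u)}$ from Propositions \ref{secondangularderivative} and \ref{twoderivativeRicciCoefficients} (equation \eqref{est_4_H}), while $\|\nabla_4^2\omegab\|_{L^2_{(sc)}(H_u)}$ is recovered by differentiating \eqref{NSE_L_omegab} and using \eqref{NBE_L_rho} to replace $\nabla_4\rho$ by $\divergence\beta$ plus controlled terms, with $\|\nabla\beta\|_{L^2_{(sc)}(H_u)}\lesssim\R\lesssim C$. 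The same scheme applied to $\omegabt$ gives the matching bound.

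The main obstacle is really a bookkeeping issue rather than a conceptual one: one must verify, for each l.o.t. generated by the renormalization and by differentiating the null structure equations, that the H\"older scheme from Section 2 does not produce an uncontrolled factor of $\delta^{-1/2}$. The only potentially troublesome appearances are products $\chih\cdot\nabla_4\omegab$ type terms, but each such term contains a single anomaly and pairs it with a good factor, so the standard $L^\infty_{(sc)}$--$L^4_{(sc)}$ splittings, combined with the localized estimates \eqref{localized_L_4_chih} when necessary, absorb the anomaly into $C$. After this verification the proof reduces to a mechanical application of the sharp trace theorem.
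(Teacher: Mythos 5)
Your overall strategy is sound and, for $\rho$, coincides with the paper's: the paper also renormalizes $\rho$ via \eqref{NSE_Lb_omega} (resp.\ \eqref{NSE_L_omegab}), applies Proposition \ref{trace theorem} to a version of $\omega$ (resp.\ $\omegab$), and recovers the missing $\nabla_3^2$-input by differentiating the null structure equation and invoking \eqref{NBE_Lb_rho} (resp.\ \eqref{NBE_L_rho}). The single cosmetic deviation is that the paper works with $\omega' = \Omega\,\omega$ rather than $\omega$, so the $2\omegab\,\omega$ term in \eqref{NSE_Lb_omega} is absorbed via $\nabla_3\Omega = -2\Omega\omegab$; this spares one from touching $\nabla_3\omegab$. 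Your direct route without the $\Omega$-twist is also fine, because $\|\nabla_3\omegab\|_{L^2_{(sc)}(S_{u,\ub})}\lesssim C$ is already available from Proposition \ref{Remaininig_First_Derivative_Estimates} and pairs with $\|\omega\|_{L^\infty_{(sc)}}$ under H\"older; it just makes the bookkeeping a touch heavier.

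Where you genuinely depart from the paper is $\sigma$. The paper does \emph{not} renormalize $\sigma$ through the potentials $\omegat,\omegabt$; it instead invokes \eqref{NSE_curl_etab} to write $\sigma = \curl\eta + \chih\wedge\chibh$ on $\Hb_{\ub}$ (and \eqref{NSE_curl_eta} on $H_u$), then cites the trace bound $\|\nabla\eta\|_{Tr_{(sc)}(\Hb_{\ub})} + \|\nabla\etab\|_{Tr_{(sc)}(H_u)}\lesssim C$ already established in \eqref{aaa}, noting that $\chih\wedge\chibh$ is harmless since each factor is $O(\delta^{1/2})$ in $L^\infty$. That is a shortcut: no new application of Proposition \ref{trace theorem} is needed. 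Your alternative — apply the sharp trace theorem to $\omegat,\omegabt$ — is valid, but carries a hidden verification: the inputs $\|\nabla^2\omegat\|_{L^2_{(sc)}(\Hb_{\ub})}$ and $\|\nabla_3\nabla\omegat\|_{L^2_{(sc)}(\Hb_{\ub})}$ are not literally in the lists of Propositions \ref{secondangularderivative} and \ref{twoderivativeRicciCoefficients} (those enumerate $\omega,\omegab$ but not $\omegat,\omegabt$). They do follow — the first from the Hodge system $\Donestar\langle\omega\rangle = \kappa+\tfrac12\beta$ with the elliptic estimate \eqref{elliptic_Hogde_2nd_order}, the second directly from $\nabla_3\omegat=\tfrac12\sigma$ and $\Roneb\lesssim C$ — but you should state this rather than cite those propositions as if they already contained the claim. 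The paper's route for $\sigma$ is shorter precisely because it rides entirely on the trace estimate for $\eta$, $\etab$ already proved a subsection earlier.
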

\begin{proof}
We prove estimates on incoming null cones $\Hb$ and the rest follows in the same way.

For $\|\sigma\|_{Tr_{(sc)}(\Hb_{\ub})}$, in view of \eqref{NSE_curl_etab}, we have
\begin{equation*}
 \|\sigma\|_{Tr_{(sc)}(\Hb_{\ub})} = \|\curl \eta\|_{Tr_{(sc)}(\Hb_{\ub})} + \|\chih \wedge \chibh\|_{Tr_{(sc)}(\Hb_{\ub})} \lesssim C.
\end{equation*}

For $\|\rho\|_{Tr_{(sc)}(\Hb_{\ub})}$, we first define $\omega' = \Omega \cdot \omega$ and rewrite \eqref{NSE_Lb_omega} as
\begin{equation}\label{nabla_3_omega_prime}
\rho = \frac{1}{\Omega}\nabla_3(\omega') + (\eta + \etab) \cdot (\eta+\etab) + ({\rho_F}^2 + {\sigma_F}^2).
\end{equation}
We only keep the principal term $\frac{1}{\Omega}\nabla_3(\omega')$ since nonlinear terms enjoy better estimates. Thus,
\begin{align*}
\|\rho\|_{Tr_{(sc)}(\Hb)} &\lesssim (\| \nabla^2_3 \omega'\|_{L^2_{(sc)}(\Hb)} + C)^\frac{1}{2}(\| \nabla^2 \omega'\|_{L^2_{(sc)}(\Hb)} + C)^\frac{1}{2}+C\lesssim C (\| \nabla^2_3 \omega'\|_{L^2_{(sc)}(\Hb)} + C)^\frac{1}{2}+C.
\end{align*}
To estimate $\| \nabla^2_3 \omega'\|_{L^2_{(sc)}(\Hb)}$, we differentiate \eqref{nabla_3_omega_prime} to derive
\begin{equation*}
\nabla^2_3 \omega' = \nabla_3 \rho + (\eta + \etab) \cdot (\nabla_3 \eta+ \nabla_3 \etab) + {\rho_F}\cdot \nabla_3 {\rho_F} +{\sigma_F} \cdot \nabla_3 {\sigma_F}.
\end{equation*}
Thanks to \eqref{NBE_Lb_rho}, we have estimates on $\nabla_3 \rho$; for those nonlinear terms, they can be easily estimated by the estimates derived so far. Hence, we complete the proof.
\end{proof}


\subsection{Estimates on Angular Momentum}\label{rotational_symmetry}
For $i=1,2,3$,  $^{(i)}\!O$ on $S_{u,0} \subset \Hb_0$ denote the generators of $\mathfrak{so}(3)$ satisfying the relation
\begin{equation}
[^{(i)}\!O, ^{(j)}\!O] = \epsilon_{ijk}\,^{(k)} \!O.
\end{equation}
In what follows, we shall suppress the index $^{(i)}$. We extend $\O$ to the entire domain $\D$ by
\begin{equation}
 \nabla_4 O_b = \chi_{b}{}^a O_a \,.
\end{equation}
The deformation tensor $\pi_{\alpha\beta}$ of $O$ is defined by $\pi_{\alpha\beta} = \L_{O} g = \frac{1}{2}(D_\alpha O_\beta + D_\beta O_\alpha)$. We also define $H_{ab} = \nabla_a O_b$ and $Z_a = \nabla_3 O_a -\chib_{ab}O_b$ and we can associate signatures to them,
\begin{equation}
 sgn(H) = \frac{1}{2}, \quad sgn(Z) = 0.
\end{equation}
We list the null components of $\pi$,
\begin{equation*}
 \pi_{33}= \pi_{44} =\pi_{4a}=0, \quad \pi_{34} = -(\eta + \etab)\cdot O,  \pi_{ab} =  \frac{1}{2}(H_{ab} + H_{ba}), \pi_{3a}= \frac{1}{2}Z_a.
\end{equation*}

We observe that both $\pi_{ab}$ and $Z$ (but not $H$!) vanish on $\Hb_0$. The goal of the current section is to derive estimates on $\pi$.

By virtue of the definitions of $H$ and $Z$, as well as \eqref{NSE_L_tr_chi}, \eqref{NSE_L_chih}, \eqref{NSE_L_tr_chib} and \eqref{NSE_L_chibh}, we derive transport equations for $H$ and $Z$
\begin{equation}\label{nabla_4_H}
 \nabla_4 H = \chi \cdot H + \nabla \chi \cdot O + \beta \cdot O + R_{b4} \cdot O + (\eta + \etab) \cdot \chi \cdot O,
\end{equation}
\begin{equation}\label{nabla_4_Z}
 \nabla_4 Z = (2\omega+\chi)\cdot Z +(\eta + \etab)\cdot H + \sigma \cdot O +\nabla(\eta + \etab) \cdot O + (\eta + \etab)\cdot O \cdot (\eta + \etab).
\end{equation}

We observe that $\chib$ and $\omegab$ do not appear in \eqref{nabla_4_H} and \eqref{nabla_4_Z}.

\begin{proposition}
 If $\delta$ is sufficiently small, we have
\begin{equation*}
 \|H\|_{L^\infty_{(sc)}} +\|Z\|_{L^\infty_{(sc)}} \lesssim C, \quad  \|\pi_{ab}\|_{L^2_{(sc)}(S_{u,\ub})} +\|Z\|_{L^2_{(sc)}(S_{u,\ub})} \lesssim C.
\end{equation*}
\end{proposition}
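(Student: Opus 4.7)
The strategy is to exploit the transport equations \eqref{nabla_4_H} and \eqref{nabla_4_Z} along the outgoing direction $L$, taking advantage of the crucial fact that $\chib$ and $\omegab$ (the coefficients that would be anomalous when integrated in $\ub$) do not appear on the right-hand sides, and of the vanishing of $Z$ and $\pi_{ab}$ on $\Hb_0$. All estimates from \textbf{Theorem A} and from the trace estimates of Subsections above are available for free, so we only need to organize the input carefully.

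The first step is a preliminary bound on $O$ itself. Since $\nabla_4 O_b = \chi_b{}^a O_a$ and $O|_{\Hb_0}$ is a standard rotation field on the unit sphere (hence uniformly bounded in the scale-invariant sense), integrating along $L$ and using $\|\chi\|_{L^\infty_{(sc)}} \lesssim C$ from \textbf{Theorem A}, together with Gronwall's inequality applied to \eqref{integralestimate1sc}, yields $\|O\|_{L^\infty_{(sc)}} \lesssim C$. In the same way, one controls $\|O\|_{L^4_{(sc)}(S)}$ and $\|\nabla O\|_{L^p_{(sc)}(S)}$ for $p=2,4$, the latter after commuting an angular derivative (which brings in $\nabla\chi$, bounded by $\OSonetwo$).

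With $O$ under control, we integrate \eqref{nabla_4_H} along $L$, starting from the bounded initial value of $H$ on $\Hb_0$. The terms $\chi\cdot H$ can be absorbed by Gronwall. For the forcing terms, we use scale-invariant H\"older, which produces the crucial $\delta^{\frac{1}{2}}$ factor: $\|\nabla\chi\cdot O\|_{L^2_{(sc)}(S)}\lesssim \delta^{\frac{1}{2}}\|\nabla\chi\|_{L^2_{(sc)}(S)}\|O\|_{L^\infty_{(sc)}}\lesssim C\delta^{\frac{1}{2}}$, and similarly $\|\beta\cdot O\|_{L^2_{(sc)}(S)}$ and the Maxwell quadratic contribution from $R_{b4}\cdot O$ are small. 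Applying \eqref{integralestimate1sc}, these produce only $\delta^{\frac{1}{2}}$ losses that are harmless for the $L^2_{(sc)}(S)$ bound on $H$. The equation \eqref{nabla_4_Z} is handled identically, and since $Z|_{\Hb_0}=0$ one obtains $\|Z\|_{L^2_{(sc)}(S_{u,\ub})}\lesssim C$ without any anomaly. For $\pi_{ab}$, taking the symmetric part of \eqref{nabla_4_H} produces a transport equation of the same schematic type whose initial datum vanishes on $\Hb_0$ because the standard rotation field on $\mathbb{S}^2$ is Killing; repeating the argument yields $\|\pi_{ab}\|_{L^2_{(sc)}(S_{u,\ub})}\lesssim C$.

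The $L^\infty_{(sc)}$ bounds for $H$ and $Z$ require one further angular derivative. Commuting $\nabla$ with \eqref{nabla_4_H} and \eqref{nabla_4_Z} produces transport equations for $\nabla H$ and $\nabla Z$ whose forcing terms involve $\nabla^2\chi$, $\nabla\beta$, $\nabla\sigma$, $\nabla^2(\eta+\etab)$ and Maxwell quadratic expressions; all of these are controlled along $H_u$ by $\OHtwo$, $\R$ and $\F$ from \textbf{Theorem A}. An integration in $\ub$ followed by a use of the Sobolev inequality \eqref{sobolev_Linfinity} combined with \eqref{trace_H} on $\nabla H$ and $\nabla Z$ gives the desired $L^\infty_{(sc)}$ control. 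The main obstacle is the appearance of curvature and connection terms at one derivative higher than in \eqref{nabla_4_H}--\eqref{nabla_4_Z}; but each such term either comes paired with an extra $\delta^{\frac{1}{2}}$ from H\"older, or already lies in a norm $\OSonetwo$, $\Rone$, $\Fone$ controlled in \textbf{Theorem A}. No renormalization is needed here because the potentially dangerous $\alpha$ and $\alpha_F$ do not enter at this level, which is precisely why the authors wrote \eqref{nabla_4_H} and \eqref{nabla_4_Z} in these specific forms.
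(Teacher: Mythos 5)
Your $L^2_{(sc)}(S)$ estimates for $Z$ and $\pi_{ab}$ are correct and essentially the paper's argument: integrate \eqref{nabla_4_H}--\eqref{nabla_4_Z}, exploit the trivial data on $\Hb_0$, and gain $\delta^{\frac12}$ from scale-invariant H\"older on the forcing terms. The preliminary control of $O$ is reasonable though left implicit in the paper.

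The $L^\infty_{(sc)}$ step has a genuine gap, and it comes from not using the sharp trace norms that the preceding subsections of Section~3 build precisely for this purpose. The paper integrates \eqref{nabla_4_H} \emph{pointwise} along the $L$-geodesics and bounds the resulting one-dimensional integrals $\int_0^{\ub}|\nabla\chih|\,d\ub'$, $\int_0^{\ub}|\beta|\,d\ub'$ by the trace norms $\|(\nabla\chih,\beta)\|_{Tr_{(sc)}(H_u)}$; this sidesteps the anomaly $\|H\|_{L^2_{(sc)}}\sim\delta^{-\frac12}$ noted immediately after the proposition, which any interpolation from $L^2$ would inherit. Your route instead needs $\|\nabla H\|_{L^4_{(sc)}(S)}$ via \eqref{trace_H}, and that inequality applied to $\nabla H$ requires $\|\nabla^2 H\|_{L^2_{(sc)}(H_u)}$. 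Commuting two angular derivatives through \eqref{nabla_4_H} produces $\nabla^3\chih\cdot O$ on the right-hand side, and $\nabla^3\chih$ is not controlled by $\OHtwo$ or any other available norm. Indeed, the $L^4_{(sc)}(S)$ estimate on $\nabla H$ appears only in the \emph{next} proposition of the paper, and its proof requires exactly the renormalization you claim is unnecessary: $\nabla^2\chih\cdot O$ is rewritten via the Codazzi/elliptic system as $\nabla^2\triangle^{-1}\nabla\beta\cdot O$ (up to lower order), $\nabla\beta$ is replaced by $\nabla_4(\rho,\sigma)$ via the Bianchi equation, and the $\nabla_4$ factor is absorbed into the transported quantity. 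Your assertion that ``no renormalization is needed here because the potentially dangerous $\alpha$ and $\alpha_F$ do not enter'' misidentifies the difficulty: the obstruction is one too many angular derivatives on $\chih$, not the curvature anomalies. Closing the $L^\infty$ estimate cleanly requires passing through the sharp trace theorem, not Sobolev interpolation.
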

\begin{proof}
We integrate \eqref{nabla_4_H} to derive
\begin{align*}
 \|H\|_{L^\infty_{(sc)}} &\lesssim \|H(u,0)\|_{L^\infty_{(sc)}} + \|(\nabla \chih, \beta)\|_{Tr_{(sc)}(H_u)} + C + C \delta^{\frac{1}{2}}\|H\|_{L^\infty_{(sc)}}.
\end{align*}
Since $\|H(u,0)\|_{L^\infty} \lesssim 1$ and $sc(H)=0$, we have $\|H(u,0)\|_{L^\infty_{(sc)}} \lesssim 1$. This yields the estimate on $H$. The $L^\infty$ estimate for $Z$ is similar in view of the key fact that $Z$ vanishes on $\Hb_0$. The $L^2_{(sc)}$ estimates follows again directly from \eqref{nabla_4_H} and \eqref{nabla_4_Z} and the triviality of the initial data of $\pi_{ab}$ and $Z$.
\end{proof}
We remark that $L^2_{(sc)}$ estimate for $H$ is anomalous because $\|H(u,0)\|_{L^2_{(sc)}} \sim \delta^{-\frac{1}{2}}$.

We turn to one derivative of $H$ and $Z$. Commuting $\nabla$ with \eqref{nabla_4_H} and \eqref{nabla_4_Z}, we derive
\begin{equation}\label{nabla_4_nabla_H}
 \nabla_4 \nabla H = \chi \cdot \nabla H + \nabla^2 \chi \cdot O + \nabla \beta \cdot O + E_1
\end{equation}
\begin{equation}\label{nabla_4_nabla_Z}
 \nabla_4 \nabla Z = (\omega + \chi)\cdot \nabla Z + \nabla^2(\eta + \etab) \cdot O + \nabla \sigma \cdot O + E_2
\end{equation}
We can easily show that the error terms can be bounded as follows,
\begin{equation*}
\|(E_1,E_2)\|_{{L^2_{(sc)}(S_{u,\ub})}} + \|(E_1,E_2)\|_{{L^4_{(sc)}(S_{u,\ub})}} \lesssim C.
\end{equation*}
\begin{proposition}
If $\delta$ is sufficiently small, we have
\begin{equation}\label{L_2_nabla_H_Z}
 \|(\nabla H, \nabla Z)\|_{{L^2_{(sc)}(S_{u,\ub})}} + \|(\nabla_4 \nabla H, \nabla_4 \nabla Z)\|_{L^2_{(sc)}(H_u)} \lesssim C,
\end{equation}
\begin{equation}\label{L_4_nabla_H_Z}
 \|(\nabla H, \nabla Z)\|_{{L^4_{(sc)}(S_{u,\ub})}} \lesssim C.
\end{equation}
\end{proposition}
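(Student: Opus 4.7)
The plan is to integrate the transport equations \eqref{nabla_4_nabla_H}--\eqref{nabla_4_nabla_Z} along the null generators of $H_u$ to obtain the $L^2_{(sc)}(S_{u,\ub})$ estimates on $(\nabla H,\nabla Z)$, read off the $L^2_{(sc)}(H_u)$ bounds on $(\nabla_4\nabla H,\nabla_4\nabla Z)$ directly from the equations, and finally bootstrap to $L^4_{(sc)}(S_{u,\ub})$ via a scale-invariant Sobolev inequality.

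First I would integrate \eqref{nabla_4_nabla_H} and \eqref{nabla_4_nabla_Z} starting from $S_{u,0}$. The initial data for $\nabla H$ on $S_{u,0}$ is controlled by the round sphere geometry, since there $O$ is a Killing generator of $\mathfrak{so}(3)$, while $\nabla Z$ vanishes identically on $\Hb_0$. On the right-hand side: $\chi\cdot\nabla H$ and $(\omega+\chi)\cdot\nabla Z$ give Gronwall contributions with prefactor $O(\delta^{\frac12})$ via the scale-invariant H\"older inequality \eqref{Holder} and the $L^\infty_{(sc)}$ bounds from Theorem A; $\nabla^2\chi\cdot O$ and $\nabla^2(\eta+\etab)\cdot O$ are controlled by Proposition \ref{secondangularderivative} together with the $L^\infty_{(sc)}$ bound on $O$ established in the preceding proposition; $\nabla\beta\cdot O$ and $\nabla\sigma\cdot O$ use the $\mathcal{R}_1$ component of the curvature norm; and the error terms $E_1,E_2$ are bounded by assumption. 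Absorbing the Gronwall term for $\delta$ small yields $\|(\nabla H,\nabla Z)\|_{L^2_{(sc)}(S_{u,\ub})}\lesssim C$, and feeding these back into \eqref{nabla_4_nabla_H}--\eqref{nabla_4_nabla_Z} yields $\|(\nabla_4\nabla H,\nabla_4\nabla Z)\|_{L^2_{(sc)}(H_u)}\lesssim C$.

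For the $L^4_{(sc)}(S_{u,\ub})$ bound I would apply the Sobolev inequality \eqref{sobolev_L4} to $\phi=\nabla H$ (respectively $\phi=\nabla Z$). The only extra ingredient required is an $L^2_{(sc)}$-bound on $\nabla^2 H,\nabla^2 Z$ on $S_{u,\ub}$, which I would obtain by commuting one more $\nabla$ with \eqref{nabla_4_H}--\eqref{nabla_4_Z} and integrating the resulting transport equation. The new source terms involve $\nabla^3\chi$, $\nabla^2\beta$, $\nabla^3(\eta,\etab)$ and $\nabla^2\sigma$, which are controlled respectively by \eqref{trace_trchib} (for $\nabla^3\tr\chi$), by the Codazzi equation \eqref{NSE_div_chih} together with the $\nabla^3\tr\chi$-bound (for $\nabla^3\chih$), by the transport-Hodge systems of Subsection \ref{Tr_estimates_for_nabla_eta_etab}, and by the second-derivative mass-aspect bounds in \eqref{twoderivative_mu}. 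The main obstacle is precisely this last step: the required bounds on $\nabla^2 H,\nabla^2 Z$ sit at the very edge of the regularity supplied by Theorem A and the trace estimates of Section \ref{Section_Trace_Rotation}, and it is only because of the delicate renormalized arguments producing sharp $\nabla^3\tr\chi,\nabla^3\tr\chib$ and $\nabla^2\mu,\nabla^2\mub$ estimates that the $L^4$ bound closes for $\delta$ sufficiently small.
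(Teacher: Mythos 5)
The $L^2$ part of your argument is on track: integrating \eqref{nabla_4_nabla_H}--\eqref{nabla_4_nabla_Z} with Cauchy--Schwarz in $\ub$ reduces everything to $\|\nabla^2\psi\|_{L^2_{(sc)}(H_u)}$ from Proposition \ref{secondangularderivative} and to $\Rone$, and the $H_u$-bounds on $\nabla_4\nabla H,\nabla_4\nabla Z$ follow by reading off the equations. But your route to \eqref{L_4_nabla_H_Z} has a genuine gap. Commuting one more $\nabla$ and trying to bound $\|\nabla^2 H\|_{L^2_{(sc)}(S_{u,\ub})}$ produces source terms $\nabla^3\chih\cdot O$, $\nabla^2\beta\cdot O$ and $\nabla^2\sigma\cdot O$ (and $\nabla^3(\eta,\etab)\cdot O$, which via \eqref{lap_eta} reduces to $\nabla^2\mu$ plus $\nabla^2(\rho,\sigma)$), none of which are controlled anywhere in the scheme. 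In particular $\nabla^3\chih$ is not recoverable from \eqref{NSE_div_chih} together with the $\nabla^3\tr\chi$ bound alone: the elliptic estimate of Lemma \ref{estimates_for_hogde_systems} applied at this order also brings in $\nabla^2\beta$, and $\Rone$, $\Roneb$ carry only one angular derivative of curvature. The obstruction is therefore not, as you suggest, that the needed estimates sit at the very edge of Theorem A and Section \ref{Section_Trace_Rotation} --- they lie strictly beyond it.

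The paper avoids $\nabla^2 H,\nabla^2 Z$ entirely: it establishes the $L^4$ bound directly from \eqref{nabla_4_nabla_H}--\eqref{nabla_4_nabla_Z}, using the interpolation inequality \eqref{sobolev_L4_along_outgoing} to integrate $L^4_{(sc)}$ source norms along $H_u$, and then \emph{renormalizing} precisely those terms for which even the one extra $\nabla$ required by that interpolation is unavailable. Concretely, via elliptic estimates one writes $\nabla^2\chih = \nabla^2\triangle^{-1}\nabla\beta + E_3$ with $E_3$ lower order; the Bianchi equations \eqref{NBE_L_rho}, \eqref{NBE_L_sigma} replace $\nabla\beta$ by $\nabla_4(\rho,\sigma)$ modulo easier terms, and the resulting $\nabla_4$ is shifted to the left-hand side of the transport equation for $\nabla H$. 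The term $\nabla\beta\cdot O$ is treated identically, $\nabla\sigma\cdot O$ is renormalized through $\nabla_4\omegabt = \tfrac12\sigma$, and $\nabla^2(\eta+\etab)\cdot O$ through the auxiliary tensor $\varphi$ with $\nabla_4\varphi = (\nabla\eta,\nabla\etab)$ from Subsection \ref{Tr_estimates_for_nabla_eta_etab}. This Bianchi/auxiliary-tensor renormalization is the device that closes the $L^4$ estimate with only $\Rone$-level curvature regularity, and it is the idea missing from your proposal.
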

\begin{proof}
Besides $E_1$ and $E_2$, there are six terms appearing in \eqref{nabla_4_nabla_H} and \eqref{nabla_4_nabla_Z}. For $\chi \cdot \nabla H$ and $(\omega + \chi)\cdot \nabla Z$, they can be absorbed eventually by Gronwall's inequality. We now indicate how to renormalize the rest four terms. The details of the proof are routine.

For $\nabla^2 \chi \cdot O$, we split it as $\nabla^2 \chi \cdot O = \nabla^2 \chih \cdot O + \nabla^2 \tr\chi \cdot O$. Thanks to $L^2_{(sc)}(H_u)$ estimates on $\nabla^3 \tr\chi$, we have
\begin{align*}
 \delta^{-1}\int_0^{\ub} \|\nabla^2 \tr\chi \cdot O\|_{L^4_{(sc)}(u,\ub')}  
&\lesssim \|\nabla^3 \tr\chi\|^{\frac{1}{2}}_{L^2_{(sc)}(H_u)}\|\nabla^2 \tr\chi\|^{\frac{1}{2}}_{L^2_{(sc)}(H_u)} + \delta^{\frac{1}{4}}\|\nabla^2 \tr\chi\|_{L^2_{(sc)}(H_u)}\lesssim C.
\end{align*}
To control $\nabla^2 \chih \cdot O$, we use \eqref{laplacian_chibh} to derive $\nabla^2 \chih = \nabla^2 \triangle^{-1} \triangle \chih=\nabla^2 \triangle^{-1} \nabla \beta + E_3$ where $E_3$ is much easier to control. Up to easier error terms, we use \eqref{NBE_L_rho} and \eqref{NBE_L_sigma} to replace $\nabla \beta$ by $\nabla_4 (\rho, \sigma)$. In this way, we can easily renormalize the system by moving $\nabla_4 \nabla^2 \triangle^{-1}(\rho, \sigma)$ to the left hand side of the equation. Hence, we can derive estimates easily. We also observe that $\nabla \beta \cdot O$ can also be treated in this way.

For  $\nabla \sigma \cdot O$, we renormalize it via transport equation $\nabla_4 \omegat = \frac{1}{2}\sigma$.

For $\nabla^2(\eta + \etab) \cdot O$, recall we defined $ \nabla_4 \varphi = (\nabla \eta, \nabla \etab)$ in Section \ref{Tr_estimates_for_nabla_eta_etab}, thus
\begin{align*}
 \nabla^2(\eta + \etab) \cdot O &= \nabla_4 (\nabla \varphi \cdot O)-\nabla \varphi \cdot \nabla_4 O + \chi \cdot \nabla \varphi + (\chi \cdot \etab+\beta+\Upsilon \cdot\Upsilon)\cdot\varphi + (\eta + \etab)\cdot\nabla(\eta, \etab).
\end{align*}
We then eliminate $\nabla_4 (\nabla \varphi \cdot O)$ via the standard renormalization. The other terms are easy to control thanks to the estimates on $\varphi$ in Section \ref{Tr_estimates_for_nabla_eta_etab}. This completes the proof.
\end{proof}

We commute $\nabla_3$ with \eqref{nabla_4_H} and \eqref{nabla_4_Z}  to derive
\begin{align*}
 \nabla_4 \nabla_3 H &= (\chi + \omega) \cdot \nabla_3 H + (\nabla_3 \tr\chi + \eta + \etab  + \sigma)\cdot H +\nabla \chi \cdot Z + (\nabla_3\nabla \chi + \chib \cdot \nabla \chi) \cdot O\\
& \quad + \omegab \cdot [\chi\cdot H + (\nabla \chi +(\eta + \etab)\cdot \chi+ \beta + \Upsilon\cdot \Upsilon)\cdot O] +(\beta + \Upsilon\cdot \Upsilon)\cdot (Z + \chib \cdot O)\\
&\quad+(\nabla_3 \beta + \nabla_3 R_{b4})\cdot O + \nabla_3 \chih \cdot Z,\\
 \nabla_4 \nabla_3 Z &= (\omega + \chi) \cdot Z + \omegab \cdot \nabla_4 Z + (\eta + \etab)\cdot \nabla_3 H + \nabla_3 \sigma \cdot O + \nabla_3 \chih \cdot Z\\
&\quad + \nabla_3 (\eta + \etab) \cdot H + [\sigma +(\eta + \etab)\cdot(1 + \eta + \etab) + \nabla(\eta + \etab) + \nabla_3 (\omega + \tr\chi)]\cdot Z\\
&\quad + [\nabla_3 \nabla (\eta + \etab) + \chib \cdot \nabla (\eta + \etab) + (\eta + \etab)\cdot (\nabla_3 (\eta + \etab)+ (\eta + \etab)\cdot \chib)+\chib \cdot\sigma ]\cdot O.
\end{align*}
\begin{proposition}
If $\delta$ is sufficiently small, we have
\begin{equation}\label{L_2_nabla_nabla_3_HZ}
 \|(\nabla_3 H, \nabla_3 Z)\|_{{L^2_{(sc)}(S_{u,\ub})}} + \|(\nabla_4 \nabla_3 H, \nabla_4 \nabla_3 Z)\|_{L^2_{(sc)}(H_u)} \lesssim C.
\end{equation}
\end{proposition}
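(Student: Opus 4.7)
The plan is to integrate the already–displayed transport equations for $\nabla_4 \nabla_3 H$ and $\nabla_4 \nabla_3 Z$ along outgoing null geodesics in the $\ub$–direction, starting from $\Hb_0$. On $\Hb_0$, $Z$ vanishes and $H$ coincides with the angular–momentum generators of the round sphere $S_{u,0}$, so the evolution equations $\nabla_4 H = \chi \cdot H$ and the definitions yield bounded initial data for $\nabla_3 H$; the initial data for $\nabla_3 Z$ is controlled because $\nabla_4 Z$ and $Z|_{\Hb_0}$ vanish and one only needs to track first $\nabla_3$. The only linear–in–unknown terms on the right, namely $(\chi+\omega)\cdot \nabla_3 H$ and $(\omega+\chi)\cdot \nabla_3 Z$, will be absorbed at the end by Gronwall's inequality. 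Everything else we bound directly in $L^2_{(sc)}(H_u)$, using the identity $\int_0^{\ub}\delta^{-1}\|\cdot\|_{L^2_{(sc)}(u,\ub')}d\ub' \lesssim \|\cdot\|_{L^2_{(sc)}(H_u)}$.

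I would split the nonlinear right–hand side into three groups. \emph{Group (i), schematic products.} Terms such as $(\nabla_3 \tr\chi + \eta + \etab + \sigma)\cdot H$, $\nabla\chi\cdot Z$, $(\beta+\Upsilon\cdot\Upsilon)\cdot(Z + \chib\cdot O)$, $\omegab\cdot[\chi\cdot H + (\eta+\etab)\cdot\chi\cdot O + \beta\cdot O + \Upsilon\cdot\Upsilon \cdot O]$, and $\nabla_3(\eta+\etab)\cdot H$ are estimated by H\"older from $\|H\|_{L^\infty_{(sc)}} + \|Z\|_{L^\infty_{(sc)}} \lesssim C$ together with the previously established $L^\infty$, $L^4$, $L^2$ bounds on connection coefficients and Maxwell components. \emph{Group (ii), $\nabla_3$ of two–tensors.} For $\nabla_3 \nabla \chi \cdot O$ and $\nabla_3 \nabla(\eta+\etab)\cdot O$ I commute derivatives, using Lemma~\ref{est_commutator} for $[\nabla_3,\nabla]$, and then invoke \eqref{est_3_H} of Proposition~\ref{twoderivativeRicciCoefficients}, which bounds $\|\nabla\nabla_3\chih\|_{L^2_{(sc)}(H_u)}$ and $\|\nabla\nabla_3(\eta,\etab)\|_{L^2_{(sc)}(H_u)}$ by $C$. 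The factor $\nabla_3\chih\cdot Z$ is anomalous, with $\|\nabla_3\chih\|_{L^2_{(sc)}(S_{u,\ub})} \lesssim C\delta^{-1/2}$ from Proposition~\ref{Remaininig_First_Derivative_Estimates}, but the $L^\infty_{(sc)}$–smallness of $Z$ and a single use of H\"older in the $\ub$–integral absorbs the $\delta^{-1/2}$ loss.

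\emph{Group (iii), $\nabla_3$ of curvature.} The genuine curvature contributions are $\nabla_3\beta$, $\nabla_3 R_{b4}$ and $\nabla_3\sigma$ appearing in the $\nabla_4\nabla_3 H$ and $\nabla_4\nabla_3 Z$ equations. These I convert using the null Bianchi identities \eqref{NBE_Lb_beta} and \eqref{NBE_Lb_sigma} together with $\mathrm{Div}\, T=0$ (exactly as in the $\mub$–renormalization of the previous section) to rewrite them as $\nabla(\rho,\sigma,\betab)$ plus easier lower–order terms. Since $\Rone$ controls $\|\nabla(\rho,\sigma,\betab)\|_{L^2_{(sc)}(H_u)}$ and Proposition~\ref{2 der est on max} controls the Maxwell part, no further renormalization is required, in contrast with the $\mub$ argument, because we integrate along $H_u$ and not along $\Hb_{\ub}$.

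The main obstacle is bookkeeping: one must verify that every two–derivative quantity that survives on the right–hand side, particularly $\nabla\nabla_3\chih$ and $\nabla\nabla_3(\eta,\etab)$, is indeed controlled on the outgoing hypersurface $H_u$ and not merely on $\Hb_{\ub}$. This is precisely the content of \eqref{est_3_H} in Proposition~\ref{twoderivativeRicciCoefficients}, and it is the reason no dangerous term (such as $\nabla\nabla_3\omegab$ or $\nabla\nabla_3\omega$, which are not estimated anywhere in the paper) can appear—indeed $\omega$ and $\omegab$ are visibly absent from the transport equations for $\nabla_3 H$ and $\nabla_3 Z$ modulo harmless factors. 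Once these checks are made, the scheme is: bound the $\nabla_4\nabla_3$ equations in $L^2_{(sc)}(H_u)$ as above, conclude the $L^2_{(sc)}(H_u)$ part of \eqref{L_2_nabla_nabla_3_HZ} directly, integrate along $\ub$ to obtain the pointwise–in–$\ub$ bound on $\|\nabla_3 H, \nabla_3 Z\|_{L^2_{(sc)}(S_{u,\ub})}$, and close with Gronwall's inequality against the $(\chi+\omega)\cdot\nabla_3H$ and $(\omega+\chi)\cdot\nabla_3 Z$ contributions.
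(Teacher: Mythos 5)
Your argument is correct and follows essentially the same route as the paper: integrate the $\nabla_4\nabla_3 H$, $\nabla_4\nabla_3 Z$ transport equations along the $\ub$-direction, absorb the $(\chi+\omega)\cdot\nabla_3 H$ and $(\omega+\chi)\cdot\nabla_3 Z$ terms by Gronwall, compensate the anomaly of $\nabla_3\chih\cdot Z$ by the non-anomalous $L^\infty_{(sc)}$ bound on $Z$ plus one H\"older gain, and convert $\nabla_3\beta$, $\nabla_3\sigma$ via the null Bianchi equations into angular derivatives of curvature that lie in $\Rone$. The one place you deviate is $\nabla_3 R_{b4}\cdot O$: you invoke $\Divergence T = 0$ to trade the $\nabla_3$ derivative for $\nabla_4$- and angular derivatives of Maxwell components, whereas the paper simply writes $R_{b4}=(\rho_F,\sigma_F)\cdot\alpha_F$ schematically, differentiates, and observes that the single anomalous factor $\nabla_3\alpha_F\cdot(\rho_F,\sigma_F)$ is neutralized by H\"older against the $L^\infty_{(sc)}$ bound on $(\rho_F,\sigma_F)$. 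Both routes close, but the paper's is more economical and does not require the $\Divergence T=0$ identity here; your phrase ``exactly as in the $\mub$-renormalization'' overstates the analogy, since no top-order cancellation is needed on $H_u$, only boundedness. Also note that the Maxwell derivatives produced are first-order, so the relevant control is Proposition~\ref{first_null_derivative_Maxwell} and Proposition~\ref{prop_L_infty_Psi_F}, not Proposition~\ref{2 der est on max}.
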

\begin{proof} We only derive estimates on $\|\nabla_3 H\|_{{L^2_{(sc)}(S_{u,\ub})}}$ and $\|\nabla_3 Z\|_{{L^2_{(sc)}(S_{u,\ub})}}$ can be bounded exactly in the same manner.

We observe that the possible dangerous terms are $\nabla_3 \beta \cdot O$, $\nabla_3 R_{b4}\cdot O$ and $\nabla_3 \chih \cdot Z$. The rest are much easier to control. For $\nabla_3 \chih \cdot Z$, $Z$ is \emph{not} anomalous, thus H\"older's inequality gains an extra $\delta^{\frac{1}{2}}$ to compensate the anomaly of $\nabla_3 \chih$; for $\nabla_3 \beta \cdot O$, we use \eqref{NBE_Lb_beta} to convert the $\nabla_3 \beta$ to angular derivative of curvature components; for $\nabla_3 R_{b4}\cdot O$, we have
\begin{align*}
 \nabla_3 R_{b4} &= \nabla_3 (\rho_F + \sigma_F) \cdot \alpha_F + l.o.t. = \Upsilon \cdot (\nabla_3 \rho + \nabla_3 \sigma ) +l.o.t.
\end{align*}
which is still easy to estimate. This completes the proof.
\end{proof}
\begin{remark}
We have a slightly refined estimates on $\nabla_3 Z$,
\begin{equation}\label{bbb}
  \|\nabla_3 Z\|_{L^2_{(sc)}(S_{u,\ub})L^{\infty}[0,\ub]} = \| \sup_{\ub' \in [0,\ub]}|\nabla_3 Z (u, \ub')|\|_{L^{2}_{(sc)}(u,\ub)}\lesssim C.
\end{equation}
\end{remark}
To prove this estimate, we observe that
\begin{equation*}
 \nabla_4 [\sup_{\ub' \in [0,\ub]}|\nabla_3 Z (u, \ub')|] \leq |\nabla_4 \nabla_3 Z (u, \ub)|.
\end{equation*}
Thus, we can still use the transport equation for $\nabla_3 Z$ to derive \eqref{bbb}.
Now we state the main proposition of this section.
\begin{proposition} If $\delta$ is sufficiently small, we have
\begin{equation*}
 \|D \pi\|_{{L^2_{(sc)}(S_{u,\ub})}} \lesssim C.
\end{equation*}
\end{proposition}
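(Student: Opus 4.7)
The plan is to decompose $D\pi$ into its null components and match each to a quantity that has already been estimated. Because $\pi_{33}=\pi_{44}=\pi_{4a}\equiv 0$, only derivatives of $\pi_{ab}=\tfrac12(H_{ab}+H_{ba})$, $\pi_{3a}=\tfrac12 Z_a$, and $\pi_{34}=-(\eta+\etab)\cdot O$ can possibly appear. When one converts $D$ into the horizontal derivatives $\nabla,\nabla_3,\nabla_4$ using the null decomposition of the connection, the difference is a schematic sum of correction terms of type $\psi\cdot(H,Z,(\eta+\etab)\cdot O)$ with $\psi$ a connection coefficient. So the problem reduces to controlling $\nabla H,\nabla Z,\nabla_3 H,\nabla_3 Z,\nabla_4 H,\nabla_4 Z$ and $D[(\eta+\etab)\cdot O]$ in ${L^2_{(sc)}(S_{u,\ub})}$.

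For the $\pi_{ab}$-components I would handle $\nabla H$ and $\nabla_3 H$ directly by \eqref{L_2_nabla_H_Z} and \eqref{L_2_nabla_nabla_3_HZ}, and bound $\nabla_4 H$ by inspecting \eqref{nabla_4_H} termwise: the term $\chi\cdot H$ is of type $\psi\cdot H$, $\nabla\chi\cdot O$ is controlled by $\OSonetwo\lesssim C$ together with the $L^\infty_{(sc)}$-bound on $O$, and $\beta\cdot O$ and $R_{b4}\cdot O$ are bounded by the $L^2_{(sc)}$-estimates for $\beta$ and for the Maxwell null components from Theorem A. The $\pi_{3a}$-components are treated identically: $\nabla Z,\nabla_3 Z$ from \eqref{L_2_nabla_H_Z}--\eqref{L_2_nabla_nabla_3_HZ}, and $\nabla_4 Z$ from the transport equation \eqref{nabla_4_Z}, every term of which is controlled by the $\Oone$-estimates for $\eta,\etab$ and the $L^2_{(sc)}$-bound on $\sigma$.

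For the $\pi_{34}$-components I use the Leibniz rule $D(\pi_{34})=D(\eta+\etab)\cdot O+(\eta+\etab)\cdot DO$, reducing everything to horizontal derivatives of $\eta,\etab$ and to $\nabla O=H$, $\nabla_4 O=\chi\cdot O$, $\nabla_3 O=Z+\chib\cdot O$, all of which are bounded. The $\Oone$-estimates (Proposition \ref{Oone_estimates}) and the first null-derivative estimates of Proposition \ref{Remaininig_First_Derivative_Estimates} furnish the control of $D(\eta+\etab)$ in ${L^2_{(sc)}(S_{u,\ub})}$ that we need.

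Collecting all contributions yields $\|D\pi\|_{{L^2_{(sc)}(S_{u,\ub})}}\lesssim C$. I do not expect a genuine obstacle here; all the serious work was done in the previous subsections and this proposition is essentially a bookkeeping assembly. The only care required is to track signatures so that every occurrence of the anomalous $H$ is paired against a factor bounded in $L^\infty_{(sc)}$: the scale-invariant H\"older inequality \eqref{Holder} then gains the factor $\delta^{1/2}$ that exactly compensates the $\delta^{-1/2}$-anomaly of $H$ at the $L^2_{(sc)}(S)$-level, while $O$ itself remains non-anomalous by its initial data on $S_{u,0}$ and its transport equation along $L$.
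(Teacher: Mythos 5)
Your plan — decompose $D\pi$ into null components, rewrite each as a horizontal derivative of $\pi$ plus Christoffel correction, and then feed everything into the already-proved estimates on $\nabla H$, $\nabla Z$, $\nabla_3 H$, $\nabla_3 Z$ together with the transport equations for $\nabla_4 H$, $\nabla_4 Z$ — is essentially the argument the paper gives, and it is correct. There is however one spot where your bookkeeping, as written, is too loose to close.

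Your closing sentence claims that ``every occurrence of the anomalous $H$ is paired against a factor bounded in $L^\infty_{(sc)}$,'' so that scale-invariant H\"older gains $\delta^{1/2}$. That argument only applies when the other factor is a genuine $\psi$ (a connection coefficient with $\|\psi\|_{L^\infty_{(sc)}}\lesssim C$) or $\tr\chi_0$ (whose $L^\infty_{(sc)}$-norm is in fact $O(\delta^{1/2})$). It does \emph{not} apply to $\tr\chib_0$: with $sgn(\tr\chib_0)=0$ one has $\|\tr\chib_0\|_{L^\infty_{(sc)}}\sim\delta^{-1/2}$, and as the paper's own footnote warns, a product against $\tr\chib_0$ in scale-invariant norms produces \emph{no} gain — the $\delta^{1/2}$ from H\"older is cancelled. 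Since $\tr\chib_0$ does appear in the Christoffel corrections (e.g.\ through $D_a e_3$ in $D_a\pi_{3b}$, and through $\nabla_3 O_a=Z_a+\chib_{ab}O^b$ in the $\pi_{34}$-components), if it were ever paired against raw $H$ your argument would fail. The fact that it does not fail rests on the precise structural point the paper singles out and you omit: wherever the background $\tr\chi$ or $\tr\chib$ appears, the quantity it multiplies is the symmetric combination $H_{ab}+H_{ba}=2\pi_{ab}$ (or the other $\pi$-components $Z$, $(\eta+\etab)\cdot O$), never $H$ alone. Because $\pi_{ab}$ has trivial data on $\Hb_0$, it is non-anomalous in $L^2_{(sc)}(S)$, so the term $\tr\chib_0\cdot\pi_{ab}$ is harmless even without a H\"older gain. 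In your own decomposition this fact is implicit in the observation that the Christoffel corrections contract the \emph{tensor} $\pi$ (not $H$) against the frame coefficients; you should state this explicitly rather than appealing to a blanket H\"older argument, since that argument, taken literally, would not survive a $\tr\chib_0\cdot H$ term. Aside from this point, the reduction to $\nabla\pi$, $\nabla_3\pi$, $\nabla_4\pi$ and the treatment of each component is sound.
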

\begin{proof} We only sketch the idea. The details can be easily carried out by the estimates derived so far. If one computes the null component of $D \pi$ one by one, we see immediately that $H$ comes either as $\psi \cdot H$ or as $\tr\chi_0 \cdot (H + \,^t\! H)$. We pay attention to $H$ since it does not enjoy non-anomalous $L^2_{(sc)}$ estimates. Nevertheless, we still can proceed as follows,
\begin{equation*}
  \| \psi \cdot H\|_{{L^2_{(sc)}(S_{u,\ub})}} \lesssim \delta^{\frac{1}{2}} \|\psi\|_{L^\infty_{(sc)}(u,\ub)}  \|H\|_{{L^2_{(sc)}(S_{u,\ub})}} \lesssim C.
\end{equation*}
For $\tr\chi_0 \cdot (H_{ab} + H_{ab}) = \pi_{ab}$, since $\pi_{ab}$ has trivial data on the initial surfaces, this allows us to derive non-anomalous estimates on $\pi_{ab}$.
\end{proof}

We summarize the estimates in this section in the following,
\begin{theoremB}\label{theorem B}If $\delta$ is sufficiently small, we have
\begin{equation*}
  \| \pi\|_{{L^4_{(sc)}(S_{u,\ub})}}+\| \pi\|_{L^{\infty}_{(sc)}(u,\ub)} +  \|D \pi\|_{{L^2_{(sc)}(S_{u,\ub})}} \lesssim C,
\end{equation*}
and for all the components of $D\pi$ except $D_3 \pi_{3a}$  we have
\begin{equation*}
\|D \pi\|_{L^{4}_{(sc)}(u,\ub)} \lesssim C,
\end{equation*}
Moreover,
\begin{equation*}
\| D_3 \pi_{3a}- \frac{1}{2}\nabla_3 Z\|_{L^{4}_{(sc)}(u,\ub)} +\|\nabla_3 Z\|_{L^2_{(sc)}(S_{u,\ub})L^{\infty}[0,\ub]} \lesssim C.
\end{equation*}
\end{theoremB}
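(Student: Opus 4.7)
The statement is an assembly of the bounds on $H$, $Z$ and the auxiliary quantity $\pi_{34}=-(\eta+\etab)\cdot O$ established in the four propositions of this section. I would organize the argument in three stages, corresponding to the three displays of the theorem.

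First, for $\|\pi\|_{L^\infty_{(sc)}}+\|\pi\|_{L^4_{(sc)}}+\|D\pi\|_{L^2_{(sc)}}\lesssim C$: the null components $\pi_{ab}=\tfrac12(H+{}^tH)$, $\pi_{3a}=\tfrac12 Z$, $\pi_{34}=-(\eta+\etab)\cdot O$, together with the $L^\infty_{(sc)}$ bounds on $H,Z$ proved above and the $\Ozeroinfinity$ bounds on $\eta,\etab$ from \textbf{Theorem A}, immediately yield $\|\pi\|_{L^\infty_{(sc)}}\lesssim C$. The $L^4_{(sc)}$ bound follows by scale-invariant interpolation $\|\phi\|_{L^4_{(sc)}}^2\le \|\phi\|_{L^2_{(sc)}}\|\phi\|_{L^\infty_{(sc)}}$ applied to each null component, using $\Ozerotwo$ for the $\eta$-entries and the $L^2_{(sc)}$ bounds on $\pi_{ab}$ and $Z$ from the first proposition of the subsection. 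The bound $\|D\pi\|_{L^2_{(sc)}(S_{u,\ub})}\lesssim C$ is the immediately preceding proposition.

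Second, for the $L^4_{(sc)}$ bound on every component of $D\pi$ other than $D_3\pi_{3a}$: the purely angular components reduce to $\nabla H$, $\nabla Z$ and $\nabla((\eta+\etab)\cdot O)$, controlled in $L^4_{(sc)}$ by \eqref{L_4_nabla_H_Z} and by Corollary \ref{Oonefour}. The $D_4$-components follow directly from \eqref{nabla_4_H} and \eqref{nabla_4_Z}: each right-hand side is an $L^4_{(sc)}$-bounded product, and crucially $\chib$ and $\omegab$ do not appear there. For $D_3\pi_{ab}$ and $D_3\pi_{34}$, which do not involve $\nabla_3 Z$, I would apply the trace Sobolev inequality \eqref{trace_Hb} to $\nabla_3 H$ and to $\nabla_3((\eta+\etab)\cdot O)$, using the $L^2_{(sc)}(\Hb_{\ub})$ bound on $\nabla\nabla_3(\cdot)$ obtained from \eqref{L_2_nabla_nabla_3_HZ} via the harmless commutator $[\nabla,\nabla_3]$.

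The genuine obstacle is $D_3\pi_{3a}$, whose principal part is $\tfrac12\nabla_3 Z$. The transport equation for $\nabla_3 Z$ displayed just before \eqref{L_2_nabla_nabla_3_HZ} contains the anomalous product $\nabla_3\chih\cdot Z$; the scale-invariant H\"older inequality \eqref{Holder} recovers the $\delta^{-1/2}$ anomaly of $\nabla_3\chih$ in $L^2_{(sc)}$ but is insufficient in $L^4_{(sc)}$, so no direct $L^4_{(sc)}(S)$ bound on $\nabla_3 Z$ is available. The remedy built into the theorem is to subtract this principal part: the remainder $D_3\pi_{3a}-\tfrac12\nabla_3 Z$ consists only of non-anomalous Christoffel-type products of schematic form $\psi\cdot Z$ arising from the $D_3$-connection, which H\"older places in $L^4_{(sc)}$ because $Z$ itself is not anomalous. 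For $\nabla_3 Z$ alone the required substitute \eqref{bbb} is already recorded: one applies $\nabla_4$ to the pointwise sup $\sup_{\ub'\in[0,\ub]}|\nabla_3 Z(u,\ub',\theta)|$ and invokes the uniform-in-$\ub'$ $L^2_{(sc)}(S)$ bound on $\nabla_4\nabla_3 Z$ inherent in the proof of \eqref{L_2_nabla_nabla_3_HZ}. Assembling the three stages completes the proof.
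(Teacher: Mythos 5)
Your assembly of the preceding propositions into the three displays matches the way the paper treats Theorem B, which it states explicitly as a summary of Section~3.5, and stages one and three of your plan are sound. Two points, however, are off.

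First, your diagnosis of the obstacle to an $L^4_{(sc)}$ bound on $\nabla_3 Z$ is not correct. The term $\nabla_3\chih\cdot Z$ is in fact $L^4_{(sc)}$--safe: by the scale-invariant H\"older inequality with $Z$ in $L^\infty_{(sc)}$ and $\nabla_3\chih$ in $L^4_{(sc)}$ (which costs only $\delta^{-1/4}$ by \eqref{B_2}), one gets $\|\nabla_3\chih\cdot Z\|_{L^4_{(sc)}}\lesssim\delta^{1/2}\cdot C\delta^{-1/4}\cdot C=C\delta^{1/4}$, so this term does not obstruct anything. The genuine problem is $\nabla_3\beta\cdot O$ and $\nabla_3 R_{b4}\cdot O$: after substituting \eqref{NBE_Lb_beta} these produce $\nabla(\rho,\sigma)\cdot O$, and an $L^4_{(sc)}(S)$ bound on $\nabla(\rho,\sigma)$ would, via \eqref{sobolev_L4}, require $\|\nabla^2(\rho,\sigma)\|_{L^2_{(sc)}}$, i.e.\ two angular derivatives of curvature, which are not available at this regularity level. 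This is precisely why the theorem settles for the mixed norm $\|\nabla_3 Z\|_{L^2_{(sc)}(S)L^{\infty}[0,\ub]}$: that only consumes an $L^2_{(sc)}$ integral of $\nabla_4\nabla_3 Z$ along $H_u$, where $\nabla(\rho,\sigma)\cdot O$ is harmless.

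Second, your stage-two treatment of the remaining $D_3$-components does not go through as written. You invoke \eqref{trace_Hb} applied to $\nabla_3 H$ and claim the needed $\|\nabla\nabla_3 H\|_{L^2_{(sc)}(\Hb)}$ comes from \eqref{L_2_nabla_nabla_3_HZ} ``via the harmless commutator.'' But \eqref{L_2_nabla_nabla_3_HZ} only provides $\|\nabla_4\nabla_3 H\|_{L^2_{(sc)}(H_u)}$; commuting $[\nabla,\nabla_3]$ exchanges $\nabla\nabla_3$ with $\nabla_3\nabla$, neither of which has been estimated on $\Hb_{\ub}$ or $H_u$, and does not reach $\nabla_4\nabla_3$. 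A route consistent with the paper is to never take $\nabla_3$ of $H$ directly: commute $\nabla_3$ past $\nabla_a$ so that the $e_3$-derivative falls on $O$ via $\nabla_3 O_b=Z_b+\chib_{bc}O_c$, producing $\nabla Z$ (controlled by \eqref{L_4_nabla_H_Z}), $\nabla\chib\cdot O$ (controlled by \eqref{remark nabla chibh nabla chih trace norm}), curvature$\cdot O$, and $\chib\cdot H$. Finally one must verify, as in the proof of the $L^2_{(sc)}$ proposition for $D\pi$, that the dangerous factor $\tr\chib_0$ multiplies $\pi_{ab}$ rather than $H$ alone, since $\pi_{ab}$ has trivial data on $\Hb_0$ and is therefore not anomalous, while $H$ is.
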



\section{Theorem C - Energy Estimates on Curvature and Maxwell Field}


\subsection{Energy Estimates on Maxwell Field}
Before we start doing estimates, we first list the components of the deformation tensors $^{(L)}\!\pi$ and $^{(\Lb)}\!\pi$,
\begin{center}
  \begin{tabular}{ | c | c | c | c | c | c | c|}
    \hline
    & $\pi_{44}$ & $\pi_{34}$ & $\pi_{33}$ & $\pi_{4a}$ & $\pi_{3a}$ & $\pi_{ab}$\\
    \hline
    $L$ & $0$ & $0$ & $-8\Omega^{-1} \omegab$ & $0$ & $2\Omega^{-1}\eta_a$ & $\Omega^{-1} \chi_{ab}$\\
    $\Lb$ & $-8\Omega^{-1} \omega$ & $0$ & $0$ & $2 \Omega^{-1}\etab_a$ & $0$ & $\Omega^{-1} \chib_{ab}$\\
    \hline
  \end{tabular}
\end{center}
We use $L$ and $\Lb$ as multipliers rather than $e_4$ and $e_3$. The main reason is that we can avoid the appearance of $\omega$ and $\omegab$ in $\pi_{34}$.

We first take $X = L$ in \eqref{stokes_energy_momentum} to derive
\begin{align*}
 \int_{H_u^{(0,\ub)}} |{\alpha_F}|^2+ \int_{\Hb_{\ub}^{(0,u)}}|{\rho_F}|^2 + |{\sigma_F}|^2 &=\int_{H_0^{(0,\ub)}} |\alpha|^2 + \doubleint_{\D(u,\ub)} \, ^{(L)}\!\pih \cdot T[F].
\end{align*}
In the scale invariant norms, we have
\begin{align*}
\|{\alpha_F}\|^2_{L^2_{(sc)}(H_u^{(0,\ub)})}+ \|({\rho_F}, {\sigma_F})\|^2_{L^2_{(sc)}(\Hb_{\ub}^{(0,u)})} &=\|{\alpha_F}\|^2_{L^2_{(sc)}(H_0^{(0,\ub)})} + \delta^{-1} \doubleint_{\D(u,\ub)} {}^{(L)}\pih \cdot T[F].
\end{align*}
The integrand of the last term can be written schematically as $\psi^{s_1} \cdot \Upsilon^{s_2} \cdot \Upsilon^{s_3}$ with $s_1 + s_2 + s_3 = 2$, where $s_i$ indicates the signature for the corresponding component. Since
\begin{align*}
|\doubleint_{\D(u,\ub)}\psi^{s_1} \cdot \psi^{s_2} \cdot \psi^{s_3}| \lesssim \delta^{-s_1-s_2 -s_3 + \frac{7}{2}} \|\psi^{s_1}\|_{L^{\infty}_{(sc)}}(\|\psi^{s_2}\|^2_{L^2_{(sc)}(H_u^{(0,\ub)})} +\|\psi^{s_3}\|^2_{L^2_{(sc)}(\Hb_{\ub}^{(0,u)})}).
\end{align*}
Together with the $L^\infty_{(sc)}$ estimates of connection coefficients, we have
\begin{equation*}
| \delta^{-1} \doubleint_{\D(u,\ub)} \pih \cdot T[F] | \lesssim C \delta^{\frac{1}{2}}\int_{0}^u \|{\alpha_F}\|^2_{L^2_{(sc)}(H_{u'}^{(0,\ub)})} du' + C \delta^{\frac{1}{2}} (\Fzero^2(u,\ub)+\Fzerob^2(u,\ub)).
\end{equation*}
After a standard use of Gronwall's inequality, we have
\begin{equation*}
\|{\alpha_F}\|^2_{L^2_{(sc)}(H_u^{(0,\ub)})}+ \|({\rho_F}, {\sigma_F})\|^2_{L^2_{(sc)}(\Hb_{\ub}^{(0,u)})} \lesssim \|{\alpha_F}\|^2_{L^2_{(sc)}(H_0)} + C \delta^{\frac{1}{2}} (\Fzero^2(u,\ub)+\Fzerob^2(u,\ub)).
\end{equation*}
which can be written in $\Fzero$ and $\Fzerob$ norms as
\begin{equation}\label{anomalous_Fzero}
\Fzero[\alpha_F] + \Fzerob[{\rho_F}, {\sigma_F}] \lesssim \Izero +C \delta^{\frac{3}{4}} (\Fzero+\Fzerob).
\end{equation}

We take $X = \Lb$ in \eqref{stokes_energy_momentum} to derive
\begin{align*}
 \int_{H_u^{(0,\ub)}} |{\rho_F}|^2 + |{\sigma_F}|^2+ \int_{\Hb_{\ub}^{(0,u)}} |{\alphab_F}|^2 &=\int_{H_0^{(0,\ub)}} |{\rho_F}|^2 + |{\sigma_F}|^2  + \doubleint_{\D(u,\ub)} \, ^{(\Lb)}\!\pih \cdot T[F].
\end{align*}
In the scale invariant norms, we have
\begin{align*}
\|({\rho_F}, {\sigma_F})\|^2_{L^2_{(sc)}(H_u^{(0,\ub)})}+ \|\alphab\|^2_{L^2_{(sc)}(\Hb_{\ub}^{(0,u)})} &\lesssim \Ozero\Izero + \delta^{-2} \doubleint_{\D(u,\ub)} \phi^{s_1} \cdot \Upsilon^{s_2} \cdot \Upsilon^{s_3}.
\end{align*}
with $s_1 + s_2 + s_3 = 1$. To control the bulk integral $E$, we make the following a couple of observations to avoid double anomalies.
\begin{remark}\label{observation_1}
For the integrand $\phi^{s_1} \cdot \Upsilon^{s_2} \cdot \Upsilon^{s_3}$, at least one of the $\Upsilon^{s_i}$'s is \emph{not} $\alpha_F$.
\end{remark}
\begin{remark}\label{observation_2}
For the integrand $\phi^{s_1} \cdot \Upsilon^{s_2} \cdot \Upsilon^{s_3}$, if $\phi^{s_1} = \tr\chib$, then \emph{none} of $\Upsilon^{s_i}$'s is $\alpha_F$.
\end{remark}
The first one is due to the signature consideration. For the second, we observe that only way for $\chib$ to appear is through the terms $\chib^{ab} \cdot T_{ab}$. While this term is equal to $\tr\chib \cdot (|\rho_F|^2 + |\sigma_F|^2)$. These two types of remarks will be repeatedly used in rest of the paper.

Thanks to these two remarks, we split $E$ into two terms
\begin{equation*}
 E= \delta^{-2} \doubleint_{\D(u,\ub)} \tr\chib_0 \cdot (|\rho_F|^2 + |\sigma_F|^2) + \delta^{-2} \doubleint_{\D(u,\ub)} \psi \cdot \Upsilon \cdot {\Upsilon}_g = E_1 + E_2.
\end{equation*}

For $E_1$, we have $|E_1| \lesssim \int_0^u \Fzero^2(u',\ub)$.

For $E_2$, since the appearance of $\alpha_F$ may cause a loss of $\delta^{\frac{1}{2}}$, we use $L^{4}_{(sc)}$ estimates on $\Upsilon$ to save an extra $\delta^{\frac{1}{4}}$. This is still not good enough because $\alpha_F$ can still be coupled to the anomalous term $\chibh$ (notice that $\chih$ does not appear). Thus, we further split $E_2$ into two parts
\begin{equation*}
 E_2= \delta^{-2} \doubleint_{\D(u,\ub)} \psi_g \cdot \Upsilon \cdot {\Upsilon}_g + \delta^{-2} \doubleint_{\D(u,\ub)} \chibh \cdot \alpha_F \cdot {\Upsilon}_g= E_{21} + E_{22}
\end{equation*}
By signature considerations, more accurately, we have $E_{22}=\delta^{-2} \doubleint_{\D(u,\ub)} \chibh \cdot \alpha_F \cdot \alphab_F$.

For $E_{21}$, in view of the fact that $\|\psi\|_{L^4_{(sc)}(H_{u}^{(0,\ub)})} \lesssim C$, we have
\begin{align*}
 |E_{21}| &\lesssim \delta^{\frac{1}{2}} \int_{0}^{u} \|\psi_g\|_{L^4_{(sc)}(H_{u'}^{(0,\ub)})} \|\Upsilon\|_{L^4_{(sc)}(H_{u'}^{(0,\ub)})}\|{\Upsilon}_g\|_{L^2_{(sc)}(H_{u'}^{(0,\ub)})} du' \lesssim C \delta^{\frac{1}{4}}.
\end{align*}

For $E_{22}$, we have
\begin{align*}
 |E_{22}| &\lesssim \delta^{-\frac{1}{2}} \int_{0}^{u}\int_{0}^{\ub} \|\chibh\|_{L^4_{(sc)}(u',\ub')} \|\alpha_F\|_{L^4_{(sc)}(u',\ub')}\|\alphab_F\|_{L^2_{(sc)}(u',\ub')}\\
& \lesssim \delta^{-\frac{3}{4}} \Ozerofour[\chibh] \int_{0}^{u}\int_{0}^{\ub}  \|\alpha_F\|_{L^4_{(sc)}(u',\ub')}\|\alphab_F\|_{L^2_{(sc)}(u',\ub')}\\
&\lesssim \delta^{-\frac{3}{4}} \Ozerofour[\chibh] \doubleint_{\D(u,\ub)} (\|\alpha_F\|^{\frac{1}{2}}_{{L^2_{(sc)}(S_{u,\ub})}}\|\nabla\alpha_F\|^{\frac{1}{2}}_{{L^2_{(sc)}(S_{u,\ub})}} + \delta^{\frac{1}{4}}\|\alpha_F\|_{{L^2_{(sc)}(S_{u,\ub})}} )\|\alphab_F\|_{{L^2_{(sc)}(S_{u,\ub})}}
\end{align*}
Thus,
\begin{align*}
 |E_{22}| &\lesssim \delta^{-\frac{3}{4}} \Ozerofour[\chibh] (\doubleint_{\D(u,\ub)} \|\alpha_F\|_{{L^2_{(sc)}(S_{u,\ub})}}\|\nabla\alpha_F\|_{{L^2_{(sc)}(S_{u,\ub})}} )^{\frac{1}{2}}(\doubleint_{\D(u,\ub)} \|\alphab_F\|^2_{{L^2_{(sc)}(S_{u,\ub})}} )^{\frac{1}{2}}\\
&\quad + \delta^{-\frac{1}{2}} \Ozerofour[\chibh] (\doubleint_{\D(u,\ub)} \|\alpha_F\|^2_{{L^2_{(sc)}(S_{u,\ub})}})^{\frac{1}{2}}(\doubleint_{\D(u,\ub)} \|\alphab_F\|^2_{{L^2_{(sc)}(S_{u,\ub})}} )^{\frac{1}{2}}\\
&\lesssim \delta^{\frac{1}{4}} \Ozerofour[\chibh] (\int_0^{u}\|\alpha_F\|_{L^2_{(sc)}(H_{u'}^{(0,\ub)})}\|\nabla\alpha_F\|_{L^2_{(sc)}(H_{u'}^{(0,\ub)})} d u' )^{\frac{1}{2}}(\int_0^{\ub}\delta^{-1} \Fzerob(u,\ub')^2 d\ub' )^{\frac{1}{2}}\\
&\quad + \delta^{\frac{1}{2}}\Ozerofour[\chibh] (\int_0^{u}\|\alpha_F\|^2_{L^2_{(sc)}(H_{u'}^{(0,\ub)})} d u' )^{\frac{1}{2}}(\int_0^{\ub}\delta^{-1} \Fzerob(u,\ub')^2 d\ub' )^{\frac{1}{2}}
\end{align*}
We retrieve the loss of $\delta$'s back to the initial data. More precisely, according to \eqref{anomalous_Fzero} and Proposition \ref{OZERO_FOUR_ESTIMATES}, we have
\begin{equation*}
\|\alpha_F\|_{L^2_{(sc)}(H_u^{(0,\ub)})} \lesssim \Izero \delta^{-\frac{1}{2}} + C \delta^{\frac{1}{4}}(\Fzero+\Fzerob),\,\,\Ozerofour[\chibh] \lesssim \Izero + C \delta^{\frac{1}{4}}.
\end{equation*}
Thus, we have
\begin{align*}
 |E_{22}| &\lesssim (\Ozero\Izero + C \delta^{\frac{1}{4}})\times[\delta^{\frac{1}{4}}  (\int_0^{u}(\Ozero\Izero \delta^{-\frac{1}{2}} + C \delta^{\frac{1}{4}}(\Fzero+\Fzerob))\Fone(u',\ub) d u' )^{\frac{1}{2}}\\
&\quad +\delta^{\frac{1}{2}}(\int_0^{u}(\Izero \delta^{-\frac{1}{2}} + C \delta^{\frac{1}{4}}(\Fzero+\Fzerob))^2 d u' )^{\frac{1}{2}}]\times(\int_0^{\ub}\delta^{-1} \Fzerob(u,\ub')^2 d\ub' )^{\frac{1}{2}}\\
&\lesssim \Izero + C \delta^{\frac{1}{4}} + C(\Izero) \Fone.
\end{align*}
Putting the estimates on $E_1$ and $E_2$, we have
\begin{align*}
&\|({\rho_F}, {\sigma_F})\|^2_{L^2_{(sc)}(H_u^{(0,\ub)})}+ \|\alphab_F\|^2_{L^2_{(sc)}(\Hb_{\ub}^{(0,u)})}\\
 &\lesssim \Izero+C\delta^{\frac{1}{4}} + \int_0 ^{\ub} \delta^{-1} \Fzerob^2(u,\ub') + \int_0^u \Fzero^2(u',\ub)+C(\Izero) \Fone.
\end{align*}
Together with \eqref{anomalous_Fzero}, we have
\begin{align*}
 \Fzero^2(u,\ub)+ \Fzerob^2(u,\ub)&\lesssim \Izero + C \delta^{\frac{1}{4}}+\int_0 ^{\ub} \delta^{-1} \Fzerob^2(u,\ub') + \int_0^u \Fzero^2(u',\ub) + C(\Izero)\Fone.
\end{align*}
We then use the Gronwall's inequality to remove the integral terms. Thus, we derive the energy estimates on Maxwell field as follows,
\begin{equation}\label{Fzero_Fzerob_Estimates}
\Fzero + \Fzerob \lesssim \Izero + C\delta^{\frac{1}{8}} + C(\Izero)\Fone^{\frac{1}{2}}.
\end{equation}


\subsection{Energy Estimates on One derivative of Maxwell Field}

\subsubsection{Preliminaries}
\begin{lemma}\label{comparison_maxwell_1}
If $\delta$ is sufficiently small, for $N = e_3$ or $e_4$ and for all components $\Upsilon$, we have
\begin{equation}
 \|\Upsilon(D_N F) - \nabla_N \Upsilon\|_{L^2_{(sc)}({H_u^{(0,\ub)}})} +  \|\Upsilon(D_N F) - \nabla_N \Upsilon\|_{L^2_{(sc)}({\Hb_{\ub}^{(0,u)}})} \lesssim C\delta^{\frac{1}{2}}.
\end{equation}
\end{lemma}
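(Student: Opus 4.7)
The plan is to expand the difference $\Upsilon(D_N F) - \nabla_N \Upsilon$ as a schematic sum $\sum \psi \cdot \Upsilon'$ on the null frame, and then apply scale-invariant H\"older to gain a factor of $\delta^{1/2}$.

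First, for each of the eight cases $(\Upsilon, N) \in \{\alpha_F, \rho_F, \sigma_F, \alphab_F\}\times\{e_3, e_4\}$, I would compute $\Upsilon(D_N F)$ via the Leibniz rule $(D_N F)(e_\alpha, e_\beta) = N(F(e_\alpha, e_\beta)) - F(D_N e_\alpha, e_\beta) - F(e_\alpha, D_N e_\beta)$, and compare it with $\nabla_N \Upsilon$ defined through the $S$-projected $D_N$. The scalar $N$-derivative and the purely horizontal contribution of $D_N e_a$ cancel between the two expressions; what remains is $F$ paired with the non-horizontal components of $D_N e_a$ and with the full $D_N e_{3}$ or $D_N e_{4}$. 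Substituting the standard expressions $D_N e_\mu = \psi \cdot e$ and identifying $F$ on pairs of frame vectors with $\alpha_F, \rho_F, \sigma_F, \alphab_F$, the remainder takes the schematic form $\sum \psi \cdot \Upsilon'$, with signatures matching that of $\nabla_N \Upsilon$.

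Second, I would apply scale-invariant H\"older on $H_u$ (and symmetrically on $\Hb_{\ub}$): $\|\psi \cdot \Upsilon'\|_{L^2_{(sc)}(H_u)} \lesssim \delta^{1/2} \|\psi\|_{L^\infty_{(sc)}} \|\Upsilon'\|_{L^2_{(sc)}(H_u)}$. Theorem A gives $\|\psi\|_{L^\infty_{(sc)}} \lesssim C$, while the energy norm $\F$ controls $\|\Upsilon'\|_{L^2_{(sc)}(H_u)} \lesssim C$ whenever $\Upsilon'$ is non-anomalous on $H_u$; for such terms the bound $C \delta^{1/2}$ is immediate, and analogously on $\Hb_{\ub}$.

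The main obstacle is the anomaly analysis: $\alpha_F$ is anomalous on $H_u$ (and $\alphab_F$ on $\Hb_{\ub}$), so a direct H\"older application to $\psi \cdot \alpha_F$ on $H_u$ would only yield $C$ rather than $C \delta^{1/2}$. To recover the missing $\delta^{1/2}$, I would pass through an $L^4(S)$ norm via Lemma \ref{sobolev_trace_estimates} and the $\OSzerofour$ estimates of Proposition \ref{OZERO_FOUR_ESTIMATES}, extracting an extra $\delta^{1/4}$ from each of the two factors -- essentially the same bookkeeping trick used repeatedly in the proof of Theorem A. A structural check on the schematic expansion then verifies that any $\alpha_F$ appearing in the difference on $H_u$ (respectively $\alphab_F$ on $\Hb_{\ub}$) is paired with a non-anomalous connection coefficient, so that this $\delta^{1/4} \cdot \delta^{1/4}$ gain yields the required $C \delta^{1/2}$ bound in both norms.
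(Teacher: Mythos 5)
The structural computation in your first step is correct, and you also correctly identify that all connection coefficients that survive in the difference are non-anomalous (they arise from the vertical parts of $D_N e_\alpha$, which involve only $\omega$, $\omegab$, $\eta$, $\etab$, never $\chih$ or $\chibh$; this is precisely the observation the paper makes, writing the remainder as $\psi_g\cdot\Upsilon$). The gap is in your anomaly fix. Your chosen H\"older split places $\psi$ in $L^\infty_{(sc)}$ and $\Upsilon'$ in $L^2_{(sc)}$, which indeed gives only $C$ when $\Upsilon'=\alpha_F$ on $H_u$. But the proposed repair via $L^4(S)\times L^4(S)$ does not recover the full power: scale-invariant H\"older contributes $\delta^{1/2}$, while $\|\alpha_F\|_{L^4_{(sc)}(S)}\lesssim C\delta^{-1/4}$ and $\|\psi_g\|_{L^4_{(sc)}(S)}\lesssim C$, so the net bound is $C\delta^{1/4}$, and integrating in $\ub$ gives $\|\psi_g\cdot\alpha_F\|_{L^2_{(sc)}(H_u)}\lesssim \sup_S\|\psi_g\cdot\alpha_F\|_{L^2_{(sc)}(S)}\lesssim C\delta^{1/4}$ — a quarter power short. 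There is no ``extra $\delta^{1/4}$'' to extract from the $\psi_g$ factor; its $L^4_{(sc)}$ norm is merely bounded, not small.

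The paper's argument reverses the H\"older split: put the \emph{Maxwell} component $\Upsilon$ in $L^\infty_{(sc)}$ and the \emph{connection coefficient} $\psi_g$ in $L^2_{(sc)}$. This is decisive because Proposition \ref{prop_L_infty_Psi_F} shows $\|\Upsilon\|_{L^\infty_{(sc)}}\lesssim C$ for \emph{all} $\Upsilon$ including $\alpha_F$ — the $L^\infty_{(sc)}$ norm is rendered non-anomalous by the localized Sobolev estimate \eqref{sobolev_local}, even though $L^2_{(sc)}$ and $L^4_{(sc)}$ remain anomalous. Combined with $\|\psi_g\|_{L^2_{(sc)}(H_u)}\lesssim C$ (no $\chih,\chibh$ appear), the scale-invariant H\"older $\|\psi_g\cdot\Upsilon\|_{L^2_{(sc)}(H_u)}\lesssim\delta^{1/2}\|\Upsilon\|_{L^\infty_{(sc)}}\|\psi_g\|_{L^2_{(sc)}(H_u)}\lesssim C\delta^{1/2}$ closes the estimate uniformly, with no case analysis on anomalies at all. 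You should swap your H\"older exponents; the rest of your outline is sound.
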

\begin{proof}
We observe that $\Upsilon(D_X F) - \nabla_X \Upsilon$ is a linear combination of the terms of form $\psi_g \cdot \Upsilon$. We use H\"{o}lder's inequality to bound $\Upsilon$ in $L^{\infty}_{(sc)}$ and $\psi_g$ in $L^{2}_{(sc)}$.
\end{proof}
\begin{remark} Similar argument also gives
\begin{equation}\label{comparison_alpha_D_a_F}
 \|\nabla_a \alpha_F-\alpha(D_a F)\|_{L^2_{(sc)}({H_u^{(0,\ub)}})} \lesssim C\delta^{\frac{1}{2}}.
\end{equation}
\end{remark}
\begin{lemma}\label{comparison_maxwell_Lie}
If $\delta$ is sufficiently small, for $\Upsilon_g \in \{\rho_F, \sigma_F, \alphab_F\}$, we have
\begin{equation*}
 \|\Upsilon_g(\Lh_O F) - \nabla_O \Upsilon_g\|_{L^2_{(sc)}({H_u^{(0,\ub)}})} +  \|\Upsilon(\Lh_O F) - \nabla_O \Upsilon_g\|_{L^2_{(sc)}({\Hb_{ub}^{(0,u)}})} \lesssim C\delta^{\frac{1}{2}}.
\end{equation*}
\end{lemma}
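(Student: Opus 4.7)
The plan is to reduce the lemma to Lemma \ref{comparison_maxwell_1} by expanding the modified Lie derivative $\Lh_O F$ into a radial part plus deformation-tensor corrections, and then to control the corrections using the angular momentum estimates of \textbf{Theorem B}. Since $O$ is a horizontal vector field, we write $O = O^a e_a$ and, recalling that $^{(O)}\pi = \L_O g$, we expand
\begin{equation*}
(\Lh_O F)_{\alpha\beta} = O^c D_c F_{\alpha\beta} + F_{\gamma\beta}\,(D_\alpha O^\gamma) + F_{\alpha\gamma}\,(D_\beta O^\gamma) + (\text{trace correction}).
\end{equation*}
Projecting onto the null frame, the ``directional'' contribution $O^a D_a F$ is exactly of the form already treated in \eqref{comparison_alpha_D_a_F}/Lemma \ref{comparison_maxwell_1} but with tangential derivatives, so it produces $O^a \nabla_a \Upsilon_g = \nabla_O \Upsilon_g$ up to errors of schematic type $\psi_g \cdot O \cdot \Upsilon$; the remaining frame-contractions assemble into components of $^{(O)}\pi$ contracted with $F$.

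Carrying this out componentwise for $\Upsilon_g \in \{\rho_F,\sigma_F,\alphab_F\}$, one obtains a schematic identity of the form
\begin{equation*}
\Upsilon_g(\Lh_O F) - \nabla_O \Upsilon_g \;=\; {}^{(O)}\pi \cdot \Upsilon \;+\; H \cdot \Upsilon \;+\; Z \cdot \Upsilon_g \;+\; (\eta + \etab)\cdot O \cdot \Upsilon,
\end{equation*}
where each product respects the restriction $\Upsilon_g \neq \alpha_F$ on the left factor (the ``bad'' $\pi_{3a}=\tfrac12 Z_a$ slot only couples to components with $e_3$ already saturated, which excludes $\alpha_F$, and $\pi_{44}=0$ for $O = \,^{(i)}\!O$). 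In particular, by Remark \ref{observation_1}/\ref{observation_2}-type considerations, one avoids any product of $\chibh$ with $\alpha_F$ or of $\tr\chib$ with $\alpha_F$.

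We then estimate each term by H\"older in scale-invariant norms, putting $\pi,H,Z$ in the $L^\infty_{(sc)}$ or $L^4_{(sc)}$ norms provided by \textbf{Theorem B} and the Maxwell component in the complementary $L^2_{(sc)}$ or $L^4_{(sc)}$ norm. For example, along $H = H_u^{(0,\ub)}$,
\begin{equation*}
\|\,^{(O)}\pi \cdot \Upsilon\,\|_{L^2_{(sc)}(H)} \lesssim \delta^{\frac{1}{2}}\|{}^{(O)}\pi\|_{L^\infty_{(sc)}}\,\|\Upsilon_g\|_{L^2_{(sc)}(H)} \lesssim C\delta^{\frac{1}{2}},
\end{equation*}
and similarly for the $H$- and $Z$-terms (using $\|H\|_{L^\infty_{(sc)}}+\|Z\|_{L^\infty_{(sc)}} \lesssim C$ and the already-established $\Fzero,\Fzerob$ bounds); the analogous estimate along $\Hb_{\ub}^{(0,u)}$ is identical. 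Each application of H\"older gains a factor of $\delta^{\frac{1}{2}}$, which is precisely the gain stated in the lemma.

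The only delicate bookkeeping issue, and the step requiring some care, is making sure that the excluded component $\alpha_F$ never appears on the right-hand side once $\Upsilon_g \in \{\rho_F,\sigma_F,\alphab_F\}$ is fixed on the left; this is a signature/parity check exactly in the spirit of Remarks \ref{observation_1}--\ref{observation_2}, and it guarantees we never face the double anomaly $\chibh\cdot\alpha_F$ that would otherwise spoil the $\delta^{\frac{1}{2}}$ gain. Once this is verified, no further work is needed.
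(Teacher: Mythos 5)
Your high-level strategy — expand $\Lh_O F$ component-by-component, organize the difference $\Upsilon_g(\Lh_O F) - \nabla_O \Upsilon_g$ into a schematic product, then estimate by H\"older — is exactly what the paper does. But the crucial bookkeeping claim at the end is wrong, and it is load-bearing for the estimate you propose.

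You assert that for $\Upsilon_g \in \{\rho_F,\sigma_F,\alphab_F\}$ the anomalous component $\alpha_F$ ``never appears on the right-hand side.'' This is false. The paper's direct computation shows $\alpha_F$ is absent only for $\alphab(\Lh_O F)$; it \emph{does} appear in $\rho(\Lh_O F)$ and $\sigma(\Lh_O F)$, contracted with $Z$ (from the slot $\pi_{3a}F^a{}_4 \sim Z\cdot\alpha_F$), exactly the coupling you claim is excluded. With this correction, your proposed H\"older split — ``$\pi,H,Z$ in $L^\infty_{(sc)}$ or $L^4_{(sc)}$, Maxwell component in the complementary $L^2_{(sc)}$ or $L^4_{(sc)}$'' — fails on these terms: $\alpha_F$ is anomalous in both $L^2_{(sc)}$ and $L^4_{(sc)}$, so $\delta^{\frac12}\|Z\|_{L^\infty_{(sc)}}\|\alpha_F\|_{L^2_{(sc)}(H)}$ only gives a constant, not $C\delta^{\frac12}$.

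What actually saves the estimate, as the paper observes, is a different cancellation: the two anomalies in the expansion are $\alpha_F$ (when it occurs) and $H$, and they are never multiplied together. Since each is separately bounded in $L^\infty_{(sc)}$, one places the anomalous factor — whichever one appears — in $L^\infty_{(sc)}$ and the remaining non-anomalous factor (e.g. $Z$ or $\Upsilon_g$) in $L^2_{(sc)}$, and the single H\"older application then yields the $\delta^{\frac12}$ gain. In particular, for $Z\cdot\alpha_F$ you must put $\alpha_F$, not $Z$, in $L^\infty_{(sc)}$, the opposite of your stated split. Your appeal to Remarks \ref{observation_1}–\ref{observation_2} is also misplaced: those concern $\tr\chib$ (and $\chibh$) in $^{(\Lb)}\pi$, but neither $\tr\chib$ nor $\chibh$ appears among the null components of $^{(O)}\pi$, whose components are $H$, $Z$, and $(\eta+\etab)\cdot O$; the double anomaly to rule out here is $H\cdot\alpha_F$, not $\chibh\cdot\alpha_F$.
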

\begin{proof}
We prove the theorem by direct computations. We only derive estimates for $\alphab_F$, the others can be obtained in the same manner. We compute
\begin{equation*}
\alphab(\Lh_O F) = \nabla_O \alphab_F + \frac{1}{2}(Z\cdot \sigma_F - Z \cdot \rho_F + (H-{}^t \!H) + 2 (\eta+\etab)\cdot O + \tr H ) \cdot \alphab_F.
\end{equation*}
There are two possible anomalies: $\alpha_F$ (this one appears in $\rho(\Lh_O F)$ and $\sigma(\Lh_O F)$ but not here) and $H$. We shall use $L^\infty_{(sc)}$ estimates on them to avoid the loss of $\delta$. Since these two anomalies are never coupled together, so we can easily prove the lemma.
\end{proof}
\begin{remark}\label{difference_Lie_O_alpha}
We can compute $\alpha(\Lh_O F)$ directly to see that $H$ and $\alpha_F$ are coupled together to form a quadratic term, thus we have
\begin{equation*}
 \|\alpha(\Lh_O F) - \nabla_O \alpha_F\|_{L^2_{(sc)}({H_u^{(0,\ub)}})} \lesssim C \delta^{\frac{1}{2}} + \delta^{\frac{1}{2}} \|H\|_{L^4_{(sc)}({H_u^{(0,\ub)}})} \|\alpha_F\|_{L^4_{(sc)}({H_u^{(0,\ub)}})}.
\end{equation*}
\end{remark}
Thanks to \eqref{Fzero_Fzerob_Estimates}, next lemma is immediate from \eqref{NM_L_alphab}-\eqref{NM_Lb_sigma}.
\begin{lemma}\label{comparison_maxwell_null_derivatives}
If $\delta$ is sufficiently small, we have
\begin{equation}\label{comparison_nabla_4_Psi_F}
 \|(\nabla_4 \rho_F, \nabla_4 \sigma_F, \nabla_4 \alphab_F)\|_{L^2_{(sc)}({H_u^{(0,\ub)}})} \lesssim \Fone(u,\ub) + C\delta^{\frac{1}{4}},
\end{equation}
\begin{equation}\label{comparison_nabla_3_Psi_F}
 \|(\nabla_3 \rho_F, \nabla_3 \sigma_F)\|_{L^2_{(sc)}({\Hb_{\ub}^{(0,u)}})} \lesssim \Foneb(u,\ub) + C\delta^{\frac{1}{4}}.
\end{equation}
\end{lemma}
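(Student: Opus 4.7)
The plan is to read each null derivative directly off the corresponding null Maxwell equation, namely \eqref{NM_L_rho}, \eqref{NM_L_sigma} and \eqref{NM_L_alphab} for the outgoing bound and \eqref{NM_Lb_rho}, \eqref{NM_Lb_sigma} for the incoming bound, and then estimate the right-hand side term by term. Each of these equations has the schematic form
\begin{equation*}
\nabla_4 \Upsilon_g \,=\, \nabla \Upsilon_4 \,+\, \psi\cdot \Upsilon, \qquad \nabla_3 \Upsilon_g \,=\, \nabla \Upsilon_g \,+\, \psi\cdot \Upsilon,
\end{equation*}
in which the angular derivative on the right falls on a component that is \emph{good} for the norm being computed: in \eqref{NM_L_rho}, \eqref{NM_L_sigma} and \eqref{NM_L_alphab} only $\alpha_F, \rho_F, \sigma_F$ are differentiated, all of which belong to $\Upsilon_4$, while in \eqref{NM_Lb_rho}, \eqref{NM_Lb_sigma} only $\alphab_F$ is differentiated and this is a $\Upsilon_g$. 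So the $\nabla \Upsilon$ piece on the right of each equation is already captured by the definition of $\Fone(u,\ub)$, respectively $\Foneb(u,\ub)$.

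For the nonlinear remainder I would apply the scale invariant H\"older inequality \eqref{Holder} pointwise on each two-sphere $S_{u,\ub'}$ and then integrate the $\ub'$ (or $u'$) direction. Bounding the connection coefficient $\psi$ in $L^{\infty}_{(sc)}$ by Proposition~\ref{close_bootstrap} and the Maxwell component $\Upsilon$ in $L^{\infty}_{(sc)}$ by Proposition~\ref{prop_L_infty_Psi_F}, together with the $L^{2}_{(sc)}$ control on the Maxwell components along $H_u$ and $\Hb_{\ub}$ provided by \eqref{Fzero_Fzerob_Estimates}, each quadratic piece contributes an amount of size $C\delta^{1/2}$, which is strictly better than the $C\delta^{1/4}$ loss permitted in the statement.

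The only term that requires a bit of care is the coupling $\chibh\cdot\alpha_F$ appearing on the right-hand side of \eqref{NM_L_alphab}, since both factors are anomalous. Here I would replace the crude $L^\infty\times L^2$ split by the localized $L^4_{(sc)}({}^{\delta}\!S)$ estimates already obtained for these two objects in \eqref{localized_L_4_chih} and \eqref{local alpha_F}, cover $H_u$ by the corresponding patches ${}^{\delta}\!H_u$, and combine them via the scale invariant H\"older. The $\delta^{1/2}$ gain of H\"older then offsets the double anomaly down to an $O(C\delta^{1/4})$ contribution; this is really the only place in the whole argument where one has to avoid placing two anomalous factors in their natural ambient norms, and this is the main (essentially unique) obstacle in the proof.

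Summing these bounds over the handful of right-hand-side terms in each null Maxwell equation yields the inequalities \eqref{comparison_nabla_4_Psi_F} and \eqref{comparison_nabla_3_Psi_F}. No transport argument or Gronwall inequality is needed, since the lemma merely records that the null derivatives of the good Maxwell components are, up to an $O(C\delta^{1/4})$ error, controlled by the angular derivatives that are already built into $\Fone$ and $\Foneb$.
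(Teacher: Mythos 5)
Your overall reading of the lemma is correct, and it matches the paper's intention: one reads each null derivative off the corresponding null Maxwell equation, the angular derivative on the right-hand side always falls on a component that is already controlled by $\Fone$ (in the $\nabla_4$ equations the differentiated components are $\alpha_F,\rho_F,\sigma_F$, all $\Upsilon_4$) or by $\Foneb$ (in the $\nabla_3$ equations only $\alphab_F$ is differentiated), and the remainder is quadratic. You also correctly single out $\chibh\cdot\alpha_F$ in \eqref{NM_L_alphab} as the one coupling that deserves separate scrutiny.

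The argument you propose for that one term, however, fails, and the failure is essentially arithmetic. If $\chibh$ and $\alpha_F$ each cost $\delta^{-1/4}$ in $L^4_{(sc)}$, the scale invariant H\"older gain of $\delta^{1/2}$ produces $\delta^{1/2}\cdot\delta^{-1/4}\cdot\delta^{-1/4}=\delta^{0}$: the H\"older gain cancels the two anomalies \emph{exactly}, leaving an $O(1)$ coefficient, not the $O(\delta^{1/4})$ you claim. The localized $L^4_{(sc)}({}^{\delta}\!S)$ norms do not change this. On a patch of diameter $\delta^{1/2}$ the local $L^4_{(sc)}$ norm is dominated by the (non-anomalous) $L^\infty_{(sc)}$ norm, but the quantity you need is the ambient $L^2_{(sc)}(H_u^{(0,\ub)})$, and passing from a supremum over the $O(\delta^{-1})$ patches covering each $S_{u,\ub'}$ back to that ambient norm returns precisely the naive $L^\infty\times L^2$ split. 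Concretely the true cost of the term is
\begin{equation*}
\|\chibh\cdot\alpha_F\|_{L^2_{(sc)}(H_u^{(0,\ub)})}\ \lesssim\ \delta^{\frac{1}{2}}\,\|\chibh\|_{L^\infty_{(sc)}}\,\|\alpha_F\|_{L^2_{(sc)}(H_u^{(0,\ub)})}\ \lesssim\ C\,\delta^{\frac{1}{2}}\|\alpha_F\|_{L^2_{(sc)}(H_u^{(0,\ub)})},
\end{equation*}
and it is the already-established energy estimate \eqref{Fzero_Fzerob_Estimates} controlling $\delta^{\frac{1}{2}}\|\alpha_F\|_{L^2_{(sc)}(H_u^{(0,\ub)})}=\Fzero[\alpha_F]$ that keeps this coefficient in check. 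This is exactly why the paper prefaces the lemma with ``Thanks to \eqref{Fzero_Fzerob_Estimates}''; your closing sentence, dismissing any role for the prior energy estimates, misses the one place where they are actually doing the work.
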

\begin{remark}
The absence of $\nabla_3 \alpha_F$ in above lemma is due to the appearance of $\tr\chib \cdot \alpha_F$ in \eqref{NM_Lb_alpha}. In fact, $\nabla_3 \alpha_F$ is anomalous and its anomaly can be retrieved to the initial data.
\end{remark}

\subsubsection{Estimates on Anomalies}

For a given vector field $X$, the Maxwell equations for $D_X F_{\alpha\beta}$ are given as follows,
\begin{equation}\label{D_X of F}
 J_\nu = D^{\mu}D_X F_{\mu\nu} = D^{\gamma}X^{\delta} D_{\gamma}F_{\delta \nu}- R_{X}{}^{\gamma}F_{\gamma\nu}+R_{X}{}^{\gamma}{}_{\nu}{}^{\delta}F_{\gamma\delta},
\end{equation}
\begin{equation}\label{D_X of Fstar}
  ^*\!J_\nu = D^{\mu}D_X \,^*\!F_{\mu\nu} = D^{\gamma}X^{\delta} D_{\gamma}\,^*\!F_{\delta \nu}- R_{X}{}^{\gamma}\,^*\!F_{\gamma\nu}+R_{X}{}^{\gamma}{}_{\nu}{}^{\delta}\,^*\!F_{\gamma\delta}.
\end{equation}

We take $X=L$ in \eqref{D_X of F} and $X = L$ in \eqref{stokes_energy_momentum} to derive
\begin{align*}
 \|\alpha(D_4 F)\|^2_{L^2_{(sc)}(H_u^{(0,\ub)})} \lesssim \|\alpha(D_4 F)\|^2_{L^2_{(sc)}(H_0^{(0,\ub)})} + \delta  \doubleint_{\D(u,\ub)} \,^{(L)}\!\pih \cdot T[D_4 F] + \Divergence T[D_4 F](L).
\end{align*}
In view of Lemma \ref{comparison_maxwell_1}, we rewrite the above as
\begin{align*}
\|\nabla_4 \alpha_F\|^2_{L^2_{(sc)}(H_u^{(0,\ub)})} &\lesssim C\delta^{\frac{1}{2}} + \|\nabla_4 \alpha_F\|^2_{L^2_{(sc)}(H_0^{(0,\ub)})} + \delta  \doubleint_{\D(u,\ub)} \,^{(L)}\!\pih \cdot T[D_4 F] + \Divergence T[D_4 F](L)\\
&= C\delta^{\frac{1}{2}} + \|\nabla_4 \alpha_F\|^2_{L^2_{(sc)}(H_0^{(0,\ub)})} + T_1 + T_2.
\end{align*}

For $T_1$, its integrand $\,^{(L)}\!\pih \cdot T[D_4 F]$ can be written as $\psi^{s_1} \cdot \Psi(D_4 F)^{s_2} \cdot \Psi(D_4 F)^{s_3}$ with $s_1 + s_2 + s_3 = 4$. Due to signature considerations, at most one of the $\Psi(D_4 F)^{s_i}$'s can be $\alpha(D_4 F)$. Since $\chib$ does not appear in ${}^{(L)}\pih$, we can bound $\psi^{s_1}$ in $L^{\infty}_{(sc)}$. In view of \eqref{comparison_nabla_4_Psi_F}, we have
\begin{align*}
\delta \doubleint_{\D(u,\ub)} \,^{(L)}\!\pih \cdot T[D_X F] &\lesssim \delta^{\frac{1}{2}}\|\psi^{s_1}\|_{L^{\infty}_{(sc)}}\int_{0}^u \| \Psi(D_4 F)^{s_2}\|_{L^2_{(sc)}(H_{u'})}\| \Psi(D_4 F)^{s_3}\|_{L^2_{(sc)}(H_{u'})}d u'\\
&\lesssim C\delta^{\frac{1}{2}} \Fone\int_{0}^u \| \nabla_4 \alpha_F\|_{L^2_{(sc)}(H_{u'})} d u' + C\delta^{\frac{1}{2}}\Fone^2 + C\delta^{\frac{1}{4}}\\
&\lesssim C\delta^{\frac{1}{2}} \int_{0}^u \| \nabla_4 \alpha_F\|_{L^2_{(sc)}(H_{u'})} d u' +C\delta^{\frac{1}{4}}.
\end{align*}

For $T_2$, we first compute it integrand $\Divergence T[D_4 F](L)$. In view of \eqref{divergence_of_T[F]}, we can ignore the part involving the Hodge dual of $D_4 F$ since it can be treated exactly in the same manner. Thus, we rewrite $\Divergence T[D_4 F](L)$ as
\begin{align*}
 \Divergence T[D_4 F](e_4) = -\frac{1}{2}D_4 F_{34} \cdot J_4 + D_4 F_{4a} \cdot J_a = T_{21}+T_{22}.
\end{align*}

For $T_{21}$, we observe that $-\frac{1}{2}D_4 F_{34}$ is not anomalous thanks to Lemma \ref{comparison_maxwell_1} and
\begin{align*}
 J_4 &= D^{\gamma}L^{\delta} \cdot D_{\gamma}F_{\delta 4} - R_{4}{}^{\gamma}F_{\gamma 4} + R_{4}{}^{\gamma}{}_{4}{}^{\delta}F_{\gamma\delta} =  D^{\gamma}L^{\delta} \cdot D_{\gamma}F_{\delta 4} - R_{4}{}^{\gamma}F_{\gamma 4}\\
&=D^{a}L^b \cdot D_a F_{b4} -\Omega^{-1}\eta_a D_4 F_{ab}- R_{4}{}^{\gamma}F_{\gamma 4} .
\end{align*}
In view of \eqref{comparison_alpha_D_a_F}, we can replace $D_a F_{b4}$ by $\nabla \alpha_F$ which is not anomalous; $D_4 F_{ab}$ is also not anomalous; $R_{4}{}^{\gamma} \cdot F_{\gamma 4}$ is harmless since we can pose $L^{\infty}_{(sc)}$ estimates on $\Upsilon$'s. Thus, we can easily show that $T_{21}$ enjoys the same estimates as $T_1$.

For $T_{22}$, recall that $J_a = D^{\gamma}L^{\delta} \cdot D_{\gamma}F_{\delta a} - R_{4}{}^{\gamma}F_{\gamma a} + R_{4}{}^{\gamma}{}_{a}{}^{\delta}F_{\gamma\delta}$. As we did for $T_{21}$, the second term $R_{4}{}^\gamma \cdot F_{\gamma a}$ is harmless so that we ignore it. We turn to the third term. If the curvature term $R_{4}{}^{\gamma}{}_{a}{}^{\delta}$ is not anomalous, we can still use the same estimates on $T_1$ to bound it. Finally, we bound the only possible dangerous term $R_{4}{}^{b}{}_{a}{}_{4}\cdot F_{b3}= \alpha \cdot \alphab_F$ as follows,
\begin{align*}
|\delta \doubleint_{\D(u,\ub)}\nabla_4 \alpha_F \cdot \alpha \cdot \alphab_F |&\lesssim \delta^{\frac{1}{2}}\int_0^{u} \|\nabla_4 \alpha_F\|_{L^2_{(sc)}(H_{u'})}\|\alpha\|_{L^4_{(sc)}(H_{u')}}\| \alphab_F\|_{L^4_{(sc)}(H_{u'})}\\
&\lesssim C \delta^{\frac{1}{4}}\int_0^{u} \|\nabla_4 \alpha_F\|_{L^2_{(sc)}(H_{u'})}.
\end{align*}

It remains to estimate the first term $D^{\gamma}L^{\delta} \cdot D_{\gamma}F_{\delta a}$ in $J_a$ which reads as
\begin{equation*}
 D^{\gamma}L^{\delta} \cdot D_{\gamma}F_{\delta a} =\Omega^{-1}(\chi_{bc} \cdot D_b F_{ca} -2\omegab \cdot D_4 F_{4a} -\eta_b\cdot D_4F_{ba} + \eta_b \cdot D_b F_{4a} ).
\end{equation*}
We can easily show that the last three terms have the same estimates as $T_1$. For the first one,
\begin{align*}
&|\doubleint_{\D(u,\ub)}\Omega^{-1} D_4 F_{4a} \cdot \chi_{bc} \cdot D_b F_{ca}| \lesssim \doubleint_{\D(u,\ub)}|\nabla_4 \alpha_F + \psi\cdot \alpha_F| |\chi| |\nabla \sigma_F + \tr\chib \alpha_F + \psi \cdot \Upsilon|\\
&\lesssim \doubleint_{\D(u,\ub)}|\nabla_4 \alpha_F | |\chi| |\nabla \sigma_F| + \doubleint_{\D(u,\ub)}|\nabla_4 \alpha_F | |\chi| |\sigma_F| + C \delta^{\frac{1}{4}}.
\end{align*}
Thus, it still has the same estimates as $T_1$. Putting things together, we derive
\begin{equation*}
 \|\nabla_4 \alpha_F\|^2_{L^2_{(sc)}(H_u^{(0,\ub)})} \lesssim C\delta^{\frac{1}{4}} + \|\nabla_4 \alpha_F\|^2_{L^2_{(sc)}(H_0^{(0,\ub)})} + \int_{0}^u \| \nabla_4 \alpha_F\|_{L^2_{(sc)}(H_{u'})}.
\end{equation*}
Thus, Gronwall's inequality yields
\begin{equation}\label{energy_estimates_for_nabla_4_alpha_F}
 \|\nabla_4 \alpha_F\|_{L^2_{(sc)}(H_u^{(0,\ub)})} \lesssim C\delta^{\frac{1}{8}} + \|\nabla_4 \alpha_F\|^2_{L^2_{(sc)}(H_0^{(0,\ub)})} \lesssim C\delta^{\frac{1}{8}} + \Izero \delta^{-\frac{1}{2}}.
\end{equation}

We take $X=\Lb$ in \eqref{D_X of F} and $X = \Lb$ in \eqref{stokes_energy_momentum} to derive
\begin{align*}
 \|\nabla_3 \alphab_F\|^2_{L^2_{(sc)}(\Hb_{\ub}^{(0,u)})} &\lesssim C\delta^{\frac{1}{4}} +\Izero \delta^{-1}+ \delta  \doubleint_{\D(u,\ub)} {}^{(\Lb)}\!\pih \cdot T[D_3 F] + \Divergence T[D_3 F](e_3)\\
&=C\delta^{\frac{1}{4}} +\Izero \delta^{-1} + T_1 + T_2.
\end{align*}

For $T_1$, we write its integrand $\,^{(\Lb)}\!\pih \cdot T[D_3 F]$ as $\psi^{s_1} \cdot \Psi(D_4 F)^{s_2} \cdot \Psi(D_4 F)^{s_3}$ with $s_1 + s_2 + s_3 = 1$. Compared to $T_1$ term we encountered for $\nabla_4 \alpha_F$, the appearance of $\tr\chib_0$ as $\psi^{s_1}$ may cause a loss of $\delta^{\frac{1}{2}}$. An additional complication comes from $\alpha(D_3 \Upsilon)$: although it can be replaced by $\nabla_3 \alpha_F$, but according to \eqref{NM_Lb_alpha}, $\nabla_3 \alpha_F$ is anomalous. Fortunately, this anomaly can be traced back to initial data. \eqref{NM_Lb_alpha} shows the anomaly is from $\alpha_F$, and the energy estimates in previous section shows the anomaly of $\alpha_F$ is from the initial data. Hence,
\begin{equation}\label{anomaly_of_nabla_3_alpha_F}
 \|\nabla_3 \alpha_F\|_{L^2_{(sc)}(H_u^{(0,\ub)})} 
 \lesssim \delta^{-\frac{1}{2}} \Izero +  \Fone + C\delta^{\frac{1}{8}}
\end{equation}
In view of Remark \ref{observation_2}, we observe that the double anomalies $|\nabla_3 \alpha_F|^2$ is absent and $\tr\chib_0$ is neither coupled with $\nabla_3 \alpha_F$ nor $\nabla_3 \alphab_F$. Thus, $T_1$ can be bounded as follows
\begin{align*}
T_1 &\lesssim (1+\delta^{\frac{1}{2}}\|\psi\|_{L^{\infty}_{(sc)}})\int_{0}^{\ub} \delta^{-1}\| \Psi(D_3 F)\|_{L^2_{(sc)}(\Hb_{\ub'})}\| \Psi(D_3 F)\|_{L^2_{(sc)}(\Hb_{\ub'})}d \ub' \\
&\lesssim C\delta^{\frac{1}{2}}\int_{0}^{\ub}\delta^{-1} (\| \nabla_3 \alphab_F\|^2_{L^2_{(sc)}(\Hb_{\ub'})} + \Foneb \cdot\| \nabla_3 \alphab_F\|_{L^2_{(sc)}(\Hb_{\ub'})}) d \ub'+ C\delta^{\frac{1}{2}}\Foneb^2 \\
&\quad + \doubleint_{\D(u,\ub)} |\chibh|\cdot |\nabla_3 \alpha_F| \cdot |\nabla_3 \alphab_F|\\
&\lesssim C\delta^{\frac{1}{4}} + C\delta^{\frac{1}{2}}\int_{0}^{\ub}\delta^{-1}\| \nabla_3 \alphab_F\|^2_{L^2_{(sc)}(\Hb_{\ub'})}  d \ub'+ \delta\doubleint_{\D(u,\ub)} |\chibh|\cdot |\nabla_3 \alpha_F| \cdot |\nabla_3 \alphab_F|.
\end{align*}
We bound $\chibh$ in ${L^{\infty}_{(sc)}}$ norm to estimate the bulk integral as follows,
\begin{align*}
 &\delta\doubleint_{\D(u,\ub)} |\chibh|\cdot |\nabla_3 \alpha_F| \cdot |\nabla_3 \alphab_F| \lesssim C \delta^{\frac{1}{2}}\int_{0}^{\ub} \delta^{-1}\| \nabla_3 \alpha_F\|_{L^2_{(sc)}(\Hb_{\ub'})}\|\nabla_3 \alphab_F\|_{L^2_{(sc)}(\Hb_{\ub'})}\\
&\lesssim C^2 \int_{0}^{\ub} \delta^{-1}\| \nabla_3 \alpha_F\|^2_{L^2_{(sc)}(\Hb_{\ub'})} +\int_{0}^{\ub} \delta^{-1}\| \nabla_3 \alphab_F\|^2_{L^2_{(sc)}(\Hb_{\ub'})}.
\end{align*}
Together with \eqref{anomaly_of_nabla_3_alpha_F}, we have
\begin{align*}
T_1 &\lesssim C\delta^{\frac{1}{4}} + (1+C\delta^{\frac{1}{2}})\int_{0}^{\ub}\delta^{-1}\| \nabla_3 \alphab_F\|^2_{L^2_{(sc)}(\Hb_{\ub'})}  d \ub'+ C^2 \Izero.
\end{align*}

For $T_2$, by ignoring the part involving the Hodge dual of $D_3 F$, we rewrite $\Divergence T[D_3 F](\Lb)$ as
\begin{align*}
 \Divergence T[D_3 F](e_3) = -\frac{1}{2}D_3 F_{34} \cdot J_3 + D_3 F_{3a} \cdot J_a = T_{21} + T_{22}.
\end{align*}

For $T_{21}$, we observe that $-\frac{1}{2}D_3 F_{34}$ is not anomalous and
\begin{equation*}
 J_3 =D^{a}{\Lb}^b \cdot D_a F_{b3} -\Omega^{-1}\etab_a D_3 F_{ab}- R_{3}{}^{\gamma}F_{\gamma 3} .
\end{equation*}
The principle term is $D^{a}{\Lb}^b \cdot D_a F_{b3}$ since $D^{a}{\Lb}^b$ contributes a constant $\tr\chib_0$. We can easily show that the remaining terms enjoy the same estimates as $T_1$. For the principle term,
\begin{align*}
\delta\doubleint_{\D(u,\ub)} |\nabla_3 \rho| \cdot |D_a F_{b3}| = \delta \doubleint_{\D(u,\ub)} |\nabla_3 \rho| \cdot |\nabla \alphab + \chib\cdot (\rho_F+\sigma_F) + (\eta + \etab)\cdot \alphab_F|.
\end{align*}
It is clear that the estimates on $T_1$ are sufficient to dominate this bulk integral.

For $T_{22}$, recall that
\begin{align*}
 J_a &= D^{\gamma}{\Lb}^{\delta} \cdot D_{\gamma}F_{\delta a} - R_{3}{}^{\gamma}F_{\gamma a} + R_{3}{}^{\gamma}{}_{a}{}^{\delta}F_{\gamma\delta}.
\end{align*}
Since $R_{3}{}^{\gamma}{}_{a}{}^{\delta}$ is never anomalous, we can easily show that last two terms in $J_a$ can be bounded by the estimates on $T_1$. We now rewrite the principle term $D^{\gamma}{\Lb}^{\delta} \cdot D_{\gamma}F_{\delta a}$ as
\begin{equation*}
 D^{\gamma}L^{\delta} \cdot D_{\gamma}F_{\delta a} =\Omega^{-1}(\chib_{bc} \cdot D_b F_{ca} -2\omega \cdot D_3 F_{3a} -\etab_b\cdot D_3 F_{ba} + \etab_b \cdot D_b F_{3a} ).
\end{equation*}
Once again the last three terms can be bounded by the estimates of $T_1$. The only remaining term can be bounded as follows (we shall treat $\chib$ as a constant)
\begin{align*}
&\quad |\delta \doubleint_{\D(u,\ub)}\Omega^{-1} D_3 F_{3a} \cdot \chib_{bc} \cdot D_b F_{ca}| \lesssim \delta \doubleint_{\D(u,\ub)}|\nabla_3 \alphab_F + \psi \cdot \alphab_F| |\nabla \sigma_F + \tr\chib \alpha_F + \psi \cdot \Upsilon|\\
&\lesssim \delta \doubleint_{\D(u,\ub)}|\nabla_3 \alphab_F | |\nabla \sigma_F| + \delta \doubleint_{\D(u,\ub)}|\nabla_3 \alphab_F | |\alpha_F| + C.
\end{align*}
Thus, it still enjoys the same estimate as $T_1$. Putting things together, we derive
\begin{align*}
 \|\nabla_3 \alphab_F\|^2_{L^2_{(sc)}(\Hb_{\ub}^{(0,u)})} &\lesssim C\delta^{\frac{1}{4}} +\Izero + (1+C\delta^{\frac{1}{2}})\int_{0}^{\ub}\delta^{-1}\| \nabla_3 \alphab_F\|^2_{L^2_{(sc)}(\Hb_{\ub'})}  d \ub'+ C^2 \Izero
\end{align*}
Thus, Gronwall's inequality yields
\begin{equation}\label{energy_estimates_for_nabla_3_alphab_F}
 \delta^{\frac{1}{2}}\|\nabla_3 \alphab_F\|_{L^2_{(sc)}(\Hb_{\ub}^{(0,u)})} \lesssim C\delta^{\frac{1}{2}}.
\end{equation}

\subsubsection{Energy Estimates on Non-anomalies.}
We use $\Lh_O F$ instead of $F$ in \eqref{stokes_energy_momentum} to derive estimates. 
We first take $X=L$ to derive
\begin{align*}
 &\quad \|\alpha(\Lh_O F)\|^2_{L^2_{(sc)}(H_u^{(0,\ub)})} +\|(\rho(\Lh_O F), \sigma(\Lh_O F))\|^2_{L^2_{(sc)}(\Hb_{\ub}^{(0,u)})} \lesssim \Izero + T_1 + T_2\\
&=\|\alpha(\Lh_O F)\|^2_{L^2_{(sc)}(H_0^{(0,\ub)})} + \doubleint_{\D(u,\ub)} \,^{(L)}\!\pih \cdot T[\Lh_O F] + \Divergence T[\Lh_O F](L);
\end{align*}
We then take $X=\Lb$ to derive
\begin{align*}
 &\quad \|(\rho(\Lh_O F), \sigma(\Lh_O F))\|^2_{L^2_{(sc)}(H_u^{(0,\ub)})} +\|\alphab(\Lh_O F)\|^2_{L^2_{(sc)}(\Hb_{\ub}^{(0,u)})} \lesssim \Izero + S_1 + S_2\\
&\lesssim \|(\rho(\Lh_O F), \sigma(\Lh_O F))\|^2_{L^2_{(sc)}(H_0^{(0,\ub)})} + \delta^{-1}\doubleint_{\D(u,\ub)} \,^{(\Lb)}\!\pih \cdot T[\Lh_O F] + \Divergence T[\Lh_O F](\Lb).
\end{align*}
Adding things together, we have
\begin{equation}\label{Lie_O_Divergence_estimates_first_step}
 \|(\alpha, \rho, \sigma)(\Lh_O F)\|^2_{L^2_{(sc)}(H_u^{(0,\ub)})} +\|(\rho, \sigma,\alphab)(\Lh_O F)\|^2_{L^2_{(sc)}(\Hb_{\ub}^{(0,u)})} \lesssim \Izero + (T_1 + S_1) + (T_2 +S_2).
\end{equation}

For $T_1$ and $S_1$ terms,  regardless of the appearance of $\tr\chib_0$, we have
\begin{equation*}
 T_1 + S_1 \lesssim \int_{0}^{\ub} \delta^{-1} \|\Upsilon(\Lh_O F)\|^2_{L^2_{(sc)}(\Hb_{\ub'})} +\int_{0}^u \|\Upsilon(\Lh_O F)\|^2_{L^2_{(sc)}(H_{u'})}.
\end{equation*}
Thus, back to \eqref{Lie_O_Divergence_estimates_first_step}, these two integrals can be absorbed by Gronwall's inequality.

For $T_2$ and $S_2$ terms, their integrands have following two types schematic expressions,
\begin{equation*}
 (I) \quad \Upsilon (\Lh_O F) \cdot {}^{(O)}\!\pih \cdot \Upsilon(D_\gamma F), \qquad (II) \quad \Upsilon (\Lh_O F) \cdot D{}^{(O)}\!\pih \cdot \Upsilon.
\end{equation*}

For type $(II)$ terms, we bound them as follows
\begin{align*}
 |(II)| &\lesssim \delta^{\frac{1}{2}} \|\Upsilon\|_{L^{\infty}_{(sc)}} \|D{}^{(O)}\!\pi\|_{L^2_{(sc)}}(\int_{0}^{\ub} \delta^{-1} \|\Upsilon(\Lh_O F)\|_{L^2_{(sc)}(\Hb_{\ub'})} +\int_{0}^u \|\Upsilon(\Lh_O F)\|_{L^2_{(sc)}(H_{u'})})\\
&\lesssim C \delta^{\frac{1}{2}}(\int_{0}^{\ub} \delta^{-1} \|\Upsilon(\Lh_O F)\|_{L^2_{(sc)}(\Hb_{\ub'})}  +\int_{0}^u \|\Upsilon(\Lh_O F)\|_{L^2_{(sc)}(H_{u'})} ).
\end{align*}
Thus, back to \eqref{Lie_O_Divergence_estimates}, we can handle these two integrals by Gronwall's inequality.

For type $(I)$ terms, the situation is more complicated since $\Psi(D_\gamma F)$ may involve anomalies. We divide it into two cases $(I)=(I)_1+(I)_2$.

For $(I)_1$, we require $\Upsilon(D_\gamma F) \notin \{\alpha(D_4 F), \alphab(D_3 F)\}$. According to Maxwell equations, we have $\Upsilon(D_\gamma F) = \nabla \Upsilon + (1+ \psi)\cdot \Upsilon$. Hence, $|(I)_1| $ can be bounded by following terms,
\begin{align*}
 &\quad \doubleint_{\D(u,\ub)} |\Upsilon(\Lh_O F)||{}^{(O)}\pi|(|\nabla \Upsilon|+ |(1+\psi)\cdot \Upsilon|)\\
&\lesssim \delta^{\frac{1}{2}} \|{}^{(O)}\pi\|_{L^{\infty}_{(sc)}} (\int_{0}^{\ub} \delta^{-1} \|\Upsilon(\Lh_O F)\|_{L^2_{(sc)}(\Hb)} \|\nabla \Upsilon\|_{L^2_{(sc)}(\Hb)}+ \int_{0}^u \|\Upsilon(\Lh_O F)\|_{L^2_{(sc)}(H)}\|\nabla \Upsilon\|_{L^2_{(sc)}(H)} )\\
&\quad  + \delta^{\frac{1}{2}}\|\Upsilon\|_{L^{\infty}_{(sc)}} (\int_{0}^{\ub} \delta^{-1} \|\Upsilon(\Lh_O F)\|_{L^2_{(sc)}(\Hb)}\|{}^{(O)}\pi\|_{L^2_{(sc)}(\Hb)} +\int_{0}^u \|\Upsilon(\Lh_O F)\|_{L^2_{(sc)}(H)} \|{}^{(O)}\pi\|_{L^2_{(sc)}(H)}).
\end{align*}
Thus, $(I)_1$ enjoys the same estimates as $(II)$'s.

For $(I)_2$, we require $\Psi(D_\gamma F) \in \{\alpha(D_4 F), \alphab(D_3 F)\}$. Recall that those terms appear through the following expressions $\Lh_O F_{X}{}^\mu \cdot {}^{(O)}\pi^{\alpha\beta} \cdot D_{\alpha}F_{\beta\mu}$. We deduce that $\Psi(D_\gamma F) \neq \alphab(D_3 F)$ since ${}^{(O)}\pi^{3 a}=0$. This forces the integrand of $(I)_2$ to be
\begin{equation*}
 \Lh_O F_{43} \cdot {}^{(O)}\pi^{4a} \cdot D_{4}F_{4a} = \rho(\Lh_O F) \cdot Z \cdot \nabla_4 \alpha_F + l.o.t.
\end{equation*}
We shall use one more derivative on $\nabla_4 \alpha_F$ to save $\delta^{\frac{1}{4}}$.
\begin{align*}
 \doubleint_{\D(u,\ub)} |\rho(\Lh_O F)||Z||\nabla_4 \alpha_F| &\lesssim \delta^{\frac{1}{2}}\doubleint_{\D(u,\ub)} \|\rho(\Lh_O F)\|_{{L^2_{(sc)}(S_{u,\ub})}}\|Z\|_{{L^4_{(sc)}(S_{u,\ub})}}\|\nabla_4 \alpha_F\|_{{L^4_{(sc)}(S_{u,\ub})}}\\
&\lesssim C \delta^{\frac{1}{4}}\int_{0}^u \|\Psi(\Lh_O F)\|_{L^2_{(sc)}(H_{u'})}.
\end{align*}
Putting the estimates on $(I)_1$, $(I)_2$ and $(II)$ back into \eqref{Lie_O_Divergence_estimates_first_step}, after using Gronwall's inequality, we have
\begin{equation}\label{Lie_O_Divergence_estimates}
 \|(\alpha, \rho, \sigma)(\Lh_O F)\|_{L^2_{(sc)}(H_u^{(0,\ub)})} +\|(\rho, \sigma,\alphab)(\Lh_O F)\|_{L^2_{(sc)}(\Hb_{\ub}^{(0,u)})} \lesssim \Izero.
\end{equation}
As a consequence of Lemma \ref{comparison_maxwell_Lie}, we have
\begin{equation}
 \|(\nabla \rho_F, \nabla \sigma_F)\|_{L^2_{(sc)}(H_u^{(0,\ub)})} +\|(\nabla \rho_F, \nabla \sigma_F,\nabla\alphab_F)\|_{L^2_{(sc)}(\Hb_{\ub}^{(0,u)})} \lesssim \Izero + C\delta^{\frac{1}{8}}.
\end{equation}
It remains to control $\|\nabla \alpha_F\|_{L^2_{(sc)}(H_u^{(0,\ub)})}$. Let $H= H_u^{(0,\ub)}$, according to Remark \ref{difference_Lie_O_alpha}, we have
\begin{align*}
\|\nabla \alpha_F\|_{L^2_{(sc)}(H)} &\leq   \|\alpha(\Lh_O F)\|_{L^2_{(sc)}(H)} +\|\nabla \alpha_F-\alpha(\Lh_O F)\|_{L^2_{(sc)}(H)} \\
&\lesssim \Izero + C\delta^{\frac{1}{8}} + \delta^{\frac{1}{2}} \|H\|_{L^4_{(sc)}(H)} \|\alpha_F\|_{L^4_{(sc)}(H)}.
\end{align*}
It is easy to see that the anomaly of $H$ comes from the initial data, thus
\begin{equation*}
 \|H\|_{L^4_{(sc)}({H_u^{(0,\ub)}})} \lesssim C_0 \delta^{-\frac{1}{4}} + C.
\end{equation*}
For $\|\alpha_F\|_{L^4_{(sc)}({H_u^{(0,\ub)}})}$, thanks to the estimates on $\alpha_F$ and $\nabla_4 \alpha_F$ in previous sections, we have
\begin{align*}
 \|\alpha_F\|_{L^4_{(sc)}(H)} &\lesssim (\delta^{\frac{1}{2}} \| \alpha_F\|_{L^2_{(sc)}(H)} + \|\nabla \alpha_F\|_{L^2_{(sc)}(H)} )^{\frac{1}{2}} (\delta^{\frac{1}{2}} \| \alpha_F\|_{L^2_{(sc)}(H)} + \|\nabla_4 \alpha_F\|_{L^2_{(sc)}(H)} )^{\frac{1}{2}}\\
&\lesssim (\Izero  + \Izero \delta^{-\frac{1}{2}} + C )^{\frac{1}{2}}(\Izero +C\delta^{\frac{1}{8}} + \|\nabla\alpha_F\|_{L^2_{(sc)}(H)} )^{\frac{1}{2}}.
\end{align*}
We combine those estimates with \eqref{Lie_O_Divergence_estimates} to derive
\begin{align*}
\|\nabla \alpha_F\|_{L^2_{(sc)}(H_u^{(0,\ub)})} \lesssim \Izero + C\delta^{\frac{1}{8}} + C(\Izero)(\Izero + C\delta^{\frac{1}{8}} + \|\nabla \alpha_F\|_{L^2_{(sc)}(H_u^{(0,\ub)})} )^{\frac{1}{2}}.
\end{align*}
This implies the desired estimates $\|\nabla \alpha_F\|_{L^2_{(sc)}(H_u^{(0,\ub)})} \lesssim \Izero + C\delta^{\frac{1}{8}}$.

In view of \eqref{Fzero_Fzerob_Estimates}, we summarize the energy estimates derived so far as follows
\begin{equation}\label{Fone_Foneb_Estimates}
 \Fzero + \Fzerob+ \Fone + \Foneb \lesssim \Izero + C \delta^{\frac{1}{8}}.
\end{equation}

\subsection{Energy Estimates on Curvature}

We first recall the divergence equations for Weyl curvature,
\begin{equation*}
 D^{\alpha}W_{\alpha\beta\gamma\delta} = J_{\beta\gamma\delta} = \frac{1}{2}(D_{\gamma}R_{\beta\delta}-D_{\delta}R_{\beta\gamma}),\quad D^{\alpha}{}\Wstar_{\alpha\beta\gamma\delta} =J^{*}_{\beta\gamma\delta}=\frac{1}{2}(D_{\mu}R_{\beta\nu}-D_{\nu}R_{\beta\mu})\epsilon^{\mu\nu}{}_{\gamma\delta}.
\end{equation*}

Due to signature considerations, $J$ and $J^*$ excludes the double anomalies $\alpha_F \cdot \nabla_4 \alpha_F$ since $J_{444} = 0$. By direct computations, we can see that other possible anomalies in $D_X R_{\mu\nu}$ are of the form $\chib_0 \cdot \alpha_F \cdot \alpha_F$, $\alpha_F \cdot \nabla_3 \alpha_F$, $\Upsilon_g \cdot \nabla_4 \alpha_F$ or $\Upsilon_g \cdot \nabla_3 \alphab_F$. Furthermore, $\alpha_F \cdot \nabla_3\alphab_F$ only appears in $D_3 R_{ab}$. The remaining terms are either trilinear (which will extremely easy to estimate) or without anomaly in derivatives.

We also have a divergence equation for Bel-Robinson tensor $Q[W]$,
\begin{equation*}
D^\alpha  Q[W]_{\alpha\beta\gamma\delta}=
W_{\beta}{}^{\mu}{}_{\delta}{}^{\nu}J_{\mu\gamma\nu} +
W_{\beta}{}^{\mu}{}_{\gamma}{}^{\nu}J_{\mu\delta\nu} +
\Wstar_{\beta}{}^{\mu}{}_{\delta}{}^{\nu}J^*{}_{\mu\gamma\nu} +
\Wstar_{\beta}{}^{\mu}{}_{\gamma}{}^{\nu}J^*{}_{\mu\delta\nu}.
\end{equation*}

\subsubsection{Anomalous Estimates}
We apply \eqref{stokes_Bel_Robinson} with $(X,Y,Z)=(L,L,L)$ to derive
\begin{align*}
 \|\alpha\|^2_{L^2_{(sc)}(H_u^{(0,\ub)})}  + \|\beta\|^2_{L^2_{(sc)}(\Hb_{\ub}^{(0,u)})}  & = \|\alpha\|^2_{L^2_{(sc)}(H_0^{(0,\ub)})} + \delta\doubleint_{\D(u,\ub)} (\Divergence Q[W]+  \pih \cdot
 Q[W])(L,L,L)\\
 &=\|\alpha\|^2_{L^2_{(sc)}(H_0^{(0,\ub)})} + T_1 +T_2.
\end{align*}

We first bound $T_2$ whose integrand can be written as $\psi^{s_1} \cdot \Psi^{s_2} \cdot \Psi^{s_3}$ with $s_1 + s_2 + s_3 = 4$. We observe that $\psi^{s_1}$ cannot be $\chib$. Hence,
\begin{align*}
 |T_2| & \lesssim C \delta^{\frac{1}{2}}\|\alpha\|^2_{L^2_{(sc)}(H_u^{(0,\ub)})} + C \delta^{\frac{1}{2}} \Rzero \|\alpha\|_{L^2_{(sc)}(H_u^{(0,\ub)})} + C \delta^{\frac{1}{2}} \Rzero^2.
\end{align*}

We turn to $T_1$ whose integrand can be written as $\Psi^{s_1} \cdot \Upsilon^{s_2} \cdot (D\Upsilon)^{s_3}$ with $s_1 + s_2 + s_3 = 4$.
They all come through $ W_{4}{}^{\mu}{}_{4}{}^{\nu} (D_4 R_{\mu\nu} - D_{\nu}R_{\mu 4})$. We ignored similar terms for the Hodge dual part since they can treated exactly in the same manner. Due to signature considerations, $\nabla_3 \alphab_F$ does not appear. Thus, we can divided the integrands of $T_1$ into following cases,
\begin{equation*}
(I) \quad \Psi \cdot \alpha_F \cdot \alpha_F, \quad (II) \quad
\Psi \cdot \Upsilon \cdot \nabla \Upsilon, \quad (III) \quad \Psi \cdot
\Upsilon_g \cdot \nabla_4 \alpha_F.
\end{equation*}

For type $(I)$ terms, we retrieve the anomalies back to the initial data
\begin{align*}
 |(I)| & \lesssim \delta^{\frac{1}{2}}\int_{0}^{u}\int_{0}^{\ub} \|\alpha_F\|^2_{L^4_{(sc)}(u',\ub')} \|\Psi\|_{L^2_{(sc)}(u',\ub')}
\lesssim \Izero \int_{0}^{u} \|\alpha\|_{L^2_{(sc)}(\Hb_{\ub}^{(0,u)})}+
C\delta^{\frac{1}{4}}.
\end{align*}

For type $(II)$ terms, we have
\begin{align*}
 |(II)| &\lesssim \delta^{\frac{1}{2}}\int_{0}^{u}\int_{0}^{\ub} \|\Upsilon\|_{L^4_{(sc)}(u',\ub')}\|\nabla \Upsilon\|_{L^4_{(sc)}(u',\ub')} \|\Psi\|_{L^2_{(sc)}(u',\ub')} \lesssim C \delta^{\frac{1}{4}} \int_{0}^{u} \|\alpha\|_{L^2_{(sc)}(\Hb_{\ub}^{(0,u)})}+
C\delta^{\frac{1}{4}}.
\end{align*}

For type $(III)$ terms, we bound them as follows,
\begin{align*}
 |(III)| &\lesssim \int_{0}^{u}\int_{0}^{\ub} \|\Upsilon\|_{L^4_{(sc)}(u',\ub')}\|\nabla_4 \alpha_F\|_{L^4_{(sc)}(u',\ub')} \|\Psi\|_{L^2_{(sc)}(u',\ub')} du' d\ub'\\
 &\lesssim C\doubleint_{\D(u,\ub)}
(\|\nabla_4 \alpha_F\|^{\frac{1}{2}}_{{L^2_{(sc)}(S_{u,\ub})}}\|\nabla\nabla_4\alpha_F\|^{\frac{1}{2}}_{{L^2_{(sc)}(S_{u,\ub})}}
+ \delta^{\frac{1}{4}}\|\nabla_4\alpha_F\|_{{L^2_{(sc)}(S_{u,\ub})}}
)\|\Psi\|_{{L^2_{(sc)}(S_{u,\ub})}}\\
&\lesssim C \delta^{\frac{1}{4}} \int_{0}^{u} \|\alpha\|_{L^2_{(sc)}(\Hb_{\ub}^{(0,u)})}+
C\delta^{\frac{1}{4}}.
\end{align*}
Combining all those estimates, we derive
\begin{equation*}
 \|\alpha\|^2_{L^2_{(sc)}(H_u^{(0,\ub)})}  + \|\beta\|^2_{L^2_{(sc)}(\Hb_{\ub}^{(0,u)})}  \lesssim  \|\alpha\|^2_{L^2_{(sc)}(H_0^{(0,\ub)})}+ C \delta^{\frac{1}{4}} + C(\Izero)\int_{0}^{u} \|\alpha\|_{L^2_{(sc)}(\Hb_{\ub}^{(0,u)})}.
\end{equation*}
After using Gronwall's inequality, we have
\begin{equation}\label{estimatesforalpha}
 \Rzero[\alpha] + \Rzerob[\beta] \lesssim \Izero +  C \delta^{\frac{1}{4}}.
\end{equation}

\subsubsection{Non-anomalous Estimates}
We apply \eqref{stokes_Bel_Robinson} with $(X,Y,Z)=(L,L,\Lb)$ to derive
\begin{align*}
 \|\beta\|^2_{L^2_{(sc)}(H_u^{(0,\ub)})}  + \|(\rho,\sigma)\|^2_{L^2_{(sc)}(\Hb_{\ub}^{(0,u)})} & = \|\beta\|^2_{L^2_{(sc)}(H_0^{(0,\ub)})}   +\doubleint_{\D(u,\ub)} (\Divergence Q[W] + \pih \cdot Q[W])(L,L,\Lb)\\
&=\|\beta\|^2_{L^2_{(sc)}(H_0^{(0,\ub)})} + T_1 +T_2.
\end{align*}

For $T_2$, its integrands can be written as $\psi^{s_1} \cdot \Psi^{s_2} \cdot \Psi^{s_3}$ with $s_1 + s_2 + s_3 = 3$.
By signature considerations, there is no term quadratic in $\alpha$. We still have to pay attention to those dangerous terms which contain  $\tr\chib$ and $\alpha$. Since $\tr\chib$ can only appear through $\,^{(\Lb)}\pi$, it must come from the following expression (we use \textit{l.o.t.} to denote those terms which do not contain $\tr\chib$, they can be easily bounded)
\begin{align*}
Q[W]_{ab44} \cdot \,^{(\Lb)}\pi^{ab}& = \tr\chib \cdot Q[W]_{aa44}+ l.o.t.=\tr\chib \cdot |\beta|^2 + l.o.t.
\end{align*}
Thus, $\tr\chib$ will not be coupled with $\alpha$ in the integrand of $T_2$. Among the terms of the integrand of $T_2$, for those terms of the form $\tr\chib_0 \cdot \Psi_g \cdot \Psi_g$, one can use Gronwall's inequality to eliminate them; for those terms of the form $\psi \cdot \Psi_g \cdot \Psi_g$, they can be estimated by $C \delta^{\frac{1}{2}}$.

It remains to bound the terms of the form $\psi \cdot \Psi \cdot \alpha$. If $\psi \notin \{\chih, \chibh\}$, we can gain $\delta^{\frac{1}{4}}$ by using the $L^4_{(sc)}$ estimates on $\alpha$ and $\psi$.
We still have to estimate the following two types terms: $\chih \cdot \Psi \cdot \alpha$ and $\chibh \cdot \Psi \cdot \alpha$. Notice that $\chih \cdot \Psi \cdot \alpha$ can not appear. The reason is as follows: assume this was the case, then by the signature considerations, $\Psi$ must be $\alphab$. Since the original expression of all the terms are $\pi \cdot Q[W]$. Notice that $\alpha \cdot \alphab$ never appears in a null component of $Q[W]$ which yields a contradiction.
To bound $\chibh \cdot \Psi \cdot \alpha$, we retrieve the anomaly of $\|\chibh\|_{{L^4_{(sc)}(S_{u,\ub})}}$ to initial data. According to {\bf{Theorem A}}, $\|\chibh\|_{{L^4_{(sc)}(S_{u,\ub})}} \lesssim \delta^{-\frac{1}{4}} \Izero + C$. Thus,
Thus,
\begin{align*}
|\doubleint_{\D(u,\ub)} \chibh \cdot \Psi_g \cdot \alpha| &\lesssim \OSzerofour[\chibh]\cdot \Rzero \cdot\Rzero[\alpha]^{\frac{1}{2}}\Rone[\alpha]^{\frac{1}{2}} \lesssim \Izero \cdot \Rzero \cdot\Rzero[\alpha]^{\frac{1}{2}}\Rone[\alpha]^{\frac{1}{2}} + \delta^{\frac{1}{4}}C.
\end{align*}

We turn to $T_1$ whose integrands can be written as $\Psi^{s_1} \cdot \Upsilon^{s_2} \cdot (D\Upsilon)^{s_3}$ with $s_1 + s_2 + s_3 = 3$. We have to analyze the trilinear structure carefully to control triple anomalies. The other terms are much easier to control so we will not give details. Possible triple anomalies are $\alpha \cdot \alpha_F \cdot \alpha_F$, $\alpha \cdot \alpha_F \cdot \nabla_4 \alpha_F$ and $\alpha \cdot \alpha_F \cdot \nabla_3 \alphab_F$. The first and second cases are excluded by signature considerations. As we remarked in the beginning of the section, the appearance of $ \alpha \cdot \alpha_F \cdot \nabla_3 \alphab_F$ is through the term $\alpha_{ab}\cdot D_3 R_{ab}$. To address the control for this term, we need an extra integration by parts to move bad derivative $\nabla_{3}$ to $\alpha$.
\begin{align*}
 |\doubleint_{D(u,\ub)}\alpha \cdot \nabla_3 R_{ab}| = |\int_{H_u} \alpha \cdot R_{ab} -\int_{H_0}\alpha \cdot R_{ab}  -\doubleint_{D(u,\ub)}\nabla_3 \alpha \cdot R_{ab}|.
\end{align*}
For those three terms at the right-hand side of the above equation, the last is easy to control thanks to \eqref{NBE_Lb_beta}. For the second or the first term, the worst possible quadratic terms in $R_{ab}$ is $\alpha_F \cdot \alphab_F$, we ignore the others and bound this one by
\begin{equation*} |\int_{H_u} \alpha \cdot \alpha_F \cdot \alphab_F| \lesssim \delta^{\frac{1}{2}} \|\alpha_F\|_{L^{4}_{(sc)}(H_u)}\|\alpha\|_{L^4_{(sc)}(H_u)}\|\alphab_F\|_{L^2_{(sc)}(H_u)}.
\end{equation*}
Recall that $\|\alphab_F\|_{L^2_{(sc)}(H_u)} \lesssim \Izero + C\delta^{\frac{1}{8}}$. By losing one derivative, we can trace back the loss of $\delta^{\frac{1}{4}}$ in $\|\alpha\|_{{L^4_{(sc)}(S_{u,\ub})}}$ to initial data, thus we have
\begin{equation*} |\int_{H_u} \alpha \cdot \alpha_F \cdot \alphab_F| \lesssim C\delta^{\frac{1}{4}} + \Izero \cdot \Rzero[\alpha]^{\frac{1}{2}} \cdot \Rone[\alpha]^{\frac{1}{2}}.
\end{equation*}
Adding up the estimates on $T_1$ and $T_2$, we derive
\begin{equation*}
 \|\beta\|^2_{L^2_{(sc)}({H_u^{(0,\ub)}})}  + \|(\rho,\sigma)\|^2_{L^2_{(sc)}(\Hb_{\ub}^{(0,u)})}  \lesssim \Izero + C \delta^{\frac{1}{4}}+\Izero \cdot \Rzero \cdot\Rzero[\alpha]^{\frac{1}{2}}\cdot \Rone[\alpha]^{\frac{1}{2}}.
\end{equation*}

We then take $(X,Y,Z)=(L,\Lb,\Lb)$ or $(X,Y,Z)=(\Lb,\Lb,\Lb)$ in \eqref{stokes_Bel_Robinson}, similar analysis gives
\begin{align*}
 &\quad \|(\rho,\sigma)\|^2_{L^2_{(sc)}({H_u^{(0,\ub)}})}  +  \|\betab\|^2_{L^2_{(sc)}({H_u^{(0,\ub)}})}+ \|\betab\|^2_{L^2_{(sc)}(\Hb_{\ub}^{(0,u)})}+\|\alphab\|^2_{L^2_{(sc)}(\Hb_{\ub}^{(0,u)})} \\
 & \lesssim \Izero + C \delta^{\frac{1}{4}}+\Izero \cdot \Rzero \cdot \Rzero[\alpha]^{\frac{1}{2}} \cdot \Rone[\alpha]^{\frac{1}{2}}.
\end{align*}
Putting all the estimates together, in particular, we use $\Rzero[\alpha] \lesssim \Izero + C \delta^{\frac{1}{4}}$, we can derive
\begin{equation*}
 \Rzero^2 + \Rzerob^2 \lesssim \Izero + C\delta^{\frac{1}{4}}+\Izero \cdot (\Rzero+\Rzerob) \cdot \Rone^{\frac{1}{2}}.
\end{equation*}
Finally, we have energy estimates on curvatures
\begin{equation}\label{energy_on_curvature_no_derivatives}
 \Rzero + \Rzerob \lesssim \Izero + C\delta^{\frac{1}{8}}+C(\Izero) \cdot \R^{\frac{1}{2}}.
\end{equation}


\subsection{Energy Estimates on Second Derivative of Maxwell Field}
\subsubsection{Preliminaries}
We can first take two covariant derivatives of Maxwell field and then take its null component. We can also first take a null component of Maxwell field and then take two horizontal covariant derivatives. The following lemmas record the deviation of these two operations.
\begin{lemma}\label{comparison_maxwell_2}
If $\delta$ is sufficiently small, for $N_1$, $N_2 = e_3$ or $e_4$ and for all the possible null components $\Psi$, we have
\begin{equation*}
 \|\Upsilon(D_{N_1} D_{N_2} F) - \nabla_{N_1} \nabla_{N_2} \Upsilon\|_{L^2_{(sc)}({H_u^{(0,\ub)}})} +  \|\Upsilon(D_{N_1} D_{N_2} F) - \nabla_{N_1} \nabla_{N_2} \Upsilon\|_{L^2_{(sc)}({\Hb_{\ub}^{(0,u)}})} \lesssim C\delta^{\frac{1}{4}}.
\end{equation*}
\end{lemma}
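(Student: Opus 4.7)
The plan is to iterate the one-derivative comparison (Lemma \ref{comparison_maxwell_1}) and express the discrepancy $\Upsilon(D_{N_1}D_{N_2}F) - \nabla_{N_1}\nabla_{N_2}\Upsilon$ as a sum of lower-order terms that are quadratic (or higher) in connection coefficients, Maxwell components, and their first derivatives, then bound each term with the scale-invariant H\"older inequality \eqref{Holder} using the estimates already obtained in \textbf{Theorem A} and Proposition \ref{first_null_derivative_Maxwell}. More precisely, I would first write
\[
\Upsilon(D_{N_1}D_{N_2}F) - \nabla_{N_1}\bigl(\Upsilon(D_{N_2}F)\bigr) = A_1,
\qquad \nabla_{N_1}\bigl(\Upsilon(D_{N_2}F)\bigr) - \nabla_{N_1}\nabla_{N_2}\Upsilon = A_2,
\]
where $A_1$ arises from the fact that the null projection does not commute with $D_{N_1}$ and has the schematic form $\psi \cdot \Upsilon(D_{N_2}F)$, and $A_2 = \nabla_{N_1}(\Upsilon(D_{N_2}F)-\nabla_{N_2}\Upsilon)$ is a derivative of the one-derivative error, which by Lemma \ref{comparison_maxwell_1} is $\nabla_{N_1}(\psi\cdot\Upsilon)$.

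Expanding $A_1$ via Lemma \ref{comparison_maxwell_1} and developing the derivative in $A_2$ gives the schematic decomposition
\[
\Upsilon(D_{N_1}D_{N_2}F) - \nabla_{N_1}\nabla_{N_2}\Upsilon \;=\; \psi\cdot\nabla_{N_2}\Upsilon \;+\; \psi\cdot\psi\cdot\Upsilon \;+\; \nabla_{N_1}\psi\cdot\Upsilon,
\]
in which every $\psi$ is a connection coefficient and every $\Upsilon$ is a null Maxwell component. The signature of each term matches $sc(\nabla_{N_1}\nabla_{N_2}\Upsilon)$, so one use of the scale-invariant H\"older inequality produces a gain of $\delta^{1/2}$ per nonlinear pairing. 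In the non-anomalous case, putting $\psi$ (or $\nabla\psi$) in $L^2_{(sc)}(H)$ (or $L^2_{(sc)}(\Hb)$) and the Maxwell factor in $L^\infty_{(sc)}$, and using Proposition \ref{close_bootstrap} together with Proposition \ref{first_null_derivative_Maxwell}, immediately delivers a bound of the form $C\delta^{1/2}$, comfortably below the claimed $C\delta^{1/4}$.

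The main obstacle will be the careful bookkeeping of anomalies: when $\Upsilon \in \{\chih,\chibh,\alpha_F,\alphab_F\}$ or when $\psi \in \{\chih,\chibh\}$ is coupled to the anomalous derivative $\nabla_3\alpha_F$ or $\nabla_4\alphab_F$, the naive estimate loses a $\delta^{1/2}$. I expect the worst case to be $\chibh \cdot \nabla_3 \alpha_F$ (appearing through $A_1$ when $N_1 = N_2 = e_3$ and $\Upsilon = \alpha_F$) and its mirror $\chih \cdot \nabla_4 \alphab_F$. For these I would trade a pure $L^2$-$L^\infty$ H\"older for an $L^4$-$L^4$ pairing on $S_{u,\ub}$, combined with the localized Sobolev inequality \eqref{sobolev_local} and the localized $L^4$ estimates of Proposition \ref{OZERO_FOUR_ESTIMATES} and \eqref{local alpha_F}, trading one $\delta^{1/4}$ for one derivative to recover the desired $\delta^{1/4}$ loss; this is exactly the maneuver already used in the proof of Proposition \ref{prop_L_infty_Psi_F}.

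The remaining terms of the form $\nabla_{N_1}\psi \cdot \Upsilon$ are handled by using Proposition \ref{Remaininig_First_Derivative_Estimates} (which bounds all $\nabla_4\psi$ and $\nabla_3\psi$ on $H_u$ and $\Hb_{\ub}$ modulo the anomalous $\nabla_3\chih$, $\nabla_4\chibh$, etc.) and the $L^\infty_{(sc)}$ bound on $\Upsilon$ from Proposition \ref{prop_L_infty_Psi_F}. For the $\psi\cdot\psi\cdot\Upsilon$ piece, two H\"older factors give $\delta$, which is more than enough. Thus every term of the decomposition is bounded by $C\delta^{1/4}$ after integrating along the appropriate null hypersurface, and summing over the finitely many schematic contributions yields the conclusion of the lemma.
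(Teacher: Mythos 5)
Your overall skeleton — iterating the one-derivative comparison to produce a schematic decomposition of the form
\[
\psi_g\cdot\nabla_{N_2}\Upsilon \;+\; \psi_g\cdot\psi_g\cdot\Upsilon \;+\; \nabla_{N_1}\psi_g\cdot\Upsilon \;+\; \psi_g\cdot\nabla_{N_1}\Upsilon,
\]
then estimating each term with scale-invariant H\"older — is the right plan and is essentially what the paper does. But your analysis of which anomalies actually occur is wrong, and that error is precisely the content of the paper's one-line observation that there is \emph{no double anomaly}.

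The point you miss is that Lemma \ref{comparison_maxwell_1} asserts the one-derivative commutator error is a sum of terms $\psi_g\cdot\Upsilon$, where $\psi_g$ is a \emph{good} connection coefficient, i.e.\ $\psi_g\neq\chih,\chibh$. This is forced by the frame computation: the discrepancy between $\Upsilon(D_{N}F)$ and $\nabla_N\Upsilon$ comes from the $e_3,e_4$ components of $D_N e_a$, $D_N e_3$, $D_N e_4$, which involve only $\eta,\etab,\omega,\omegab$ (and $\tr\chi,\tr\chib$), never $\chih$ or $\chibh$. Iterating, the same is true for the two-derivative error: every connection-coefficient factor appearing there is a $\psi_g$ or a null derivative $\nabla_{N_1}\psi_g$ of a $\psi_g$, never a $\chih$ or $\chibh$. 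Your claimed worst case $\chibh\cdot\nabla_3\alpha_F$ (and its mirror $\chih\cdot\nabla_4\alphab_F$) therefore simply does not occur, and your assertion ``when $\psi\in\{\chih,\chibh\}$ is coupled to the anomalous derivative\dots'' starts from a false premise. Also, as an aside, $\chih,\chibh$ are connection coefficients, not candidates for $\Upsilon$, so the list ``$\Upsilon\in\{\chih,\chibh,\alpha_F,\alphab_F\}$'' is a category confusion.

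This matters not only because it misrepresents the structure of the error, but also because the fix you propose would not work if the double anomaly were present. Pairing $\chibh$ and $\nabla_3\alpha_F$ in $L^4_{(sc)}$ gains $\delta^{1/2}$ from H\"older but spends $\delta^{-1/4}$ on each factor (via Proposition \ref{OZERO_FOUR_ESTIMATES} for $\chibh$ and Proposition \ref{prop_L_infty_Psi_F} for $\nabla_3\alpha_F$), so the net result is $O(1)$, not $O(\delta^{1/4})$; the localized norms do not rescue this, because the localized $L^4$ estimates only trade the anomaly back to the initial data, not eliminate it. What actually makes the lemma go through is that the genuine worst case is a \emph{single} anomaly, $\psi_g\cdot\nabla_3\alpha_F$ or $\psi_g\cdot\nabla_4\alphab_F$: there $\psi_g$ is placed in $L^4_{(sc)}$ at unit cost, the anomalous factor in $L^4_{(sc)}$ at cost $\delta^{-1/4}$, and H\"older gains $\delta^{1/2}$, so the net is $\delta^{1/4}$ — exactly the paper's prescription ``bound $\Upsilon$'s in $L^\infty_{(sc)}$ and bound $\nabla_3\Upsilon,\nabla_4\Upsilon$ in $L^4_{(sc)}$.'' You should also note that $\nabla_4\omega$ and $\nabla_3\omegab$ lack $L^4_{(sc)}$ bounds (only $L^2_{(sc)}$), so for $\nabla_{N_1}\psi_g\cdot\Upsilon$ with $\nabla_{N_1}\psi_g\in\{\nabla_4\omega,\nabla_3\omegab\}$ one must pair $L^2_{(sc)}\times L^\infty_{(sc)}$ rather than $L^4\times L^4$; this is harmless since $\Upsilon$ is bounded in $L^\infty_{(sc)}$ by Proposition \ref{prop_L_infty_Psi_F}.
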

\begin{proof}
The proof is routine. We first compute the commutator to observe that there is no double anomaly. Then, we bound $\Upsilon$'s in $L^{\infty}_{(sc)}$ norms and we bound $\nabla_3 \Upsilon$, $\nabla_4 \Upsilon$ in $L^{4}_{(sc)}$ norms to avoid the loss of $\delta^{\frac{1}{2}}$ due to anomalies.
\end{proof}
\begin{lemma}\label{comparison_maxwell_2_D_a_D_null}
Let $H={H_u^{(0,\ub)}}$ and $\Hb = {\Hb_{\ub}^{(0,\ub)}}$. If $\delta$ is sufficiently small, we can bound the following quantities
\begin{equation*}
\|\alpha(D_a D_4 F) - \nabla_a \nabla_4 \alpha_F\|_{L^2_{(sc)}(H)}, \|\alphab(D_a D_4 F) - \nabla_a \nabla_4 \alphab_F\|_{L^2_{(sc)}(H)}, \|\alpha(D_a D_3 F) - \nabla_a \nabla_3 \alpha_F)\|_{L^2_{(sc)}(H)},
\end{equation*}
\begin{equation*}
\|(\rho(D_a D_4 F) - \nabla_a \nabla_4 \rho_F - \nabla_4 \alpha_F\|_{L^2_{(sc)}(H)}, \|\sigma(D_a D_4 F) - \nabla_a \nabla_4 \sigma_F - \nabla_4 \alpha_F)\|_{L^2_{(sc)}(H)};
\end{equation*}
\begin{equation*}
\|(\alphab(D_a D_4 F) - \nabla_a \nabla_4 \alphab_F\|_{L^2_{(sc)}(\Hb)},\|\alphab(D_a D_3 F) - \nabla_a \nabla_3 \alphab_F\|_{L^2_{(sc)}(\Hb)}, \|\alpha(D_a D_3 F) - \nabla_a \nabla_3 \alpha_F)\|_{L^2_{(sc)}(\Hb)},
\end{equation*}
\begin{equation*}
\|(\rho(D_a D_3 F) - \nabla_a \nabla_3 \rho_F+ \alpha_F \|_{L^2_{(sc)}(\Hb)}, \|\sigma(D_a D_3 F) - \nabla_a \nabla_3 \sigma_F + \alpha_F)\|_{L^2_{(sc)}({\Hb_{\ub}^{(0,u)}})}.
\end{equation*}
by $\Izero + C\delta^{\frac{1}{4}}$.
\end{lemma}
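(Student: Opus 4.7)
The plan is to mimic Lemma \ref{comparison_maxwell_Lie} and Lemma \ref{comparison_maxwell_2}: expand each null component of $D_a D_N F$ in the null frame and extract the mismatch with $\nabla_a \nabla_N \Upsilon$. Writing
\begin{equation*}
\Upsilon(D_a D_N F) = D_a[(D_N F)(e_\mu, e_\nu)] - (D_N F)(D_a e_\mu, e_\nu) - (D_N F)(e_\mu, D_a e_\nu),
\end{equation*}
substituting the standard null-frame formulas for $D_a e_3$ and $D_a e_4$ (which produce $\chi$, $\chib$, $\zeta$), and reducing the surviving $(D_N F)$-components via Lemma \ref{comparison_maxwell_1}, one obtains a schematic identity
\begin{equation*}
\Upsilon(D_a D_N F) - \nabla_a \nabla_N \Upsilon = \mathrm{Corr} + \psi \cdot \nabla_N \Upsilon + \psi_g \cdot \nabla \Upsilon + \nabla \psi \cdot \Upsilon + \psi \cdot \psi \cdot \Upsilon,
\end{equation*}
where $\mathrm{Corr}$ collects the leading-order contractions of $\tr\chi_0$ or $\tr\chib_0$ against an anomalous null component of $D_N F$. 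Inspection shows $\mathrm{Corr}$ is nonzero only for the pairs flagged in the statement, and equals exactly the term subtracted there: for example, in $\rho(D_a D_4 F)$ the only contraction that fails to gain $\delta^{1/2}$ from H\"older is the $\tr\chib_0$-trace of $\chib_a{}^b\, \nabla_4 \alpha_F{}_b$, which reduces to $\nabla_4 \alpha_F$ up to a universal constant, while the traceless $\chibh$ and $\chih$ contractions absorb into $\psi_g \cdot \nabla_N \Upsilon$.

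Each of the four schematic error classes is then bounded in the relevant $L^2_{(sc)}$-norm using \textbf{Theorem A}, Proposition \ref{prop_L_infty_Psi_F}, Proposition \ref{first_null_derivative_Maxwell}, and the energy estimate \eqref{Fone_Foneb_Estimates}. The strategy is to put $\psi$ and $\Upsilon$ in $L^\infty_{(sc)}$, $\nabla\psi$ and $\nabla\Upsilon$ in $L^4_{(sc)}$ where needed to absorb potential anomalies, and $\nabla_N\Upsilon$ in $L^2_{(sc)}(H)$ or $L^2_{(sc)}(\Hb)$. Each use of H\"older gains $\delta^{1/2}$, so the nonlinear remainders are $O(\delta^{1/4})$; the $\Izero$ on the right-hand side reflects only that a single factor of $\|\nabla\Upsilon\|_{L^2_{(sc)}}$ (or $\|\nabla\alpha_F\|_{L^2_{(sc)}}$) may carry its initial-data size through one H\"older without gaining any $\delta$-power.

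The main obstacle, as elsewhere in the paper, is to rule out double anomalies. The dangerous couplings to exclude are $\chih \cdot \alpha_F$ on $H$ and $\chibh \cdot \alphab_F$ on $\Hb$, each of which would cost a full factor $\delta^{-1}$. I would check, by the signature considerations of Remarks \ref{observation_1}--\ref{observation_2}, that the $\chih$-piece of $D_a e_4$ can only contract with an $\alphab_F$-slot of $D_N F$, giving the combination $\chih \cdot \alphab_F$ which is non-anomalous in $L^2_{(sc)}(H)$, and symmetrically $\chibh$ only pairs with $\alpha_F$, which is non-anomalous on $\Hb$. The remaining borderline quantity $\nabla_3 \alpha_F$ is anomalous along $H$, but its excess $\delta^{-1/2}$ is retrievable to initial data through \eqref{anomaly_of_nabla_3_alpha_F}, producing the $\Izero$ contribution rather than an unbounded loss. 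Once these structural observations are verified component by component, the remaining estimates are a completely routine case analysis, which is why the authors omit the details.
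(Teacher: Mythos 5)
Your overall strategy is the same as the paper's: expand the commutator $\Upsilon(D_a D_N F) - \nabla_a\nabla_N\Upsilon$ directly via the null-frame relations, isolate the term proportional to $\tr\chib_0$ (or $\tr\chi_0$) acting on an anomalous null derivative of a Maxwell component as the "correction'' to be subtracted, and bound the remainder using $L^\infty_{(sc)}$ and $L^4_{(sc)}$ estimates from Theorem A together with the energy estimates. This matches the spirit of the paper's proof, which works out the explicit commutator for $\sigma(D_a D_3 F)$ and isolates $\frac12\chib\cdot(\chib\wedge\alpha_F)$ as the anomaly. Your identification of where the subtracted $\nabla_4\alpha_F$ and $\alpha_F$ corrections come from is correct.

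However, your anomaly bookkeeping on $\Hb$ has the roles of $\alpha_F$ and $\alphab_F$ reversed, and this is a genuine flaw in the reasoning. Recall from $\Fzerob$ that it is $\alphab_F$ that carries \emph{no} $\delta^{1/2}$ prefactor on $\Hb_{\ub}$, i.e.\ $\|\alphab_F\|_{L^2_{(sc)}(\Hb)}\lesssim\Fzerob$ is \emph{non}-anomalous there, whereas the $L^4$ sphere estimate $\|\alpha_F\|_{L^4_{(sc)}(S)}\lesssim\delta^{-1/4}C$ shows $\|\alpha_F\|_{L^2_{(sc)}(\Hb)}\lesssim\delta^{-1/2}C$, so $\alpha_F$ \emph{is} anomalous on $\Hb$ --- exactly as it is on $H$. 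Consequently the "dangerous coupling'' you single out, $\chibh\cdot\alphab_F$ on $\Hb$, is in fact harmless (one anomalous factor against a good one gains $\delta^{1/4}$ via H\"older), while the pairing you dismiss, $\chibh\cdot\alpha_F$ on $\Hb$, is the real borderline term: it gains no net power of $\delta$ from H\"older. It closes only because \emph{both} the $\chibh$-anomaly and the $\alpha_F$-anomaly trace back to initial data (cf.\ the $\Ozerofour[\chibh]\lesssim\Izero+C\delta^{1/4}$ and $\Rzero^\delta$/$\Fzero^\delta$ localized bounds), which is precisely what produces the $\Izero$ on the right-hand side of the lemma --- the same mechanism you correctly invoke for $\nabla_3\alpha_F$, but which you fail to apply here. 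You should also be aware that for a component like $\alphab(D_a D_N F)$, the traceless $\chibh$-piece of $D_a e_b$ and $D_a e_3$ contracts against $\rho(D_NF)$ and $\sigma(D_NF)$ rather than against $\alpha(D_NF)$, so the clean "$\chih$ only meets $\alphab_F$, $\chibh$ only meets $\alpha_F$'' dichotomy is too coarse; which pairings occur depends on the component being computed, and one has to check them case by case as the paper does.
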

\begin{proof}
 The proof is routine. It is based on direct computation and the energy estimates derived so far. We only treat one term to illustrate the idea. We can compute
\begin{align*}
&\sigma(D_a D_3 F) - \nabla_a \nabla_3 \sigma_F = \frac{1}{2}\chib \wedge \nabla_3 \alpha_F -\chib \cdot \nabla \sigma_F - \zeta_a \cdot \nabla_3 \sigma_F + \eta \wedge \nabla_a \alphab_F + \frac{1}{2}\chi \wedge \nabla_3 \alphab_F\\
&\quad -\frac{1}{2} \chib \cdot (\chib \wedge \alpha_F)-\chib \wedge (\eta\cdot (\rho_F + \sigma_F)) +\nabla \eta \wedge \alphab_F + \chi \wedge (\omegab \cdot \alphab_F) -\frac{1}{2}\chib \cdot (\chi \wedge \alphab_F)- \zeta \cdot (\eta \wedge \alphab_F).
\end{align*}
Thus, the anomaly the right-hand side is $\frac{1}{2}\chib \cdot (\chib \wedge \alpha_F)$. Hence,
\begin{equation*}
\|(\sigma(D_a D_3 F) - \nabla_a \nabla_3 \sigma_F +\frac{1}{2} \chib \wedge (\chib\wedge \alpha_F)\|_{L^2_{(sc)}({\Hb_{\ub}^{(0,u)}})} \lesssim \Izero + C \delta^{\frac{1}{4}}.
\end{equation*}
\end{proof}

Similarly, we have
\begin{lemma}\label{comparison_maxwell_2_D_a_D_b}
If $\delta$ is sufficiently small, we have the following estimates
\begin{equation*}
 \|\alpha(D_a D_b F) - \nabla_a \nabla_b \alpha_F -\nabla_4 \alpha_F\|_{L^2_{(sc)}({H_u^{(0,\ub)}})} \lesssim C \delta^{\frac{1}{4}},
\end{equation*}
\begin{equation*}
 \|\alphab(D_a D_b F) - \nabla_a \nabla_b \alphab_F-\alpha_F\|_{L^2_{(sc)}({H_u^{(0,\ub)}})} \lesssim \Izero+ C \delta^{\frac{1}{4}},
\end{equation*}
\begin{equation*}
 \|\rho(D_a D_b F) - \nabla_a \nabla_b \rho_F\|_{L^2_{(sc)}({H_u^{(0,\ub)}})} +  \|\sigma(D_a D_b F) - \nabla_a \nabla_b \sigma_F\|_{L^2_{(sc)}({H_u^{(0,\ub)}})} \lesssim \Izero+ C \delta^{\frac{1}{4}}.
\end{equation*}
\end{lemma}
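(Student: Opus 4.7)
The plan is to reduce everything to a direct computation of the commutator $\Upsilon(D_aD_b F) - \nabla_a\nabla_b \Upsilon$ for each null component $\Upsilon \in \{\alpha_F,\alphab_F,\rho_F,\sigma_F\}$, and then to separate out the few truly principal terms (those that cannot be absorbed in an $O(\delta^{1/4})$ error) from the rest. The starting point is the decomposition
\[
D_a e_b = \nabla_a e_b + \tfrac12\chi_{ab}\, e_3 + \tfrac12\chib_{ab}\, e_4,\quad
D_a e_4 = \chi_{a}{}^{c}e_c - \tfrac12(\eta-\etab)_a e_4, \quad
D_a e_3 = \chib_{a}{}^{c}e_c + \tfrac12(\eta-\etab)_a e_3,
\]
together with the analogous formulas for $D_aD_b$ applied to a $2$-form. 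Expanding $(D_aD_bF)(X,Y)$ in this way for the relevant choices $(X,Y)\in \{(e_a,e_4),(e_a,e_3),(e_3,e_4),(e_1,e_2)\}$ produces the schematic identity
\[
\Upsilon(D_aD_bF) = \nabla_a\nabla_b \Upsilon + \chi\cdot(\text{null derivatives of }\Upsilon) + \chib\cdot(\text{null derivatives of }\Upsilon) + \psi\cdot\nabla\Upsilon + \psi\cdot\psi\cdot\Upsilon,
\]
where the precise coefficient of each $\nabla_N\Upsilon'$ is determined by the pairing of $\chi$ or $\chib$ with $F(e_3,\cdot)$ or $F(e_4,\cdot)$.

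The first step is to read off the principal contribution from each $\chi\cdot\nabla_4(\cdot)$ or $\chib\cdot\nabla_3(\cdot)$ term. For $\alpha(D_aD_bF)$ the coefficient of $\nabla_4\alpha_F$ comes from $\tfrac12\tr\chi\, \delta_{ab}\cdot \nabla_4\alpha_F$, which after splitting $\tr\chi = \tr\chi_0 + \widetilde{\tr\chi}$ and normalizing gives exactly the $\nabla_4\alpha_F$ subtracted in the statement; every other term either involves $\widetilde{\tr\chi}$, $\chih$, or $\chibh$ coupled to $\nabla_4\alpha_F$ (giving $\delta^{1/2}$ from H\"older times a bounded factor), or it is a genuine lower order term. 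For $\alphab(D_aD_bF)$ the analogous principal piece is $\tfrac12\tr\chib_0\cdot \alpha_F$, which is $O(1)\cdot \alpha_F$ modulo nonprincipal corrections: this yields exactly the subtracted $\alpha_F$ and explains the $\Izero$ on the right, since $\|\alpha_F\|_{L^2_{(sc)}(H_u)}\lesssim \Izero\delta^{-1/2}$ is anomalous. For $\rho(D_aD_bF)$ and $\sigma(D_aD_bF)$ no such principal term survives because the matching $\chi$- or $\chib$-contractions couple $F(e_3,e_4)$ or $F(e_1,e_2)$, which are non-anomalous: these terms contribute at the $\Izero+C\delta^{1/4}$ level directly.

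The second step is to dispose of all remaining error terms. These have the schematic form $\psi\cdot D\Upsilon$, $\psi\cdot\psi\cdot \Upsilon$, or $R\cdot \Upsilon$ where $R$ is a curvature component from the second-order commutator $[D_a,D_b]F$. For each such term one of the following suffices: (i) place a non-anomalous $\psi_g$ in $L^\infty_{(sc)}$, gaining $\delta^{1/2}$ via the scale-invariant H\"older inequality \eqref{Holder}; (ii) in cases where a potential anomaly $\chih$ or $\chibh$ is coupled to $\nabla_4\alpha_F$ or $\nabla_3\alphab_F$, use the $L^4_{(sc)}$ estimates of Proposition \ref{OZERO_FOUR_ESTIMATES} and of Proposition \ref{prop_L_infty_Psi_F}, recovering a net gain of $\delta^{1/4}$; (iii) for $R\cdot\Upsilon$, the only dangerous curvature is $\alpha$, which appears paired with $\alphab_F$ and so benefits from $\|\alphab_F\|_{L^\infty_{(sc)}}\lesssim C$ plus the $L^2_{(sc)}$ bound on $\alpha$ from Theorem A. This is exactly the mechanism used in Lemma \ref{comparison_maxwell_1}, Lemma \ref{comparison_maxwell_2} and Lemma \ref{comparison_maxwell_2_D_a_D_null}, so the bookkeeping is the same; we just have to ensure that no double anomaly $\alpha_F\cdot\chibh$ or $\alpha_F\cdot\chih$ enters unbalanced.

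The main obstacle will be the careful isolation of the two surviving principal terms $\nabla_4\alpha_F$ (for $\alpha$) and $\alpha_F$ (for $\alphab$) from the flood of $\chi, \chib, \tr\chi_0, \tr\chib_0$ couplings generated by the commutators. In particular, one must check that when $\tr\chib_0$ multiplies $\alpha_F$ inside $\alphab(D_aD_bF)$ it does so with coefficient exactly $1$ (up to the precision of the estimate), so that the remainder contains only $\widetilde{\tr\chib}\cdot\alpha_F$, $\chibh\cdot\alpha_F$, or products of two $\psi_g$'s with one $\Upsilon$, all of which fall under cases (i)--(iii) above. Once these principal terms are correctly split off, all remaining pieces are controlled as in the preceding lemmas and the proof is complete.
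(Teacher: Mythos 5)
Your overall strategy—expand $\Upsilon(D_aD_bF)-\nabla_a\nabla_b\Upsilon$ via the frame formulas, isolate the few anomalous principal terms, and absorb the rest by scale-invariant H\"older with $L^4_{(sc)}$ or $L^\infty_{(sc)}$ placements—is exactly the mechanism the paper uses (it gives no separate proof for this lemma, deferring with \emph{``Similarly, we have''} to the example computation in Lemma~\ref{comparison_maxwell_2_D_a_D_null}). However, your identification of the source of the subtracted $\nabla_4\alpha_F$ is wrong. You write that it comes from $\tfrac12\tr\chi\,\delta_{ab}\cdot\nabla_4\alpha_F$, splitting $\tr\chi=\tr\chi_0+\widetilde{\tr\chi}$. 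This term cannot occur: by the paper's signature accounting, $sgn(\tr\chi)=1$ and $sgn(\nabla_4\alpha_F)=\tfrac32$, so $\tr\chi\cdot\nabla_4\alpha_F$ has signature $\tfrac52$, whereas $\alpha(D_aD_bF)$ has signature $\tfrac32$. The correct source is $\tfrac12\chib_{ab}\cdot\alpha(D_4F)_c$, coming from the $\tfrac12\chib_{ab}e_4$ component of $D_ae_b$; its trace part $\tfrac12\cdot\tfrac12\tr\chib_0\,\delta_{ab}\cdot\nabla_4\alpha_F$ is the $O(1)$ constant multiple of $\nabla_4\alpha_F$ that the lemma subtracts, while the remainders $\trchibt\cdot\nabla_4\alpha_F$ and $\chibh\cdot\nabla_4\alpha_F$ are handled by the $L^4_{(sc)}$ mechanism. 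Relatedly, the decomposition $\tr\chi=\tr\chi_0+\widetilde{\tr\chi}$ you invoke does not exist in the paper; only $\trchibt=\tr\chib-\tr\chib_0$ is defined, and $\tr\chib_0=-2/r$ is the only connection coefficient that serves as a genuine background constant. Once you replace $\tr\chi$ by $\tr\chib$ and invoke $\tr\chib_0$ in place of the fictitious $\tr\chi_0$, your second step (the error bookkeeping via H\"older, $L^4_{(sc)}$ saves, and $L^\infty_{(sc)}$ bounds) is sound and matches what the paper does elsewhere in Lemmas~\ref{comparison_maxwell_2} and~\ref{comparison_maxwell_2_D_a_D_null}.
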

We also need the following formula to compute current.
\begin{align*}
 D^{\mu}D_X D_Y F_{\mu\alpha}= J_\alpha(X,Y) &= (D^{\mu}Y^{\nu} D_{\mu}D_X F_{\nu \alpha} + D^{\mu}X^{\nu} D_Y D_{\mu} F_{\nu \alpha}) + D_Y D^{\mu}X^{\nu}D_{\mu}F_{\nu\alpha} \\
& +(R_{Y}{}^{\mu}{}_{\alpha}{}^{\nu}D_X F_{\mu\nu}+ R_{X}{}^{\mu}{}_{\alpha}{}^{\nu}D_Y F_{\mu\nu}) + D_Y R_{X}{}^{\mu}{}_{\alpha}{}^{\nu} F_{\mu\nu} \\
&-(R_Y{}^{\mu}D_X F_{\mu\alpha}+R_X{}^{\mu}D_Y F_{\mu\alpha}+D_Y R_X{}^{\mu}F_{\mu\alpha}),
\end{align*}
\begin{remark}
The the last three terms have schematic expression $\Upsilon \cdot \Upsilon \cdot D \Upsilon$. Since we can use $L^{\infty}_{(sc)}$ estimates on $\Upsilon$, the trilinear structure of these terms gains an extra $\delta$. Thus, in what follows, we shall ignore those (trilinear) terms.
\end{remark}
Similarly, since the difference between the Weyl tensor $W_{\alpha\beta\gamma\delta}$ and $R_{\alpha\beta\gamma\delta}$ is quadratic in $\Upsilon$, we now can replace $R_{\alpha\beta\gamma\delta}$ by $W_{\alpha\beta\gamma\delta}$ in $J_\alpha(X,Y)$. By ignoring trilinear terms, this allows us to write $J_\alpha(X,Y)$ as
\begin{align}\label{D_X D_Y of F}
 J_\alpha(X,Y) = D^{\mu}D_X D_Y F_{\mu\alpha}&= (D^{\mu}Y^{\nu} D_{\mu}D_X F_{\nu \alpha} + D^{\mu}X^{\nu} D_Y D_{\mu} F_{\nu \alpha}) + D_Y D^{\mu}X^{\nu}D_{\mu}F_{\nu\alpha}\\
& +(W_{Y}{}^{\mu}{}_{\alpha}{}^{\nu}D_X F_{\mu\nu}+ W_{X}{}^{\mu}{}_{\alpha}{}^{\nu}D_Y F_{\mu\nu}) + D_Y W_{X}{}^{\mu}{}_{\alpha}{}^{\nu} F_{\mu\nu}.\notag
\end{align}

We need two more lemmas.\footnote{\, $\Lh_O D_\mu F$ should be understood as $\Lh_O (D_\mu F)$ where $D_\mu F$ is a give two form.}
\begin{lemma}\label{two more 1}
Let $H= {H_u^{(0,\ub)}}$ and $\Hb = \Hb_{\ub}^{(0,u)}$. If $\delta$ is sufficiently small, for $\Upsilon \in \{\rho_F, \sigma_F, \alphab_F\}$, we have
\begin{equation*}
 \|\Upsilon(\Lh_O D_4 F) - \nabla_O \nabla_4 \Upsilon\|_{L^2_{(sc)}(H)} + \|\Upsilon(\Lh_O D_3 F) - \nabla_O \nabla_3 \Upsilon\|_{L^2_{(sc)}(\Hb)} \lesssim C\delta^{\frac{1}{4}}.
\end{equation*}
Moreover, the following quantities
\begin{align*}
 \|\alpha(\Lh_O D_4 F) - \nabla_O \nabla_4 \alpha_F-H\cdot \nabla_4 \alpha_F\|_{L^2_{(sc)}(H)}, &\|\alpha(\Lh_O D_3 F) - \nabla_O \nabla_3 \alpha_F-H\cdot \nabla_3 \alpha_F\|_{L^2_{(sc)}(H)}\\
\|\alpha(\Lh_O D_3 F) - \nabla_O \nabla_3& \alpha_F-H\cdot \nabla_3 \alpha_F\|_{L^2_{(sc)}(\Hb)},
\end{align*}
are bounded by $C\delta^{\frac{1}{4}}$.
\end{lemma}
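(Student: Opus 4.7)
The plan is to mimic, and then refine, the argument used for Lemma \ref{comparison_maxwell_Lie}: compute the difference between taking a null component of $\Lh_O (D_N F)$ and applying $\nabla_O \nabla_N$ directly to the corresponding null component of $F$, and then show that apart from the explicitly isolated term $H \cdot \nabla_N \alpha_F$ (in the anomalous case), all remaining pieces are bounded by $C\delta^{1/4}$ using the estimates assembled in Theorems A and B and the Maxwell energy bounds \eqref{Fone_Foneb_Estimates}.

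First, writing $G = D_N F$ (a $2$-form), recall $\Lh_O G = \L_O G - \frac{1}{2}\tr {}^{(O)}\pi \cdot G$ so that schematically $\Upsilon(\Lh_O G) = \nabla_O \Upsilon(G) + P[{}^{(O)}\pi]\cdot G$, where $P[{}^{(O)}\pi]$ collects the null components of ${}^{(O)}\pi$ (namely $H$, ${}^tH$, $Z$ and $(\eta+\etab)\cdot O$) that arise from commuting the null projection with the Lie derivative, exactly as in the proof of Lemma \ref{comparison_maxwell_Lie}. Second, Lemma \ref{comparison_maxwell_1} gives $\Upsilon(G) = \nabla_N \Upsilon + Q[\psi]\cdot \Upsilon$ with $\psi \in \{\chi,\chib,\eta,\etab,\omega,\omegab,\zeta\}$. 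Combining,
\begin{equation*}
\Upsilon(\Lh_O D_N F) - \nabla_O \nabla_N \Upsilon \;=\; P[{}^{(O)}\pi]\cdot D_N F \;+\; \nabla_O(Q[\psi]\cdot \Upsilon),
\end{equation*}
which we further expand into $P[{}^{(O)}\pi]\cdot \nabla_N \Upsilon$, $P[{}^{(O)}\pi]\cdot Q[\psi]\cdot \Upsilon$, $(\nabla_O Q[\psi])\cdot \Upsilon$ and $Q[\psi]\cdot \nabla_O \Upsilon$.

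Next I would bound the four classes of error terms. The trilinear pieces $P[{}^{(O)}\pi]\cdot Q[\psi]\cdot \Upsilon$, $(\nabla_O Q[\psi])\cdot \Upsilon$ and $Q[\psi]\cdot \nabla_O \Upsilon$ all carry the extra H\"older factor $\delta^{1/2}$ and can be controlled by Theorem B ($\|\pi\|_{L^\infty_{(sc)}}$, $\|D\pi\|_{L^2_{(sc)}}$, $\|\pi\|_{L^4_{(sc)}}$), the $\mathcal{O}$ estimates on connection coefficients, and the bounds on $\Upsilon$ and $\nabla\Upsilon$ from \eqref{Fone_Foneb_Estimates}; each use of H\"older yields at least $\delta^{1/4}$. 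For the principal term $P[{}^{(O)}\pi]\cdot \nabla_N \Upsilon$ we examine the possible anomalies. When $\Upsilon \in \{\rho_F,\sigma_F,\alphab_F\}$, the corresponding null derivative $\nabla_N \Upsilon$ is non-anomalous along the respective hypersurface (Lemma \ref{comparison_maxwell_null_derivatives}) so that pairing even with the anomalous component $H$ of ${}^{(O)}\pi$, and using $\|H\|_{L^\infty_{(sc)}}\lesssim C$ from Theorem B plus $\|\nabla_N \Upsilon\|_{L^2_{(sc)}}\lesssim C$, gains a $\delta^{1/4}$ after one H\"older. The non-anomalous components $Z$, $\pi_{ab}$ of ${}^{(O)}\pi$ give even easier contributions.

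The only mechanism for a loss occurs when $\Upsilon = \alpha_F$ and $\nabla_N \alpha_F$ (for $N=3$ or $N=4$) is paired with the $H$ component of ${}^{(O)}\pi$: both factors are anomalous at the level of $L^2_{(sc)}$ along $H_u$ or $\Hb_{\ub}$, so no $\delta^{1/4}$ gain is possible, and this product must be kept explicitly. It is exactly of the form $H\cdot \nabla_N \alpha_F$, which is why this term appears on the right-hand side of the three anomalous inequalities. All other contributions in the $\alpha$ case (namely $Z\cdot \nabla_N \alpha_F$, the trilinear $\pi$--$\psi$--$\Upsilon$ pieces, and $\nabla_O(Q[\psi]\cdot \alpha_F)$) either pair the $\alpha$-anomaly with a non-anomalous factor or are trilinear, and thus admit the $C\delta^{1/4}$ bound.

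The main obstacle is the careful bookkeeping of the null decomposition of ${}^{(O)}\pi\cdot D_N F$ to ensure that every double anomaly reduces, up to acceptable errors, to the single explicit term $H\cdot \nabla_N \alpha_F$; once this is verified, the stated estimates follow by integrating the schematic bounds along $H_u$ or $\Hb_{\ub}$ and invoking the Maxwell energy and second derivative estimates already established.
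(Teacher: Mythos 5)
Your proposal follows essentially the same route as the paper: compute the discrepancy $\Upsilon(\Lh_O D_N F) - \nabla_O\nabla_N\Upsilon$ directly, isolate the one double anomaly $H\cdot\nabla_N\alpha_F$ when $\Upsilon=\alpha_F$, and show everything else is bounded by $C\delta^{1/4}$ via H\"older, $L^4_{(sc)}$ estimates, and the angular momentum bounds of Theorem B. That is the paper's strategy too (the paper then carries out the direct computation for two representative cases).

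There is, however, a small but substantive inaccuracy in your middle paragraph. You write the principal term as $P[{}^{(O)}\pi]\cdot\nabla_N\Upsilon$, with the \emph{same} null component $\Upsilon$, and argue that for $\Upsilon\in\{\rho_F,\sigma_F,\alphab_F\}$ the null derivative $\nabla_N\Upsilon$ is non-anomalous, so pairing with $H$ is harmless. But the Lie derivative $\Lh_O$ mixes null components: the paper's explicit formula
\begin{align*}
\rho(\Lh_O D_4 F) - \nabla_O \nabla_4 \rho_F &= -\nabla_O \etab \cdot \alpha_F - \etab \cdot \nabla_O \alpha_F + \tfrac{1}{4}Z \cdot \nabla_4 \alpha_F + \tfrac{1}{2}\omega \cdot (Z \cdot \alpha_F)\\
&\quad +(2\zeta \cdot O + \tr H)\cdot \nabla_4 \rho_F -(2\zeta \cdot O + \tr H)\cdot(\etab\cdot \alpha_F)
\end{align*}
shows that the anomalous derivative $\nabla_4\alpha_F$ \emph{does} appear in the $\rho_F$ case; what saves the estimate is that it is coupled only to $Z$, which is non-anomalous, so $\|Z\|_{L^4_{(sc)}}\|\nabla_4\alpha_F\|_{L^4_{(sc)}}$ gains $\delta^{1/4}$. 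This is precisely the ``most difficult term'' the paper singles out. Your assertion that the only double anomaly is $H\cdot\nabla_N\alpha_F$ is correct, but it is a consequence of the structure of ${}^{(O)}\pi$ (in particular $\pi_{4a}=0$ and the placement of $H$ vs.~$Z$ in $\pi_{ab}$ and $\pi_{3a}$), and your argument never checks that $H$ does not couple to $\nabla_N\alpha_F$ outside the $\alpha$-component. You flag this as ``the main obstacle'' and defer it, but it is precisely the content of the lemma; without the null-component computation the proof is not complete. To close the gap you should carry out (or at least cite) the commutator computation verifying that in $\rho(\Lh_O D_N F)$, $\sigma(\Lh_O D_N F)$, $\alphab(\Lh_O D_N F)$, the anomalous $\nabla_N\alpha_F$ appears only with $Z$ (or with cubic $l.o.t.$ factors), and that $H\cdot\nabla_N\alpha_F$ appears only in $\alpha(\Lh_O D_N F)$.
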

\begin{proof}
Those estimates are again based on direct computation. We only show two of them and the rest can be derived in the same manner. We first consider $\|\rho(\Lh_O D_4 F) - \nabla_O \nabla_4 \rho_F\|_{L^2_{(sc)}(H)}$.
\begin{align*}
 \rho(\Lh_O D_4 F) - \nabla_O \nabla_4 \rho_F &= -\nabla_O \etab \cdot \alpha_F - \etab \cdot \nabla_O \alpha_F + \frac{1}{4}Z \cdot \nabla_4 \alpha_F + \frac{1}{2}\omega \cdot (Z \cdot \alpha_F)\\
&\quad +(2\zeta \cdot O + \tr H)\cdot \nabla_4 \rho_F -(2\zeta \cdot O + \tr H)\cdot(\etab\cdot \alpha_F).
\end{align*}
Notice that $\frac{1}{4}Z \cdot \nabla_4 \alpha_F$ is the most difficult term. We can bound it by using $L^4_{(sc)}$ estimates to save $\delta^{\frac{1}{4}}$ since $Z$ is not anomalous. This yields the desired estimates.

For $\|\alpha(\Lh_O D_4 F) - \nabla_O \nabla_4 \alpha_F-H\cdot \nabla_4 \alpha_F\|_{L^2_{(sc)}(H)}$, we compute
\begin{align*}
 \alpha(\Lh_O (D_4 F))-\nabla_O \nabla_4 \alpha_F &= 2\omega \cdot \nabla_O \alpha_F + 2\nabla_O \omega \cdot \alpha_F + (2\eta\cdot O + \frac{1}{2}(H-{}^t\!H) + \tr H)\nabla_4 \alpha_F \\
&\quad 	+ 2(2\eta\cdot O + \frac{1}{2}(H-{}^t\!H) + \tr H)\cdot(\omega \cdot \alpha_F).
\end{align*}
The estimates are immediate from this formula.
\end{proof}
\begin{remark}
Since we can trace back the anomaly of $H$ to the initial data, we can bound
\begin{equation*}
 \|(\alpha(\Lh_O D_4 F) - \nabla_O \nabla_4 \alpha_F, \alpha(\Lh_O D_3 F) - \nabla_O \nabla_3 \alpha_F)\|_{L^2_{(sc)}(H)},  \|\alpha(\Lh_O D_3 F) - \nabla_O \nabla_3 \alpha_F\|_{L^2_{(sc)}(\Hb)},
\end{equation*}
by $\Izero + C\delta^{\frac{1}{4}}$.
\end{remark}
Similarly, we derive
\begin{lemma}\label{two more 2}
Let $H= {H_u^{(0,\ub)}}$ and $\Hb = \Hb_{\ub}^{(0,u)}$. If $\delta$ is sufficiently small, for $\Upsilon_4 \in \{\alpha_F, \rho_F, \sigma_F\}$ or $\Upsilon_3 \in \{\rho_F, \sigma_F, \alphab_F\}$, we have
\begin{equation*}
 \|\Upsilon_4(\Lh_O D_a F) - \nabla_O \nabla_a \Upsilon_4\|_{L^2_{(sc)}(H)} + \|\Upsilon_3(\Lh_O D_a F) - \nabla_O \nabla_a \Upsilon_3\|_{L^2_{(sc)}(\Hb)} \lesssim \Izero + C \delta^{\frac{1}{4}}.
\end{equation*}
\end{lemma}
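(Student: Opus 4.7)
The strategy mirrors the one used for Lemma \ref{two more 1}: the difference $\Upsilon(\Lh_O D_a F) - \nabla_O\nabla_a \Upsilon$ is computed directly, shown to be a sum of lower-order products of $\pi$-components (i.e.\ $H$, $Z$, $\pi_{34}$, $\pi_{ab}$), null connection coefficients $\psi$ and null Maxwell components $\Upsilon$, and then bounded using H\"older's inequality together with the estimates already collected in \textbf{Theorem A}, \textbf{Theorem B} and the energy estimates \eqref{Fone_Foneb_Estimates}.

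First I would expand $\Lh_O(D_a F)$ as a Lie derivative of a $2$-form and extract its null components. Writing $G=D_aF$, one has a general identity of the schematic shape
\begin{equation*}
\Upsilon(\Lh_O G) \;=\; \nabla_O\Upsilon(G) \;+\; {}^{(O)}\!\pih * \Upsilon(G),
\end{equation*}
exactly as in the proof of Lemma \ref{comparison_maxwell_Lie} and Lemma \ref{two more 1}. Next I would use Lemma \ref{comparison_maxwell_2_D_a_D_b} (and the intermediate formulas derived there) to replace the null components of $D_aF$ by $\nabla_a\Upsilon$ plus bilinear errors of the form $\psi\cdot\Upsilon$ (with one exceptional linear correction of type $\Upsilon$ coming from the $\alpha_F$/$\alphab_F$ commutator, traceable to the initial data through $\Izero$).

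Plugging this in and commuting $\nabla_O$ with the $\psi$--$\Upsilon$ products produces three kinds of error contributions: (i) $\pi\cdot\nabla\Upsilon$, (ii) $\pi\cdot\psi\cdot\Upsilon$, and (iii) $\nabla_O(\psi\cdot\Upsilon)=O\cdot\nabla\psi\cdot\Upsilon + O\cdot\psi\cdot\nabla\Upsilon$. Each use of H\"older in ${L^2_{(sc)}(S_{u,\ub})}$ gains $\delta^{\frac12}$, so the trilinear contributions (ii) and the analogous terms in (iii) are manifestly $\lesssim C\delta^{\frac12}$. For (i) the $\pi$-components are non-anomalous in $L^{\infty}_{(sc)}$ by \textbf{Theorem B}, while $\|\nabla\Upsilon\|_{L^2_{(sc)}(H_u)}$ and $\|\nabla\Upsilon\|_{L^2_{(sc)}(\Hb_{\ub})}$ are controlled by $\Izero+C\delta^{\frac18}$ via \eqref{Fone_Foneb_Estimates}; pairing $L^\infty_{(sc)}$ with $L^2_{(sc)}$ on null hypersurfaces again gains $\delta^{\frac12}$, which absorbs all such errors into $\Izero+C\delta^{\frac14}$.

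The main obstacle is the single genuinely anomalous coupling $H\cdot\nabla_a\alpha_F$ (respectively $H\cdot\nabla_a\alphab_F$ along $\Hb$) which appears through the $\pi_{ab}$-part of ${}^{(O)}\!\pih$: the $L^2_{(sc)}$ norm of $H$ is anomalous of size $\delta^{-\frac12}\Izero$ since $\pi_{ab}$ has non-trivial trace part on $H_0$, and $\nabla\alpha_F$ is itself anomalous only through its initial data. The way around this is precisely the one used at the end of Lemma \ref{two more 1}: use the non-anomalous $L^\infty_{(sc)}$ bound on $H$ supplied by \textbf{Theorem B} and bound $\nabla\alpha_F$ in $L^2_{(sc)}(H_u)$ by $\Izero+C\delta^{\frac18}$, so the product lies in $L^2_{(sc)}$ with norm $\lesssim \Izero+C\delta^{\frac14}$, tracing the only residual anomaly back to the initial data via $\Izero$. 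Since ${}^{(O)}\!\pih$ contains no $\chib$-component along $H_u$ and no $\chi$-component along $\Hb_{\ub}$, no extra double anomaly can appear, and the estimate closes as claimed.
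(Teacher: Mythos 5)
Your proposal is correct and matches the route the paper intends: since the paper dispatches Lemma~\ref{two more 2} with only the word ``Similarly'', the intended proof is exactly what you lay out --- expand $\Lh_O$ of the two-form $G=D_aF$ using the Lie-derivative identity (as in Lemma~\ref{comparison_maxwell_Lie} and Lemma~\ref{two more 1}), replace the null components of $D_aF$ by $\nabla_a\Upsilon$ plus $\psi\cdot\Upsilon$ corrections, split the residual error into $\pi\cdot\nabla\Upsilon$, trilinear terms, and $\nabla_O(\psi\cdot\Upsilon)$, and close each with H\"older using Theorems A, B and the energy bounds \eqref{Fone_Foneb_Estimates}, tracing the residual anomaly to $\Izero$ through $\nabla\alpha_F$ and the non-anomalous $L^\infty_{(sc)}$ control of $\pi$, $H$, $Z$. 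One small imprecision: the comparison that turns null components of $D_aF$ into $\nabla_a\Upsilon+\psi\cdot\Upsilon$ is the one giving \eqref{comparison_alpha_D_a_F} (the remark following Lemma~\ref{comparison_maxwell_1}), not Lemma~\ref{comparison_maxwell_2_D_a_D_b}, which concerns the second derivative $D_aD_bF$; this is only a labelling slip, and the ``intermediate formulas'' you actually invoke are the right ones. Also note that the $\tr\chib_0\cdot\nabla_O\alpha_F$ contribution (from the $\chib\cdot\alpha_F$ piece of $\rho(D_aF)$) is really bounded via $\Fone\lesssim\Izero+C\delta^{1/8}$, so strictly the exponent produced is $\delta^{1/8}$ rather than $\delta^{1/4}$ --- a harmless discrepancy present already in the paper's statement, as both exponents serve only as a generic smallness marker.
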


\subsubsection{Energy Estimates on Anomalies} Let $H = H_{u}^{(0,\ub)}$. We take $X=Y=L$ in \eqref{D_X D_Y of F} and we also take $X = L$ in \eqref{stokes_energy_momentum} to derive
\begin{align*}
 \|\alpha(D_4 D_4 F)\|^2_{L^2_{(sc)}(H)} &\lesssim \|\alpha(D_4 D_4 F)\|^2_{L^2_{(sc)}(H)} + \delta ^{3} \doubleint_{\D(u,\ub)} \!\!\!{^{(L)}\!\pih \cdot T[D_4 D_4 F] + \Divergence T[D_4 D_4 F](L)}.
\end{align*}
In view of Lemma \ref{comparison_maxwell_2}, we rewrite the above as
\begin{align*}
\|\nabla_4 \nabla_4 \alpha_F\|^2_{L^2_{(sc)}(H)} &\lesssim C\delta^{\frac{1}{4}} + \|\nabla_4 \nabla_4 \alpha_F\|^2_{L^2_{(sc)}(H)} + \delta^{3} \doubleint_{\D(u,\ub)} \!\!\!\!\!{^{(L)}\!\pih \cdot T[D_4 D_4 F] + \Divergence T[D_4 D_4 F](L)}\\
&= C\delta^{\frac{1}{4}} + \|\nabla_4 \nabla_4 \alpha_F\|^2_{L^2_{(sc)}(H_0^{(0,\ub)})} + T_1 + T_2.
\end{align*}

For $T_1$, its integrands can be written as $\psi^{s_1} \cdot \Upsilon(D_4 D_4 F)^{s_2} \cdot \Upsilon(D_4 D_4 F)^{s_3}$ with $s_1 + s_2 + s_3 = 6$.  Since $\chib$ does not appear in ${}^{(L)}\pih$, we can bound $\psi^{s_1}$ in $L^{\infty}_{(sc)}$. Thus,
\begin{align*}
T_1 &\lesssim \delta^{\frac{1}{2}}\|\psi^{s_1}\|_{L^{\infty}_{(sc)}}\int_{0}^u \| \Upsilon(D_4 D_4 F)^{s_2}\|_{L^2_{(sc)}(H_{u'})}\| \Upsilon(D_4 D_4 F)^{s_3}\|_{L^2_{(sc)}(H_{u'})}\\
&\lesssim C \delta^{\frac{1}{2}} \int_{0}^u C\delta^{-\frac{1}{2}} \cdot C\delta^{-\frac{1}{2}}\lesssim C\delta^{-\frac{1}{2}}.
\end{align*}

For $T_2$, we first compute $\Divergence T[D_4 D_4 F](L)$ as follows\footnote{\, We always ignore the Hodge dual part since it can be treated exactly in the same manner.}
\begin{align*}
 \Divergence T[D_4 D_4 F](e_4) = -\frac{1}{2}D_4 D_4 F_{34} \cdot J_4(L,L) + D_4 D_4 F_{4a} \cdot J_a(L,L)= S_1 +S_2,
\end{align*}
where $J_\alpha(L,L)$ is given by
\begin{align*}
 J_\alpha(L,L)  &= (D^{\mu}L^{\nu} D_{\mu}D_4 F_{\nu \alpha} + D^{\mu}L^{\nu} D_4 D_{\mu} F_{\nu \alpha}) + D_4 D^{\mu}L^{\nu}D_{\mu}F_{\nu\alpha} \notag\\
&\quad +W_{4}{}^{\mu}{}_{\alpha}{}^{\nu}D_4 F_{\mu\nu} + D_4 W_{4}{}^{\mu}{}_{\alpha}{}^{\nu} F_{\mu\nu}.\notag
\end{align*}
The first two terms in the parentheses and the last term can be bounded in the same way as $T_1$ by bounding $\Upsilon$ in $L^\infty_{(sc)}$. Thus, by ignoring those terms, we have
\begin{equation*}
 J_\alpha(L,L)  = D_4 D^{\mu}L^{\nu}D_{\mu}F_{\nu\alpha} \notag +W_{4}{}^{\mu}{}_{\alpha}{}^{\nu}D_4 F_{\mu\nu}.
\end{equation*}

For $S_1$, since $ W_{4}{}^{\mu}{}_{4}{}^{\nu}$ is symmetric in $\mu$ and $\nu$, thus $W_{4}{}^{\mu}{}_{4}{}^{\nu}D_4 F_{\mu\nu}$= 0. Thus, we derive
\begin{align*}
 |S_{1}| &\lesssim  C\delta^{-\frac{1}{2}}+\delta^{3} \doubleint_{\D(u,\ub)}|D_4 D_4 F_{34} | | D_4 D^{\mu}L^{a}||D_{\mu}F_{a 4}|\\
&\lesssim C\delta^{-\frac{1}{2}} + \delta^{\frac{1}{2}} \int_{0}^u \|D_4 D_4 F_{34}\|_{L^2_{(sc)}(H_{u'})}\|D_4 D^{\mu}L^{a}\|_{L^4_{(sc)}(H_{u'})} \| D_{\mu}F_{a 4}\|_{L^4_{(sc)}(H_{u'})}\\
&\lesssim   C\delta^{-\frac{1}{2}}+ \delta^{\frac{1}{2}} \int_{0}^u C\delta^{-\frac{1}{2}}\cdot C\delta^{-\frac{1}{4}}\cdot C\delta^{-\frac{1}{4}} \lesssim C\delta^{-\frac{1}{2}}.
\end{align*}

For $S_2$, we have $S_2 =C\delta^{-\frac{1}{2}}+ \delta^{3} \doubleint_{\D(u,\ub)} \nabla_4 \nabla_4 \alpha_F ( D_4 D^{\mu}L^{\nu}D_{\mu}F_{\nu a}  + W_{4}{}^{\mu}{}_{a}{}^{\nu}D_4 F_{\mu\nu})$. By $L^4_{(sc)}$ estimates, we can bound $S_{2}$ exactly in the same way as we bound $S_1$. Putting all the estimates together, we have
\begin{align*}
\|\nabla_4 \nabla_4 \alpha_F\|^2_{L^2_{(sc)}(H_u^{(0,\ub)})} &\lesssim  C \delta^{-\frac{1}{2}}+\|\nabla_4 \nabla_4 \alpha_F\|^2_{L^2_{(sc)}(H_0)}.
\end{align*}
Multiplying by $\delta$, we derive the first anomalous estimate of this subsection
\begin{equation}\label{anomalous_D4D4_alpha_F}
 \Ftwo(\nabla_4 \nabla_4 \alpha_F) \lesssim \Izero + C \delta^{\frac{1}{4}}.
\end{equation}
\begin{remark}
As a byproduct, we have also showed
\begin{equation}\label{anomalous_D4D4_rho_sigma_F_Hb}
 \Ftwob(\nabla_4 \nabla_4 \rho_F, \nabla_4 \nabla_4 \sigma_F) \lesssim \Izero + C \delta^{\frac{1}{4}}.
\end{equation}
\end{remark}

We take $X=Y=L$ in \eqref{D_X D_Y of F} and we also take $X = \Lb$ in \eqref{stokes_energy_momentum} to derive
\begin{align*}
\|\nabla_4 \nabla_4 \rho_F,\nabla_4 \nabla_4 \sigma_F)\|^2_{L^2_{(sc)}(H)} &\lesssim \Izero \cdot\delta^{-1} + \delta^{3} \doubleint_{\D(u,\ub)} \!\!\!\!\!{^{(\Lb)}\!\pih \cdot T[D_4 D_4 F] + \Divergence T[D_4 D_4 F](\Lb)}\\
&=\Izero \cdot\delta^{-1} + T_1 + T_2.
\end{align*}

For $T_1$, its integrands can be written as $\psi^{s_1} \cdot \Upsilon(D_3 D_3 F)^{s_2} \cdot \Upsilon(D_3 D_3 F)^{s_3}$.
If $\psi \neq \chib$, we can easily bound those terms by $C\delta^{-\frac{1}{2}}$ as before. When $\chib$ appears, in view of Remark \ref{observation_2}, this term must be
$\tr\chib \cdot(|\nabla_4 \nabla_4 \rho_F|^2+ |\nabla_4 \nabla_4\sigma_F|^2)$. Thanks to Gronwall's inequality,  this term can be absorbed by the left hand side of the above equation.

For $T_2$, we can proceed exactly as before with the following exception: we can no longer ignore $D^{\mu}\Lb^{\nu} D_{\mu}D_4 F_{\nu \alpha} + D^{\mu}\Lb^{\nu} D_4 D_{\mu} F_{\nu \alpha}$ since $D^{\mu}\Lb^{\nu}$ may become $\tr\chib$. Once again, we can use Gronwall's inequality to overcome this difficulty. Thus, we have
\begin{equation}\label{anomalous_D4D4_rho_sigma_F_H}
 \Ftwo(\nabla_4 \nabla_4 \rho_F, \nabla_4 \nabla_4 \sigma_F) \lesssim \Izero + C \delta^{\frac{1}{4}}.
\end{equation}

We move on to other anomalous estimates. We now fix $X=Y=\Lb$ in \eqref{D_X D_Y of F}. We take $X = L$ or $\Lb$ in \eqref{stokes_energy_momentum} then add them together to derive
\begin{align*}
&\|(\nabla_3 \nabla_3 \alphab_F, \nabla_3 \nabla_3 \rho_F, \nabla_3 \nabla_3 \sigma_F)\|^2_{L^2_{(sc)}(\Hb_{\ub}^{(0,u)})}+ \|(\nabla_3 \nabla_3 \rho_F, \nabla_3 \nabla_3 \sigma_F)\|^2_{L^2_{(sc)}(H_{u}^{(0,\ub)})}\\
&\lesssim \Izero \delta^{-1}+  \doubleint_{\D(u,\ub)} \!\!\!\!{^{(N)}\!\pih \cdot T[D_3 D_3 F] + \Divergence T[D_3 D_3 F](N)}= \Izero \delta^{-1}+ T_1 + T_2.
\end{align*}
where $N= L$ or $\Lb$.

For $T_1$, its integrands can be written as $\psi^{s_1} \cdot \Upsilon(D_3 D_3 F)^{s_2} \cdot \Upsilon(D_3 D_3 F)^{s_3}$.  If $\psi \neq \chib$, we can easily bound those terms by $C\delta^{-\frac{1}{2}}$. When $\chib$ appears, as we just did, we could use Gronwall's inequality to remove this term. Thus, by ignoring the $\chib$ terms,
\begin{align*}
T_1 &\lesssim C \delta^{-\frac{1}{2}}.
\end{align*}

For $T_2$, we can also proceed exactly as before to bound it by $ C \delta^{-\frac{1}{2}}$.
Putting all the estimates of this subsection together, we derive
\begin{equation}\label{anomalous_D3D3_alphab_F}
\Ftwo(\nabla_3 \nabla_3 \rho_F, \nabla_3 \nabla_3 \sigma_F)+\Ftwob(\nabla_3 \nabla_3 \alphab_F, \nabla_3 \nabla_3 \rho_F, \nabla_3 \nabla_3 \sigma_F) \lesssim \Izero + C \delta^{\frac{1}{4}}.
\end{equation}

\subsubsection{Energy Estimates on Non-anomalies}
We take $O$ as some angular momentum and $(X,Y) \in \{(L, \Lb), (\Lb,L)\}$ to derive
\begin{align*}
  &\int_{H_{u}^{(0,\ub)}} |\Upsilon(\Lh_O D_Y F)^{(s)}|^2 +\int_{\Hb_{\ub}^{(0,u)}} |\Upsilon(\Lh_O D_Y F)^{(s-\frac{1}{2})}|^2 \lesssim  \int_{H_{0}^{(0,\ub)}} |\Upsilon(\Lh_O D_Y F)^{(s)}|^2  + \\
  & \quad  \doubleint_{\D(u,\ub)} | \Divergence T[\Lh_O D_Y F](X)|  + |{}^{(X)}\pi\cdot T[\Lh_O D_Y F]| = \Izero + T_1 +T_2.
\end{align*}
We take all the possible combination of $X$, $Y$ and $O$ and add the scale invariant version of the above estimates together to derive estimates. We need the following schematic expression \footnote{\, We use $\pi$ to denote the deformation tensor of $O$.}
\begin{align}\label{divergence_formula_two_derivatives}
D^{\mu}\Lh_O D_Y F_{\mu\nu} &= D^{\gamma}Y^{\delta} \cdot \Lh_O D_\gamma F_{\delta \nu} +  D\pi \cdot \Upsilon(D_Y F) \\
& \quad +\Psi(\Lh_O W) \cdot \Upsilon  + \Psi \cdot \Upsilon(\Lh_O F) + \pi \cdot \Upsilon(D_\mu D_Y F) + l.o.t.\notag,
\end{align}
where $l.o.t.$ stands for trilinear terms. They are at least quadratic in $\Upsilon$. When one derives estimates, they are extremely easy to handle since we can use twice $L^{\infty}_{(sc)}$ norms on $\Upsilon$. In this way, we can gain a whole $\delta$ which is good enough to compensate all kinds of anomalies. It is precisely for this reason that we shall omit these terms in sequel. We have a similar formula for $\Lh_O D_Y {}^*\!	F_{\mu\nu}$. By duality, we shall not consider those terms.

We first control $T_1$. In view of \eqref{divergence_formula_two_derivatives}, we claim that, thanks to Gronwall's inequality, we can ignore curvature terms $\Psi(\Lh_O W) \cdot \Upsilon$ and $\Psi \cdot \Upsilon(\Lh_O F)$. In fact, for $\Psi(\Lh_O W) \cdot \Upsilon$, because there is no anomaly in $\Psi(\Lh_O W)$, we use $L^2_{(sc)}$ estimates on $\Psi(\Lh_O W)$ and $L^\infty_{(sc)}$ estimates on $\Upsilon$, it contributes at most $C\delta^{\frac{1}{2}}$ to $T_1$ which is acceptable; for $\Psi \cdot \Upsilon(\Lh_O F)$, although we may encounter anomaly from curvature component $\Psi$, since $\Psi(\Lh_O F)$ is never anomalies, we can use $L^4_{(sc)}$ estimates on both terms, thus, it contributes at most $C\delta^{\frac{1}{4}}$  to $T_1$.

Without loss of generality, we can then rewrite \eqref{divergence_formula_two_derivatives} as
\begin{equation}\label{divergence_formula_two_derivatives_revisted}
D^{\mu}\Lh_O D_Y F_{\mu\nu} = D\pi \cdot \Upsilon(D_Y F)+ D^{\gamma}Y^{\delta} \cdot \Lh_O D_\gamma F_{\delta \nu}  + \pi \cdot \Upsilon(D_\mu D_Y F).	
\end{equation}
and we split $T_1 = T_{11}+T_{12}+T_{13}$ according to \eqref{divergence_formula_two_derivatives_revisted}. We control them one by one.

For $T_{11} = \int\!\!\!\int \Upsilon(\Lh_O D_Y F) \cdot  D\pi \cdot \Upsilon(D_Y F)$, we claim that $T_{11} \lesssim C\delta^{\frac{1}{4}}$. We observe that $\Upsilon(\Lh_O D_Y F)$ is never anomalous. If $D\pi \neq D_3 \pi_{3a}$,  we can use $L^4_{(sc)}$ estimates on both $D\pi$ and $\Upsilon(D_Y F)$ to bound the whole thing by $C \delta^{\frac{1}{4}}$; when $D\pi = D_3 \pi_{3a}$, we use trace estimates,
\begin{align*}
 &\quad \doubleint_{\D(u,\ub )} \Upsilon(\Lh_O D_Y F) \cdot  \nabla_3 Z \cdot \Upsilon(D_Y F)\\
&\lesssim \|\nabla_3 Z\|_{L^2_{(sc)}(S_{u,0})L^{\infty}([0,\ub])} \sup_{\ub}\|\Upsilon(D_Y F)\|_{Tr_{(sc)}(H_u)} \delta^{-\frac{1}{2}}\int_0^{\ub}  \|\Upsilon(\Lh_O D_Y F)\|_{L^2_{(sc)}(\Hb_{\ub'})} \lesssim C\delta^{\frac{1}{4}}.
\end{align*}

For $T_{12} = \int\!\!\!\int \Upsilon(\Lh_O D_Y F) \cdot  D^{\gamma}Y^{\delta} \cdot \Lh_O D_\gamma F_{\delta \nu}$, the worst case happens when $Y =\Lb$ and $\gamma=\delta =a$ (this brings in $\tr\chib_0$); other terms are easily bounded by $C\delta^{\frac{1}{2}}$. Thus, the worst case gives
\begin{equation*}
  \tr\chib_0 \cdot  \sum_{a=1}^{2}\doubleint_{\D(u,\ub )}\Upsilon(\Lh_O D_Y F) \cdot  \Lh_O D_a F_{a \nu}.	
\end{equation*}
Thus, according to Maxwell equation $D^{\mu}F_{\mu\nu} =0$, we replace $D_a F_{a \nu}$ by $D_4 F_{3\nu}$ and $D_3 F_{4\nu}$. Then, thanks to Gronwall's inequality, this term can be absorbed by the left hand side.

For $T_{13} = \int\!\!\!\int  \Upsilon(\Lh_O D_Y F) \cdot  \pi \cdot \Upsilon(D_\mu D_Y F)$. A careful computation shows that the integrand is of the form
\begin{equation*}
 \Upsilon(\Lh_O D_Y F) \cdot \pi^{\gamma\delta} \cdot D_{\gamma} D_Y F_{\delta \mu}.
\end{equation*}
with $\nu = 3$ or $4$. If $D_{\gamma} D_Y F_{\delta \mu}$ is not anomalous, we can easily bound these terms by $C \delta^{\frac{1}{2}}$. Thus, we only concentrate on anomalies. According to Lemma \ref{comparison_maxwell_2}, \ref{comparison_maxwell_2_D_a_D_null}, \ref{two more 1} and \ref{two more 2}, we essentially have two such terms,
\begin{equation}\label{lalala}
D_4 D_4 F_{a4} , \quad D_3 D_3 F_{a3}.
\end{equation}
Of course, there are other anomalous terms. But their anomalies are from lower order terms which allow us to use $L^4_{(sc)}$ estimates. To illustrate the idea, in view of Lemma \ref{comparison_maxwell_2_D_a_D_null}, we pick $\sigma(D_a D_3 F)$ as one example. The anomaly comes from $\alpha_F$. We can control it by
\begin{align*}
|\doubleint_{\D(u,\ub)}\Upsilon(\Lh_O D_Y F) \cdot \pi\cdot  \alpha_F| \lesssim |\int_{0}^u \|\Upsilon(\Lh_O D_Y F)\|_{L^2_{(sc)}(H_u)} \|\pi\|_{L^4_{(sc)}(H_u)} \|\alpha_F\|_{L^4_{(sc)}(H_u)} \lesssim C \delta^{\frac{1}{4}}.
\end{align*}

We turn to the terms in \eqref{lalala}. Since $\pi^{3a} = 0$, we can forget $D_3 D_3 F_{a3}$. Thus, it remains to control a bulk integral with integrand
\begin{equation*}
 \Upsilon(\Lh_O D_Y F) \cdot \pi^{4a} \cdot D_{4} D_Y F_{a 4} \sim \Upsilon(\Lh_O D_4 F) \cdot Z \cdot \nabla_{4} \nabla_4 \alpha_F.
\end{equation*}

The idea is to move bad derivative $\nabla_4$ to good component to avoid the anomalies. As we noticed in $T_{11}$, $\Upsilon \neq \alpha$. First of all, we have
\begin{align*}
\doubleint\nabla_O \nabla_4 \Upsilon \cdot Z \cdot \nabla_{4} \nabla_4 \alpha_F = -\doubleint\nabla_4 \Upsilon \cdot \nabla_O Z \cdot \nabla_{4} \nabla_4 \alpha_F -\doubleint\nabla_4 \Upsilon \cdot  Z \cdot \nabla_O \nabla_{4} \nabla_4 \alpha_F.
\end{align*}

We bound the first integral by $\Izero + C\delta^{\frac{1}{4}}$: we use $L^4_{(sc)}$ norm on $\nabla_4 \Upsilon$ and $\nabla Z$; we use $L^2_{(sc)}$ norm on $\nabla_4 \nabla_4 \alpha_F$ to trace the anomaly back to initial data.

To control the second one, we integrate by parts again. Notice that we can replace $\nabla_O \nabla_{4} \nabla_4 \alpha_F$ by $\nabla_{4} \nabla_O \nabla_4 \alpha_F$ since the commutator enjoys better estimates then can be ignored. Thus,
\begin{align*}
\doubleint \nabla_4 \Upsilon \cdot  Z &\cdot \nabla_4 \nabla_{O} \nabla_4 \alpha_F =\int_{H_u} \nabla_4 \Upsilon \cdot  Z \cdot \nabla_{O} \nabla_4 \alpha_F - \int_{H_0} \nabla_4 \Upsilon \cdot  Z \cdot \nabla_{O} \nabla_4 \alpha_F\\
&-\doubleint\nabla_4 \nabla_4 \Upsilon \cdot  Z \cdot \nabla_{O} \nabla_4 \alpha_F-\doubleint\nabla_4 \Upsilon \cdot  \nabla_4 Z \cdot \nabla_{O} \nabla_4 \alpha_F
\end{align*}
In this form, all of the anomalies can be traced back to initial data. Thus, we have control on $T_{13}$ as well as $T_1$.

The control of $T_2$ is much easier. In fact, according to the form of the integrand, we use Gronwall's inequality and anomalous estimates derived so far to show  $|T_{2}| \lesssim \Izero + C \delta^{\frac{1}{4}}$.

We collect all those estimates, together with the anomalous ones, we finally conclude
\begin{equation}\label{energy_estimates_Ftwo}
 \Ftwo + \Ftwob \lesssim \Izero + C\delta^{\frac{1}{8}}.
\end{equation}

\subsection{Energy Estimates on One Derivative of Curvature}

\subsubsection{Preliminaries}
We use $\R_u$ and $\Rb_{\ub}$ to denote the restriction of $\R$ and $\Rb$ to the interval $[0,u]$ and $[0,\ub]$ respectively. We recall following lemmas from Section 15.2 of \cite{K-R-09}.
\begin{lemma} \label{comparison_D_curvature} Let $H=H_u^{(0,\ub)}$ and $\Hb = \Hb_{\ub}^{(0,u)}$. If $\delta$ is sufficiently small, then
\begin{align*}
 &\delta^{\frac{1}{2}}\|\alpha(D_3 W)\|_{L^2_{(sc)}(H)} + \delta^{\frac{1}{2}}\|\beta(D_a W)\|_{L^2_{(sc)}(H)} \lesssim \Izero + \delta^{\frac{1}{4}}C,\\
 \|\alpha(D_a W)\|_{L^2_{(sc)}(H)} &+ \|(\beta(D_3 W, D_4 W))\|_{L^2_{(sc)}(H)} + \|(\betab(D_4 W,D_a W))\|_{L^2_{(sc)}(H)}\\
&\quad +\|(\rho,\sigma)(D_4 W, D_3 W, D_a W)\|_{L^2_{(sc)}(H)} +\|\alphab(D_4 W)\|_{L^2_{(sc)}(H)} \lesssim \R_u + \delta^{\frac{1}{4}}C,\\
\|(\rho, \sigma)(D_4 W, D_3 W, &D_a W)\|_{L^2_{(sc)}(\Hb)} +\|\betab(D_4 W, D_3 W, D_a W)\|_{L^2_{(sc)}(\Hb)}\\
&\quad +\|\beta(D_3 W)\|_{L^2_{(sc)}(\Hb)}+\|\alphab(D_4 W, D_a W)\|_{L^2_{(sc)}(\Hb)}\lesssim \Rb_{\ub} + \delta^{\frac{1}{4}}C.
\end{align*}
\end{lemma}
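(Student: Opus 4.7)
The plan is to reduce each of the listed norms to the standard quantities $\R_u$, $\Rb_{\ub}$, plus quadratic error terms of schematic form $\psi \cdot \Psi$ which are then handled by the scale-invariant H\"older inequality and the estimates already established in {\bf Theorem A}. For any null component $\Psi$ and any direction $N \in \{e_3,e_4,e_a\}$, a direct computation using $D_N e_\alpha = \chi\cdot e + \chib \cdot e + \eta\cdot e + \etab \cdot e + \omega \cdot e + \omegab \cdot e$ (with the appropriate schematic coefficients) yields
\[
\Psi(D_N W) = \nabla_N \Psi + \sum \psi \cdot \Psi'.
\]
I would first work out this expansion for each of the roughly fifteen pairs $(\Psi,N)$ appearing in the lemma, tracking which $\Psi'$ shows up, and then bound the principal term $\nabla_N \Psi$ directly by the definition of $\R_u$ or $\Rb_{\ub}$. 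This accounts for the non-anomalous placements (e.g.\ $\nabla_a\beta$, $\nabla_4\rho$, $\nabla_3\sigma$, etc.) and, for the two anomalous slots in the definitions of $\Rone,\Roneb$, justifies the $\delta^{1/2}$ prefactor automatically carried along on $\nabla_4\alpha$ and $\nabla_3\alphab$.

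For the quadratic error $\psi\cdot\Psi'$, the strategy is uniform: place the non-anomalous connection coefficient $\psi$ in $L^\infty_{(sc)}$ (using $\OSzeroinfinity\lesssim C$ from Proposition~\ref{close_bootstrap}) and the curvature component $\Psi'$ in $L^2_{(sc)}(H)$ or $L^2_{(sc)}(\Hb)$. The scale-invariant H\"older \eqref{Holder} gains $\delta^{1/2}$, so each such product is absorbed into $\delta^{1/2}C$. When both factors in the error are anomalous (this is the only subtlety), I would switch to a double $L^4_{(sc)}$ estimate: using Proposition~\ref{OZERO_FOUR_ESTIMATES} for $\chih,\chibh$ and the $L^4_{(sc)}$ curvature estimates from the end of Section~2 for $\alpha$, each such term is bounded by $\delta^{1/4}C$, which feeds directly into the $\delta^{1/4}C$ on the right-hand side of the lemma.

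The two genuinely delicate cases are $\delta^{1/2}\|\alpha(D_3 W)\|_{L^2_{(sc)}(H)}$ and $\delta^{1/2}\|\beta(D_a W)\|_{L^2_{(sc)}(H)}$, where the principal term is not directly listed in $\R_u$. Here I would use the null Bianchi equation \eqref{NBE_Lb_alpha} to rewrite $\nabla_3\alpha = \nabla\tensor\beta + \tr\chib\cdot \alpha + (\text{controllable remainder})$, where the $\nabla\beta$ part is non-anomalous and lies in $\Rone$, while the $\tr\chib\cdot\alpha$ term carries the anomaly of $\alpha$. Multiplying by the $\delta^{1/2}$ prefactor in the lemma exactly converts $\delta^{1/2}\|\tr\chib\cdot \alpha\|_{L^2_{(sc)}(H)}$ into a bound by $\Rzero[\alpha]$, and invoking the already-proved curvature estimate \eqref{estimatesforalpha}, namely $\Rzero[\alpha] \lesssim \Izero + C\delta^{1/4}$, yields precisely the stated right-hand side. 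The term $\beta(D_a W)$ is handled symmetrically: the dangerous cross piece is $\chi\cdot\alpha$ (double anomaly), and pairing $\chih$ in $L^\infty_{(sc)}$ with $\alpha$ in $L^2_{(sc)}(H)$ again produces $\Rzero[\alpha]$ after the $\delta^{1/2}$ prefactor is consumed, hence $\lesssim \Izero + \delta^{1/4}C$.

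The Einstein--Maxwell coupling contributes additional Ricci-type terms $\Upsilon\cdot D\Upsilon$ in \eqref{NBE_Lb_alpha}--\eqref{NBE_L_alphab}; these are trilinear in small quantities and, using the energy bounds \eqref{Fone_Foneb_Estimates} together with the $L^\infty_{(sc)}$ bound on $\Upsilon$ from Proposition~\ref{prop_L_infty_Psi_F}, are controlled with strictly more smallness in $\delta$ than the vacuum terms, so they never drive the final rate. The main obstacle is thus the case-by-case bookkeeping: for each $(\Psi,N)$ pair, one must verify that no undetected double anomaly survives the H\"older allocation, and that the anomalous prefactors $\delta^{1/2}$ on the left-hand side match exactly the anomalies inherited from $\alpha$, $\chih$, or $\chibh$. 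This is the same combinatorial check performed in Section~15.2 of \cite{K-R-09}, and the argument in the present setting is essentially unchanged modulo the (harmless) Maxwell contributions.
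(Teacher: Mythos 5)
The paper does not actually prove this lemma: it is recalled verbatim from Section~15.2 of \cite{K-R-09} with no argument given. Your proposal supplies the standard reconstruction, and the strategy is correct: expand $\Psi(D_N W)=\nabla_N\Psi+\psi\cdot\Psi'$, bound $\nabla_N\Psi$ by $\R_u$ or $\Rb_{\ub}$, allocate the quadratic error by scale-invariant H\"older (one factor $L^\infty_{(sc)}$, or both $L^4_{(sc)}$ for double anomalies), and for the two anomalous slots use the Bianchi equation \eqref{NBE_Lb_alpha} together with \eqref{estimatesforalpha} (legitimate here, since it is proved in the earlier anomalous-curvature subsection) to convert the $\tr\chib_0\cdot\alpha$ contribution into $\Rzero[\alpha]\lesssim\Izero+C\delta^{1/4}$.

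Two small bookkeeping slips worth fixing if you carry out the case analysis in full. First, for $\beta(D_a W)$ the dangerous frame-derivative correction comes from $D_a e_3=\chib_{ab}e_b+\zeta_a e_3$ acting on the $e_3$ slot of $\beta_b=\tfrac12 W_{b434}$, producing $\chib\cdot\alpha$ rather than $\chi\cdot\alpha$; this does not change the estimate (treat $\tr\chib_0$ as a bounded scalar and $\chibh$ in $L^\infty_{(sc)}$), but the coefficient should be identified correctly. Second, the Ricci contributions $D_3 R_{44}-D_4 R_{43}$ in \eqref{NBE_Lb_alpha} are of schematic form $\Upsilon\cdot D\Upsilon$, which is bilinear, not trilinear; the reason they are harmless is that $\|\Upsilon\|_{L^\infty_{(sc)}}\lesssim C$ and the Maxwell energy bounds \eqref{anomaly_of_nabla_3_alpha_F} and \eqref{Fone_Foneb_Estimates} control $\|D\Upsilon\|_{L^2_{(sc)}}$, so each H\"older application still gains a clean $\delta^{1/2}$.
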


\begin{lemma}\label{comparison_lemma_2} Let $H=H_u^{(0,\ub)}$, $\Hb = \Hb_{\ub}^{(0,u)}$ and $O$ is an angular momentum. Let $1\leq s_1 \leq \frac{5}{2}$, $1\leq s_2 \leq \frac{3}{2}$, $ s_3 \leq \frac{1}{2}$ and $\frac{1}{2} \leq s_4 \leq 2$. If $\delta$ is sufficiently small, we have
\begin{align*}
 &\|\alpha(\Lh_L R) -\nabla_L \alpha\|_{L^2_{(sc)}(H_u^{(0,\ub)})} \lesssim C, \quad \delta^{\frac{1}{2}}\|\alpha(\Lh_{\Lb} R) -\nabla_{\Lb} \alpha\|_{L^2_{(sc)}(H_u^{(0,\ub)})} \lesssim \Rzero + C \delta^{\frac{3}{4}},\\
 &\|(\Psi^{s_1}(\Lh_L R) -(\nabla_L \Psi)^{s_1},\Psi^{s_1}(\Lh_O R) -(\nabla_O \Psi)^{s_1})\|_{L^2_{(sc)}(H)}+ \|\Psi^{s_4}(\Lh_O R) -(\nabla_O \Psi)^{s_4}\|_{L^2_{(sc)}(\Hb)}\lesssim C \delta^{\frac{1}{4}}, \\
 &\|\Psi^{s_2}(\Lh_{\Lb} R) -(\nabla_{\Lb} \Psi)^{s_2}\|_{L^2_{(sc)}(H)} + \|\Psi^{s_3}(\Lh_{\Lb} R) -(\nabla_{\Lb} \Psi)^{s_3}\|_{L^2_{(sc)}(\Hb)} \lesssim \Rzero + C\delta^{\frac{1}{4}}.
\end{align*}
\end{lemma}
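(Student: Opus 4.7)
The plan is to compute, case by case, the exact schematic form of $\Psi(\Lh_X R) - \nabla_X \Psi$ for $X \in \{L, \Lb, O\}$, and then apply Hölder's inequality in scale-invariant norms using the bounds on connection coefficients, curvature, and Maxwell components already established in Theorem A and (for $\pi$) Theorem B. Recall that for any vector field $X$ with modified Lie derivative $\Lh_X$, a direct computation gives
\begin{equation*}
\Psi(\Lh_X R) - \nabla_X \Psi \;=\; \,^{(X)}\!\pih \cdot R \,+\, (\text{projection commutator terms}),
\end{equation*}
so that the entire difference is schematically of the form $\,^{(X)}\!\pih \cdot \Psi'$, where $\Psi'$ ranges over the null components of $R$ of suitably shifted signature. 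The signatures $s_1,\dots,s_4$ in the statement are precisely chosen to exclude the doubly anomalous combinations; my first task is thus to tabulate which $\Psi'$ can arise for each $(X,\Psi^{s_i})$.

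For $X = L$, the nonzero components of $\,^{(L)}\!\pih$ are $\chi$, $\eta$, and $\omegab$; in particular $\chib$ is absent, so one may safely place $\,^{(L)}\!\pih$ in $L^\infty_{(sc)}$ and gain the standard $\delta^{\frac{1}{2}}$ from Hölder. The only subtlety is when $\Psi' = \alpha$: for $X=L$ this only couples to $\chi$ and does not produce a double anomaly, giving $\|\alpha(\Lh_L R)-\nabla_L\alpha\|_{L^2_{(sc)}(H_u^{(0,\ub)})}\lesssim C$ after using $\OSzeroinfinity\lesssim C$. For the non-top signature components $\Psi^{s_1}$ one argues identically, with a free factor of $\delta^{\frac{1}{4}}$ arising from the $L^4_{(sc)}$ estimates on the components.

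For $X = \Lb$, the main obstacle appears: $\,^{(\Lb)}\!\pih$ contains the constant-order piece $\tr\chib_0$, from which one cannot gain $\delta^{\frac{1}{2}}$ via Hölder. I would split $\,^{(\Lb)}\!\pih = \tr\chib_0 \cdot (\text{identity part}) + \psi_g$, control the $\psi_g$ part as in the $L$-case, and for the $\tr\chib_0$ piece isolate the contribution of $\alpha$. The double anomaly $\tr\chib_0 \cdot \alpha$ forces the weight $\delta^{\frac{1}{2}}$ on $\|\alpha(\Lh_{\Lb}R)-\nabla_{\Lb}\alpha\|_{L^2_{(sc)}(H)}$ and produces the loss $\delta^{-\frac{1}{2}}\Rzero$ stated; the anomaly is then traced back to the initial data via the already-proven bound $\Rzero[\alpha]\lesssim \Izero + C\delta^{\frac{1}{4}}$, yielding $\Rzero + C\delta^{\frac{3}{4}}$. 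For the intermediate signatures $s_2$ (on $H$) and $s_3$ (on $\Hb$) one checks directly that $\alpha$ cannot appear as $\Psi'$, so the only dangerous coupling is $\tr\chib_0\cdot\Psi'$ with $\Psi'$ non-anomalous; this gives the loss $\Rzero + C\delta^{\frac{1}{4}}$ on both hypersurfaces.

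For $X = O$, the deformation tensor $\pi$ and its derivatives are controlled by Theorem B, in particular $\|\pi\|_{L^\infty_{(sc)}}\lesssim C$. The computation of $\Psi(\Lh_O R)-\nabla_O\Psi$ produces terms of the form $\pi\cdot \Psi'$ plus a single exceptional contribution $Z\cdot \alpha_*$ coming from the $3a$-component (analogous to Remark \ref{difference_Lie_O_alpha} in the Maxwell case); since $Z$ vanishes on $\Hb_0$ and is non-anomalous, one can absorb this by Hölder in $L^4_{(sc)}$ to gain $\delta^{\frac{1}{4}}$. The only genuine obstacle is the potential double anomaly $H\cdot \alpha$, which is excluded by the range of $s_1$ (on $H$) and $s_4$ (on $\Hb$) since $\alpha$ has signature $2$ and is excluded from both intervals. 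Collecting the three cases gives the stated bounds; the hard part is bookkeeping the signatures and carefully checking that each potentially dangerous product of two anomalous factors is in fact forbidden by the signature restriction, which is precisely why the four different ranges $s_1,\dots,s_4$ must be stated separately.
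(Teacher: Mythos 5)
The paper does not give a proof of this lemma at all: it is explicitly \emph{recalled} from Section 15.2 of \cite{K-R-09}, together with the preceding Lemma~\ref{comparison_D_curvature}. So there is no ``paper proof'' to compare against; I can only judge whether your strategy would actually deliver the stated estimates. The overall plan — write $\Psi(\Lh_X R) - \nabla_X\Psi$ schematically as $\pih\cdot\Psi'$, apply scale-invariant H\"older using Theorem~A for $\psi$'s and $\Psi$'s and Theorem~B for $\pi$, and single out the constant $\tr\chib_0$ inside $\,^{(\Lb)}\!\pih$ as the piece that defeats the $\delta^{1/2}$ gain — is sound and consistent with how the rest of the paper handles such comparison lemmas.

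There is, however, one concrete error in the $X=O$ case. You claim the double anomaly $H\cdot\alpha$ ``is excluded by the range of $s_1$ (on $H$) and $s_4$ (on $\Hb$) since $\alpha$ has signature $2$ and is excluded from both intervals.'' This is false under either reading of the index. Since $\Lh_O$ raises signature by $\tfrac12$, $\alpha(\Lh_O R)$ has signature $\tfrac52$, which sits exactly at the top of $[1,\tfrac52]$ — indeed that endpoint was chosen precisely so that $\alpha(\Lh_O R)$ on $H$ and $\beta(\Lh_L R)$ on $H$ are both admitted. So the $H\cdot\alpha$ product \emph{does} occur in $\alpha(\Lh_O R)-\nabla_O\alpha$ and cannot be dismissed by bookkeeping. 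It must be handled directly, just as the paper does for the exactly analogous Maxwell term in Remark~\ref{difference_Lie_O_alpha}: isolate the product, use $L^4_{(sc)}$ H\"older, and trace the anomalies of $H$ and $\alpha$ back to the initial data via $\|H\|_{L^4_{(sc)}}\lesssim \delta^{-1/4}\Izero + C$ and \eqref{ozerofour_for_chih}/\eqref{estimatesforalpha}. You already run exactly this argument for $\tr\chib_0\cdot\alpha$ in the $\Lb$ case; it needs to be imported into the $O$ case on $H$, not waved away. (On $\Hb$, with $s_4\leq 2<\tfrac52$, $\alpha(\Lh_O R)$ genuinely is out of range, so the $s_4$ half of your claim is fine, and there the dangerous component is instead $\beta(\Lh_O R)$ at $s_4=2$, which is anomalous on incoming cones and deserves the same scrutiny.) The side remark about ``a single exceptional contribution $Z\cdot\alpha_*$ from the $3a$-component, analogous to Remark~\ref{difference_Lie_O_alpha}'' is also off the mark: that remark identifies $H\cdot\alpha_F$, not $Z\cdot\alpha_F$, as the obstruction, and $Z$ is non-anomalous (it vanishes on $\Hb_0$), so it is not where the difficulty lives.
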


\subsubsection{Energy Estimates on Anomalies}

We take $O = L$ and $X = Y = Z = L$ in \eqref{stokes_Bel_Robinson} to derive
\begin{align*}
 \int_{H_u^{(0,\ub)}} |\alpha(\Lh_L W)|^2 &\lesssim \int_{H_0^{(0,\ub)}} |\alpha(\Lh_L W)|^2 + |\doubleint_{\D(u,\ub)} \Divergence Q[\Lh_L W]_{444}  + (\pih \cdot Q[\Lh_L W])_{444}|
\end{align*}
We classify the terms in the bulk integral into five types of expressions with $s_1 + s_2 + s_3 = 6$,
\begin{align*}
(I)\, \psi^{s_1}\cdot\Psi^{s_2}[\Lh_4 W]\cdot\Psi^{s_3}[\Lh_4 W], \, (II)\, \psi^{s_1}\cdot &\Psi^{s_2}[\Lh_4 W]\cdot (D\Psi)^{s_3},\,(III)(D\,^{(4)}\!\pih)^{s_1}\cdot\Psi^{s_2}[\Lh_4 W]\cdot\Psi^{s_3}, \\
(IV)\, \Upsilon^{s_1}\cdot (D_L D\Upsilon)^{s_2}\cdot \Psi^{s_3}[\Lh_4 W],\quad &(V) (D_L \Upsilon)^{s_1}\cdot (D\Upsilon)^{s_2}\cdot \Psi^{s_3}[\Lh_4 W].
\end{align*}

For type $(I)$, $(II)$ and $(IV)$ terms, we can use $L^\infty_{(sc)}$ norm on $\psi$ or $\Upsilon$. Thus, it is easy to bound all of them by $C\delta^{-\frac{1}{2}}$.


For type $(V)$ terms, we can proceed as follows,
\begin{align*}
 (V)  \lesssim   \delta^{\frac{1}{2}} \int_{0}^{u} \|\Psi(\Lh_L W)^{s_2}\|_{L^2_{(sc)}(H_{u'}^{(0,\ub)})}  \|D \Upsilon\|_{L^4_{(sc)}(H_{u'}^{(0,\ub)})} \|D \Upsilon\|_{L^4_{(sc)}(H_{u'}^{(0,\ub)})} \lesssim C \delta^{-\frac{1}{2}}.
\end{align*}

For type $(III)$ terms, we rewrite $(D\,^{(4)}\!\pih)^{s_1}$ as
\begin{align*}
 (D\,^{(4)}\!\pih)^{s_1} = (\Dslash \,^{(4)}\!\pih)^{s_1} + \tr\chib_0 \cdot \psi^{s_1} + \psi\cdot \psi,
\end{align*}
where $\Dslash \,^{(4)}\!\pih$ is the tangential part of $D\,^{(4)}\!\pih$. The last two terms can be ignored since they enjoy better estimates. We observe that $(\Dslash \,^{(4)}\!\pih)^{s_1}$ can not be $\nabla_4 \omega$ or $\nabla_3 \omegab$ (which we do not have estimates). In fact, since $\omega$ does not appear as a component of $\,^{(4)}\!\pih$, so $\nabla_4 \omega$ can not occur; we can also rule out $\nabla_3 \omegab$ by signature considerations. Thus,
\begin{align*}
 (III) &\lesssim  \delta^{\frac{1}{2}} \int_{0}^{u} \|\Psi(\Lh_L W)^{s_2}\|_{L^2_{(sc)}(H_{u'}^{(0,\ub)})}  \|(\Dslash\,^{(4)}\!\pih)^{s_1}\|_{L^4_{(sc)}(H_{u'}^{(0,\ub)})}\|\Psi^{s_3}\|_{L^4_{(sc)}(H_{u'}^{(0,\ub)})}\lesssim  C\delta^{-\frac{1}{2}}.
\end{align*}

Putting all these estimates together, we have $ \|\alpha(\Lh_L W)^{s_2}\|^2_{L^2_{(sc)}(H_{u}^{(0,\ub)})} \lesssim \delta^{-1}\Izero  + C \delta^{-\frac{1}{2}}$.

In view of Lemma \ref{comparison_lemma_2}, we conclude
\begin{equation*}
\delta^{\frac12}\|\nabla_4 \alpha\|_{L^2_{(sc)}(H_u^{(0,\ub)})} \lesssim \Izero + C\delta^{\frac{1}{8}}.
\end{equation*}

We then take $O = \Lb$ and $X = Y = Z = \Lb$ in \eqref{stokes_Bel_Robinson} to derive
\begin{align*}
 \int_{\Hb_{\ub}^{(0,u)}} |\alphab(\Lh_{\Lb} W)|^2 &\lesssim  \delta^{-1}\Izero + |\doubleint_{\D(u,\ub)} \Divergence Q[\Lh_{\Lb} W]_{333}  + (\,^{(3)}\!\pih \cdot Q[\Lh_{\Lb} W])_{333}|.
\end{align*}
We classify the terms in the bulk integral into five types of expressions with $s_1 + s_2 + s_3 = 1$,
\begin{align*}
(I)\, \psi^{s_1}\cdot\Psi^{s_2}[\Lh_3 W]\cdot\Psi^{s_3}[\Lh_3 W], \, (II)\, \psi^{s_1}\cdot&\Psi^{s_2}[\Lh_3 W]\cdot (D\Psi)^{s_3},\,(III)\,(D\,^{(3)}\!\pih)^{s_1}\cdot\Psi^{s_2}[\Lh_3 W]\cdot\Psi^{s_3}, \\
(IV)\,\Upsilon^{s_1}\cdot (D_3 D\Upsilon)^{s_2}\cdot \Psi^{s_3}[\Lh_3 W],\quad &(V)\, (D_3 \Upsilon)^{s_1}\cdot (D\Upsilon)^{s_2}\cdot \Psi^{s_3}[\Lh_3 W].
\end{align*}

For type $(I)$ terms, if $\psi^{s_1} \neq \tr\chib$, we can still use $L^\infty_{(sc)}$ norm on $\psi$ or $\Upsilon$ to bound them by $C\delta^{-\frac{1}{2}}$.
We now estimate the terms of type $(I)$. If $\tr\chib$ appears in $(I)$, it must come from $\,^{(3)}\!\pi^{ab} \cdot Q[\Lh_{\Lb} W])_{ab33} = \tr\chib |\betab(\Lh_{\Lb} W)|^2 $.  Thus, we can estimate $(I)$ as follows
\begin{align*}
 (I) & \lesssim C\delta^{-\frac{1}{2}} + \delta^{-1}\int_{0}^{\ub}  \|\betab(\Lh_{\Lb} W)\|^2_{L^2_{(sc)}(\Hb_{\ub'}^{(0,u)})}.
\end{align*}
In view Lemma \ref{comparison_lemma_2} and \eqref{NBE_Lb_betab},
\begin{align*}
  \|\betab(\Lh_{\Lb} W)\|_{L^2_{(sc)}(\Hb_{\ub}^{(0,u)})}&\lesssim \|\nabla_3\betab \|_{L^2_{(sc)}(\Hb_{\ub}^{(0,u)})} + C\\
&\lesssim \|\divergence \alphab+\tr \chib_0 \cdot \betab  + \psi\cdot\Psi + \Upsilon\cdot \nabla \Upsilon-D_3 R_{3b} \|_{L^2_{(sc)}(\Hb_{\ub}^{(0,u)})} + C\lesssim C.
\end{align*}
Hence, $ (I) \lesssim C\delta^{-\frac{1}{2}}$.

For type $(II)$ terms, since $s_3 \leq 1$,  we can integrate $(D\Psi)^{s_3}$ along $\Hb_{\ub}$. Thus,
\begin{align*}
 (II) & \lesssim  \delta^{-1}\int_{0}^{\ub} \|\Psi[\Lh_3 W]\|_{L^2_{(sc)}(\Hb_{\ub'}^{(0,u)})} \|(D\Psi)^{s_3}\|_{L^2_{(sc)}(\Hb_{\ub'}^{(0,u)})} \\
&\lesssim  \delta^{-1}\int_{0}^{\ub} \|\alphab(\Lh_{\Lb} W)\|^2_{L^2_{(sc)}(\Hb_{\ub'}^{(0,u)})}  +  \delta^{-1}\int_{0}^{\ub}  |\Rb(u,\ub')|^2 + C\delta^{\frac{1}{4}}.
\end{align*}

For type $(III)$, $(IV)$ and $V$ terms, they can be treated exactly as before (the $(III)$ terms in for $X=Y=Z=O=L$), thus $(III)+(IV)+(V)  \lesssim C \delta^{-\frac{1}{2}}$.

Putting all estimates together, after a standard use of Gronwall's inequality, we derive
\begin{equation*}
 \delta^{\frac{1}{2}}\|\nabla_3 \alphab\|_{L^2_{(sc)}(\Hb_{\ub}^{(0,u)})} \lesssim \Izero + C \delta^{\frac{1}{8}}.
\end{equation*}

\subsubsection{Energy Estimates on Non-anomalies}
We take an angular momentum $O$ and $X , Y, Z \in \{L,\Lb\}$ in \eqref{stokes_Bel_Robinson} to derive
\begin{align*}
  & \quad \int_{H_{u}^{(0,\ub)}} |\Psi^s(\Lh_O W)|^2 +\int_{\Hb_{\ub}^{(0,u)}} |\Psi^{s-\frac{1}{2}}(\Lh_O W)|^2 \\
& \lesssim  \int_{H_{0}^{(0,\ub)}} |\Psi^s(\Lh_O W)|^2 + |\doubleint_{\D(u,\ub)} \Divergence Q[\Lh_{O} W](X,Y,Z)  + (\pih \cdot Q[\Lh_{O} W])(X,Y,Z)|
\end{align*}
As before, we sum all the possible choice for $O$, $X$, $Y$ and $Z$. The integrands of the bulk integral can be classified into five types as before,
\begin{align*}
(I)\,\psi^{s_1} \Psi^{s_2}[\Lh_O W] \Psi^{s_3}[\Lh_O W], \, (II)\, (\,^{(O)}\!\pih)^{s_1}& \Psi^{s_2}[\Lh_O W] (D\Psi)^{s_3},\,(III)\, (D\,^{(O)}\!\pih)^{s_1} \Psi^{s_2}[\Lh_O W] \Psi^{s_3},\\
(IV)\, \Upsilon^{s_1} (D_O D\Upsilon)^{s_2} \Psi^{s_3}[\Lh_O W],\quad& (V)\,(D_O \Upsilon)^{s_1} (D\Upsilon)^{s_2}  \Psi^{s_3}[\Lh_O W],
\end{align*}
where $s_1 + s_2 + s_3 = 2s \geq 2$. We shall bound them one by one.

For type $(I)$ terms,  if $\psi^{s_1} \neq \tr\chib$, those terms are obvious bounded by $C\delta^{\frac{1}{2}}$. For the worst $\psi^{s_1} = \tr\chib$, we can bound those terms by
\begin{equation*}
\sum_{s\geq 1} \int_0^{u}  \|\Psi^s(\Lh_O W)\|^2_{L^2_{(sc)}(H_{u'}^{(0,\ub)})} + \sum_{s\leq 2} \delta^{-1} \int_0^{\ub}  \|\Psi^s(\Lh_O W)\|^2_{L^2_{(sc)}(\Hb_{\ub'}^{(0,u)})}.
\end{equation*}
Thanks to Gronwall's inequality, this quantity can be absorbed by the left hand side. Hence, we can say that $(I) \lesssim C \delta^{\frac{1}{2}}$.

For type $(II)$ terms, if $(D\Phi)^{s_3}$ is not anomalous, those terms are obvious bounded by $C\delta^{\frac{1}{2}}$. When $(D\Phi)^{s_3}$ is anomalous, we observe that $D\Phi  \in \{\alpha(D_3 W), \beta(D_a W), \alpha(D_4 W), \alphab(D_3 W)\}$. We bound these four possibilities one by one.

When $D\Phi ^{s_3} = \alpha(D_3 W)$, according to \eqref{NBE_Lb_alpha}, we have $\alpha(D_3 W) = -\frac{1}{2}\tr\chib_0 \cdot \alpha + E_1$ where $E_1$ is not anomalous, i.e. we have
$\|E_1\|_{L^2_{(sc)}(H_{u}^{(0,\ub)})} \lesssim C$ or $\|E_1\|_{L^2_{(sc)}(\Hb_{\ub}^{(0,u)})} \lesssim C$. Thus, we can regard $D\Phi ^{s_3} = \alpha(D_3 W)$ as $\alpha$. We then use $L^4_{sc}$ norms on $\alpha$ and $(\,^{(O)}\!\pih)^{s_1}$ so that the whole thing is bounded by $C\delta^{\frac{1}{4}}$.

When $D\Phi ^{s_3} = \beta(D_a W)$, we can proceed exactly before to locate the anomaly of $D\Phi ^{s_3} = \beta(D_a W)$ to $\alpha$. Thus, those terms are also bounded by $C\delta^{\frac{1}{4}}$.

When $D\Phi ^{s_3} = \alphab(D_3 W)$,  according to $\alphab(D_3 R) = \nabla_3 \alpha + 4\omegab\cdot \alphab$, we can replace $\alphab(D_3 W)$ by $\nabla_3 \alphab$ and ignore other terms (which are bounded by $C\delta^{\frac{1}{4}}$); similarly, we replace $\Psi^{s_2}[\Lh_O W]$ by $(\nabla_O \Psi)^{s_2}$. Thus, it suffices to bound the following integral,
\begin{align*}
 \doubleint_{D(u,\ub)}(\,^{(O)}\!\pih)^{s_1}\cdot&( \nabla_O \Psi)^{s_2} \cdot \nabla_3 \alphab = \int_{H_u}(\,^{(O)}\!\pih)^{s_1}\cdot (\nabla_O \Psi)^{s_2} \cdot \alphab -\int_{H_0}(\,^{(O)}\!\pih)^{s_1}\cdot(\nabla_O \Psi)^{s_2} \cdot \alphab\\
& -\doubleint_{D(u,\ub)} (\nabla_3 \,^{(O)}\!\pih)^{s_1}\cdot(\nabla_O \Psi)^{s_2}\cdot  \alphab -\doubleint_{D(u,\ub)}(\,^{(O)}\!\pih)^{s_1}\cdot\nabla_3 (\nabla_O \Psi)^{s_2}\cdot \alphab.
\end{align*}

We ignore two boundary terms since they are by $C\delta^{\frac{1}{2}}$ in an obvious way.

For first bulk integral, if $(\nabla_3 \,^{(O)}\!\pih)^{s_1} \neq (D_3 \,^{(O)}\!\pih)_{3a}$, we use $L^4_{(sc)}$ norms on $\alphab$ and $(\nabla_3 \,^{(O)}\!\pih)^{s_1}$ to bound the whole thing by $C\delta^{\frac{1}{2}}$. If $(\nabla_3 \,^{(O)}\!\pih)^{s_1} = \nabla_3 Z$, we use trace estimates,
\begin{align*}
|\doubleint\nabla_3 Z \cdot(\nabla_O \Psi)^{s_2} \cdot\alphab |\lesssim \|\nabla_3 Z\|_{L^2_{(sc)}(S_{u,0})L^{\infty}[0,\ub]} \sup_{\ub}\|\alphab\|_{Tr_{(sc)}(H_u)} \delta^{-\frac{1}{2}}\int_0^{\ub}  \|(\nabla_O \Psi)^{s_2}\|_{L^2_{(sc)}(\Hb_{\ub'})}.
\end{align*}
By signature considerations, we know that $(\nabla_O \Psi)^{s_2}$ can be integrated along $\Hb_{\ub}$. In fact,  since $s_2 =2s \in \mathbb{Z}$ and $s_2 < 3$, we know $s_2 \leq 2$. Thus, we can bound first bulk integral by $C\delta^{\frac{1}{2}}$.

For second bulk integral term, we replace $\nabla_3 (\nabla_O \Psi)^{s_2}$ by $\nabla_O [(\nabla_3 \Psi)^{s_2-\frac{1}{2}}]$ and we ignore the commutator which can be bounded by $C\delta^{\frac{1}{2}}$ in an obvious way. We integrate by parts again,
\begin{align*}
\doubleint_{D(u,\ub)} & (\,^{(O)}\!\pih)^{s_1} \cdot \nabla_O [(\nabla_3 \Psi)^{s_2-\frac{1}{2}}] \cdot \alphab =-\doubleint_{D(u,\ub)}\nabla_O [(\,^{(O)}\!\pih)^{s_1}] \cdot (\nabla_3 \Psi)^{s_2-\frac{1}{2}} \cdot \alphab\\
  &-\doubleint_{D(u,\ub)}(\,^{(O)}\!\pih)^{s_1} \cdot (\nabla_3 \Psi)^{s_2-\frac{1}{2}} \cdot \nabla_O \alphab -\doubleint_{D(u,\ub)}(\,^{(O)}\!\pih)^{s_1} \cdot (\nabla_3 \Psi)^{s_2-\frac{1}{2}} \cdot (\nabla^a O_a)\alphab
\end{align*}
By signature considerations, $\Psi \neq \alphab$, thus $\nabla_3 \Psi$ is not anomalous. We can easily bound these three integrals. Thus, second bulk integral can be bounded by $C\delta{^\frac{1}{2}}$. We can also conclude that when $D\Phi ^{s_3} = \alphab(D_3 W)$, all the terms are bounded by $C\delta^{\frac{1}{4}}$.

When $D\Phi ^{s_3} = \alpha(D_4 W)$, similarly, we can bound those terms by $C\delta^{\frac{1}{4}}$. The details can be found in Section 15.10 of \cite{K-R-09}. Thus, Hence, we can say that $(II) \lesssim C \delta^{\frac{1}{4}}$.

For type $(III)$ terms, if $(D\,^{(O)}\!\pi)^{s_1}\neq D_3\,^{(O)}\!\pi_{3a}$, we use $L^4_{(sc)}$ norms on $(D\,^{(O)}\!\pi)^{s_1}$ and $\Psi^{s_3}$ to bound those terms by $C\delta^{\frac{1}{4}}$. If $(D\,^{(O)}\!\pi)^{s_1}\neq D_3\,^{(O)}\!\pi_{3a}$, we replace $D_3\,^{(O)}\!\pi_{3a}$ by $\nabla_3 Z$ and bound it as follows,
\begin{align*}
\doubleint\nabla_3 Z\cdot\Psi^{s_2}[\Lh_O W]\cdot  \Psi^{s_3}\lesssim\|\nabla_3 Z\|_{L^2_{(sc)}(S_{u,0})L^{\infty}[0,\ub]} \sup_{\ub}\|\Psi^{s_3}\|_{Tr_{(sc)}} \delta^{-\frac{1}{2}}\int_0^{\ub}  \|(\nabla_O \Psi)^{s_2}\|_{L^2_{(sc)}(\Hb_{\ub'})}.
\end{align*}
Those bounds yield $(III) \lesssim C\delta{^\frac{1}{4}}$.

For type $(IV)$ terms, we bound them as follows,
\begin{align*}
(IV) 
&\lesssim \delta^{-1}\int_{0}^{\ub} \|\Psi(\Lh_O W)^{s_2}\|_{L^2_{(sc)}(\Hb_{\ub'}^{(0,u)})} \delta^{\frac{1}{2}}\|\Upsilon^{s_1}\|_{L^{\infty}_{(sc)}(\Hb_{\ub'}^{(0,u)})}\|D_O D\Upsilon^{s_2}\|_{L^2_{(sc)}(\Hb_{\ub'}^{(0,u)})} \lesssim C \delta^{\frac{1}{4}}.
\end{align*}

For type $(V)$ terms, we bound them as follows,
\begin{align*}
 (V) 
& \lesssim \delta^{-1}\int_{0}^{\ub} \|\Psi(\Lh_O W)^{s_2}\|_{L^2_{(sc)}(\Hb_{\ub'}^{(0,u)})}  \delta^{\frac{1}{2}}\| (D\Upsilon)^{s_2}\|_{L^4_{(sc)}(\Hb_{\ub'}^{(0,u)})} \| (\nabla_O \Upsilon)^{s_2}\|_{L^4_{(sc)}(\Hb_{\ub'}^{(0,u)})}\lesssim C\delta^{\frac{1}{4}}.
\end{align*}

Putting everything together, we summarize the estimates in this subsection in the following inequalities:
\begin{align*}
 \|\nabla \alpha \|_{L^2_{(sc)}(H_{u}^{(0,\ub)})} +\|\nabla \beta \|_{L^2_{(sc)}(\Hb_{\ub}^{(0,u)})} &\lesssim \|\nabla \alpha \|_{L^2_{(sc)}(H_{0}^{(0,\ub)})} + C \delta^{\frac{1}{4}},\\
\sum_{s\geq 2}\|(\nabla \Psi)^{s}\|_{L^2_{(sc)}(H_{u}^{(0,\ub)})} +\|(\nabla \Psi)^{s-\frac{1}{2}} \|_{L^2_{(sc)}(\Hb_{\ub}^{(0,u)})} &\lesssim \sum_{s\geq 2}\|(\nabla \Psi)^{s}\|_{L^2_{(sc)}(H_{0}^{(0,\ub)})} + C \delta^{\frac{1}{4}}.
\end{align*}

Combining all the energy estimates, for sufficiently small $\delta$, we establish {\bf{Theorem C}}. We remark that the constant $C$ from now on will depend only on the size of initial data $\Izero$.


\section{Formation of Trapped Surfaces}
Based estimates derived in previous sections, we demonstrate how a trapped surface forms. More precisely, we show that $S_{1,\delta}$ is trapped. The following equations are responsible for the formation,
\begin{equation}\label{Final_Equation_1}
 \nabla_4 \tr \chi + \frac{1}{2}(\tr \chi)^2 = -|\chih|^2-2\omega \cdot \tr \chi - |\alpha_F|^2,
\end{equation}
\begin{equation}\label{Final_Equation_2}
 \nabla_3 \chih + \frac{1}{2} \tr \chib \cdot\chih = \nabla \tensor \eta +2\omegab \cdot \chih -\frac{1}{2}\tr \chi \cdot \chibh +\eta \tensor \eta + \hat{T}_{ab} = E_1,
\end{equation}
\begin{align}\label{Final_Equation_3}
\nabla_3 {\alpha_F}  + \frac{1}{2}\tr\chib \cdot \alpha_F &= -\nabla {\rho_F} + ^*\!\nabla {\sigma_F}-2\,^*\!\etab \cdot {\sigma_F} + 2\etab \cdot {\rho_F} + 2\omegab {\alpha_F}-\chih \cdot {\alphab_F} = E_2.
\end{align}
We also recall that $r = \ub-u+ r_0$ and $\tr\chib_0 = -\frac{2}{r} = -\frac{2}{\ub-u+ r_0}$. We assume  where $r_0 \sim 10$.

\subsection{Formation Mechanism}
We rewrite \eqref{Final_Equation_1} as $\frac{d}{d\ub} \tr \chi  = -\frac{1}{2\Omega} (\tr\chi)^2-\frac{1}{\Omega}(|\chih|^2+ |\alpha_F|^2) \leq -\frac{1}{\Omega}(|\chih|^2+ |\alpha_F|^2)$, therefore,
\begin{equation}\label{Equation_a}
 \tr\chi(u, \delta) \leq \tr\chi(u,0) - \int_0^\delta \frac{1}{\Omega}(|\chih|^2+ |\alpha_F|^2)(u,\ub) d\ub
\end{equation}
In view of \eqref{Final_Equation_3}, we have $\nabla_3 (r^2 |\alpha_F|^2)=r^2|\alpha_F|^2[-\trchibt + \frac{2}{r}(\Omega-1)]+r^2 (E_2 \cdot \alpha_F)$, thus,
\begin{align*}
 \frac{d}{du} (r^2 |\alpha_F|^2) &=\frac{r^2|\alpha_F|^2}{\Omega}[-\trchibt + \frac{2}{r}(\Omega-1)]+\frac{r^2 (E_2\cdot \alpha_F)}{\Omega}=F_2,
\end{align*}
which implies
\begin{equation*}
 |\alpha_F|^2(u,\ub) = \frac{r(0,\ub)^2}{r(u,\ub)^2}|\alpha_F(0,\ub)|^2 +  \frac{r(0,\ub)^2}{r(u,\ub)^2}\int_0^{u}F_2(u',\ub)=\frac{(\ub+r_0)^2}{(\ub - u + r_0)^2}|\alpha_F(0,\ub)|^2 +G_2,
\end{equation*}
with $G_2 = \frac{r(0,\ub)^2}{r(u,\ub)^2}\int_0^{u}F_2(u',\ub)$. Similarly, we have
\begin{equation*}
 |\chih|^2(u,\ub) =\frac{(\ub+r_0)^2}{(\ub - u + r_0)^2}|\chih(0,\ub)|^2 +G_1,
\end{equation*}
with $ G_1 = \frac{r(0,\ub)^2}{r(u,\ub)^2}\int_0^{u}(\frac{r^2|\chih|^2}{\Omega}[-\trchibt + \frac{2}{r}(\Omega-1)]+\frac{r^2 (E_1\cdot \chih)}{\Omega})(u',\ub)$.
Base on the following lemma whose proof is deferred to next subsection, we can ignore two error terms $G_1$ and $G_2$.
\begin{lemma}\label{Smallness_Lemma}If $\delta$ is sufficiently small, then
\begin{equation}\label{Smallness_Estimates}
 \|\int_{0}^{\delta} G_1(u,\ub') d\ub'\|_{L_{u,\theta}^{\infty}}+\|\int_{0}^{\delta} G_2(u,\ub') d\ub'\|_{L_{u,\theta}^{\infty}}\leq C(\Izero) \delta^{\frac{1}{2}} .
\end{equation}
\end{lemma}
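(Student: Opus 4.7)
The plan is to estimate $\int_0^\delta G_1\, d\ub$ and $\int_0^\delta G_2\, d\ub$ pointwise in $(u,\theta)$, reducing each to a $\ub$-integral after swapping the order of integration by Fubini. The $r$-ratio $r(0,\ub)^2/r(u,\ub)^2$ is uniformly bounded since $r_0\sim 10$, $u\leq 2$, and $\ub\leq\delta\ll 1$; likewise $\Omega\in[1/4,4]$. Thus it suffices to bound
\[
\int_0^u\!\!\int_0^\delta\!\Big|\,|\chih|^2\bigl(-\trchibt+\tfrac{2}{r}(\Omega-1)\bigr)+E_1\!\cdot\!\chih\,\Big|(u',\ub,\theta)\,d\ub\,du'
\]
(and its analogue with $E_2\cdot\alpha_F$ for $G_2$) by $C\delta^{1/2}$, uniformly in $(u,\theta)$. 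Since Theorem A provides \emph{pointwise} control through the $L^\infty_{(sc)}$ bounds on all connection and Maxwell components, the estimates can indeed be carried out at each angular point.

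For the first term of $G_1$, the bracket $-\trchibt+\tfrac{2}{r}(\Omega-1)$ is pointwise of size $O(\delta^{1/2})$: this follows from $\|\trchibt\|_{L^\infty_{(sc)}}\lesssim C$ (so $\|\trchibt\|_{L^\infty}\lesssim C\delta^{1/2}$) and from integrating $\omegab=-\tfrac12\nabla_3\Omega$ with $\|\omegab\|_{L^\infty}\lesssim C\delta^{1/2}$. The remaining factor satisfies $\int_0^\delta |\chih|^2(u',\ub,\theta)\,d\ub\lesssim\Izero$ uniformly, thanks to the initial ansatz \eqref{initial_ansatz_1_prime} propagated along $u'$ using \eqref{NSE_Lb_chibh} and the $L^\infty$ bounds from Theorem A. Multiplying and integrating $du'\leq 1$ gives an $O(\delta^{1/2})$ contribution, as desired.

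The main difficulty is controlling $\int_0^\delta E_1\cdot\chih\,d\ub$, since $\chih$ is anomalous and $E_1$ contains $\hat T_{ab}$ (quadratic in $\Upsilon$, with an anomalous piece $\alpha_F\otimes\alpha_F$) as well as $\omegab\chih$ and $\tfrac12\tr\chi\,\chibh$. I would apply Cauchy--Schwarz in $\ub$,
\[
\int_0^\delta |E_1\cdot\chih|\,d\ub\;\leq\;\Bigl(\int_0^\delta |E_1|^2\,d\ub\Bigr)^{\!1/2}\Bigl(\int_0^\delta |\chih|^2\,d\ub\Bigr)^{\!1/2},
\]
and split $E_1$ into good pieces and dangerous pieces. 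The terms $\nabla\tensor\etab$ and $\eta\tensor\eta$ are bounded in $L^\infty$ by $C\delta^{1/2}$ (scale invariance), so they give $(\int|E_1|^2d\ub)^{1/2}\lesssim C\delta$, and combined with the $O(1)$ bound on $\int|\chih|^2d\ub$ yield $O(\delta)$. For $\omegab\chih$ use $|\omegab|\lesssim C\delta^{1/2}$ and $\int|\chih|^2d\ub\lesssim 1$, giving $O(\delta^{1/2})$. For $\tr\chi\,\chibh$, $\chibh$ is bounded pointwise and $\tr\chi$ is $O(1)$, so again $O(\delta^{1/2})$. The subtlest piece is $\hat T_{ab}\sim\alpha_F\otimes\alpha_F+\text{(non-anomalous)}$: here one uses that $\int_0^\delta |\alpha_F|^2\,d\ub\lesssim\Izero$ by the same initial-data ansatz, then Cauchy--Schwarz in $\ub$ pairing with $|\chih|$ yields a pointwise factor of $\delta^{1/2}$ after absorbing one initial-data constant.

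The estimate for $G_2$ proceeds identically: the good factor $[-\trchibt+\tfrac{2}{r}(\Omega-1)]$ provides $\delta^{1/2}$ against $\int_0^\delta|\alpha_F|^2d\ub\lesssim 1$, and $E_2\cdot\alpha_F$ is handled by Cauchy--Schwarz in $\ub$, with the troublesome $\chih\cdot\alphab_F$ piece bounded via $\int|\chih|^2d\ub\lesssim 1$ and the $L^\infty_{(sc)}$ bound on $\alphab_F$. I expect the main obstacle to be the bookkeeping of anomalous terms in $E_1$ and $E_2$ to confirm that no double-anomaly occurs — once the signature analysis (already used repeatedly in Sections 2--4) is applied, every dangerous pair is saved either by the small factor $\delta^{1/2}$ coming from $[-\trchibt+\tfrac{2}{r}(\Omega-1)]$ or by Cauchy--Schwarz against the initial-data ansatz.
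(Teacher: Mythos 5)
Your high-level strategy — bound the quadratic terms in $G_1, G_2$ pointwise, then use Cauchy--Schwarz in $\ub$ for the pieces involving first angular derivatives — is aligned with the paper's plan, and most of the easy terms are handled correctly. But the treatment of the $\nabla\tensor\eta\cdot\chih$ piece of $G_1$ has a genuine gap.

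You claim that $\nabla\tensor\eta$ is ``bounded in $L^\infty$ by $C\delta^{1/2}$ (scale invariance).'' This is not correct. The signature of $\nabla\eta$ is $sgn(\eta)+\tfrac12 = 1$, so $sc(\nabla\eta)=-\tfrac12$ and $\|\nabla\eta\|_{L^\infty_{(sc)}}=\delta^{1/2}\|\nabla\eta\|_{L^\infty}$; a scale-invariant $L^\infty$ bound of size $C$ would therefore produce $\|\nabla\eta\|_{L^\infty}\lesssim C\delta^{-1/2}$, not $C\delta^{1/2}$. Moreover, the paper never establishes an $L^\infty_{(sc)}$ bound on $\nabla\eta$ at all: only $L^4_{(sc)}$ and trace-norm bounds are derived. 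Using the trace bound $\|\nabla\eta\|_{Tr_{(sc)}(H_u)}\lesssim C$ and $sc(\nabla\eta)=-\tfrac12$ gives $(\sup_{\theta}\int_0^\delta|\nabla\eta|^2\,d\ub)^{1/2}\lesssim C$, with \emph{no} gain in $\delta$. Combined by Cauchy--Schwarz with $(\int_0^\delta|\chih|^2\,d\ub)^{1/2}\lesssim C$, you get $\int_0^\delta|\nabla\tensor\eta||\chih|\,d\ub\lesssim C$, not $C\delta^{1/2}$, and integrating over $u'\in[0,u]$ cannot supply the missing $\delta^{1/2}$. (Incidentally the same issue shows $|\eta\tensor\eta|\lesssim C$, not $C\delta^{1/2}$, though that term is saved anyway because the $\ub$-integration provides a full $\delta$.) Compare the scale of $\nabla\rho_F$: $sc(\nabla\rho_F)=0$, so the trace bound \emph{does} give $(\int_0^\delta|\nabla\rho_F|^2\,d\ub)^{1/2}\lesssim C\delta^{1/2}$, which is why your Cauchy--Schwarz approach comes much closer to working for the $G_2$ side; the structural obstruction is specific to $\nabla\eta$.

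The paper fills exactly this gap with the auxiliary tensor $\phit$, defined by $\nabla_3\phit=\nabla\eta$ with $\phit(0,\ub)=0$. Since $\nabla_3$ differentiates in $u'$, the integral $\int_0^u\nabla\tensor\eta\cdot\chih\,du'$ is handled by an integration by parts in $u'$, producing a boundary term $\phit\cdot\chih$ and a bulk term with $\nabla_3\chih$. The point is then the refined bound $\|\phit\|_{L^\infty_{(sc)}}\lesssim C\varepsilon^{1/4}+C\delta^{1/8}$ proved immediately before the lemma; this is where the second ansatz \eqref{initial_ansatz_2} and Proposition \ref{refined estimates on nabla 2 eta} enter, via the $\varepsilon$-smallness of $\|(\nabla\rho,\nabla\sigma)\|_{L^2(\Hb_{\ub})}$ and hence of $\nabla^2\eta$. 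Your argument makes no use of \eqref{initial_ansatz_2} anywhere, which is a strong indicator that a key input has been dropped. The missing ingredients are (i) the integration by parts in $u'$ via $\phit$ (rather than Cauchy--Schwarz in $\ub$), and (ii) the higher-order smallness hypothesis \eqref{initial_ansatz_2} feeding into the $\phit$ bound.
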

The proof is deferred to the next subsection. Back to equation \eqref{Equation_a}, we have
\begin{align*}
 \tr\chi(u, \delta) &\leq \tr\chi(u,0) - \int_0^\delta \frac{(\ub+r_0)^2}{\Omega(\ub - u + r_0)^2}(|\chih(0,\ub)|^2 + |\alpha_F(0,\ub)|^2) d\ub + C\delta^{\frac{1}{2}}\\
&\leq \tr\chi(u,0) - \frac{r_0^2}{(r_0-u)^2}\int_0^\delta \frac{1}{\Omega(u,\ub)}(|\chih(0,\ub)|^2 + |\alpha_F(0,\ub)|^2) d\ub + C\delta^{\frac{1}{2}}.
\end{align*}
Since $|\Omega(u,\ub) - 1 | \leq C \delta^{\frac{1}{2}}$, we have
\begin{align*}
 \tr\chi(u, \delta) &\leq \frac{2}{r_0-u} - \frac{r_0^2}{(r_0-u)^2}\int_0^\delta(|\chih(0,\ub)|^2 + |\alpha_F(0,\ub)|^2) d\ub + C\delta^{\frac{1}{2}}.
\end{align*}
Thus, if the left-hand side is negative, this would be a sufficient condition for $S_{u,\delta}$ to be trapped. This is equivalent to
\begin{equation}\label{Formation_Condition_1}
\int_0^\delta|\chih(0,\ub)|^2 + |\alpha_F(0,\ub)|^2 d\ub > (1+C \delta^{\frac{1}{2}})\frac{2(r_0-u)}{r_0^2}
\end{equation}

We have to make sure that on the initial surface $H_0$, there is no trapped surface. According to \eqref{Final_Equation_1} (notice that $\Omega =1$ and $\omega=0$ on $H_0$), we have $\nabla_4 \tr \chi =- \frac{1}{2}(\tr \chi)^2  -|\chih|^2 - |\alpha_F|^2 \geq -\frac{2}{r_0^2} -(|\chih|^2 + |\alpha_F|^2)$, this implies
\begin{align*}
 \tr\chi(0,\ub) &\geq \tr\chi(0,0) -\frac{2\delta}{r_0^2} - \int_0^\delta|\chih(0,\ub)|^2 + |\alpha_F(0,\ub)|^2 =\frac{2(r_0-\delta)}{r_0^2} - \int_0^\delta |\chih(0,\ub)|^2 + |\alpha_F(0,\ub)|^2.
\end{align*}
So a sufficient condition, that initial hypersurface is free of trapped surfaces, is as follows,
\begin{equation}\label{Formation_Condition_2}
 \int_0^\delta(|\chih(0,\ub)|^2 + |\alpha_F(0,\ub)|^2) d\ub < \frac{2(r_0-\delta)}{r_0^2}.
\end{equation}
Together with \eqref{Formation_Condition_1}, if $\delta$ is sufficiently small, we require the data satisfies
\begin{equation}\label{Initial_Condition_1}
 (1+C \delta^{\frac{1}{2}})\frac{2(r_0-u)}{r_0^2}<\int_0^\delta(|\chih(0,\ub)|^2 + |\alpha_F(0,\ub)|^2) d\ub < \frac{2(r_0-\delta)}{r_0^2}.
\end{equation}
This implies the formation of trapped surfaces. We complete the proof of {\bf Main Theorem}.
\subsection{Verification of Smallness}
We need a refined estimate on $\phit$ defined in Subsection \ref{Tr_estimates_for_nabla_eta_etab}. This is needed for the formation of trapped surfaces.
\begin{proposition} If $\delta$ is sufficiently small, we have
\begin{equation}
 \|\phit\|_{L^{\infty}_{(sc)}(u, \ub)} \lesssim C\varepsilon^{\frac{1}{4}} + C \delta^{\frac{1}{8}}.
\end{equation}
\end{proposition}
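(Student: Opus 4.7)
The plan is to reduce the $L^\infty_{(sc)}$ bound on $\phit$ to a sharper $L^4_{(sc)}$ bound via Sobolev interpolation, and then to extract the $\varepsilon$-smallness from the additional initial data ansatz \eqref{initial_ansatz_2}. By Lemma \ref{sobolev_trace_estimates},
\begin{equation*}
\|\phit\|_{L^{\infty}_{(sc)}(S_{u,\ub})} \lesssim \|\phit\|_{L^4_{(sc)}(S_{u,\ub})}^{\frac{1}{2}} \,\|\nabla \phit\|_{L^4_{(sc)}(S_{u,\ub})}^{\frac{1}{2}} + \delta^{\frac{1}{4}} \|\phit\|_{L^4_{(sc)}(S_{u,\ub})}.
\end{equation*}
Combining the estimates on $\varphib$ obtained in Subsection \ref{Tr_estimates_for_nabla_eta_etab} with the interpolation $\|\nabla\phit\|_{L^4_{(sc)}} \lesssim \|\nabla\phit\|_{L^2_{(sc)}}^{\frac{1}{2}} \|\nabla^2\phit\|_{L^2_{(sc)}(\Hb_{\ub})}^{\frac{1}{2}} + \delta^{\frac{1}{4}}\|\nabla\phit\|_{L^2_{(sc)}}$, we already have the non-anomalous bound $\|\nabla\phit\|_{L^4_{(sc)}} \lesssim C$. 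Hence it suffices to establish the refined estimate $\|\phit\|_{L^4_{(sc)}(S_{u,\ub})} \lesssim C\varepsilon^{\frac{1}{2}} + C\delta^{\frac{1}{4}}$, since squaring and inserting (and using $\delta\ll\varepsilon$ to absorb the lower-order term $\delta^{\frac{1}{4}}\|\phit\|_{L^4_{(sc)}}$) yields exactly $C\varepsilon^{\frac{1}{4}}+C\delta^{\frac{1}{8}}$.

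To achieve this refined $L^4_{(sc)}$ bound I would integrate the defining transport equation $\nabla_3 \phit = \nabla \eta$ from the trivial data $\phit|_{\Hb_0} = 0$, obtaining $\|\phit\|_{L^4_{(sc)}(S_{u,\ub})} \lesssim \int_0^u \|\nabla\eta\|_{L^4_{(sc)}(S_{u',\ub})}\, du'$. The central device is then the Hodge system $\divergence \eta = -\mu - \rho$, $\curl \eta = \sigma + \chibh \wedge \chih$, together with Lemma \ref{estimates_for_hogde_systems}, which reduces $\|\nabla\eta\|_{L^4_{(sc)}}$ to $\|\mu\|_{L^4_{(sc)}} + \|(\rho,\sigma)\|_{L^4_{(sc)}} + \|\chibh\wedge\chih\|_{L^4_{(sc)}}$ plus easier lower-order terms. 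The product $\chibh\wedge\chih$ absorbs its double anomaly through H\"older's inequality and contributes at the level $C\delta^{\frac{1}{2}}$, which is acceptable.

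The main obstacle and technical core of the argument is thus to prove the $\varepsilon$-refined bound $\|(\mu,\rho,\sigma)\|_{L^4_{(sc)}} \lesssim C\varepsilon^{\frac{1}{2}} + C\delta^{\frac{1}{4}}$. Each of $\mu$, $\rho$, $\sigma$ has trivial data at $\ub = 0$ on $H_0$, so their size on $H_0$ is determined by the integrated source terms in $\nabla_4 \mu = \psi\cdot(\nabla\psi+\Psi+\Upsilon\cdot\Upsilon) + \Upsilon\cdot \nabla\Upsilon$, $\nabla_4\rho = \divergence\beta + \ldots$, and $\nabla_4\sigma = -\divergence{}^*\!\beta + \ldots$. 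Using $\beta = -\divergence\chih + \frac{1}{2}T_{4b} + \ldots$ from \eqref{NSE_div_chih}, these sources are built out of higher angular derivatives of $(\chih,\alpha_F)$, which by \eqref{initial_ansatz_2} are $\varepsilon$-small in $L^2$. Interpolating and invoking the Sobolev bounds already available on $H_0$ will give $L^4_{(sc)}$ smallness of order $\varepsilon^{\frac{1}{2}}$ on the initial hypersurface. This smallness is then propagated to arbitrary $u$ by integrating the transport equations in the $\nabla_3$ direction, using Gronwall together with the a priori bounds from \textbf{Theorem A} and \textbf{Theorem C}; the nonlinear error terms each carry at least an extra $\delta^{\frac{1}{2}}$ via H\"older, and under the regime $\delta\ll\varepsilon$ these errors remain subdominant, closing the estimate.
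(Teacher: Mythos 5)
Your outline is parallel in spirit to the paper's argument but is pitched one derivative lower, and this is precisely where it breaks down. The paper transports $\nabla\phit$ in $L^2_{(sc)}$, using the commuted equation $\nabla_3\nabla\phit=\nabla^2\eta+\ldots$ to reduce to $\|\nabla^2\eta\|_{L^2_{(sc)}(\Hb_{\ub})}$, which Proposition \ref{refined estimates on nabla 2 eta} bounds by $\|(\nabla\rho,\nabla\sigma)\|_{L^2_{(sc)}(\Hb_{\ub})}+\delta^{\frac{1}{4}}C$. The point of the extra derivative is that ansatz \eqref{initial_ansatz_2} and the Lie-derived energy estimates behind Theorem C make $\|(\nabla\rho,\nabla\sigma)\|_{L^2(\Hb_{\ub})}\lesssim C\varepsilon$: the $\varepsilon$-smallness is available exactly at the level of one derivative of curvature, and only there. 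You instead work with $\phit$ in $L^4_{(sc)}$, reduce to $\nabla\eta$, and finally to undifferentiated $\mu$, $\rho$, $\sigma$.

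The gap is in your claimed propagation of $\|(\mu,\rho,\sigma)\|_{L^4_{(sc)}}\lesssim C\varepsilon^{\frac{1}{2}}+C\delta^{\frac{1}{4}}$. You argue it by integrating $\nabla_3$-transport and asserting that the error terms each gain $\delta^{\frac{1}{2}}$ from H\"older. But the $\nabla_3$-Bianchi equations \eqref{NBE_Lb_rho}, \eqref{NBE_Lb_sigma} carry the \emph{linear} sources $-\divergence\betab$ and $-\divergence{}^*\betab$: these have no H\"older gain and are not $\varepsilon$-small (Theorem C gives $\|\nabla\Psi_g\|_{L^2_{(sc)}(\Hb_{\ub})}\lesssim\Izero$, not $\lesssim\varepsilon$, and $\|\nabla\betab\|_{L^4_{(sc)}(S_{u,\ub})}$ is not controlled at all in a framework with only one derivative of curvature). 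So $\nabla_3$-transport does not preserve $\varepsilon$-smallness of $\rho$, $\sigma$ from $H_0$. Separately, $\mu$ has no $\nabla_3$-transport equation in the paper (it is $\mub$, not $\mu$, that satisfies one); the paper's refinement is a $\delta^{\frac{1}{4}}$-bound on $\nabla\mu$, obtained from its $\nabla_4$-transport and triviality on $\Hb_0$, not a bound on $\mu$. Your reduction could be repaired by converting the available $\varepsilon$-smallness of $\|\nabla\rho\|_{L^2_{(sc)}(\Hb_{\ub})}$ to $\|\rho\|_{L^4_{(sc)}(S_{u,\ub})}\lesssim C\varepsilon^{\frac{1}{2}}$ via the trace inequality \eqref{trace_Hb} and invoking the paper's $\nabla\mu$ estimate for the $\mu$-contribution, but that is then in substance the paper's argument. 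Also, as a minor point, Lemma \ref{estimates_for_hogde_systems} furnishes only $L^2_{(sc)}$ elliptic estimates for the Hodge systems, not the $L^4_{(sc)}$ estimate for $\nabla\eta$ that you invoke.
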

\begin{proof}
Recall the definition of $\phit$: $\nabla_3 \phit = \nabla \eta$ on $\Hb_{\ub}$ with $\phit(0,\ub)=0$. Combined with the estimates derived so far, one can easily deduce $\|\phit\|_{L^2_{(sc)}{(u,\ub)}} \lesssim C$. By commuting derivatives, we have
\begin{equation*}
 \nabla_3 \nabla \phit = (\tr\chib_0 + \phi) \cdot \nabla \phit + (\psi \cdot \psi + \betab + \Upsilon \cdot \Upsilon)\cdot \phit + \psi \cdot \nabla \eta + \nabla^2 \eta.
\end{equation*}
In view of the triviality of $\nabla \phit$ on $H_0$ and Proposition \ref{refined estimates on nabla 2 eta}, by Gronwall's inequality, we have
\begin{equation*}
 \|\nabla \phit\|_{L^2_{(sc)}{(u,\ub)}} \lesssim C\delta^{\frac{1}{2}} + \|\nabla^2 \eta\|_{L^2_{(sc)}(\Hb_{\ub})} \lesssim C\delta^{\frac{1}{4}} +\|\nabla \rho\|_{L^2_{(sc)}(\Hb_{\ub})}+\|\nabla \sigma\|_{L^2_{(sc)}(\Hb_{\ub})}.
\end{equation*}
In view of ansatz \eqref{initial_ansatz_2} and {\bf{Theorem C}}, we know $\|(\nabla \rho, \nabla \sigma)\|_{L^2(\Hb_{\ub})} \lesssim C \varepsilon$. Hence, we end the proof by a direct use of Lemma \ref{sobolev_trace_estimates}.
\end{proof}

We turn to the proof of Lemma \ref{Smallness_Lemma}. According to the $L^{\infty}_{(sc)}$ estimates on connection coefficients and Maxwell field, we have
\begin{equation}
 \delta^{\frac{1}{2}} \|(\chi, \omega, \alpha_F)\|_{L^{\infty}} + \|(\eta, \etab, \rho_F,\sigma_F)\|_{L^{\infty}}+\delta^{\frac{1}{2}}\|(\chibh,\trchibt,\omegab, \alphab_F)\|_{L^{\infty}} \lesssim 1.
\end{equation}
In view of the definition of $E_1$ and $E_2$, all those quadratic terms are bounded in $L^{\infty}$ norm by a constant depending only on the initial data. Together with the definition of $F_1$ and $F_2$, it is easy to show the contribution from these terms to $G_1$ and $G_2$ verify the estimates \eqref{Smallness_Estimates} (we bound $r$ and $\Omega$ by $1$ and bound $\Omega-1$ by $C\delta^{\frac{1}{2}}$). It remains to show that for
\begin{align*}
 H_1(u,\ub, \theta) = \int_0^{u}|\nabla \tensor \eta||\chih| du' ,\,\, H_2(u,\ub, \theta) = \int_0^{u}(|\nabla\rho_F|+|\nabla \sigma_F|)|\alpha_F| du'.
\end{align*}
In fact, for $H_1$, we can use $\phit$ to renormalize $\chibh$ to be $\phit - \chibh$, see Section 2.4 of \cite{K-R-09} for details; for $H_2$, it comes directly from {\bf{Theorem C}} and initial ansatz \eqref{initial_ansatz_2}. This completes the proof.

\appendix

\section{Hodge Operators}\label{HodgeOperators}
We review four Hodge operators for horizontal tensor fields. In this subsection, all the functions or tensors are defined on $S_{u,\ub}$. For more detailed account on the subject, we refer the readers to \cite{B-Z} or \cite{Ch-K}.

For one form $F$, we have $\Done (F) = (\divergence F, \curl F)$; For a pair of functions $(F_1,F_2)$, we have $\Donestar$ (which is the dual of $\Done$) as follows, $\Donestar (F_1,F_2) = -\nabla F_1 + \nablastar F_2$; For a traceless symmetric two tensor $F$, we have $\Dtwo F = \divergence F$; For an one form $F$, we have $\Dtwostar$ (which is the dual of $\Dtwo$) acting as $\Dtwostar F = -\frac{1}{2} \widehat{\L_F \gamma}_{ab}= -\frac{1}{2}(\nabla_a F_b +\nabla_b F_a -(\divergence F)\gamma_{ab})$ where $\gamma$ is the induced metric on $S_{u,\ub}$.

We also have the standard Bochner formulas
\begin{align*}
\Donestar \cdot \Done &= -\triangle + K, \quad \Done \cdot \Donestar = -\triangle, \quad \Dtwostar \cdot \Dtwo = -\frac{1}{2}\triangle + K, \quad \Dtwo \cdot \Dtwostar = \frac{1}{2}(\triangle + K).
\end{align*}
Those formulas lead to the standard elliptic estimates for Hodge systems.

\section{Energy Estimates Scheme}
\subsection{Energy Identities for Weyl Fields}\label{Preliminary_Energy_Weyl}
We refer the reader to \cite{Ch-K} for the basic definitions. Let $W_{\alpha\beta\gamma\delta}$ be a Weyl field 
satisfying the following divergence equation, $\Divergence W = J$ where the source term $J_{\alpha\beta\gamma}$ is a Weyl current.
The Hodge dual $\Wstar$ also satisfies a divergence equation $\Divergence \Wstar = J^{*}$ with source term (which is also a Weyl current) $J^{*}{}_{\alpha\beta\gamma}= \frac{1}{2}J_{\alpha}{}^{\mu\nu}\cdot \epsilon_{\mu\nu\beta\gamma}$. In the case when there is a electromagnetic field coupled to the space-time and $W$ is taken to be the Weyl curvature tensor, these divergence identities read as,
\begin{align*}
 D^{\alpha}W_{\alpha\beta\gamma\delta} &= \frac{1}{2}(D_{\gamma}R_{\beta\delta}-D_{\delta}R_{\beta\gamma}),\quad D^{\alpha}{}\Wstar_{\alpha\beta\gamma\delta} =\frac{1}{2}(D_{\mu}R_{\beta\nu}-D_{\nu}R_{\beta\mu})\epsilon^{\mu\nu}{}_{\gamma\delta}.
\end{align*}

Let $Q[W]$ be the Bel-Robinson tensor of $W$, 
it also satisfies a divergence equation,
\begin{equation*}
D^\alpha  Q[W]_{\alpha\beta\gamma\delta}= W_{\beta}{}^{\mu}{}_{\delta}{}^{\nu}J_{\mu\gamma\nu} + W_{\beta}{}^{\mu}{}_{\gamma}{}^{\nu}J_{\mu\delta\nu} + \Wstar_{\beta}{}^{\mu}{}_{\delta}{}^{\nu}J^*{}_{\mu\gamma\nu} + \Wstar_{\beta}{}^{\mu}{}_{\gamma}{}^{\nu}J^*{}_{\mu\delta\nu}.
\end{equation*}
The modified Lie derivative of $\Lh_O W$ satisfies $\Divergence (\Lh_O W)_{\beta\gamma\delta} = J(O,W)_{\beta\gamma\delta} =  \sum_{i=0}^3 J_i(O,W)_{\beta\gamma\delta}$ where
\begin{align*}
 J_0(O,W) = \Lh_O J_{\beta\gamma\delta}&=\L_O J_{\beta\gamma\delta}-(\pih_{\beta}{}^{\mu} J_{\nu\gamma\delta}+\pih_{\gamma}{}^{\mu} J_{\beta\mu\delta} + \pih_{\delta}{}^{\mu} J_{\beta\gamma\mu})+\frac{1}{4}\text{Tr} \pi J_{\beta\gamma\delta},\\
 J_1(O,W)_{\beta\gamma\delta}&= \pih^{\mu\nu}D_\mu W_{\nu\beta\gamma\delta}, \quad J_2(O,W)_{\beta\gamma\delta}= p^\mu W_{\mu\beta\gamma\delta},\\
J_3(O,W)_{\beta\gamma\delta}&=q_{\mu\beta\nu}W^{\mu\nu}{}_{\gamma\delta}+q_{\mu \gamma \nu} W^{\mu}{}_{\beta}{}^{\nu}{}_{\delta}+q_{\mu \delta \nu} W^{\mu}{}_{\beta\gamma}{}^{\nu},
\end{align*}
and
\begin{equation*}
 p_\alpha = D^\mu \pih_{\mu\alpha},\quad q_{\alpha\beta\gamma}= D_\beta \pih_{\gamma\alpha}-D_\gamma \pih_{\beta\alpha}-\frac{1}{3}(p_\gamma g_{\beta\alpha}-p_\beta g_{\gamma\alpha}).
\end{equation*}

Given vector fields $X$, $Y$ and $Z$, we define the current associated to  $X$, $Y$, $Z$ and $W$ to be $P[W](X,Y,Z)_{\alpha} = Q[W]_{\alpha\beta\gamma\delta}X^\beta Y^\gamma Z^\delta$. The space-time divergence of $P[W]$ is $\Divergence P[W](X,Y,Z) = \Divergence Q[W] (X,Y,Z) + (\pi \cdot Q[W])(X,Y,Z)$ where
\begin{equation*}
(\pi \cdot Q[W])(X,Y,Z)=Q[W]_{\alpha\beta\gamma\delta}{}^{(X)}\!\pi^{\alpha\beta}Y^{\gamma}Z^{\delta}+Q[W]_{\alpha\beta\gamma\delta}{}^{(Y)}\!\pi^{\alpha\beta}Z^{\gamma}X^{\delta}+Q[W]_{\alpha\beta\gamma\delta}{}^{(Z)}\!\pi^{\alpha\beta}X^{\gamma}Y^{\delta}.
\end{equation*}
We integrate this identity on domain $\D(u,\ub)$ to derive

\begin{minipage}[!t]{0.2\textwidth}
  \includegraphics[width = 2 in]{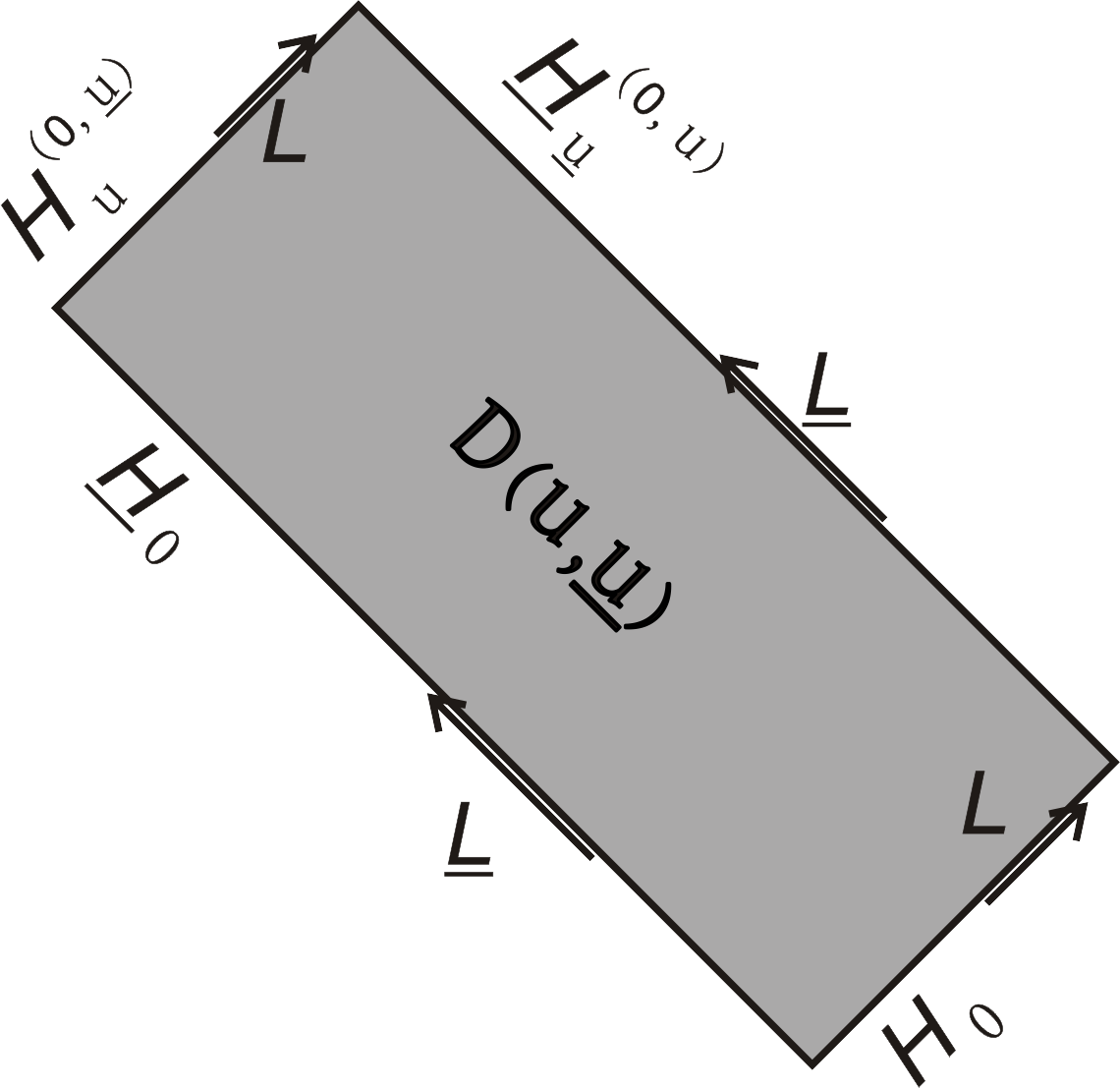}
\end{minipage}
\hspace{0.1\textwidth}
\begin{minipage}[!t]{0.6\textwidth}
\begin{align}\label{stokes_Bel_Robinson}
 &\quad\int_{H_u} Q(X,Y,Z,L)+\int_{\Hb_{\ub}} Q(X,Y,Z,\Lb) \notag\\
 &= \int_{H_0} Q(X,Y,Z,L)+\int_{\Hb_{0}} Q(X,Y,Z,\Lb)\\
&+\doubleint_{\D(u,\ub)} \Divergence Q (X,Y,Z) + \doubleint_{\D(u,\ub)} (\pi \cdot Q)(X,Y,Z).\notag
\end{align}
\end{minipage}

Given a vector field $O$, we use $\pi$ to denote its deformation tensor $^{(O)}\!\pi$.  


\subsection{Energy Identities for Maxwell Fields}\label{Preliminary_Energy_Maxwell}
We refer the reader to the thesis of Zipser in \cite{B-Z} for more details. Let $F_{\alpha\beta}$ be a 2-form. Its Hodge dual $\Fstar_{\alpha\beta}$ is defined as $\Fstar_{\alpha\beta} = \frac{1}{2}\epsilon_{\alpha\beta\mu\nu} F^{\mu\nu}$. The energy-momentum tensor $T[F]_{\alpha\beta}$ associated to $F_{\alpha\beta}$ is defined as
\begin{equation*}
 T[F]_{\alpha\beta} = F_{\alpha\mu}F_{\beta}{}^{\mu}-\frac{1}{4}(F\cdot F)g_{\alpha \beta} = F_{\alpha\mu}F_{\beta}{}^{\mu} + \Fstar{}_{\alpha\mu}{}\Fstar_{\beta}{}^{\mu}.
\end{equation*}
If $(F_{\alpha\beta},\Fstar{}_{\alpha\beta})$ satisfies divergence identities $D^\mu F_{\mu\alpha} = J_\alpha$ and $D^\mu {}\Fstar_{\mu\alpha} = J'_\alpha$, then $T[F]_{\alpha\beta}$ satisfies,
\begin{equation}\label{divergence_of_T[F]}
 D^\mu T[F]_{\mu\alpha} = F_{\alpha}{}_{\mu}J_{\mu} + {}\Fstar_{\alpha}{}_{\mu}J'_{\mu}.
\end{equation}
 Given a vector field $O$, we define the modified Lie derivative of $F$ along $O$ as
\begin{align*}
 \Lh_O F_{\alpha\beta} = \L_O F_{\alpha\beta}-(\pih_{\mu\alpha}\,F^{\mu}{}_{\beta} -\pih_{\mu\beta}\,F^{\mu}{}_{\alpha}).
\end{align*}
We remark that $\Lh_O$ commutes with Hodge star operator.
If $F_{\alpha\beta}$ solves the Maxwell equations, then the divergence of $T[\Lh_O F]_{\alpha\beta}$ is
\begin{equation*}
 \Divergence T[\Lh_O F]_\alpha = \Lh_{O} F_{\alpha}{}^{\mu}J(O,F)_{\mu}+{}^*\!\Lh_{O} F_{\alpha}{}^{\mu}J'(O,F)_{\mu},
\end{equation*}
where
\begin{align*}
 J(O,F)_{\mu} &= \pih^{\alpha\beta}D_{\beta}F_{\alpha\mu} + D^\alpha \pih_{\alpha\beta} F^{\beta}{}_{\mu} + (D_{\mu} \pih_{\alpha\beta}-D_{\beta}\pih_{\mu\alpha})F^{\alpha\beta},\\
 J'(O,F)_{\mu} &= \pih^{\alpha\beta}D_{\beta}{}\Fstar{}_{\alpha\mu} + D^\alpha \pih_{\alpha\beta} {}\Fstar{}^{\beta}{}_{\mu} + (D_{\mu} \pih_{\alpha\beta}-D_{\beta}\pih_{\mu\alpha}){}\Fstar{}^{\alpha\beta}.
\end{align*}

Given a vector field $X$, the current associated to $X$ and $F$ is defined to be $P[F](X)_{\alpha} = T[F]_{\alpha\mu}X^\mu$. The space-time divergence of $P[F]$ is $\Divergence P[F](X) =\Divergence T[F] (X) + \pih \cdot T[F]$. We integrate this identity on the domain $\D(u,\ub)$ to derive
\begin{align}\label{stokes_energy_momentum}
 \int_{H_u} T[F](X,L)+ \int_{\Hb_{\ub}} T[F](X,\Lb) &=\int_{H_0} T[F](X,L)+\int_{\Hb_{0}} T[F](X,\Lb)\\
& +\doubleint_{\D(u,\ub)} \Divergence T[F] (X) + \doubleint_{\D(u,\ub)} \pih \cdot T[F].\notag
\end{align}

\end{document}